\newcommand{\pr}[1]{\mathbb{P}\!\left(#1\right)}
\newcommand{\E}[1]{\mathbb{E}\!\left[#1\right]}
\newcommand{\estart}[2]{\mathbb{E}_{#2}\!\left[#1\right]}
\newcommand{\prstart}[2]{\mathbb{P}_{#2}\!\left(#1\right)}
\newcommand{\prnstart}[3]{\mathbb{P}^{#3}_{#2}\!\left(#1\right)}
\newcommand{\prcond}[3]{\mathbb{P}_{#3}\!\left(#1\;\middle\vert\;#2\right)}
\newcommand{\econd}[2]{\mathbb{E}\!\left[#1\;\middle\vert\;#2\right]}
\newcommand{\prb}[1]{\mathbf{P}\!\left(#1\right)}
\newcommand{\Eb}[1]{\mathbf{E}\!\left[#1\right]}
\newcommand{\prcondb}[3]{\mathbf{P}_{#3}\!\left(#1\;\middle\vert\;#2\right)}
\newcommand{\econdb}[2]{\mathbf{E}\!\left[#1\;\middle\vert\;#2\right]}
\newtheorem{theorem}{Theorem}[section]
\newtheorem{lem}[theorem]{Lemma}
\newtheorem{prop}[theorem]{Proposition}
\newtheorem{defn}[theorem]{Definition}
\newtheorem{rmk}[theorem]{Remark}
\newtheorem{assn}[theorem]{Assumption}
\def\N{\mathbb{N}}
\def\C{\mathbb{C}}
\def\bP{\mathbb{P}}
\def\S{\mathbb{S}}
\def\bP{\mathbb{P}}
\def\X{X^{\text{exc}}}
\renewcommand{\epsilon}{\varepsilon}
\def\T{\mathcal{T}}
\def\C{\mathcal{C}}
\newcommand{\bPb}{\mathbf{P}}
\newcommand{\Loop}{\textsf{Loop}}
\newcommand{\diam}{\textsf{Diam}}
\newcommand{\diamR}{\textsf{Diam}_{\mathrm{res}}}
\renewcommand{\Xi}{X^{\infty}}
\newcommand{\Height}{\textsf{Height}}
\newcommand{\Tai}{T_{\infty}}
\newcommand{\p}{\textsf{par}}
\newcommand{\Ynu}{\tilde{Y}_i}
\newcommand{\Ynui}{\tilde{Y}_{j_i}}
\newcommand{\Znv}{Z_j}
\newcommand{\Znu}{Z_i}
\newcommand{\car}{\mathsf{c_{\alpha}^{R}}}
\def\ER{Erd\"os-R\'enyi }
\def\V{V^{\text{dec}}}
\def\BB{B^{\text{dec}}}
\def\BT{B^{\text{dec}}} %*same
\newcommand{\Sr}{\mathsf{Spine}_r}
\def\S2r{\text{Spine}_{\frac{1}{2}r}}
\def\sER{s^{ER}}
\newcommand{\sad}{\mathsf{s_{\alpha}^d}}
\newcommand{\tad}{\mathsf{t^d_{\alpha}}}
\newcommand{\ttad}{\mathsf{T^d_{\alpha}}}
\def\sav{\mathsf{s_{\alpha}^v}}
\def\fv{\mathsf{f_{\alpha}^v}}
\def\Sc{S^{(c)}}
\def\Tc{T^{(c)}}
\def\Nfr{N^f_r}
\def\Nf2r{N^f_{\frac{r}{2}}}
\def\HER{H^{ER}}
\def\1a{a_1}
\def\2a{a_2}
\def\3a{a_3}
\def\4a{a_4}
\def\N{\mathbb{N}}
\def\C{\mathbb{C}}
\def\bP{\mathbb{P}}
\def\S{\mathbb{S}}
\def\X{X^{\text{exc}}}
\renewcommand{\epsilon}{\varepsilon}
\def\T{\mathcal{T}}
\def\Loop'{\textsf{Loop'}}
\def\C{\mathcal{C}}
\def\diam{\textsf{Diam}}
\def\Sr2{S^{(\frac{1}{2}r)}_{\sigma}}
\def\Height{\textsf{Height}}
\def\T{\mathcal{T}}
\def\Loop'{\textsf{Loop'}}
\def\Loop{\textsf{Loop}}
\def\C{\mathcal{C}}
\def\diam{\textsf{Diam}}
\def\Sr2{S^{(\frac{1}{2}r)}_{\sigma}}
\def\Ti{T_{\infty}}
\def\Ref{R_{\text{eff}}}
\def\Height{\textsf{Height}}
\def\Heightdec{\textsf{Height}^{\text{dec}}}
\def\HER{H^{\text{dec}}}
\def\ER{Erd\"os-R\'enyi }
\def\TERa{\T_{\infty}^{\text{dec}}}
\def\TER{\T^{\text{dec}}}
\def\TERh{(\T^{[h,2h]})^{\text{dec}}}
\def\tTER{\tilde{\T}^{\text{dec}}}
\def\Sr{\text{Spine}_r}
\def\S2r{\text{Spine}_{\frac{1}{2}r}}
\def\dER{d^{\text{dec}}}
\def\dERa{\mathsf{d^{\text{vol}}}}
\def\rER{\rho^{\text{dec}}}
\def\1a{a_1}
\def\2a{a_2}
\def\3a{a_3}
\def\4a{a_4}
\def\rhoERA{\rho^{\text{dec}}}
\def\sER{s^{\text{dec}}}
\def\dg{d^{\text{dec}}}
\def\ea{w_{\alpha}}
\def\Da{\mathsf{d^{\text{dis}}}}
\def\ka{s_{\alpha}}
\def\ba{\beta_6}
\def\bat{\beta_7}
\def\batt{\beta_8}
\def\saR{\mathsf{s_{\alpha}^R}}
\def\taR{\mathsf{t^R_{\alpha}}}
\def\Sc{S^{(k)}}
\def\Tc{T^{(k)}}
\def\Nfr{N^f_r}
\def\Nf2r{N^f_{\frac{r}{2}}}
\def\Dn{D^{\nu}_n}
\def\esim{\overset{e}{\sim}}
\def\Trin{T^{\Delta}_n}
\def\ddis{\mathsf{d^{\text{dis}}}}
\def\dspec{\mathsf{d^{\text{spec}}}}
\begin{document}
\title{Random walks on decorated Galton--Watson trees}
\author{Eleanor Archer\thanks{Tel Aviv University. Email: {eleanora@mail.tau.ac.il}. This work was supported by a JSPS short term predoctoral fellowship, and EPSRC grant number EP/N509796/1.}}
\maketitle

\begin{abstract}
In this article, we study a simple random walk on a decorated Galton--Watson tree, obtained from a Galton-Watson tree by replacing each vertex of degree $n$ with an independent copy of a graph $G_n$ and gluing the inserted graphs along the tree structure. We assume that there exist constants $d,R \geq 1, v< \infty$ such that the diameter, effective resistance across and volume of $G_n$ respectively grow like $n^{\frac{1}{d}}, n^{\frac{1}{R}}, n^v$ as $n \to \infty$. We also assume that the underlying Galton--Watson tree is critical with offspring tails $\xi(x)$ decaying like $cx^{-\alpha-1}$ as $x \to \infty$ for some constant $c$ and some $\alpha \in (1,2)$. We establish the fractal dimension, spectral dimension, walk dimension and simple random walk displacement exponent for the resulting metric space as functions of $\alpha, d, R$ and $v$, along with bounds on the fluctuations of these quantities.
\end{abstract}
\textbf{AMS 2010 Mathematics Subject Classification:} 60K37 (primary), 60J80, 60J35, 60J10

\textbf{Keywords and phrases:} Galton--Watson tree, simple random walk, spectral dimension.

%\tableofcontents

\section{Introduction}

The purpose of this paper is to study random walks on a general class of so-called decorated Galton--Watson trees, informally obtained from classical Galton--Watson trees by replacing each vertex of degree $n$ with a graph of boundary size $n$, and gluing these graphs along the underlying tree structure as indicated in Figure \ref{fig:dec tree pic} below. In the planar case, we can alternatively construct these graphs from the looptrees of \cite{RSLTCurKort} by ``filling in'' the loops. Such decorated trees naturally describe the structure of a variety of critical statistical mechanics models on random planar maps; examples include critical percolation clusters \cite{CurKortUIPTPerc, RichierIICUIHPT}, Fortuin-Kasteleyn models \cite{BerestyckiLaslierRayFKRPM}, and quadrangulations with skewness \cite{BaurRichQuadSkew}. More generally, this structure applies to critical Boltzmann maps in the dense phase \cite{RichierMapBoundaryLimit}, which are conjectured to describe a wide range of statistical physics models on Brownian surfaces.

We are interested in the regime where the underlying Galton--Watson tree has a critical offspring distribution $\xi$ satisfying
\begin{equation}\label{eqn:offspring tails}
    \xi(k) \sim ck^{-\alpha-1}
\end{equation}
as $k \rightarrow \infty$, for some $\alpha \in (1,2)$. The case $\alpha = 2$ is less interesting since in this case the metric space structure of the decorated tree does not differ substantially from that of the original undecorated tree (similarly for any finite variance offspring distribution). Analogous results will also hold on incorporating a slowly-varying function into the offspring tails, but for ease of notation we do not prove this here.

\begin{figure}[h]
\begin{subfigure}{0.5\textwidth}
\includegraphics[width=7cm]{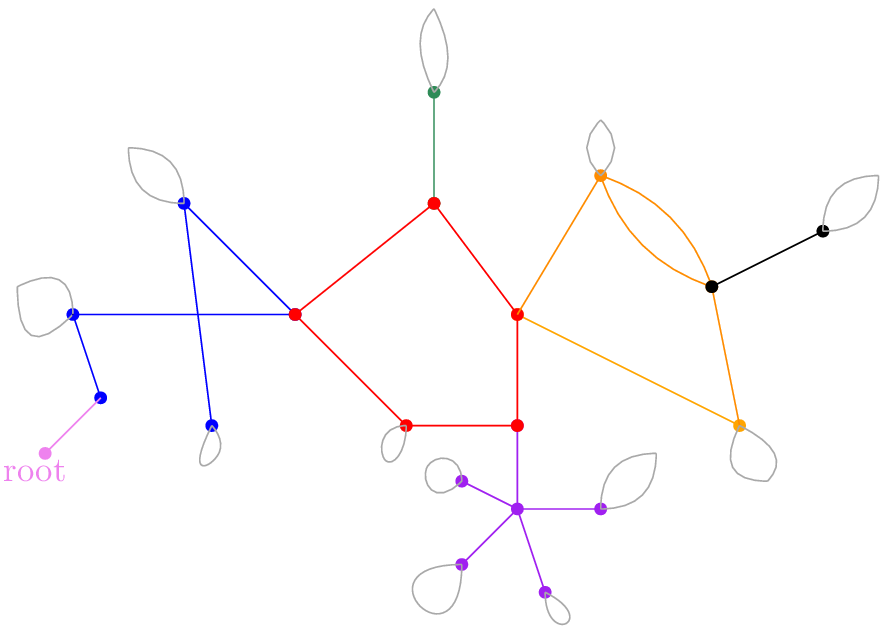}
\centering
\end{subfigure}\hfill
\begin{subfigure}{.5\textwidth}
\includegraphics[width=6cm]{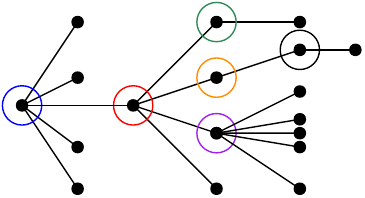}
\end{subfigure}\hfill
\caption{An example of a rooted decorated tree, and its underlying Galton--Watson tree.}\label{fig:dec tree pic}
\end{figure}

In order to understand the long-term behaviour of the random walk, we will work with a critical Galton--Watson tree conditioned to survive, also known as Kesten's tree. Under the above assumptions on the offspring distribution, we denote this tree $\Tai$ and its root $\rho$. Given a family of connected, potentially random graphs $(G_n)_{n \geq 1}$, where each $G_n$ is conditioned to have $n$ boundary vertices, we construct our decorated tree from $\Tai$ by replacing each vertex $v \in \Tai$ with an independent copy of $G_{\deg v}$. We then uniquely identify each boundary vertex of $G_n$ with an edge incident to $v$, in such a way as to respect any planarity restrictions, and then if $v \sim v'$ in $\Tai$ we glue their corresponding graphs at the two vertices identified with the edge $(v,v')$. We use the following rooting convention: if $v$ is the root of $\TERa$, we replace $v$ with an independent copy of $G_{\deg v + 1}$, rather than $G_{\deg v}$, and we add an extra edge emanating from the ``spare'' boundary vertex of $G_{\deg v + 1}$ to a new extra vertex, and designate this extra vertex to be the root vertex of $\TERa$, denoted $\rER$. The graph $G_n$ may be random (e.g. an \ER graph on $n$ vertices) or deterministic (e.g. the complete graph on $n$ vertices); in the random case we sample independently for each vertex, conditionally on the boundary size.  We call the resulting structure $\TERa$. We further assume that the inserted graphs are each endowed with a measure and a metric, and extend these to a measure $\V$ and a metric $\dg$ on $\TERa$ by superposing the measures and adding distances along tree branches in the natural way. We will make the construction precise in Section \ref{sctn:model def}.

Our aim is to establish the exponents governing the behaviour of a simple random walk $(X_n)_{n \geq 0}$ on $\TERa$ in terms of $\alpha$ and the relevant exponents for the inserted graphs; in particular, to identify the following two quantities (and show that the limits exist):
\begin{enumerate}[(i)]
\item The spectral dimension, $d_S = -2 \lim_{n \rightarrow \infty} \frac{\log (p_{2n}(x,x))}{\log n}$,
\item The displacement exponent, $d_{\text{dis}} = \lim_{n \rightarrow \infty} \frac{\log (\sup_{k \leq n} \dg(X_0, X_k))}{\log n}$.
%\item The walk dimension, $d_w = \lim_{n \rightarrow \infty} \frac{\log n}{\log \E{d(0, X_n)}}$
\end{enumerate}
Here $p_n(x,y) = \frac{\prstart{X_n = y}{x}}{\deg y}$ is the transition density of the simple random walk; it is easy to see that for recurrent graphs, the limits do not depend on the choice of $x$ or $X_0$. The quantity $d_w = \frac{1}{d_{\text{dis}}}$ is also known as the walk dimension, and can be naturally compared to the fractal dimension of $\TERa$, denoted $d_f$, given by
\[
d_f = \lim_{r \rightarrow \infty} \frac{\log \left(\V (\BB(x,r))\right)}{\log r},
\]
where $\BB(x,r)$ denotes the open ball of radius $r$ centred at $x$, defined with respect to the metric $\dg$. We will see that, as is commonly the case for sufficiently homogeneous graphs, the relation 
\[
d_S = \frac{2d_f}{d_w}
\]
holds on $\TERa$. This will hold as a consequence of verifying the conditions of \cite{KumMisumiHKStronglyRecurrent}.

It is well-known that these simple random walk exponents are determined by two key properties of the ambient graph: its effective resistance and volume growth. Most of the paper is therefore devoted to a study of these two properties of $\TERa$, and as a result, we also determine $d_f$ as defined above, which is of independent interest. 

Given $n \in \N$, we define the following notation.
\begin{itemize}%[itemsep=1mm, parsep=0pt]
\item $G_n$ denotes a copy of the inserted graph conditioned on having $n$ boundary vertices,
\item $(U^{(1)}_n, U^{(2)}_n)$ denotes a uniform pair of distinct points on the boundary of $G_n$,
\item $d_n$ is a given metric on $G_n$ (e.g. the graph metric), and $d^U_n = d_n(U^{(1)}_n, U^{(2)}_n)$,
\item $\Ref$ denotes effective resistance on $G_n$ when each edge has conductance $1$, and $R^U_n = \Ref(U^{(1)}_n, U^{(2)}_n)$,
\item $\diam (G_n) = \sup_{x, y \in G_n} d_n(x,y)$ and $\diam_{\text{res}} (G_n) = \sup_{x, y \in G_n} \Ref(x,y)$,
\item %$\V_n (G_n) = \sum_{x \in G_n} V_n (x)$, where
 $V_n$ is a pre-defined measure on $G_n$ (e.g. $V_n(x) = \deg x$).
\end{itemize}

For example, $G_n$ could be a uniformly chosen plane tree with $n$ leaves, $d_n$ could be the graph metric and $V_n$ could be the counting measure on its vertices. Then effective resistance is equal to graph distance, so that $d^U_n$ and $R^U_n$ both correspond to the graph distance between two uniformly chosen distinct leaves.

In what follows, we assume that the decorated tree $\TERa$ is defined on the probability space $(\mathbf{\Omega}, \mathbf{\mathcal{F}}, \bPb)$, and let $\pr{\cdot}$ denote the (quenched) law of a simple random walk on $\TERa$, started from the root. We will make the following assumption on the quantities defined above.

\begin{assn}\label{assn:sec}
\ \\
\textbf{(D) Metric growth.} There exist $d \geq 1$ and $\epsilon > 0$ such that
\begin{align*}
\inf_{n \geq 1} \prb{d_n^U (G_n) \geq n^{\frac{1}{d}}} >0, \hspace{1cm} \sup_{n \geq 1} \prb{\diam (G_n) \geq \lambda n^{\frac{1}{d}}} = {O(\lambda^{-\frac{\alpha + \epsilon}{\alpha - 1}})} \text{ as } \lambda \to \infty.
\end{align*}
\textbf{(R) Resistance growth.} There exist $R \geq 1$ and $\epsilon >0$ such that
\begin{align*}
\inf_{n \geq 1} \prb{R_n^U (G_n) \geq n^{\frac{1}{R}}} >0, \hspace{1cm} \sup_{n \geq 1} \prb{\diamR (G_n) \geq \lambda n^{\frac{1}{R}}} ={O(\lambda^{-\frac{\alpha + \epsilon}{\alpha - 1}})} \text{ as } \lambda \to \infty.
\end{align*}
\textbf{(V) Volume growth.} There exist $v \geq 1$ and $\epsilon > 0$  such that
\begin{align*}
\inf_{n \geq 1} \prb{V_n (G_n) \geq n^{v}} >0, \hspace{1cm} \sup_{n \geq 1} \prb{V_n (G_n) \geq \lambda n^{v}} = O(\lambda^{-\frac{\alpha + \epsilon}{v}}) \text{ as } \lambda \to \infty.
\end{align*}
\end{assn}

\begin{rmk}
\begin{enumerate}
\item Assumption \ref{assn:sec}(D) ensures that, if $T_{[n,2n]}$ is a Galton--Watson tree conditioned to have progeny in $[n,2n]$, with decorated version $\TER_{[n,2n]}$, then the sequence 
$\left( n^{-\left[ (1 - \frac{1}{\alpha}) \vee \frac{1}{\alpha d}\right]} \diam (\TER_{[n,2n]}) \right)_{n \geq 1}$ is a tight sequence (with an extra scaling factor of $\log n$ when $d(\alpha - 1)=1$). We do not expect this to be optimal in the regime where $d(\alpha - 1) < 1$, in which case tail decay of $o(\lambda^{-\alpha d})$ is probably sufficient, as this is enough to ensure that the maximal diameter of any individual graph in $\TER_n$ is of order $n^{\frac{1}{\alpha d}}$, with high probability.

In the regime $d(\alpha - 1)>1$ one certainly needs tail decay that is $o(\lambda^{-\frac{\alpha}{\alpha - 1}})$, again to control the maximal diameter in the tree. We could also remove the $\epsilon$ from Assumption \ref{assn:sec}(D) by employing a similar strategy to that carried out in \cite[Section 3]{MarzoukStableSnake} to deal with larger values of $n^{-\frac{1}{d}} \diam (G_n)$, however we have chosen to keep the $\epsilon$ as this makes our proofs shorter.
\item Assumption \ref{assn:sec}(V) is essentially what is required to ensure that inserted graphs of high volume coincide with high degree vertices in $\Tai$. (We could technically weaken it very slightly to an integrability condition). If this is not satisfied, then the typical order of the total volume of a finite decorated Galton-Watson tree $\TER_n$ additionally depends on the precise tail decay of the probabilities in $(V)$, and not just on $v$.
\end{enumerate}
\end{rmk}

Under Assumption \ref{assn:sec}, we define the following key exponents.

\begin{tcolorbox}[colback=white]
\begin{center}
$\sad = d(\alpha - 1), \ \saR = R(\alpha - 1), \ \sav= \frac{\alpha - 1}{v}$ and $\fv = \frac{\alpha}{v}$.
\end{center}
\end{tcolorbox}\label{box:decorated exponents paper}

We will see in Section \ref{sctn:finite tree bounds dec} that, roughly speaking, for a typical vertex $v$ on the backbone of $\Tai$ and a typical vertex $u$ not on the backbone of $\Tai$, with corresponding graphs $G(v)$ and $G(u)$ respectively, there exist constants $c, C \in (0, \infty)$ such that for all large enough $x$ we have $cx^{-\sad} \leq \prb{\diam (G(v)) \geq x} \leq Cx^{-(\sad \wedge 1)}$ and $cx^{-\fv} \leq \prb{\V (G(u)) \geq x} \leq Cx^{-(\fv \wedge 1)}$. Similar results hold for the other exponents.

We define the decorated volume exponent as
\begin{equation}\label{eqn:volume exponent def}
\dERa = {\frac{\alpha(\sad \wedge 1)}{(\alpha - 1)(\fv \wedge 1)}}.
\end{equation}
We also let $\BT(\rER, r)$ denote a ball of radius $r$ around the root of $\TERa$ with respect to the metric $\dg$. The main volume growth theorem is as follows.

\begin{theorem}[Volume growth/fractal dimension]\label{thm:dec vol growth main}
Under Assumption \ref{assn:sec} and \eqref{eqn:offspring tails}, $d_f (\TERa) = \dERa$, $\bPb$-almost surely. Moreover, define the constant 
$$b_1 = \begin{cases} 0 &\text{ if } \sad, \fv \neq 1, \\
-\dERa &\text{ if } \sad=1, \fv \neq 1, \\
1 &\text{ if } \sad \neq 1, \fv = 1, \\
1-\dERa &\text{ if } \sad=\fv = 1.
\end{cases}$$
%$b_1 = -\dERa \mathbbm{1}\{\sad = 1\}$.
Then:
\begin{enumerate}[(i)]
\item There exist $c_1<\infty, a_1>0$ depending only on $d,\alpha,v$ such that for all $r, \lambda>1$,
$$\prb{\V(\BT(\rER, r)) \notin [r^{\dERa} (\log r)^{b_1}\lambda^{-1}, r^{\dERa}(\log r)^{b_1} \lambda]} \leq c_1 \lambda^{-a_1}.$$
%\item If $\sad = 1, \fv \neq 1$, then $\prb{\V(\BT(\rER, r)) \notin [r^{\dERa}(\log r)^{-\dERa}\lambda^{-1}, r^{\dERa}(\log r)^{-\dERa} \lambda]} \leq c_1 \lambda^{-a_1}$.
%\item If $\sad \neq 1, \fv = 1$, then $\prb{\V(\BT(\rER, r)) \notin [r^{\dERa}(\log r)\lambda^{-1}, r^{\dERa}(\log r) \lambda]} \leq c_1 \lambda^{-a_1}$.
%\item If $\sad = \fv = 1$, then $\prb{\V(\BT(\rER, r)) \notin [r^{\dERa}(\log r)^{-(\dERa - 1)}\lambda^{-1}, r^{\dERa}(\log r)^{-(\dERa - 1)} \lambda]} \leq c_1 \lambda^{-a_1}$.
\item There exists a finite, explicit constant $\beta_1 = \beta_1(d,\alpha,v)$ such that, $\bPb$-almost surely,
\begin{align*}
r^{\dERa} (\log r)^{-\beta_1} \leq \V(\BT(\rER, r)) \leq r^{\dERa} (\log r)^{\beta_1}
\end{align*}
for all sufficiently large $r$.
%Then for any $\epsilon > 0, r, \lambda > 1$, there exist constants $c, C \in (0, \infty)$ such that:
%\begin{align*}
%\pr{\V(\BT(\rER, r)) \geq r^{\dERa} \lambda} \leq 
%C\lambda^{{-\frac{(\fv \wedge 1) (\alpha - 1 -\epsilon)}{\alpha}}}.
%\end{align*}
\end{enumerate}
\end{theorem}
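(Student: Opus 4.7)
The plan is to use the spine decomposition of Kesten's tree $\Tai$ and reduce the ball-volume estimate to a sequence of three heavy-tailed partial-sum asymptotics: decorated spine distance, then discrete tree-size in the decorated ball, then decorated volume. Write $\Tai$ as its infinite spine $\rho = v_0, v_1, v_2, \ldots$ together with the critical $\xi$-Galton--Watson subtrees $(T_k^{(j)})_j$ hanging off each spine vertex $v_k$. Standard properties of Kesten's tree give that the spine degrees $\deg v_k$ are iid size-biased copies of $\xi$, with tail $\prb{\hat\xi \geq m} \asymp m^{-(\alpha - 1)}$, while the hanging subtrees are iid critical $\xi$-GW trees with total-progeny tail $\prb{|T| \geq n} \asymp n^{-(\alpha - 1)/\alpha}$. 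Note $\dg(\rER, \rho) = O(1)$, so the ball $\BT(\rER,r)$ and the analogous ball around $\rho$ coincide up to a bounded layer which does not affect asymptotics.

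The first step is to control the decorated spine distance $D_k := \dg(v_{k-1}, v_k)$. Conditioning on $\deg v_k$ and integrating Assumption \ref{assn:sec}(D) against the size-biased degree tail, the main contribution arises from $m \sim x^d$ and yields $\prb{D_k \geq x} \asymp x^{-\sad}$; the $\epsilon$ in the assumption provides enough moments to apply truncation-based deviation bounds to $L_K := \sum_{k \leq K} D_k$, showing $L_K \asymp K^{1/(\sad \wedge 1)}$ (with a $\log K$ correction at $\sad = 1$) with polynomial tails. Inverting, the number $K(r)$ of spine vertices within decorated distance $r$ of the root satisfies $K(r) \asymp r^{\sad \wedge 1}$ (respectively $r/\log r$ when $\sad = 1$), with polynomial deviation bounds in $\lambda$.

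Next, let $T_r \subset \Tai$ denote the set of vertices within decorated distance $r$ of $\rho$. Modulo hanging subtrees that straddle the boundary of the ball (handled via the finite-tree diameter bounds of Section \ref{sctn:finite tree bounds dec}), $|T_r|$ is essentially the sum over the first $K(r)$ spine vertices of the total progenies of their hanging subtrees; an $\alpha/(\alpha-1)$-stable summation, combined with the size-biased number of subtrees per spine vertex, then gives $|T_r| \asymp K(r)^{\alpha/(\alpha - 1)} \asymp r^{\alpha(\sad \wedge 1)/(\alpha - 1)}$. Finally, $\V(T_r) = \sum_{v \in T_r} V(G(v))$, where interior degrees are iid $\xi$, and combining Assumption \ref{assn:sec}(V) with the $\xi$-tail yields $\prb{V(G(v)) \geq x} \asymp x^{-\fv}$. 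A last stable/LLN sum over $|T_r|$ iid terms gives $\V(T_r) \asymp |T_r|^{1/(\fv \wedge 1)}$, with a log correction when $\fv = 1$. Chaining the three estimates produces $\V(\BT(\rER, r)) \asymp r^{\dERa}(\log r)^{b_1}$ with polynomial tails, proving (i). Part (ii) then follows by applying (i) at $r = 2^n$, summing the deviation probabilities via Borel--Cantelli, and interpolating to all $r$ using the monotonicity of $r \mapsto \V(\BT(\rER, r))$.

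The main obstacle is that $D_k$ and $V(G(v_k))$ are strongly correlated at a single spine vertex (both driven by $\deg v_k$), so the three sums above are not independent, and a naive chaining would yield only crude tail bounds. I would handle this by working conditionally on the spine-degree sequence $(\deg v_k)_{k \geq 0}$, exploiting the conditional independence of the inserted graphs and the hanging subtrees to execute the sums stage by stage, and integrating out at the end. A secondary subtlety is the boundary regimes $\sad = 1$ and $\fv = 1$, where stable-sum asymptotics degenerate into logarithmic scaling; these require replacing the stable limit with truncated second-moment arguments and carefully propagating log factors through the chaining so as to preserve the polynomial rate $\lambda^{-a_1}$, which is precisely what generates the case distinction defining $b_1$.
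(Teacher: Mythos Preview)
Your outline is essentially the paper's argument, repackaged. The paper also works along the spine: it fixes a cutoff index $n_{r,\epsilon,\lambda}\asymp r^{\sad\wedge 1}$ (your $K(r)$), controls the number $\Nfr=\sum_{k\le K(r)}(\deg v_k-1)$ of subtrees hanging off that stretch (Proposition~\ref{prop:spinal vol bound}), and then sums the \emph{decorated volumes} $\V(\TER_i)$ of those subtrees directly. The key input is Proposition~\ref{prop:GW vol tail}, which shows $\prb{\V(\TER)\ge x}\asymp x^{-(\fv\wedge 1)/\alpha}$ for an unconditioned decorated tree; your steps~2 and~3 (count $|T_r|$, then sum $V(G(v))$) are precisely what the proof of that proposition does internally at the level of a single subtree. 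You have merely interchanged the order of the two summations (over subtrees vs.\ over vertices within a subtree). The paper's ordering is cleaner because the subtrees $\TER_i$ are genuinely i.i.d., whereas your assertion that ``interior degrees are i.i.d.\ $\xi$'' given $|T_r|$ is not literally correct: conditioning on the total size constrains the degree sequence, and making your step~3 rigorous requires the Vervaat-transform/exchangeability argument (Lemma~\ref{lem:GW typical degree prob bound}) that the paper uses inside Proposition~\ref{prop:GW vol tail}. For the lower bound the paper does \emph{not} try to show that most subtrees sit inside the ball; it instead shows (Proposition~\ref{prop:vol LB prob UB 2}) that with high probability at least one of the $\Nf{r/2}$ subtrees simultaneously has $\V(\TER_i)\ge r^{\dERa}\lambda^{-1}$ and $\Heightdec(\TER_i)\le r/2$, invoking the finite-tree height bound of Proposition~\ref{prop:decorated height bound without log term prog}. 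This single-subtree argument is simpler than your proposed global control of the boundary, and it is the place where your sketch is thinnest.
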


These logarithmic fluctuations in part $(ii)$ are not optimal in many cases, and in particular can often be improved to log-logarithmic when inserting deterministic graphs (although not always, $\Tai$ itself being an example where the upper fluctuations are genuinely logarithmic). However, although we make some comments on how the arguments can potentially be fine-tuned at appropriate parts of the proof, our emphasis here is on determining the correct leading terms for the volume growth, rather than the optimal fluctuations.

\begin{figure}[h]
\begin{subfigure}{.5\textwidth}
\includegraphics[height=4.9cm]{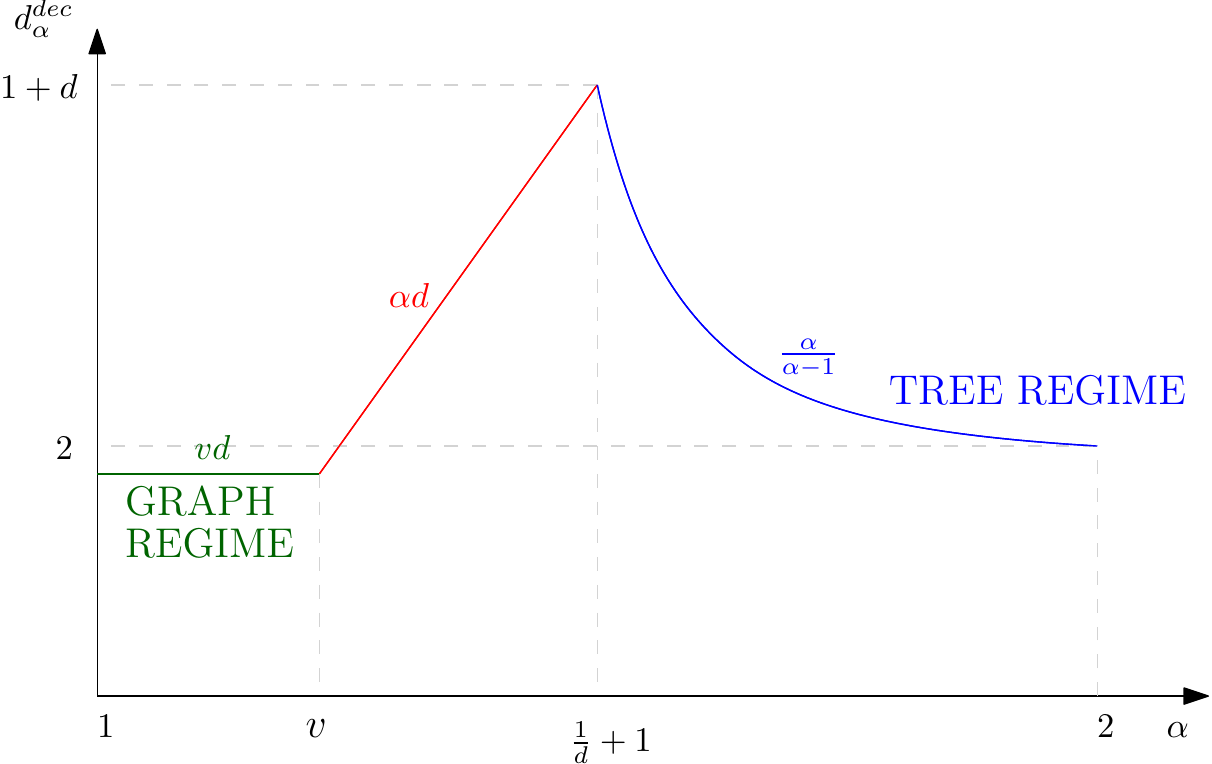}
\centering
\subcaption{If $\frac{1}{d}+1 \geq v$.}
\end{subfigure}
\begin{subfigure}{.5\textwidth}
\includegraphics[height=4.9cm]{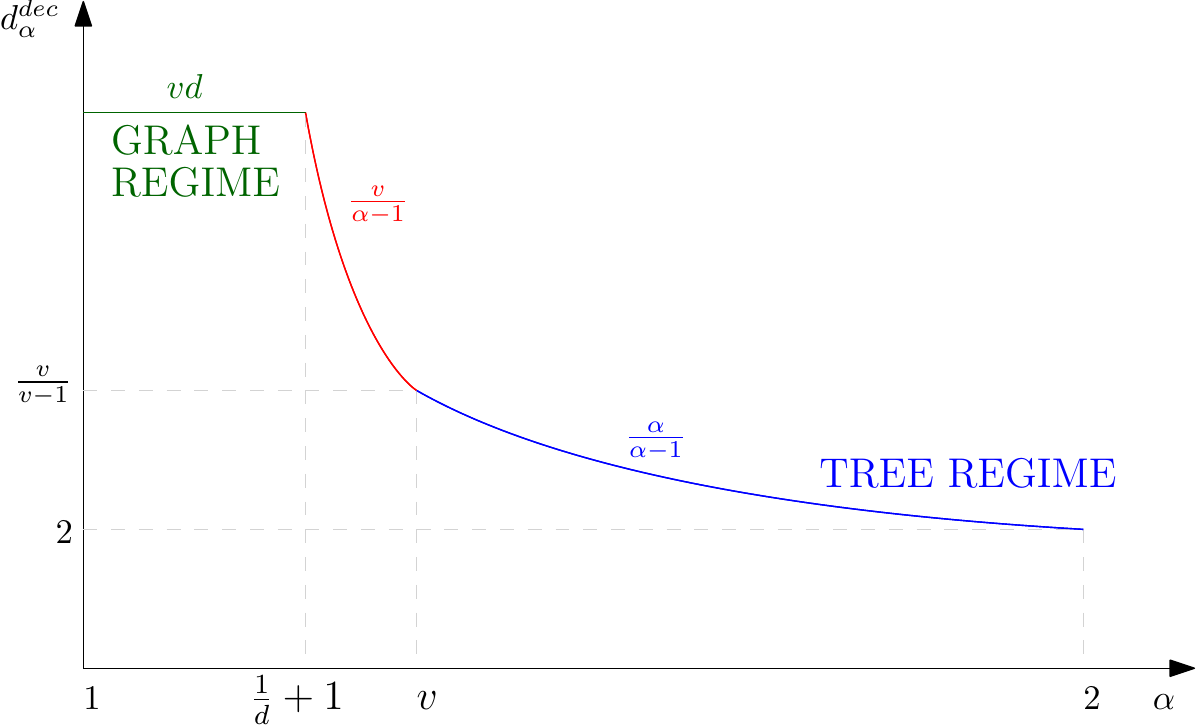}
\centering
\subcaption{If $\frac{1}{d}+1 \leq v$.}
\end{subfigure}
\caption{Different phases of the decorated volume exponent. Note that we do not necessarily see all three phases for one given model.}\label{fig:dec vol exp graphs}
\end{figure}

%\begin{theorem}\label{thm:decorated height tightness}
%Fix an aperiodic offspring distribution $\xi$ satisfying $\xi([k, \infty)) \sim ck^{-\alpha}$ as $k \rightarrow \infty$, and let $(T_n)_{n \geq 1}$ denote a sequence of Galton--Watson trees with offspring distribution $\xi$ conditioned to have $n$ vertices (this is well-defined for all sufficiently large $n$). Let $(\TER_n)_{n \geq 1}$ denote the corresponding decorated versions, each constructed from $T_n$ using a sequence $(G_m)_{m \geq 0}$ as described above. If, in addition, it holds that
%\begin{equation}\label{eqn:marzouk assn}
%\limsup_{n \to \infty} \left\{ \prb{\diam (G_n) \geq \lambda n^{\frac{1}{d}}} \vee \prb{\diamR (G_n) \geq \lambda n^{\frac{1}{R}}} \right\} = o(\lambda^{\frac{-\alpha}{\alpha - 1}})
%\end{equation}
%as $\lambda \to \infty$, then
%\[
%\lim_{\lambda \to \infty} \limsup_{n \to \infty} \prb{\diam (\TER_n) \notin \left[n^{\frac{(\alpha - 1)(\sad \wedge 1)}{\alpha - 1}} \lambda^{-1}, n^{\frac{(\alpha - 1)(\sad \wedge 1)}{\alpha - 1}} \lambda\right]} = 0.
%\]
%*Need to add log n term if $\sad = 1$.
%In particular, the rescaled sequence $\left(n^{\frac{-(\alpha - 1)(\sad \wedge 1)}{\alpha - 1}}\diam (\TER_n) \right)_{n \geq 1}$ is tight.
%\end{theorem}

The factor $\frac{\alpha}{\alpha - 1}$ appearing in \eqref{eqn:volume exponent def} is the fractal dimension of the underlying tree $\Tai$. The factors of $\sad \wedge 1$ and $\fv \wedge 1$ appearing in \eqref{eqn:volume exponent def} reflect the way that distances and volumes add up differently along branches and in subtrees in the decorated version $\TERa$.

We have graphed the volume exponent in Figure \ref{fig:dec vol exp graphs}, viewing $v$ and $d$ as fixed and varying $\alpha$. There are two cases for the graph, depending on which of $\sad$ and $\fv$ exceeds one ``first''. In both cases, there are up to three regimes. The case where both of these exponents exceed $1$ can be thought of as the ``tree regime'': in this case the relevant tails on the inserted graphs are not heavy enough to impact the volume exponent, so we see the same exponent appearing as for an undecorated tree. The case where both of the exponents are less than $1$ can be thought of as the ``graph regime'', and we lose the dependence on $\alpha$. This reflects the fact that as the offspring tails get heavier (i.e. as $\alpha \downarrow 1$), it is easier for a finite critical Galton--Watson tree to be large by having one vertex of macroscopic degree (cf \cite[Proposition 3.6]{RSLTCurKort}), so that we essentially just see one macroscopic copy of the inserted graph in a finite decorated tree. In the case of a decorated Kesten's tree, we essentially just see a one-dimensional sequence of graphs glued along the backbone of $\Tai$. As $\alpha \uparrow 2$, however, the vertex degrees become more balanced, and the contribution from any one single vertex is less significant, so we regain some tree structure and eventually recover it entirely once the distance and volumes across typical inserted graphs have finite expectation.

We can also fix $\alpha$ and consider different sequences of graphs with differing values of $v$ and $d$. As can be seen in \eqref{eqn:volume exponent def}, $\dERa$ increases linearly with $d$ (respectively $v$) up to the point at which the expected diameter (respectively volume) of a graph inserted at a typical vertex becomes finite, after which point we lose the dependence on $d$ (respectively $v$).

%Depending on which of $\sad$ and $\fv$ tip over $1$ first, we also see an intermediate regime where we ``see'' the effect of either volumes or distances in the inserted graphs, but not both.
%
We can use similar considerations to those discussed above to either establish the volume growth exponents with respect to the resistance metric, or otherwise add up resistance contributions along the backbone and along paths in subtrees to compare resistance to the graph distance. Again, there are different regimes depending on whether resistance across a typical backbone vertex has finite expectation or not: as a result, we will also see a factor of $\saR \wedge 1$ in the exponents below. We can then combine the resistance and volume estimates using results of \cite{KumMisumiHKStronglyRecurrent} to identify the random walk exponents.

Although the volume results hold quite generally, for the purpose of understanding a simple random walk on $\TERa$ we are specifically interested in the measure on $\TERa$ given by $V (x) = \deg x$ for all $x \in \TERa$. (In the construction outlined above, this is obtained by superposing the measures given by $V_n (x) = \deg x$ for $x \in G_n$).

We also set 
\begin{align}\label{eqn:spec dis exponent def}
\begin{split}
\dspec &= \frac{2\alpha (\saR \wedge 1)}{\alpha (\saR \wedge 1) + (\alpha - 1)(\fv \wedge 1)}, \\
\ddis &= \frac{(\saR \wedge 1)(\alpha -1)(\fv \wedge 1)}{(\alpha -1)(\fv \wedge 1)(\sad \wedge 1) + \alpha(\saR \wedge 1)(\sad \wedge 1)}.
\end{split}
\end{align}

The exponents may look complicated, but similarly to $\dERa$ they can be obtained from the corresponding exponents for $\Tai$ by adding extra terms which reflect how distance, volume and resistance add up along branches of $\TERa$, and in smaller subtrees. We will see below that they respectively give the quenched spectral dimension of $\TERa$, and the quenched displacement exponent. On $\Tai$, these quantities are respectively $\frac{2\alpha}{2\alpha - 1}$ and $\frac{\alpha - 1}{2\alpha - 1}$.

In what follows, we let $(X_n)_{n \geq 0}$ denote a simple random walk on $\TERa$ started at $\rER$, $\tau_r$ its exit time from a ball of radius $r$ with respect to the decorated metric $\dg$, and $p_n(x,y) = \frac{\prstart{X_n=y}{x}{}}{\deg y}$ its transition density. Our first result shows that $(X_n)_{n \geq 0}$ is $\bPb$-almost surely recurrent; as for discrete critical Galton--Watson trees, this essentially follows from the fact that the underlying tree $\Tai$ has a unique path to infinity.

\begin{theorem}\label{thm:recurrence}
$\bPb$-almost surely under Assumption \ref{assn:sec} and \eqref{eqn:offspring tails}, $(X_n)_{n \geq 0}$ is recurrent.
\end{theorem}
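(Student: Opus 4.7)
The walk is recurrent if and only if $\Reff^{\TERa}(\rER, \infty) = \infty$, so I aim to establish this. The idea is to transfer the one-dimensional structure of Kesten's tree (unique infinite spine with finite subtrees hanging off) to $\TERa$ via the shared boundary vertices along the spine.

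Let $\rho = v_0, v_1, v_2, \ldots$ denote the spine of $\Tai$ and, for each $i \geq 0$, let $b_i$ be the unique vertex shared between $G(v_i)$ and $G(v_{i+1})$ (the boundary vertex identified with the spine edge $(v_i, v_{i+1})$). Then $\{b_i\}_{i \geq 0}$ is a sequence of cut vertices in $\TERa$ separating $\rER$ from infinity, and by the series law for effective resistance across cut points,
\begin{equation*}
\Reff^{\TERa}(\rER, \infty) = \Reff^{B_0}(\rER, b_0) + \sum_{i \geq 1} r_i, \qquad r_i := \Reff^{G(v_i)}(b_{i-1}, b_i),
\end{equation*}
where $B_0$ is the block containing $\rER$. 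Restricting to $G(v_i)$ rather than the full block $B_i$ (which also contains the attached subtrees) is valid because these off-spine subtrees are dead ends that carry no current between $b_{i-1}$ and $b_i$. Each $r_i$ is a positive real number: the subtrees hanging off the spine are (sub)critical Galton--Watson trees and hence $\bPb$-a.s. finite, and by Assumption \ref{assn:sec}(V) every inserted graph $G_n$ is $\bPb$-a.s. finite. It therefore suffices to show $\sum_{i \geq 1} r_i = \infty$ $\bPb$-a.s.

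To this end I would apply the conditional Borel--Cantelli lemma to the events $A_i = \{r_i \geq 1\}$. Conditionally on the spine degrees $(D_i)_i$ and the attachment combinatorics of Kesten's tree, the graphs $G(v_i)$ are independent copies of $G_{D_i}$, so the $A_i$ are conditionally independent. Using that the spine child at $v_i$ is chosen uniformly among the offspring (which makes $(b_{i-1}, b_i)$ a uniform pair of distinct boundary vertices of $G_{D_i}$), Assumption \ref{assn:sec}(R) yields
\begin{equation*}
\bPb(A_i \mid D_i) \geq \bPb\bigl(R^U_{D_i} \geq D_i^{1/R} \,\big|\, D_i\bigr) \geq p > 0
\end{equation*}
uniformly in $i$ (using $D_i^{1/R} \geq 1$). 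Hence the conditional probabilities sum to infinity almost surely, and conditional Borel--Cantelli gives $A_i$ i.o.\ a.s., so $\sum_i r_i = \infty$ as required.

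The main obstacle is the boundary-identification step: ensuring that the planar identification of boundary vertices of $G_{D_i}$ with the edges at $v_i$ in $\Tai$ is symmetric enough that $(b_{i-1}, b_i)$ really has the law of a uniform random pair. If this is delicate under strict planarity constraints, one can sidestep Assumption \ref{assn:sec}(R) by using the Nash--Williams bound $r_i \geq 1/\deg_{G(v_i)}(b_{i-1})$ combined with the fact that the spine of a critical Galton--Watson tree contains infinitely many vertices of any prescribed small degree $d_0$ with $\xi(d_0) > 0$, for which $G_{d_0}$ has boundary degree bounded by $|V(G_{d_0})|$, a.s.\ finite; another Borel--Cantelli then again forces $\sum r_i = \infty$.
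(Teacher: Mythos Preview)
Your argument is correct and is genuinely different from the paper's route. The paper does not give a direct proof of recurrence; instead it obtains recurrence as a byproduct of the Kumagai--Misumi machinery in Section~\ref{sctn:RW exponents}: once the volume and resistance estimates (Propositions~\ref{prop:vol UB prob UB}, \ref{prop:vol LB prob UB 2}, \ref{prop:res to boundary general}) verify \eqref{eqn:prob in Jlambda}, \cite[Theorem~1.5]{KumMisumiHKStronglyRecurrent} yields polynomial decay of $p_{2n}(\rER,\rER)$ with exponent $\ka<1$, and recurrence follows. Your approach is far more elementary: it uses only the one-ended structure of Kesten's tree, the series law at the cut vertices $b_i$, and the single lower-bound clause of Assumption~\ref{assn:sec}(R). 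This is exactly the intuition flagged in the introduction (``this essentially follows from the fact that the underlying tree $\Tai$ has a unique path to infinity''), but the paper never makes it into a standalone proof. What the paper's route buys is that recurrence comes for free once the heavier estimates are in place; what your route buys is a self-contained argument that does not rely on any of Sections~\ref{sctn:finite tree bounds dec}--\ref{sctn:dec res bounds}.

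Your worry about the uniform-pair law is unfounded under the paper's construction. By step~2 of Section~\ref{sctn:dec tree def}, the parent-edge boundary vertex of $G(v_i)$ is $\ell^{-1}(-U_{v_i}\bmod D_i)$ with $U_{v_i}$ uniform on $\{0,\dots,D_i-1\}$, and by Definition~\ref{def:Kesten's tree} the spine child is $v_ij$ with $j$ uniform on $\{1,\dots,D_i-1\}$ independently; thus $(b_{i-1},b_i)$ corresponds to $\ell$-labels $(a,a+j\bmod D_i)$, which ranges uniformly over ordered pairs of distinct elements. So $r_i\overset{d}{=}R^U_{D_i}$ given $D_i$, and your conditional Borel--Cantelli step goes through cleanly. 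The Nash--Williams backup is unnecessary (and in fact shakier: boundedness of $\deg_{G_{d_0}}(b_{i-1})$ is random across $i$, so the sum $\sum 1/\deg$ need not diverge without a further tail assumption).
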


We establish the following quenched results on the random walk exponents using \cite[Proposition 1.3]{KumMisumiHKStronglyRecurrent}.

\begin{theorem}[Quenched random walk exponents]\label{thm:dec main quenched RW}
Under Assumption \ref{assn:sec} and \eqref{eqn:offspring tails}, $\bPb$-almost surely,
\begin{align*}
d_S (\TERa) &= \dspec, \hspace{2cm}
d_{\text{dis}} (\TERa) = \ddis.
\end{align*}

Moreover, there exist finite constants $\beta_2, \ldots, \beta_5$, depending only on $\alpha, d, R$ and $v$, such that, $\bPb$-almost surely, the following statements hold.
\begin{enumerate}[a)]
\item There exists $N< \infty$ such that $$n^{\frac{-\dspec}{2}} (\log n)^{-\beta_2} \leq p_{2n}(\rhoERA, \rhoERA) \leq n^{\frac{-\dspec}{2}} (\log n)^{\beta_2}$$ for all $n \geq N$.
\item Let $\tau_r$ denote the exit time of a ball of radius $r$ defined with respect to the metric $\dg$. Then there exists $R_1 < \infty$ such that for all $r \geq R_1$, $$r^{\frac{1}{\ddis}} (\log r)^{-\beta_3} \leq \E{\tau_r} \leq r^{\frac{1}{\ddis}} (\log r)^{\beta_3}.$$ Moreover, there $\bP$-almost surely exists $R_2< \infty$ such that, for all $r \geq R_2$, $$r^{\frac{1}{\ddis}} (\log r)^{-\beta_4} \leq \tau_r \leq r^{\frac{1}{\ddis}} (\log r)^{\beta_4}.$$
\item $\bP$-almost surely, there exists $N< \infty$ such that $$n^{\ddis} (\log n)^{-\beta_5} \leq \sup_{k \leq n} \dER (\rhoERA, X_k) \leq n^{\ddis} (\log n)^{\beta_5}$$ for all $n \geq N$.
\item Let $W_n = \{X_0, X_1, \ldots, X_n\}$ and $S_n = \V (W_n)$. Then, $\bP$-almost surely, $$\lim_{n \to \infty} \frac{\log S_n}{\log n} = \frac{\dspec}{2}.$$
%
%-------------------
%
%If $d(\alpha - 1), R(\alpha - 1), \frac{v}{\alpha} \neq 1$, then $$\lim_{\lambda \to \infty} \limsup_{n \to \infty} \pr{ \dER (\rhoERA, X_n) \notin [\lambda^{-1} n^{d_{\text{dis}}}, \lambda n^{d_{\text{dis}}}]} = 0.$$
%**Need to add $\log n$ term if exponents equal to $1$.
%\item If, in addition, there exists $\kappa > 0$ such that
%\begin{equation}\label{eqn:exp assn}
%\limsup_{n \to \infty} \left\{ \prb{\diam (G_n) \geq \lambda n^{\frac{1}{d}}} \vee \prb{\diamR (G_n) \geq \lambda n^{\frac{1}{R}}} \right\} = O(e^{-c\lambda^{\kappa}})
%\end{equation}
%as $\lambda \to \infty$, then there exists $N< \infty$ such that $$n^{\ddis} (\log n)^{-\beta_4} \leq \sup_{k \leq n} \dER (\rhoERA, X_k) \leq n^{\ddis} (\log n)^{\beta_4}$$ for all $n \geq N$.
%%, and $d_s(\TERa) := -2 \lim_{n \rightarrow \infty} \frac{\log p_{2n}(\rhoERA, \rhoERA)}{\log n} = 2\ka$.
%%\item Let $W_n = \{X_0, X_1, \ldots, \X_n\}$, and let $S_n = \sum_{x \in W_n} \deg x$. Then $\bP$-almost surely, $\lim_{n \rightarrow \infty} \frac{\log S_n}{\log n} = \ka$.
\end{enumerate}
\end{theorem}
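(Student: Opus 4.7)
The plan is to invoke the heat kernel framework of \cite{KumMisumiHKStronglyRecurrent}, whose input consists of two-sided polynomial estimates on both the volume of balls and the effective resistance from the root to their boundaries. Theorem \ref{thm:dec vol growth main} already supplies the volume input, with exponent $\dERa$ and logarithmic fluctuations, and Theorem \ref{thm:recurrence} supplies the recurrence hypothesis. What remains is to prove an analogous estimate for the resistance $\Reff(\rER, \partial \BT(\rER, r))$: there should exist $\zeta_* \in (0, \infty)$ and a fluctuation exponent $\beta$ so that, $\bPb$-almost surely,
\[
r^{\zeta_*}(\log r)^{-\beta} \leq \Reff(\rER, \partial \BT(\rER, r)) \leq r^{\zeta_*}(\log r)^{\beta}
\]
for all sufficiently large $r$, together with matching polynomial tail estimates on the bad event for each fixed $r$. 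Once this is in hand, the exponents are forced by the general identities $d_S = 2 \dERa /(\dERa+\zeta_*)$ and $d_{\mathrm{dis}} = 1/(\dERa+\zeta_*)$; a direct algebraic check shows that the values $\dspec$ and $\ddis$ in \eqref{eqn:spec dis exponent def} correspond precisely to $\zeta_* = (\sad \wedge 1)/(\saR \wedge 1)$.

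To prove the resistance estimate I would follow the same route that underlies Theorem \ref{thm:dec vol growth main}. Recall that $\Tai$ has a single infinite spine with finite subtrees hanging off it. By series--parallel reduction, the resistance from $\rER$ to $\partial \BT(\rER, r)$ is bounded above by the sum of the resistances across inserted graphs at spine vertices reached within distance $r$, and bounded below by shorting every non-spine subtree and applying Nash--Williams to the same sum. Assumption \ref{assn:sec}(R), combined with the size-biased offspring distribution governing the spine degrees, then yields tail estimates on individual spine-resistance contributions that mirror exactly the distance estimates already used in the proof of Theorem \ref{thm:dec vol growth main}. Summing these in series, in the same manner that backbone-edge lengths were summed under the metric $\dg$, produces the required $r^{(\sad \wedge 1)/(\saR \wedge 1)}$ scaling, and the matching logarithmic fluctuations follow from Borel--Cantelli arguments identical to those used for the volume.

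With both the volume and resistance estimates in place, parts (a)--(c) follow by direct application of \cite[Proposition 1.3]{KumMisumiHKStronglyRecurrent}, which is a quenched polylog version of the Barlow--Kumagai framework for strongly recurrent graphs. For part (d), I would combine the inclusion $W_n \subseteq \BT(\rER, \sup_{k \leq n} \dg(\rER, X_k))$, which yields the upper bound $\log S_n/\log n \leq \ddis \cdot \dERa = \dspec/2$ via part (c) and Theorem \ref{thm:dec vol growth main}(ii), with the general strongly-recurrent lower bound $\E{S_n} \asymp n^{d_S/2}$ that is again part of the framework of \cite{KumMisumiHKStronglyRecurrent}. The main obstacle will be the sharpness of the polylog corrections in the resistance estimate: a crude $O(r^{\zeta_*})$ bound is relatively easy to extract from series--parallel laws, but matching the precise logarithmic fluctuations of the volume estimate, so that they feed cleanly into the Kumagai--Misumi machinery and yield constants $\beta_2, \ldots, \beta_5$ depending only on $\alpha, d, R, v$, requires a careful, simultaneous treatment of both volume and resistance along the spine and within each excursion subtree.
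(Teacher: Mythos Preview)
Your overall plan—feed volume and resistance estimates into \cite{KumMisumiHKStronglyRecurrent} and read off the exponents—is the right one, and matches the paper's high-level strategy. However, there is a genuine obstruction in the way you propose to implement it, which the paper explicitly discusses and works around.

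The set $J(\lambda)$ in \cite[Definition 1.1]{KumMisumiHKStronglyRecurrent}, when applied to a general metric $d$, requires not just $\Reff(\rER, \BT(\rER,r)^c) \geq \lambda^{-1} r(r)$ but also the \emph{uniform} upper bound $\Reff(\rER,y) \leq \lambda\, r(d(\rER,y))$ for \emph{every} $y \in \BT(\rER,r)$. Your proposal addresses only the first of these. When the inserted graphs are random, the paper argues that this pointwise comparison between $\Reff$ and $\dg$ cannot be controlled uniformly in $r$ (there may be genuine logarithmic-order multiplicative fluctuations between the two metrics at different points of the ball), so one cannot verify $J(\lambda)$ directly for $\dg$. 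The paper's route instead is to work in the resistance metric, where the uniform condition is trivial; verify the volume and resistance-to-boundary estimates for resistance balls (this is Propositions \ref{prop:vol UB prob UB}, \ref{prop:vol LB prob UB 2}, \ref{prop:res to boundary general}); apply \cite[Proposition 1.3]{KumMisumiHKStronglyRecurrent} to obtain all exponents with respect to the resistance metric; and only then transfer to $\dg$ via a global comparison of balls (Proposition \ref{prop:metric balls compare}). The transition density and range statements do not depend on the ambient metric, so they transfer for free; the exit-time and displacement statements pick up extra log corrections from the ball comparison.

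There is also a gap in your Nash--Williams argument. Shorting every non-spine subtree does \emph{not} give a lower bound on $\Reff(\rER, \BT(\rER,r)^c)$ of the desired order: a subtree attached at a spine vertex $s_i$ with $i$ small may itself have $\dg$-height exceeding $r$, in which case the boundary of $\BT(\rER,r)$ passes through that subtree and, after shorting, collapses to $s_i$. The resistance from the root to $s_i$ can then be much smaller than $r^{\zeta_*}$. To make this work one must first show that, with high probability, no subtree attached to the first $\asymp r^{\saR \wedge 1}$ spine vertices has decorated (resistance-)height exceeding $r/2$; this is precisely the content of Lemma \ref{lem:height bound GW union} and the cutvertex argument of Proposition \ref{prop:res to boundary general}, and it requires the decorated-height tail bounds of Proposition \ref{prop:decorated height bound without log term prog} rather than just series--parallel laws.
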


The annealed results follow similarly from \cite[Proposition 1.4]{KumMisumiHKStronglyRecurrent}.

\begin{theorem}[Annealed random walk results]\label{thm:dec main ann RW}
There exist $\ba, \bat, \batt < \infty$, deterministic functions of $\alpha, d, R, v$, such that under Assumption \ref{assn:sec} and \eqref{eqn:offspring tails}, we have that:
\begin{enumerate}[a)]
\item There exist constants $c_1 >0, c_2 < \infty, \gamma_1 > 0$ such that for all $r \geq 1$, $c_1 r^{\frac{1}{\ddis}}(\log r)^{\ba} \leq \Eb{\estart{\tau_r}{\rhoERA}{}}$, and $\Eb{\left(\estart{\tau_r}{\rhoERA}{}\right)^{\gamma_1}} \leq c_2 r^{\frac{\gamma_1}{\ddis}} (\log r)^{\gamma_1 \ba}$.
\item There exist constants $c_3 > 0, c_4 < \infty, \gamma_2 > 0$ such that $c_3 n^{\frac{-\dspec}{2}} (\log n)^{\bat} \leq \Eb{p_{2n}(\rhoERA, \rhoERA)}$ for all $n \geq 1$, and $\Eb{\left(p_{2n}(\rhoERA, \rhoERA)\right)^{\gamma_2}} \leq c_3 n^{\frac{-\gamma_2 \dspec}{2}} (\log n)^{\gamma_2\bat}$.
\item There exist constants $c_5 > 0, c_6 < \infty, \gamma_3 > 0$ such that $c_5n^{\ddis} (\log n)^{\batt} \leq \Eb{ \estart{\dER (\rhoERA, X_n)}{\rhoERA}{}}$ for all $n \geq 1$, and $\Eb{ \left(\estart{\dER (\rhoERA, X_n)}{\rhoERA}{}\right)^{\gamma_3}} \leq c_6n^{\gamma_3 \ddis} (\log r)^{\gamma_3 \batt}$.
%\item If the tail decay of Proposition \ref{prop:vol LB prob UB 2} is $O(\lambda^{-(1+\epsilon)})$, for some $\epsilon > 0$, then $ \Eb{p_{2n}(\rhoERA, \rhoERA)} \leq c_3 n^{-\ka} (\log n)^{-\beta_1}$ (see \cite[Remark 1.6(1)]{KumMisumiHKStronglyRecurrent}). 
\end{enumerate}
If $\saR, \sad, \fv \neq 1$, then $\ba = \bat = \batt = 0$. Otherwise, we will give the precise values in Section \ref{sctn:RW exponents}.
\end{theorem}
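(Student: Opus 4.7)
The plan is to apply \cite[Proposition 1.4]{KumMisumiHKStronglyRecurrent}, which in the strongly recurrent setting converts annealed tail bounds on ball volumes and effective resistances into precisely the annealed statements of parts $a), b), c)$. The quenched framework needed to invoke this proposition has already been set up in the proof of Theorem \ref{thm:dec main quenched RW}, and recurrence is established in Theorem \ref{thm:recurrence}, so the principal task is to collect the correct annealed moment estimates on $\V(\BT(\rER,r))$ and on the effective resistance $\Ref(\rER, \partial \BT(\rER,r))$ with the appropriate polynomial tails and log-correction exponents.

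The volume input is exactly Theorem \ref{thm:dec vol growth main}(i), which gives for all $r,\lambda \geq 1$
$$\bPb\bigl(\V(\BT(\rER,r)) \notin \bigl[\lambda^{-1} r^{\dERa}(\log r)^{b_1},\, \lambda r^{\dERa}(\log r)^{b_1}\bigr]\bigr) \leq c_1 \lambda^{-a_1}.$$
The analogous resistance estimate is established alongside it (producing a decorated resistance exponent $\sER$ and a log exponent $b_2$ that vanishes unless $\saR=1$), yielding a tail bound of the same shape. Via the standard commute-time / resistance identity $\estart{\tau_r}{\rhoERA}{} \asymp \V(\BT(\rER,r))\cdot \Ref(\rER, \partial \BT(\rER,r))$, integrating the two tail estimates against $\gamma \lambda^{\gamma-1}\,d\lambda$ yields the small-$\gamma$ upper moment bound of part $a)$, with log exponent $\gamma_1 \ba = \gamma_1(b_1+b_2)$ and with the threshold $\gamma_1$ determined by $\gamma_1 < a_1 \wedge a_2$. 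The matching lower bound on the annealed mean follows from the observation that the event $\{\V(\BT(\rER,r))\,\Ref(\rER,\partial \BT(\rER,r)) \geq c\, r^{1/\ddis}(\log r)^{\ba}\}$ has $\bPb$-probability bounded uniformly below in $r$, which is immediate from the lower bounds contained in the same annealed estimates.

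Parts $b)$ and $c)$ proceed along identical lines. The Kumagai--Misumi framework relates, at the scale $r_n$ determined by $r_n^{1/\ddis}(\log r_n)^{\ba} \asymp n$, the annealed on-diagonal heat kernel to $1/\V(\BT(\rER,r_n))$ and the annealed displacement to $r_n$ itself. Inverting this scaling relation at the relevant scale identifies the log exponents $\bat$ and $\batt$ as explicit linear combinations of $b_1$ and $b_2$; in particular, when $\saR,\sad,\fv \neq 1$ one has $b_1 = b_2 = 0$ and all three log exponents vanish, recovering the ``clean'' case stated at the end of the theorem.

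The main obstacle is not any single technical step but the careful bookkeeping of the polylogarithmic corrections through (i) the tail integration producing the moment bounds, (ii) the inversion of $r \mapsto r^{1/\ddis}(\log r)^{\ba}$ when passing from space-scale to time-scale, and (iii) the substitution into the formulas of \cite[Proposition 1.4]{KumMisumiHKStronglyRecurrent}. One also has to verify that the thresholds $\gamma_1, \gamma_2, \gamma_3$ produced by the framework are strictly positive, which reduces to checking that the tail exponents $a_i$ in the annealed volume and resistance bounds are strictly positive --- this is precisely what the ``$\epsilon > 0$'' in Assumption \ref{assn:sec} supplies, and is the reason these $\epsilon$'s were introduced there in the first place.
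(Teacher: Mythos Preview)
Your high-level plan to ``just apply \cite[Proposition 1.4]{KumMisumiHKStronglyRecurrent}'' does not go through as stated, and the paper explicitly flags this. That proposition rests on \cite[Assumption 1.2(2)]{KumMisumiHKStronglyRecurrent}, which requires annealed moment control of the form $\Eb{\mu(B(r))} \asymp v(r)$. In the present setting this can fail: already on the undecorated tree $\Tai$ the expected volume of a unit ball is infinite (see the remark after the theorem statement and \cite{CroyKumRWGWTreeInfiniteVar}). This is precisely why the upper bounds are only claimed for small powers $\gamma_i$, and why the paper says the upper bounds ``are not immediate from \cite[Proposition 1.4]{KumMisumiHKStronglyRecurrent}.'' So the black-box invocation in your opening paragraph is a genuine gap.

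Your treatment of part a) via the commute-time bound $\estart{\tau_r}{\rER}{} \leq \Ref(\rER, B^d(r)^c)\,\mu(B^d(r))$ and tail integration is essentially what the paper does. However, you need an \emph{upper} tail for $\Ref(\rER, B^d(r)^c)$, not just the lower tail from Proposition \ref{prop:res to boundary general}; the paper gets this via the metric comparison Proposition \ref{prop:metric balls compare}, embedding $B^d(r)$ in a resistance ball of controlled radius. Your proposal does not mention this step.

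The assertion that parts b) and c) ``proceed along identical lines'' is where the proposal is weakest. They do not. For b) the paper uses the pointwise inequality from \cite[Proposition 3.1(a)]{KumMisumiHKStronglyRecurrent}: on the event $\{\mu(B^R(r)) \geq \theta^{-1} v^R(r)\}$ one has $p_{rv^R(r)}(0,0) \leq c\theta / v^R(I^R(rv^R(r)))$, and then the polynomial lower-tail on volume (Proposition \ref{prop:vol LB prob UB 2}) transfers to polynomial upper-tail on the heat kernel. For c) the paper extracts a polynomial tail on $\tau_r^R$ from \cite[Proposition 3.5(a)]{KumMisumiHKStronglyRecurrent} (the display \eqref{eqn:poly tail exit time LB}), by playing the parameters $\theta$ and $\lambda$ against each other. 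Neither of these is the same mechanism as in a), and your heuristic ``annealed heat kernel $\sim 1/\V(B(r_n))$'' is not an identity one can simply integrate.
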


%\begin{theorem}[Annealed random walk results]\label{thm:dec main ann RW}
%Under Assumption \ref{assn:sec}, there exist constants $c>0, C< \infty$ such that, for all $n \geq 1$,
%\begin{enumerate}[a)]
%\item $$\Eb{p_{2n}(\rhoERA, \rhoERA)}\geq c n^{-2\dspec}.$$
%\item If $d(\alpha - 1), R(\alpha - 1), \frac{v}{\alpha} \neq 1$, then $$cn^{\ddis} \leq \Eb{ \E{\dER (\rhoERA, X_n)}} \leq Cn^{\ddis} (\log n)^{2 + \frac{1}{\kappa}}$$ (**add $\log n$ term if exponent equal to one).
%\item If $d(\alpha - 1), R(\alpha - 1), \frac{v}{\alpha} \neq 1$, and additionally \eqref{eqn:exp assn} holds, then $$cn^{\ddis} \leq \Eb{ \E{\sup_{k \leq n} \dER (\rhoERA, X_k)}} \leq Cn^{\ddis}$$ (**add $\log n$ term if exponent equal to one).
%\item If, in addition \eqref{eqn:marzouk assn} holds, then
%$$ \lim_{\lambda \to \infty} \limsup_{n \to \infty} \bPb \times \pr{\sup_{k \leq n} \dER (\rhoERA, X_k) \notin [n^{\ddis} \lambda^{-1}, n^{\ddis} \lambda]} = 0.$$
%\end{enumerate}
%\end{theorem}

\begin{rmk}%\begin{enumerate}
%\item In particular, under the conditions of ... we establish that
%\begin{align*}
%d^{E}_{\text{dis}} (\TERa) :&= \lim_{n \rightarrow \infty} \frac{\log \Eb{ \E{\sup_{k \leq n} \dER (\rhoERA, X_k)}}}{\log n} = \ddis, \\
%d^E_s(\TERa) :&= -2 \lim_{n \rightarrow \infty} \frac{\log \Eb{p_{2n}(\rhoERA, \rhoERA)}}{\log n} \geq \dspec.
%\end{align*}
In general it is not possible to get annealed upper bounds without taking lower powers, since these quantities are infinite on the underlying tree $\Tai$. This is because the expected volume of a unit ball is infinite in this case, as established by Croydon and Kumagai \cite{CroyKumRWGWTreeInfiniteVar}. This does not mean that it is always infinite in the decorated case (e.g. see \cite[Theorem 1.2]{BjornStef} for the corresponding result for discrete looptrees), but one must insert graphs that sufficiently ``spread out'' different branches of the tree. 
\end{rmk}

As mentioned earlier, the decorated tree structure considered in this paper arises naturally in the context of critical statistical physics models on random planar maps, such as a critical percolation cluster on the UIPT \cite{CurKortUIPTPerc}. Such a critical percolation cluster is believed to rescale to the $\frac{7}{6}$-stable map \cite[Section 5.4]{BerCurMierPerconTriang}, which is also believed to have a topological structure somewhat resembling that of a Sierpinski gasket. In fact these connections hold more generally for stable maps in the dense phase, which are believed to describe a range of statistical physics models on random planar maps, have gasket-type structures, and were shown to have a decorated tree structure by Richier \cite[Theorem 1.2]{RichierMapBoundaryLimit}. In Section \ref{sctn:examples} we verify that the Sierpinski triangle does indeed satisfy Assumption \ref{assn:sec}, indicating that this viewpoint is relevant for understanding a random walk on the incipient infinite cluster. We also consider some other examples of graphs to insert, such as critical \ER graphs and dissections of polygons, and in each case verify Assumption \ref{assn:sec} and compute the exponents considered above.

For example, for the Sierpinski triangle it turns out that we can take $d = 1$, $R = \frac{\log 2}{\log 5 - \log 3}, v=\frac{\log 3}{\log 2}$ and plug into the theorems above.

Since scaling limits of critical trees and looptrees are now well-understood \cite{AldousCRTIII, LeG2005randomtreesandapplications, DuqContourLimit, RSLTCurKort}, it is natural to wonder if one can construct general scaling limits for decorated Galton--Watson trees. In the regime where $d(\alpha - 1) < 1$, it will be clear from Proposition \ref{prop:decorated height bound without log term prog} that large graphs persist in the scaling limit and in this regime it has been shown in a recent work of S\'enizergues, Stef\`ansson and Stufler \cite{SenStefStufDecStableTrees} that the Gromov--Hausdorff--Prokhorov scaling limit can be constructed by gluing scaling limits of the inserted graphs along an appropriate tree structure. In the case $d(\alpha - 1) > 1$, Proposition \ref{prop:decorated height bound without log term} indicates that the Gromov--Hausdorff--Prokhorov scaling limit is plausibly a stable tree, endowed with a measure which may be supported on either the hubs or the leaves of the tree (or both), depending on the value of the parameter $v$. Moreover, we anticipate that in this regime, the techniques of \cite{DavidResForms} will apply to show that the simple random walk on $\TERa$ rescales to a limiting diffusion on the stable tree with this speed measure. Interestingly, if the hubs play a role the limiting measure will be singular with respect to the canonical mass measure on the stable tree, which is supported on the leaves, and the limiting diffusion will experience trapping at hubs, which contrasts with Brownian motion and classically studied FIN diffusions on stable trees, which spend almost all their time in the leaves.

In the regime $\sad < 1$, instead of decorating trees, we also expect that one would obtain the same results on decorating a Boltzmann dissection of the polygon $P_n$ with boundary length $n$, in the case where the face weights of the dissection enjoy the same asymptotics as the offspring distribution $\xi$ considered in this paper. Just as we can imagine constructing $\TERa$ by filling in the loops of looptrees, we can imagine decorating a dissected polygon by inserting into each face $f$ a graph with boundary length $\deg f$. The metric space structure of such a dissection is very close to that of a looptree (see Figure \ref{fig:BoltzmannDiss} and \cite[Proposition 4.5]{RSLTCurKort}) and it was shown in \cite[Corollary 1.3]{RSLTCurKort} that these looptrees and dissections have the same scaling limit. In the regime $\sad <1$, when the metric space scaling factors are sufficiently large, we anticipate that we should recover the same exponents for this model of decorated dissected polygons. However, we have not proved this in this article.

We conclude the introduction by commenting briefly on the case where $\xi([k, \infty)) = O(k^{-2})$ as $k \rightarrow \infty$ (note that this may be finite or infinite variance). It was shown in \cite[Theorem 1.2]{KortRichBoundaryRPMLooptrees} and \cite[Theorem 13]{CurHaasKortCRTDiss} that the metric space scaling limit of any discrete looptree with offspring distribution in the domain of attraction of a Gaussian law is the Brownian CRT, meaning that the loops do not persist in the scaling limit. Assuming that $R,d \geq1$ this would therefore also be the case for our decorated tree model (since then distances are stochastically no bigger than those in looptrees). However, if there is polynomial tail decay in the offspring distribution it is always possible to choose the volume exponent $v$ large enough that larger volumes persist in the scaling limit; in the same spirit, if $\alpha = 2$ and we were to repeat the arguments of this paper we expect that we would obtain a volume growth exponent
\begin{equation*}
d_2^{\text{dec}} = {\frac{2}{f_2^v \wedge 1}},
\end{equation*}
with $\fv$ defined as above. We have not pursued this line of enquiry in this paper. In the infinite variance case when the offspring tails decay exactly like $cx^{-2}$, the arguments of this paper apply almost directly (one simple tweak is required) and we can simply take $\alpha = 2$ in \eqref{eqn:volume exponent def} and \eqref{eqn:spec dis exponent def} to give the correct exponents, but there are further subcases to consider for the logarithmic corrections for the precise gauge functions.

The paper is organised as follows. In Section \ref{sctn:model def} we give some technical background, and formally define a simple random walk on a decorated Galton--Watson tree. In Section \ref{sctn:finite tree bounds dec} we study the volume and distance asymptotics for finite decorated Galton--Watson trees. In particular, we show that if $T_{[n,2n]}$ is a Galton--Watson tree conditioned to have progeny in $[n.2n]$ vertices, and $\TER_{[n,2n]}$ its decorated version, then the rescaled diameter and volume of $\TER_{[n,2n]}$ are tight sequences under Assumption \ref{assn:sec}. In Section \ref{sctn:volume bounds decorated} we apply these results to obtain the volume results of Theorem \ref{thm:dec vol growth main}. In Section \ref{sctn:dec res bounds} we study resistance on $\TERa$, and in Section \ref{sctn:RW exponents} we bring everything together to prove Theorems \ref{thm:recurrence}, \ref{thm:dec main quenched RW} and \ref{thm:dec main ann RW}. We conclude in Section \ref{sctn:examples} with some examples.

We will work under Assumption \ref{assn:sec} for the entirety of the paper, and always assume that our trees have offspring distribution satisfying \eqref{eqn:offspring tails}. However, from Section \ref{sctn:model def} onwards we will not write it explicitly in the statements.

\textbf{Notation.} 
Here we briefly summarise the main notation used throughout the paper.
\begin{center}
\begin{tabular}{ |c|c|c|c| } 
\hline
$d_S, d_{\text{dis}}, d_f, d_w$ & spectral dimension, displacement exponent, fractal dimension, walk dimension (general)\\% &2 \\
$\Ti$ & critical Galton-Watson tree conditioned to survive, offspring distribution satisfying \eqref{eqn:offspring tails} \\
$(G_n)_{n \geq 1}$ & graph sequence used for decoration \\
$\TERa$ & decorated version of Kesten's tree $\Ti$ \\
$\TER$ & decorated version of GW tree $T$\\
$\dg$ & metric on $\TERa$ \\
$\rER$ & root of $\TERa$ \\
$\V$ & volume measure on $\TERa$ \\
$\BB (v,r)$ & closed ball of radius $r$ around $v \in \TERa$, defined wrt $\dg$ \\
$(U^{(1)}_n, U^{(2)}_n)$ & a uniform pair of distinct points on the boundary of $G_n$ \\%&2 \\
$d_n$ & pre-defined metric on $G_n$ \\%&3\\
$d^U_n$ & $d_n(U^{(1)}_n, U^{(2)}_n)$ \\%&3 \\
$\Ref$ & effective resistance on $G_n$ when each edge has conductance $1$ \\%& 3 \\
$R^U_n$ & $\Ref(U^{(1)}_n, U^{(2)}_n)$ \\%&3 \\
$\diam (G_n)$ & $\sup_{x, y \in G_n} d_n(x,y)$ \\%&3 \\
$\diamR (G_n)$ & $\sup_{x, y \in G_n} \Ref(x,y)$ \\%&3 \\
$V_n$ & a pre-defined measure on $G_n$ \\%& 3 \\
$\ell_n$ & a labelling of boundary vertices of $G_n$; $\ell_n: \partial G_n \to \{0, 1, \ldots, n-1\}$ \\%& 10 \\
$\bPb$ & law of $\TERa$ \\%& 3 \\
$\bP$ & law of SRW on $\TERa$ \\%& 3 \\
$\sad$ &$d(\alpha - 1)$ \\%& 3\\
$\saR$ &$R(\alpha - 1)$ \\%& 3 \\
$\sav$ &$\frac{\alpha - 1}{v}$ \\%& 3 \\
$\fv$ &$\frac{\alpha}{v}$ \\%& 3 \\
$\dERa$ &$\frac{\alpha(\sad \wedge 1)}{(\alpha - 1)(\fv \wedge 1)}$ \\%& 3 \\
$\dspec$ &$\frac{2\alpha (\saR \wedge 1)}{\alpha (\saR \wedge 1) + (\alpha - 1)(\fv \wedge 1)}$ \\
$\ddis$ & $\frac{(\saR \wedge 1)(\alpha -1)(\fv \wedge 1)}{(\alpha -1)(\fv \wedge 1)(\sad \wedge 1) + \alpha(\saR \wedge 1)(\sad \wedge 1)}$ \\
$X=(X_n)_{n \geq 0}$ & SRW on $\TERa$, started at $\rER$ \\%& 5, 11 \\
$\tau_r$ & exit time of $X$ from $\BB(\rER, r)$ \\%& 5 \\
$\tau^R_r$ & exit time of $X$ from resistance ball of radius $r$ \\%& 5 \\
$T$ & unconditioned critical Galton--Watson tree  with offspring distribution satisfying \eqref{eqn:offspring tails} \\%&8 \\
$d$ & graph distance on $T$ \\
%$v_h(T)$ &$\arg \max_{v \in T} d(\rho, v)$ \\%&8 \\
$T_n$ & critical Galton-Watson tree with offspring distribution satisfying \eqref{eqn:offspring tails}, with $n$ vertices \\ 
$W^T / W^{(n)}$ &Lukasiewicz path of $T/T_n$ \\%&8 \\
$\TER_n$ & decorated version of GW tree $T_n$ \\
$|u|$ & label of $u$ in the lexicographical ordering of $T$ \\
$\p (u$) & parent of vertex $u$ \\%& 9 \\
$\chi_u$ & relative position of $u$ amongst the offspring of $\p(u)$ \\
$|u-v|$ & the {lexicographical} distance between $u$ and $v$ \\
$M_Y$ & Given $T$ and $(Y_i)_{0 \leq 1 \leq |T|-1}$, $M_Y = \arg \max \left\{i \leq |T|-1: \sum_{j \preceq i} Y_j\right\}$ \\%& 11 \\
$\Height_Y$ & $\sum_{j \preceq M_Y} Y_j$ \\%& 11 \\
$H_Y$ & $d(\rho, u_{M_Y})$ \\%& 11 \\
\hline
\end{tabular}

\begin{tabular}{ |c|c|c|c| } 
\hline
$T_{[n,2n]}$ & a Galton--Watson tree conditioned on having total progeny in the interval $[n, 2n]$ \\
$\TER_{[n, 2n]}$ & a decorated version of $T_{[n,2n]}$ \\
$T^{[h,2h]}$ & a Galton--Watson tree conditioned on having height in the interval $[h, 2h]$ \\
$\TERh$ & a decorated version of $T^{[h,2h]}$ \\
$W$ & Lukasiewicz path for forest \\
$H, H^h$ & $H$ the height function for forest, and $H^h_m = H_m \mathbbm{1}\{\arg \min_{[0, n_{h, \lambda}]} X \geq m\}$ \\
$\tau_h$ & $\inf\{i \geq 1: \Height (T^{(i)}) \in [h,2h]\}$ \\
$\Sr$, $\partial \Sr$ & $\bigcup_{j=1}^{\infty} G({s_j}) \cap \BT(\rho, r)$ and $\bigcup_{j=1}^{\infty} \partial G({s_j}) \cap \BT(\rho, r)$ respectively \\
$\Nfr$ & $|\partial \Sr|$: number of subtrees on backbone of $\TERa$ within decorated distance $r$ of $\rER$ \\
$n_{r,\epsilon,\lambda}$ & $r^{\sad \wedge 1} \lambda^{\epsilon}$ when $\sad \neq 1$, and $r \lambda^{\epsilon}(\log r)^{-1}$ when $\sad =1$ \\
$\Heightdec (\TER)$ & $\sup_{x \in \TER} \dER (\rER, x)$ \\
$\Heightdec_{\text{res}} (\TER)$ & $\sup_{x \in \TER} \Ref (\rER, x)$ \\
\hline
\end{tabular}
\end{center}

\textbf{Constants. }Throughout, $c, c', C, C'$ etc. will denote constants bounded from above and below, but values may change on each appearance. In general these constants can depend on the quantities appearing in Assumption \ref{assn:sec} and the constant $c$ appearing in \eqref{eqn:offspring tails}, but we will not always make this explicit. In the case where constants may depend on other parameters, this will be indicated using subscripts.

\textbf{Acknowledgements.} I would like to thank David Croydon for helpful discussions and Takashi Kumagai for hosting me at RIMS in 2019, during which time this work was initiated. I would also like to thank Delphin S\'enizergues for interesting conversations about a concurrent project and for his comments on the proof of Proposition \ref{prop:decorated height bound without log term}, and Nicolas Curien for putting us in touch and for helpful questions. Finally I would like to thank the referee for a detailed reading and many helpful comments.

%\vspace{.6cm}
%
%\begin{tcolorbox}[colback=white]
%For all the results in this paper, we will assume that all the conditions of Assumption \ref{assn:sec} hold, unless explicitly stated otherwise. The main exception to this is Section \ref{sctn:dec vol UBs}**.
%\end{tcolorbox}

\section{Definition and background}\label{sctn:model def}
\subsection{A technical lemma}

First we give a short lemma that we will apply several times in the paper to compose several levels of randomness. For example, we can apply it to the pairs $(X,Y) = (|T|, \sup_{v \in T} \deg v)$ or $(X,Y) = (\deg v_i, \V (G(v_i)))$, where $T$ is a Galton--Watson tree and where $v_i$ is a labelled vertex of $T$.

\begin{lem}\label{lem:stable composition tail}
Let $\beta > 0$, let $X$ be a non-negative random variable with $\pr{X>x} \sim cx^{-\beta}$ as $x \rightarrow \infty$, and let $Y$ be a second random variable, dependent on $X$.
%, such that there exists $m,z>0$, constants $c, C \in (0, \infty)$ and a sequence $\lambda_n \to \infty$ such that, for all sufficiently large $n$:
%$$\prcond{Y \geq \lambda n^z}{X = n}{} \leq C\lambda^{-m} \text{ for all } \lambda > 1, \text{ and } c\lambda^{-m} \leq \prcond{Y \geq \lambda n^z}{X = n}{} \text{ for all } 1 < \lambda \leq \lambda_n.$$
%% where $p(\lambda) = o(\lambda^{\frac{-\beta}{z}})$ as $\lambda \rightarrow \infty$.
%Then, there exist $c, C \in (0, \infty)$ such that
%\begin{enumerate}
%\item If $m \neq \frac{\beta}{z}$:
%\[
%ck^{-\frac{\beta}{z}} \leq \pr{Y \geq k} \leq Ck^{-\left(\frac{\beta}{z}\wedge m\right)} \text{ for all } k \geq 1 \text{ and } ck^{-\left(\frac{\beta}{z} \wedge m\right)} \leq \pr{Y \geq k} \text{ for all } 1 \leq k \leq \lambda_n.
%\]
%\end{enumerate}
\begin{enumerate}[(i)]
\item Suppose that there exist $m,z>0$ and $C < \infty$ such that for all $n \geq 1$,
$$\prcond{Y \geq \lambda n^z}{X = n}{} \leq C\lambda^{-m} \text{ for all } \lambda \geq 1.$$
% where $p(\lambda) = o(\lambda^{\frac{-\beta}{z}})$ as $\lambda \rightarrow \infty$.
Then, there exists $C_{m,z,\beta} < \infty$ such that, if $m \neq \frac{\beta}{z}$:
\[
\pr{Y \geq k} \leq C_{m,z,\beta}k^{-\left(\frac{\beta}{z}\wedge m\right)} \text{ for all } k \geq 1.
\]
Suppose that there exist $z, c'>0$ and $N<\infty$ such that for all $n \geq N$, $\prcond{Y \geq c'n^z}{X \geq n}{} \geq c'.$
% where $p(\lambda) = o(\lambda^{\frac{-\beta}{z}})$ as $\lambda \rightarrow \infty$.
Then there exists $c''>0$ such that $k \geq c'N^z$,
$$ \pr{Y \geq k} \geq c''k^{-\frac{\beta}{z}}.$$
\item Suppose that there exist $m>0$ and $C < \infty$ such that for all $n \geq 1$,
$$\prcond{Y \geq \lambda n \log n}{X = n}{} \leq C\lambda^{-m} \text{ for all } \lambda > 1.$$
% where $p(\lambda) = o(\lambda^{\frac{-\beta}{z}})$ as $\lambda \rightarrow \infty$.
Then, there exists $C_{m,z,\beta} < \infty$ such that, if $m \neq \beta$:
\[
\pr{Y \geq k \log k} \leq C_{m,z,\beta}k^{-\left(\beta\wedge m\right)} \text{ for all } k \geq 1.
\]
Suppose that there exist $c'>0$ and $N < \infty$ such that for all $n \geq N$,
$\prcond{Y \geq c'n \log n}{X \geq n}{} \geq c'.$
% where $p(\lambda) = o(\lambda^{\frac{-\beta}{z}})$ as $\lambda \rightarrow \infty$.
Then there exists $c''>0$ such that for all $k \geq c'N$, $$\pr{Y \geq k \log k} \geq c''k^{-\beta}.$$
\end{enumerate}
\end{lem}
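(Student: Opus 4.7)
The plan is to prove each inequality by conditioning on $X$ and splitting the resulting sum over $n$ at the natural threshold where the marginal tail of $X$ balances the conditional tail of $Y$ given $X = n$. For part (i) this threshold is $n_0 := \lceil k^{1/z}\rceil$ (where $\prcond{Y\geq k}{X=n}{}$ crosses over from the trivial bound $1$ to the nontrivial bound $C(k/n^z)^{-m}$), and for part (ii) the threshold is $n_0\asymp k$, obtained by solving $n_0\log n_0\asymp k\log k$.

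For the upper bound in (i), I would write
$$\pr{Y \geq k} = \sum_n \pr{X=n}\,\prcond{Y \geq k}{X=n}{},$$
use $\prcond{Y \geq k}{X=n}{} \leq 1$ for $n \geq n_0$, and apply the hypothesis with $\lambda = k/n^z \geq 1$ for $n < n_0$ to obtain $\prcond{Y\geq k}{X=n}{} \leq Ck^{-m}n^{mz}$. This produces
$$\pr{Y \geq k} \leq \pr{X \geq n_0} + Ck^{-m}\,\E{X^{mz}\mathbbm{1}\{X < n_0\}}.$$
The first term is $O(k^{-\beta/z})$ by the tail hypothesis on $X$. For the truncated moment I would apply the layer-cake identity together with $\pr{X > u} \leq C'u^{-\beta}$ for large $u$, which gives $\E{X^{mz}\mathbbm{1}\{X<n_0\}} = O(1)$ when $mz < \beta$ and $O(n_0^{mz-\beta})$ when $mz > \beta$. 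Substituting back, both regimes collapse to $\pr{Y \geq k} \leq C_{m,z,\beta}k^{-(\beta/z \wedge m)}$. The boundary case $mz = \beta$ introduces a $\log n_0$ factor in the truncated moment and is precisely what is excluded by the hypothesis $m \neq \beta/z$.

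The lower bound in (i) is a one-line computation: for $k \geq c'N^z$, setting $n := \lfloor (k/c')^{1/z}\rfloor$ so that $c'n^z \geq k$ and $n \geq N$ yields
$$\pr{Y \geq k} \geq \prcond{Y \geq c'n^z}{X \geq n}{}\,\pr{X \geq n} \geq c' \cdot c\, n^{-\beta} \geq c''k^{-\beta/z}.$$

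Part (ii) is structurally identical: $n^z$ is replaced by $n\log n$ throughout, so the relevant truncated moment is $\E{X^{m}(\log X)^m\mathbbm{1}\{X<n_0\}}$; a layer-cake computation as above, combined with the fact that $(\log k)^{-m}$ is slowly varying, shows that the log factors from the threshold and from the moment cancel cleanly, and the same two-term bookkeeping yields $\pr{Y \geq k\log k} \leq Ck^{-(\beta \wedge m)}$ when $m \neq \beta$, together with the matching lower bound obtained as in (i) by choosing $n$ so that $c'n\log n \geq k\log k$, i.e., $n \asymp k$. I do not anticipate any serious obstacle; the only points requiring care are bookkeeping the constants across the two regimes $mz < \beta$ and $mz > \beta$ (respectively $m < \beta$ and $m > \beta$ in part (ii)), and verifying that the slowly varying corrections in part (ii) really do wash out so that the final bound contains no stray logarithms.
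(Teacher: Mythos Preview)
Your approach is essentially identical to the paper's: both split at the threshold $n_0 \asymp k^{1/z}$, bound the tail piece by $\pr{X\geq n_0}$, and estimate the remaining truncated moment via summation by parts / layer-cake to get $O(1)$ or $O(n_0^{mz-\beta})$ according to whether $mz<\beta$ or $mz>\beta$. One small slip: in the lower bound you need $c'n^z \geq k$, which requires $n \geq (k/c')^{1/z}$, so take the ceiling rather than the floor.
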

\begin{proof}
This is just a computation. (In this paper we mainly apply the result when $X$ is a positive integer, but clearly the same proof works for continuous random variables). Note that it is sufficient to prove the result only for $k$ sufficiently large, since we can then increase or decrease the constants $C'$ and $c''$ if necessary so that the claim holds for all $k \geq 1$. 
\begin{enumerate}[(i)]
\item If $m > \frac{\beta}{z}$, we have for all $k \geq 1$ that
\begin{align*}
\pr{Y \geq k} \leq \pr{X \geq k^{\frac{1}{z}}} + \sum_{x=1}^{k^{\frac{1}{z}}} \pr{X = x} \prcond{Y \geq k x^{-z} X^{z}}{X = x}{} &\leq 2ck^{-\frac{\beta}{z}} + 2\sum_{x=1}^{k^{\frac{1}{z}}}\pr{X = x} C(kx^{-z})^{-m}.
\end{align*}
We can then apply summation by parts to deduce that there exists $C_{m,z,\beta}<\infty$ such that
\begin{align*}
\sum_{x=1}^{k^{\frac{1}{z}}}\pr{X = x} (kx^{-z})^{-m} &= k^{-m} \left( \pr{X \geq 1} -k^{m} \pr{X \geq k^{1/z} + 1} + \sum_{x=2}^{k^{1/z}} \pr{X \geq x} \left(x^{mz} - (x-1)^{mz}\right) \right)
\\
% &..k^{-m} \left(-(1+o(1))\left[ \pr{X > x} x^{mz} \right]^{k^{1/z}}_1 + zm  (1+o(1)) \int_1^{k^{1/z}} \pr{X > x} x^{zm-1} dx \right) \\
&\leq C_{m,z,\beta} k^{-m} \cdot k^{m-\frac{\beta}{z}} = C_{m,z,\beta}k^{-\frac{\beta}{z}}.
\end{align*}
The proof is the same when $\prcond{Y \geq \lambda n^z}{X = n}{} \leq C\lambda^{-m}$ where $m < \frac{\beta}{z}$, in which case the bracketed term in the summation by parts is of constant order and we get overall tail decy of order $k^{-m}$.
%\begin{align*}
%\pr{Y \geq k} \leq \sum_{x=1}^{\infty} \pr{X = x} \pr{Y \geq k x^{-z} X^{z}} &\leq 2\sum_{x=1}^{\infty} cx^{-(\beta + 1)} C k^{-m} x^{zm} = c'k^{-m}.
%%&\leq k^{-\frac{\beta}{z}} + k^{-m} \int_{k}^{k^{\frac{1}{z}}} x^{-(\beta + 1) + mz} dx .
%\end{align*}

The lower bound is even simpler: %if $m> \frac{\beta}{z}$, we write
\begin{align*}
\pr{Y \geq k} \geq \pr{X \geq (c')^{-\frac{1}{z}}k^{\frac{1}{z}}} \prcond{Y \geq k}{X \geq (c')^{-\frac{1}{z}}k^{\frac{1}{z}}}{} \geq c''k^{\frac{-\beta}{z}}.
\end{align*}

\item This follows by exactly the same proof as $(i)$ with $z=1$, again on decomposing according to whether $X\geq k$ or $X<k$.

%If instead $m < \frac{\beta}{z}$:
%\begin{align*}
%\pr{Y \geq k} \geq \pr{X \geq 1} \prcond{Y \geq k}{X \geq 1}{} &\geq ck^{-m}.
%\end{align*}
%
%(If $m = \frac{\beta}{z}$, we similarly acquire a logarithmic correction to the tail decay, but we won't need this case).
%Again, the proof is the same as in the non-polynomial cases.
%%If there is no polynomial tail decay, we instead get:
%%\begin{align*}
%%\pr{f(X) \geq k} &\leq \pr{X \geq k^{\frac{1}{z}}} + \sum_{x=k}^{k^{\frac{1}{z}}} \pr{X = x} \pr{f(X) \geq k x^{-z} X^{z}} \\
%%&\leq k^{-\frac{\beta}{z}} + \sum_{x=k}^{k^{\frac{1}{z}}} x^{-(\beta + 1)} p (k x^{-z}) \\
%%&\leq k^{\frac{-\beta}{z}} + \int_{k}^{k^{\frac{1}{z}}} x^{-(\beta + 1)} dx,
%%\end{align*}
%%and
%%\begin{align*}
%%\pr{f(X) \geq k} \geq \pr{X \geq k} \prcond{f(X) \geq k}{X \geq k}{} &\geq k^{-\beta} p( k^{1 - z})
%%\end{align*}
%\item The proof is the same as above, except that the sums and integrals start at $1$, rather than $k$.
\end{enumerate}
\end{proof}

\subsection{Galton--Watson trees}\label{sctn:GW tree defs}
Before defining the full decorated tree model, we give a brief background on Galton--Watson trees, starting with the Ulam-Harris labelling notation for discrete trees and following the formalism of \cite{Neveu}. Firstly, we introduce the set
\[
\mathcal{U}=\cup_{n=0}^{\infty} {\N}^n.
\]
By convention, ${\N}^0=\{ \emptyset \}$. If $u=(u_1, \ldots, u_n)$ and $v=(v_1, \ldots, v_m) \in \mathcal{U}$, we let $uv= (u_1, \ldots, u_n, v_1, \ldots, v_m)$ denote the concatenation of $u$ and $v$.

\begin{defn} A plane tree $T$ is a finite subset of $\mathcal{U}$ such that
\begin{enumerate}[(i)]
\item $\emptyset \in T$,
\item If $v \in T$ and $v=uj$ for some $j \in \N$, then $u \in T$,
\item For every $u \in T$, there exists a number $k_u(T) \geq 0$ such that $uj \in T$ if and only if $1 \leq j \leq k_u(T)$.
\end{enumerate}
\end{defn}

We let $\mathbb{T}$ denote the set of all plane trees. A plane tree therefore comes pre-equipped with a \textbf{lexicographical ordering} on its vertices, also known as the \textbf{depth-first} search order. If $T_n$ is a plane tree with $n$ vertices, we will often list its vertices as $u_0, u_1, \ldots, u_{n-1}$ in lexicographical order. If $u$ is a vertex of $T$, we let $|u|$ denote its label in the lexicographical ordering of $T$.

A plane tree $T$ can be coded by a walk excursion $(W^{T}_m)_{0 \leq m \leq n}$ called the \textbf{Lukasiewicz path}; this is defined by setting $W^{T}_0 = 0$, then for $m \geq 1$ listing the vertices $u_0, u_1, \ldots, u_{|T|-1}$ in lexicographical order and setting $W^T_{m+1} = W^T_m + k_{u_m}(T)-1$. It is not too hard to see that $W^{T}_m \geq 0$ for all $0 \leq m < |T|-1$, and $W^T_{|T|-1} = -1$.

A plane tree can be alternatively coded by its \textbf{height function} $(H^{T}_m)_{0 \leq m \leq n}$: this is constructed by again listing the vertices in lexicographical order, and then setting $H^T_m$ to be the height of $u_m$, or in other words its distance from the root of $T$. It is straightforward to see that the height function and the Lukasiewicz path are related by 
\begin{equation}\label{eqn:H from W background}
H^T_m = |\{\ell \leq m: W^T_{\ell} = \inf_{\ell \leq p \leq m}W^T_p\}|.
\end{equation}

If $u \neq u_0$ is a vertex of $T$, we also define $\p (u)$ to be the \textbf{parent} of $u$; that is, if $u = (u_1, \ldots, u_{n-1}, u_n)$, then $\p (u) = (u_1, \ldots, u_{n-1})$. We also let $\chi_u$ denote its relative position amongst the offspring of $\p(u)$, and say that $v$ is an \textbf{ancestor} of $u$ and write $v \preceq u$ if $v = (u_1, \ldots, u_m)$ for some $m \leq n$. For $i, j < |T|$, we write $i \preceq j$ if $u_i \preceq u_j$ in the lexicographical ordering.

Given a finite plane tree $T$, we let $\rho$ denote its root. This is the vertex corresponding to $\emptyset$ or $u_0$ in the lexicographical ordering. We also let $d$ denote graph distance on $T$, and if $v_h(T) = \arg \max_{v \in T} d(\rho, v)$, we call the path from $\rho$ to $v_h(T)$ the \textbf{Williams' spine}. If $v_h(T)$ is not unique, we take the leftmost path. If $u, v \in T$, we let $[[u,v]]$ denote the path from $u$ to $v$, inclusive of endpoints, and $((u,v]] = [[u,v]]\setminus \{u\}$.

In this paper we will work with Galton--Watson trees, meaning that $k_u$ is random and i.i.d. for each $u \in T$. (We assume that the reader is familiar with Galton--Watson trees, but if not see \cite[Section 1.2]{LeG2005randomtreesandapplications} for a brief introduction). When $T$ is a $\xi$-Galton--Watson tree, the Lukasiewicz path is therefore a random walk excursion with jump distribution $\xi - 1$ conditioned on $W^{T}_m \geq 0$ for all $0 \leq m < |T|-1$, and $W^T_{|T|-1} = -1$.

\subsection{Bounds for undecorated Galton--Watson trees}\label{sctn:GW tree undec}
In this section, we let $T$ denote a critical (i.e. with mean $1$) Galton--Watson tree with offspring distribution $\xi$, where $\xi$ is aperiodic and satisfies \eqref{eqn:offspring tails}. It is straightforward to show that $T$ is almost surely finite. Let $p_n = \xi(n)$ and recall that $\rho$ denotes the root of $T$. We also let $T_n$ denote a copy of $T$ conditioned on having $n$ vertices (since $\xi$ is aperiodic, this is well-defined for all sufficiently large $n$).

In this section we collect some standard results on Galton-Watson trees.

\subsubsection{Progeny bounds}

\begin{lem}\label{lem:prog tail bound}
\begin{enumerate}[(i)]
\item There exist constants $c', c'' \in (0, \infty)$, such that, as $k \rightarrow \infty$,
\[
\prb{|T| \geq k} \sim c'k^{\frac{-1}{\alpha}}, \qquad \prb{|T| = k} \sim c''k^{\frac{-(\alpha + 1)}{\alpha}}.
\]
\item There exists a constant $q>0$ such that $1 - \Eb{e^{-\theta |T|}} \sim q \theta^{\frac{1}{\alpha}}$ as $\theta \rightarrow 0$.
\end{enumerate}
\end{lem}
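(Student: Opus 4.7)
My plan is to deduce both parts from the classical connection between the total progeny of a critical Galton--Watson tree and a random walk with jump distribution $\xi-1$, combined with local limit theory for heavy-tailed random walks.

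First, for part $(i)$, I would invoke the Otter--Dwass (cycle lemma) identity
\[
\prb{|T| = n} = \frac{1}{n}\prb{S_n = -1},
\]
where $S_n = \sum_{i=1}^n (\xi_i - 1)$ is a random walk with i.i.d.\ step distribution of mean zero. Under \eqref{eqn:offspring tails}, the step distribution lies in the domain of attraction of an $\alpha$-stable law, so $n^{-1/\alpha} S_n$ converges in law to a non-degenerate $\alpha$-stable random variable $Z_\alpha$ with continuous density $p_\alpha$. I would then apply the stable local limit theorem (for example Gnedenko's theorem in the aperiodic case; see Ibragimov--Linnik or Bingham--Goldie--Teugels) to conclude $\prb{S_n = -1} \sim n^{-1/\alpha} p_\alpha(0)$, which gives the pointwise estimate $\prb{|T|=n} \sim c'' n^{-(\alpha+1)/\alpha}$ with $c'' = p_\alpha(0)$.

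Next, I would obtain the tail estimate by summation: since $\prb{|T| \geq k} = \sum_{n \geq k} \prb{|T|=n}$, and $\prb{|T|=n}$ is regularly varying with index $-(\alpha+1)/\alpha > -2$, a Karamata-type summation by parts argument yields $\prb{|T| \geq k} \sim \alpha c'' k^{-1/\alpha}$, so we can set $c' = \alpha c''$. (Alternatively, this is already known in the literature, e.g.\ \cite{SlackBranching} or \cite{LeG2005randomtreesandapplications}, and one can simply cite it.)

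For part $(ii)$, I would apply a standard Tauberian theorem for Laplace transforms of non-negative random variables (e.g.\ Bingham--Goldie--Teugels, Theorem 8.1.6) to the tail asymptotic just established. Since $\prb{|T| \geq k}$ is regularly varying with index $-1/\alpha$, the Tauberian theorem gives
\[
1 - \Eb{e^{-\theta |T|}} \sim \Gamma(1-1/\alpha) \cdot c' \cdot \theta^{1/\alpha}
\]
as $\theta \downarrow 0$, which yields the claim with $q = c'\,\Gamma(1-1/\alpha)$. The main technical obstacle is the local limit theorem step in part $(i)$, which requires care with aperiodicity (already built into the hypotheses) and control over the approach to the stable density at the point $-1$; but this is standard, and is the route taken in, for example, Kortchemski's work on conditioned stable trees.
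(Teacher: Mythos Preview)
Your proposal is correct and follows essentially the same route as the paper. The paper simply cites \cite[Proposition A.3(i)]{CurKortUIPTPerc} for part $(i)$ (whose proof is precisely the cycle lemma plus Gnedenko local limit theorem argument you outline) and then invokes its own appendix Tauberian lemma (Lemma~\ref{lem:Tauberian at 0 app}) for part $(ii)$, which is exactly the Tauberian step you describe.
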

\begin{proof}
\begin{enumerate}[(i)]
\item This is shown in \cite[Proposition A.3(i)]{CurKortUIPTPerc}.
\item This is a direct consequence of part $(i)$ and Lemma \ref{lem:Tauberian at 0 app}, with $\beta = \frac{1}{\alpha}$.
\end{enumerate}
\end{proof}

\subsubsection{Height bounds}

At various points, we will also need to perform spinal decompositions along various choices of spine in $T$. For this, the following results will be useful. If $T$ has root $\rho$, we let 
\[
\Height(T) = \sup_{v \in T} d(\rho, v).
\]

\begin{lem}\label{lem:tree height bound slack}
There exists $c' \in (0, \infty)$ such that, as $n \rightarrow \infty$, 
\[
\prb{\Height(T) \geq n} \sim c'n^{-\frac{1}{\alpha - 1}}.
\]
\end{lem}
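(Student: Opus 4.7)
The plan is to follow the classical generating-function argument due to Slack. Let $q_n := \prb{\Height(T) \geq n}$ and let $f(s) := \Eb{s^{\xi}}$ denote the offspring probability generating function. By a first-step decomposition at the root (the tree has height at least $n+1$ iff at least one child roots a subtree of height at least $n$), one obtains the basic recursion
\[
1 - q_{n+1} = f(1 - q_n), \qquad q_0 = 1.
\]
Since $q_0=1 > 0$ and $f$ is strictly increasing and concave with $f(1)=1, f'(1)=1$, the sequence $q_n$ is strictly decreasing and tends to $0$ (using the fact that $T$ is almost surely finite because $\xi$ is critical).

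Next, I would translate the tail assumption \eqref{eqn:offspring tails} into an asymptotic expansion of $f$ near $1$. Because $\xi$ is critical and $\xi(k)\sim ck^{-\alpha-1}$ with $\alpha\in(1,2)$, a standard Tauberian/Abelian computation (analogous to, and easier than, Lemma \ref{lem:prog tail bound}(ii)) gives a positive constant $\kappa=\kappa(c,\alpha)$ such that
\[
f(1-u) - (1-u) \;=\; \kappa\, u^{\alpha}\bigl(1+o(1)\bigr) \qquad \text{as } u \downarrow 0.
\]
Concretely, writing $f(1-u)-1+u = \sum_{k\geq 2} \xi(k)\bigl((1-u)^k - 1 + ku\bigr)$ and using the tail asymptotics together with $\int_0^\infty (e^{-x}-1+x)x^{-\alpha-1}\,dx = \Gamma(2-\alpha)/(\alpha(\alpha-1))$ (after approximating the sum by an integral near $u \downarrow 0$) yields the required expansion.

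Substituting into the recursion and using $q_n \to 0$ gives
\[
q_{n+1} = q_n - \kappa\, q_n^{\alpha}\bigl(1+o(1)\bigr).
\]
I would then set $x_n := q_n^{-(\alpha-1)}$ and Taylor expand to get
\[
x_{n+1} - x_n = (\alpha-1)\kappa + o(1) \qquad \text{as } n \to \infty,
\]
so that a telescoping/Ces\`aro argument yields $x_n \sim (\alpha-1)\kappa\, n$, equivalently $q_n \sim c' n^{-1/(\alpha-1)}$ with $c' = \bigl((\alpha-1)\kappa\bigr)^{-1/(\alpha-1)}$.

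The only nontrivial step is verifying the generating-function expansion from the bare tail hypothesis \eqref{eqn:offspring tails}; once this is in hand, the rest is a routine discrete ODE analysis. The result is otherwise standard and could simply be cited from \cite{CurKortUIPTPerc} (or from Slack's original paper), where it appears in essentially the same form; I would present it as a short self-contained lemma following the outline above.
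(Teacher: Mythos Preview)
Your argument is correct and is precisely Slack's classical generating-function proof; the paper does not reproduce this argument but simply cites \cite[Lemma~2]{SlackInfVar} directly. So your proposal unpacks exactly the reference the paper invokes, and there is nothing to add.
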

\begin{proof}
This follows directly from \cite[Lemma 2]{SlackInfVar} (note that the parameter $\alpha$ in \cite{SlackInfVar} corresponds to the parameter $\alpha -1$ in our notation).
\end{proof}

\subsubsection{Spinal decompositions}\label{sctn:spinal decomp}
At several points in this paper we will perform a spinal decomposition of a finite Galton--Watson tree along the branch maximising a certain functional (for example, maximising $\sum_u \diam (G(u))$). Performing this kind of maximisation often biases the laws of the degrees of the vertices on this spine. In this subsection we prove a general result showing that the vertex degrees along this kind of spine can always be stochastically dominated by an independent sequence of random variables with the size-biased distribution.

Accordingly, let $T$ be as above, with vertices $\{u_0, \ldots, u_{|T|-1}\}$ listed in lexicographical ordering, and let $Y=(Y_i)_{0 \leq i \leq |T|-1}$ be a sequence of random variables such that given $T$, the law of $Y_i$ depends only on $\deg u_i$. In our applications $Y_i$ will be non-negative for each $i$, but we do not assume this a priori. Let
\begin{equation}\label{eqn:M H def}
M_Y(T) = \arg \max \left\{i \leq |T|-1: \sum_{j \preceq i} Y_j\right\}, \hspace{1cm} \Height_Y(T) = \sum_{j \preceq M_Y} Y_j, \hspace{1cm} H_Y(T) = d(\rho, u_{M_Y}),
\end{equation}
breaking ties if necessary by taking the minimal such $i$. In the case that $Y_i \equiv 1$ for all $i$, we write $M$ and $H$ in place of the first and third quantities above. Given the sequence $(Y_i)_{0 \leq i \leq |T|-1}$, we then define a sequence of \textbf{spinal vertices} $s_0, s_1, \ldots, s_{H_Y}$ to be the ancestors of $u_{M_Y}$, listed in lexicographical order.

We first give the following (elementary) result.

\begin{lem}\label{lem:height tree decreasing prob}
The function $h: \N \rightarrow [0,1]$, $x \mapsto \prb{H_Y(T)=x}$ is non-increasing in $x$.
\end{lem}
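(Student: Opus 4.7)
The plan is to exploit the branching structure of $T$ by decomposing at the root. Let $K = \deg \rho$ and let $T^{(1)}, \ldots, T^{(K)}$ be the subtrees rooted at $\rho$'s children, which are conditionally i.i.d.\ copies of $T$ given $K$. For each $i$, let $\Sigma^{(i)}$ denote the maximum ancestral $Y$-sum within $T^{(i)}$ and set $H^{(i)} = H_Y(T^{(i)})$; then the pairs $(\Sigma^{(i)}, H^{(i)})$ are conditionally i.i.d.\ with the joint law of $(\Sigma(T), H_Y(T))$ on a freshly drawn GW tree (here $\Sigma(T)$ denotes the maximum ancestral $Y$-sum within $T$).

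The basic observation is that the global argmax in $T$ is the root exactly when $\max_i \Sigma^{(i)} \leq 0$ (the root wins any tie by lex order), and otherwise lies in the first subtree $T^{(i^*)}$ achieving $\max_i \Sigma^{(i)}$. Correspondingly, $H_Y(T) = 0$ or $H_Y(T) = 1 + H^{(i^*)}$. The main step of the proof will be a bookkeeping computation: using exchangeability to sum over the possible identity of $i^*$ (conditioning on $\Sigma^{(i^*)} = s$) and then over $K = k$, I would show, for every $n \geq 1$,
\begin{equation*}
\prb{H_Y(T) = n} = \Eb{\psi(\Sigma(T))\,\mathbbm{1}\{\Sigma(T) > 0,\, H_Y(T) = n-1\}},
\end{equation*}
where $\psi(s) = \frac{f(F(s)) - f(F_-(s))}{F(s) - F_-(s)}$, $f$ is the offspring generating function, $F(s) = \prb{\Sigma(T) \leq s}$, $F_-(s) = \prb{\Sigma(T) < s}$, and the ratio is understood as $\psi(s) = f'(F(s))$ at continuity points of the law of $\Sigma(T)$. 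Concretely, the $k$-fold and $j$-fold sums collapse via $\sum_{j=1}^k F_-(s)^{j-1} F(s)^{k-j} = (F(s)^k - F_-(s)^k)/(F(s) - F_-(s))$ and then $\sum_k p_k (F(s)^k - F_-(s)^k) = f(F(s)) - f(F_-(s))$.

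The conclusion is then immediate from criticality: $f$ is convex on $[0,1]$ with $f'(1) = \Eb{\xi} = 1$, so the mean value theorem gives $\psi(s) \leq f'(F(s)) \leq f'(1) = 1$ uniformly in $s$. Therefore
\begin{equation*}
\prb{H_Y(T) = n} \leq \Eb{\mathbbm{1}\{\Sigma(T) > 0,\, H_Y(T) = n-1\}} \leq \prb{H_Y(T) = n-1}
\end{equation*}
for every $n \geq 1$, which gives the desired monotonicity. The main obstacle is the exchangeability computation producing $\psi$, which requires careful handling of the strict versus weak inequalities enforced by the lex tie-breaking in the definition of $M_Y$; once the identity is in hand, the pointwise bound $\psi \leq 1$---essentially equivalent to criticality of $\xi$---closes the argument.
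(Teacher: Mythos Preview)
Your argument is correct and uses the same underlying idea as the paper: decompose at the root into i.i.d.\ subtrees, observe that $H_Y(T)=n$ forces $H_Y(T^{(i^*)})=n-1$ for the winning subtree, and close with criticality. The difference is only in execution. The paper simply applies a union bound,
\[
\prcondb{H_Y(T)=x+1}{\deg\rho=n}{}\;\leq\;\sum_{i=1}^n \prb{H_Y(T^{(i)})=x},
\]
and then uses $\sum_n n p_n=1$ directly. You instead compute the probability exactly by summing over $i^*=j$ and conditioning on $\Sigma^{(j)}=s$, which collapses (via the geometric sum and the offspring generating function $f$) to the weight $\psi(s)=\frac{f(F(s))-f(F_-(s))}{F(s)-F_-(s)}$; your bound $\psi\le f'(1)=1$ is then precisely the criticality fact the paper invokes. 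So your route yields the sharper identity $\prb{H_Y(T)=n}=\Eb{\psi(\Sigma)\mathbbm{1}\{\Sigma>0,\,H_Y(T)=n-1\}}$, which is potentially useful elsewhere, but for the lemma as stated the paper's union bound gets there in four lines without the generating-function bookkeeping.
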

\begin{proof}
Take some $n \geq 1$. Conditionally on the event $\{\deg \rho =n\}$, for $i \in \{1, \ldots, n\}$ let $T^{(i)}$ denote the subtree emanating from the $i^{th}$ child of $\rho$. For $x \geq 0$, we then write:
\begin{align*}
\prb{H_Y(T) = x+1} &= \sum_{n = 0}^{\infty} \prb{\deg \rho =n} \prcondb{H_Y(T) = x+1}{{\deg \rho =n}}{} \\
&\leq \sum_{n = 0}^{\infty} p_n \sum_{i=1}^n \prb{H_Y (T^{(i)}) = x} %\prb{\max_{ j \neq i}H_Y (T^{(j)}) \leq x} 
\leq \sum_{n = 0}^{\infty} np_n \prb{H_Y(T) = x} 
%\\ \hspace{8.5cm}
 = \prb{H_Y(T) = x},
\end{align*}
where the final line follows since $\xi$ is critical.
\end{proof}

The next proposition follows by the same logic used to prove a similar result in \cite[Section 2]{GeigerKerstingGWHeight}, which covers the special case in which $Y_i \equiv 1$ for all $0 \leq i \leq |T|-1$. In particular, by starting at $s_{H_Y}$ and recursively working backwards towards the root, it shows that the offspring distribution of the vertices along any such spine can be stochastically dominated by independent size-biased random variables.

\begin{prop}\label{prop:spinal offspring UB general}
Take the notation as above. There exists $c'< \infty$ such that, for  any $h \in \N \cup \{0\}$, any $i \leq h$, any set $A \subset \N^{h-i}$ and all $K \geq 1$,
\[
\prcondb{\deg s_i \geq K}{H_Y(T) = h, (\deg s_j)_{i < j \leq h} \in A}{} \leq c'K^{-(\alpha - 1)}.
\]
\end{prop}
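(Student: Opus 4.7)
The plan is to adapt the spinal decomposition strategy of Geiger--Kersting cited in the paper (which treats the special case $Y_i \equiv 1$) to the present setting. The key principle is that, conditionally on the structure above $s_i$, the degree $\deg s_i$ should be stochastically dominated up to constants by a size-biased version of $\xi$; the tail of this size-biased law decays like $K^{-(\alpha-1)}$ by \eqref{eqn:offspring tails}, which yields the claim.

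First I would condition on the subtree $B$ rooted at $s_{i+1}$ (which contains $s_{i+1}, \ldots, s_h = u_{M_Y}$). The critical observation is that the event $E := \{H_Y(T) = h, (\deg s_j)_{i<j\leq h} \in A\}$ imposes on each of the $\deg s_i - 1$ side subtrees attached to $s_i$ a constraint of the form ``local $Y$-partial sum $\leq V$'', where a direct calculation gives
\[
V \;=\; \Height_Y(T) - \sum_{j=0}^{i} Y_{s_j} \;=\; \sum_{j=i+1}^{h} Y_{s_j}
\]
(the second equality holds because $s_0, \ldots, s_h$ are exactly the ancestors of $u_{M_Y}$). The crucial point is that $V$ depends only on $B$, and in particular is independent of $\deg s_i$. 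Given $B$, the side subtrees at level $i$ are i.i.d.\ GW trees, and each satisfies its threshold with probability $p_L$ or $p_R$ according to whether it lies to the left or right of $s_{i+1}$ (with a mild asymmetry caused by the tie-breaking rule in the definition of $M_Y$); setting $p := \max(p_L, p_R) \in [0,1]$ and summing over the position $\chi_{s_{i+1}} \in \{1, \ldots, n\}$ of $s_{i+1}$ among its siblings, one obtains
\[
\prcondb{\deg s_i = n}{B, E}{} \;\propto\; \xi(n) \sum_{k=1}^{n} p_L^{k-1} p_R^{n-k}.
\]

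From here the tail estimate follows using the elementary bounds $\sum_{k=1}^n p_L^{k-1}p_R^{n-k} \leq n p^{n-1}$ for the numerator and $\geq p^{n-1}$ for the denominator, giving
\[
\prcondb{\deg s_i \geq K}{B, E}{} \;\leq\; \frac{\sum_{n \geq K} n \xi(n) p^{n-1}}{\sum_{m \geq 1} \xi(m) p^{m-1}},
\]
and splitting into cases $p \geq 1/2$ and $p < 1/2$. In the first case the denominator is bounded below by the constant $\sum_{m \geq 1} \xi(m) 2^{-(m-1)} > 0$ while the numerator is at most $\sum_{n \geq K} n\xi(n) \leq cK^{-(\alpha-1)}$ by \eqref{eqn:offspring tails} (this being simply the tail of the size-biased distribution). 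In the second case, criticality of $\xi$ gives $\sum_{n \geq K}n\xi(n)p^{n-1} \leq p^{K-1}$ while the denominator is at least $\xi(n_0)p^{n_0-1}$ for the smallest $n_0$ with $\xi(n_0)>0$, so the ratio is exponentially small in $K$ and hence $\leq CK^{-(\alpha-1)}$ after enlarging the constant. Marginalising over $B$ then gives the claim. The main subtle point --- and the only place where the generalisation beyond $Y_i \equiv 1$ really needs to be checked --- is the telescoping identity for $V$ displayed above: constraints on side subtrees at lower levels $i' < i$ do involve $Y_{s_i}$ through their thresholds, but conditioning on $Y_{s_i}$ makes them independent of $\deg s_i$ and they contribute only factors that cancel between numerator and denominator in the conditional law above, so they do not affect the final tail bound.
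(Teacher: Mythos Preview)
Your approach is essentially the paper's: both argue via the Geiger--Kersting backward spinal decomposition, bounding the conditional law of $\deg s_i$ given the subtree above by a size-biased distribution. The paper's proof is much terser---it simply defines the partition events $B_{j,k}$ and writes the one-line chain $\prcondb{\deg s_i > K}{E}{}\leq \sum_{k\geq K}p_k\sum_{j=1}^k\prcondb{B_{j,k}}{E}{}\leq \sum_{k\geq K}kp_k$---whereas you spell out the explicit proportionality $\xi(n)\sum_k p_L^{k-1}p_R^{n-k}$ and do the case split on $p$, which is a genuine clarification.

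One caveat on your final paragraph. You propose handling the lower levels by conditioning on $Y_{s_i}$, arguing that the sub-$i$ factors then cancel. They do, but conditioning on $Y_{s_i}=y$ simultaneously introduces the density factor $f_n(y)=\mathbf{P}(Y_{s_i}\in dy\mid \deg s_i=n)$ into the law of $\deg s_i$, and this does \emph{not} cancel between numerator and denominator when the law of $Y_i$ genuinely depends on $\deg u_i$ (take $Y_u=\deg u$ to see the issue sharply: then $\deg s_i$ is determined by $Y_{s_i}$). So your displayed proportionality $\prcondb{\deg s_i=n}{B,E}{}\propto \xi(n)\sum_k p_L^{k-1}p_R^{n-k}$ is not literally correct in the general setting. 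The paper's proof is equally silent here, and in the application where it invokes this proposition directly (Lemma~\ref{lem:ER spine offspring dist}) one has $Y_i\equiv 1$, so the issue is moot; but if you want your argument to stand in the stated generality you should either assume $Y_i$ independent of $\deg u_i$, or keep track of the extra factor and argue that it does not spoil the tail bound.
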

\begin{proof}
On the event $\deg s_i = k+1$ for some $k \geq 1$, let $T^{(1)}, T^{(2)}, \ldots, T^{(k)}$ denote the subtrees rooted at each of the offspring of $s_i$, listed in lexicographical order. Given $1 \leq j \leq k$, also define
\begin{align*}
B_{j,k} &= \{\Height_Y (T^{(j)}) > \sup_{1 \leq i < j} \Height_Y (T^{(i)}), \ \Height_Y (T^{(j)}) \geq \sup_{j < i \leq k} \Height_Y (T^{(i)})\}.
\end{align*}
Then there exists $c'<\infty$ such that for any $K \geq 1$,
\begin{align*}
\prcondb{\deg s_i > K}{H_Y(T) = h, (\deg s_j)_{i < j \leq h} \in A}{} &\leq \sum_{k \geq K} p_k \sum_{j=1}^k \prcondb{B_{j,k}}{H_Y(T) = h, (\deg s_j)_{i < j \leq h} \in A}{} \\
&\leq \sum_{k \geq K} kp_k \leq c'K^{-(\alpha - 1)}. \qedhere
\end{align*}
\end{proof}

\begin{rmk}\label{rmk:spinal decomp}
This implies that the vertex degrees of the spinal vertices can be stochastically dominated by independent random variables satisfying the tail bound of Proposition \ref{prop:spinal offspring UB general}. (This does not imply that the vertex degrees are themselves independent of each other). Moreover, the same proof applies along the spine to a uniformly chosen vertex, or to the vertex $u_j$ (lexicographically ordered) for any fixed $j < |T|$, or the spine to the leftmost vertex in generation $j$, for any fixed $j \leq \Height (T)$.
\end{rmk}

For technical reasons later on we will also need a slightly more general version of Proposition \ref{prop:spinal offspring UB general}. Set $H = \Height (T)$, and let $H_Y^{\wedge}$ denote the spinal index of the most recent common ancestor of $u_M$ and $u_{M^Y}$, where $M$ and $M_Y$ are as in \eqref{eqn:M H def}. Later on we will need to condition on both $H$ and $H_Y$, for which we will need the following upper bound on the spinal offspring distribution. The proof is the same as that of Proposition \ref{prop:spinal offspring UB general} on incorporating the extra height condition into the event $B_{j,k}$.

\begin{prop}\label{prop:spinal offspring UB general conditioned}
Take the notation as above. There exists $c'< \infty$ such that, for  any $h_1 \geq h_2 \geq h_3 \in \N \cup \{0\}$, any $h_3 < i \leq h_2$, any set $A \subset \N^{h_2-i}$ and any $K \geq 1$,
\[
\prcondb{\deg s_i \geq K}{H=h_1, H_Y(T) = h_2, H_Y^{\wedge} = h_3, (\deg s_j)_{i < j \leq h_2} \in A}{} \leq c'K^{-(\alpha - 1)}.
\]
\end{prop}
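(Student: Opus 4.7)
The plan is to mimic the proof of Proposition~\ref{prop:spinal offspring UB general}, strengthening the event $B_{j,k}$ there to absorb the additional height conditioning.

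First I would make the following geometric observation. Since $h_3 < i$, the vertex $s_i$ lies strictly below the MRCA $s_{h_3}$ of $u_M$ and $u_{M_Y}$ on the $Y$-spine, so $u_M$ is not a descendant of $s_i$. This forces every subtree hanging off $s_i$ to have height strictly less than $h_1 - i$: a subtree of height $\geq h_1 - i$ would either push the total height above $h_1$, or supply a vertex of depth $h_1$ whose MRCA with $u_{M_Y}$ sits at $s_i$ or below, contradicting $H_Y^{\wedge} = h_3 < i$ (recall that $u_M$ is the leftmost height-maximal vertex, so this analysis is unaffected by the tiebreaking rule).

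Next, on the event $\{\deg s_i = k+1\}$, let $T^{(1)}, \ldots, T^{(k)}$ denote the subtrees rooted at the offspring of $s_i$ in lexicographic order, and set
\[
B_{j,k}^{\mathrm{cond}} := B_{j,k} \cap \{\Height(T^{(\ell)}) < h_1 - i \text{ for all } 1 \leq \ell \leq k\},
\]
where $B_{j,k}$ is the event used in the proof of Proposition~\ref{prop:spinal offspring UB general}. Writing $E := \{H=h_1,\, H_Y = h_2,\, H_Y^{\wedge} = h_3,\, (\deg s_\ell)_{i<\ell\leq h_2} \in A\}$ for the full conditioning event, on $\{\deg s_i = k+1\} \cap E$ there is a unique $j$ such that $s_{i+1}$ is the $j$-th offspring of $s_i$, and this $j$ forces $B_{j,k}^{\mathrm{cond}}$ to hold. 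The same decomposition as in Proposition~\ref{prop:spinal offspring UB general} then delivers
\[
\prcondb{\deg s_i \geq K}{E}{} \leq \sum_{k \geq K} p_k \sum_{j=1}^{k} \prcondb{B_{j,k}^{\mathrm{cond}}}{E}{} \leq \sum_{k \geq K} k\, p_k \leq c' K^{-(\alpha - 1)},
\]
using the trivial bound $\prcondb{B_{j,k}^{\mathrm{cond}}}{E}{} \leq 1$ together with the offspring tail asymptotics \eqref{eqn:offspring tails}.

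The only conceptual step beyond the original proof is identifying the correct height constraint on the subtrees of $s_i$ implied by conditioning on $\{H = h_1\} \cap \{H_Y^{\wedge} = h_3\}$ when $h_3 < i$; this is the paragraph above. Once that is in hand, no new probabilistic input is needed: the subtrees of $s_i$ remain conditionally iid given the portion of the tree up to and including $s_i$, and there are still $k$ possible positions for the $Y$-spine to leave $s_i$, so the two inequalities above go through unchanged. I therefore do not anticipate a genuine obstacle beyond this bookkeeping.
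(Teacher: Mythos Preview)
Your approach is exactly what the paper intends: it says only that ``the proof is the same as that of Proposition~\ref{prop:spinal offspring UB general} on incorporating the extra height condition into the event $B_{j,k}$'', and you carry this out by defining $B_{j,k}^{\mathrm{cond}}$ and rerunning the two-line bound.

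One small slip in your geometric paragraph: the claim that every subtree hanging off $s_i$ has height \emph{strictly} less than $h_1-i$ is not quite right. If the Williams' spine leaves $s_{h_3}$ through a child to the \emph{left} of the $Y$-spine child, then $u_M$ lies to the left of every descendant of $s_i$, and a subtree of $s_i$ may well contain a vertex at depth $h_1$ without that vertex being $u_M$ (your appeal to ``$u_M$ is leftmost'' goes the wrong way here: it only rules out depth-$h_1$ vertices to the \emph{left} of $u_M$, not to the right). The correct constraint to put into $B_{j,k}^{\mathrm{cond}}$ is therefore $\Height(T^{(\ell)}) \le h_1 - i$; with this weakening the covering $\{\deg s_i = k+1\}\cap E \subset \bigcup_j B_{j,k}^{\mathrm{cond}}$ holds in both left/right cases, and the rest of your argument is unchanged.
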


Now restrict to the case $Y_i \equiv 1$, and recall that $H=H_Y(T)$ in this case. It is well-known \cite[remark below Proposition 2.2]{GeigerKerstingGWHeight} that the offspring distribution of spinal vertices is asymptotically size-biased as $H \rightarrow \infty$, but we will need the following precise formulation of this result.

\begin{prop}\label{prop:spinal offspring LB}
Let $c$ be as in \eqref{eqn:offspring tails}. There exists $c'<\infty$ such that for every $C, \epsilon>0$ there exists another constant $c_{\epsilon, C, c} \in (0, \infty)$ such that for all $n > c' \epsilon^{-1}$, all $k \leq Cn^{\frac{1}{\alpha - 1}}$ and any set $A \subset \N^{n}$:
\begin{enumerate}
\item $\prcondb{\deg s_n \geq k}{H \geq (1+\epsilon)n, (\deg s_i)_{i<n} \in A}{} \geq c_{\epsilon, C, c} k^{-(\alpha - 1)}.$
\item For any $m \geq (1+\epsilon)n$, $\prcondb{\deg s_n \geq k}{H = m, (\deg s_i)_{i<n} \in A}{} \geq c_{\epsilon, C, c} k^{-(\alpha - 1)}.$
\end{enumerate}
%NB $c$ depends on $C$.
\end{prop}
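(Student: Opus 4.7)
The plan is to reduce the statement to a claim about an unconditioned Galton--Watson tree via a spinal decomposition, and then to prove that claim by combining a Bernoulli-type inequality with the tail asymptotics for $\xi$ and $\Height(T)$.

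For the reduction step, I would enlarge the conditioning $\sigma$-algebra to include the entire ``external'' subtree $T^{\mathrm{ext}}$: i.e.\ the vertices $s_0, \ldots, s_n$, the positions of the $s_{i+1}$ among the children of $s_i$, and the subtrees hanging off these vertices other than $T_n$ (the subtree rooted at $s_n$). Since any uniform lower bound on a conditional probability transfers to a coarser conditioning, this is a legitimate reduction. The subtree $T_n$ is then, by the branching property, an unconditioned GW tree independent of $T^{\mathrm{ext}}$. Moreover, on the event that $s_n$ is on the spine to the leftmost deepest vertex we have $H = n + \Height(T_n)$; hence the joint event $\{s_n \text{ on spine}\} \cap \{H \geq (1+\epsilon)n\}$, evaluated conditionally on $T^{\mathrm{ext}}$, reduces to $\{\Height(T_n) \geq h^*\}$ for some $T^{\mathrm{ext}}$-measurable threshold $h^* \geq \epsilon n$. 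Here $h^*$ bundles together the $\epsilon n$ requirement and the (strict or non-strict, depending on left/right position) height comparisons forced by sibling subtrees via the leftmost tiebreaking convention.

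Consequently, part (1) reduces to the following key lemma for an unconditioned critical GW tree $T$: there exists $c>0$, depending on $C$, $\alpha$ and the constant in \eqref{eqn:offspring tails}, such that for all sufficiently large $h$ and all $k \leq C h^{1/(\alpha-1)}$,
\[
\prcondb{\deg \rho(T) \geq k}{\Height(T) \geq h}{} \geq c\, k^{-(\alpha-1)}.
\]
To prove this, write
\[
\prb{\deg \rho \geq k, \Height(T) \geq h} = \sum_{j \geq k} p_j \bigl(1 - (1-q_{h-1})^j\bigr),
\]
where $q_\ell := \prb{\Height(T) \geq \ell}$ and the factor $1 - (1-q_{h-1})^j$ is the probability that the maximum height among $j$ i.i.d.\ children-subtrees is at least $h-1$. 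Using the elementary inequality $1-(1-p)^j \geq c' \min(jp,1)$, splitting the sum at $j = 1/q_{h-1}$, and applying summation by parts together with the asymptotic $q_\ell \sim c_0 \ell^{-1/(\alpha-1)}$ (Lemma \ref{lem:tree height bound slack}) and the tail assumption \eqref{eqn:offspring tails}, one verifies that the above sum is bounded below by $c\, q_{h-1}\, k^{-(\alpha-1)}$ throughout $k \leq Ch^{1/(\alpha-1)}$ (with the constant degrading as $C$ grows, to cover the subcase $k > 1/q_{h-1}$). Dividing by $\prb{\Height(T) \geq h} = q_h \leq q_{h-1}$ then yields the key lemma. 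The hypothesis $n > c' \epsilon^{-1}$ ensures that $h = \Omega(\epsilon n)$ is large enough for the asymptotic for $q_\ell$ to be effective, while the hypothesis $k \leq Cn^{1/(\alpha-1)}$ passes to $k \leq C\epsilon^{-1/(\alpha-1)} h^{1/(\alpha-1)}$, producing the stated dependence of the final constant on $\epsilon$, $C$ and $c$.

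Part (2) follows by the same spinal reduction, with $\{H=m\}$ now translating under $T^{\mathrm{ext}}$ to $\{\Height(T_n) = m-n\}$. The analogous key lemma $\prcondb{\deg \rho \geq k}{\Height(T) = h}{} \geq c\, k^{-(\alpha-1)}$ is proved by writing the exact-height factor as $(1-q_h)^j - (1-q_{h-1})^j$, bounding this below via the mean value theorem by $c' j (1-q_{h-1})^{j-1} (q_{h-1}-q_h)$, and using that the ratio $(q_{h-1}-q_h)/(q_h-q_{h+1})$ arising after dividing by $\prb{\Height(T)=h}$ tends to $1$ as $h \to \infty$ (a consequence of $q_\ell$ being regularly varying with index $-1/(\alpha-1)$). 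The main technical subtlety throughout is the careful bookkeeping in the definition of $h^*$ under the leftmost tiebreaking convention, but this affects only the precise value of $h^*$ and not the essential conclusion that $h^* \geq \epsilon n$, so the proof goes through.
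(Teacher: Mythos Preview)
Your proposal is correct and follows essentially the same route as the paper: both reduce, via the Geiger--Kersting spinal decomposition, to lower-bounding $\prcondb{\deg\rho \geq k}{\Height(T)=h}$ (or $\geq h$) for an unconditioned tree, and both estimate this using $(1-q_{h-1})^{k-1}\gtrsim e^{-c}$ for $k\leq Ch^{1/(\alpha-1)}$ together with the tail asymptotics of $\xi$. One small simplification: for part~(2) your ratio $(q_{h-1}-q_h)/(q_h-q_{h+1})$ equals $\prb{H=h-1}/\prb{H=h}$, which is $\geq 1$ directly by Lemma~\ref{lem:height tree decreasing prob}, so the regular-variation argument is unnecessary (and this is exactly how the paper handles it, via the factor $\prb{H=m-n}/\prb{H=m}\geq 1$).
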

\begin{proof}
We prove the first point and the second point will follow as a byproduct of the proof. Given some $\tilde{k}\geq 1$, we first prove a corresponding result for $\prcondb{\deg s_n = \tilde{k}}{H \geq (1+\epsilon)n}{}$, and then obtain the result by summing over $\tilde{k} \geq k$. 

Take the notation as in the proof of Proposition \ref{prop:spinal offspring UB general}. The key observation is then as follows (this was first observed in \cite[Section 2]{GeigerKerstingGWHeight}). Given that $\deg s_n = k+1$, $(\deg s_i)_{i < n} \in A$ and $H = m$ (where $m > n$), the subtrees $(T^{(i)})_{i \leq k}$ are independent of $\deg s_n$, and conditionally on $j$ being the smallest index such that $T^{(j)}$ maximises $\Height (T^{(i)})$, the law of $T^{(j)}$ is conditioned to have $\Height (T^{(j)}) = m-n$, and the subtrees $(T^{(i)})_{i \leq k}$ are jointly conditioned to satisfy $\Height (T^{(i)}) < m-n$ for all $i < j$, and $\Height (T^{(i)}) \leq m-n$ for all $i > j$. Moreover, given all of this, the random variables $(T^{(i)})_{i \neq j}$ are independent of each other.

In particular, for $1 \leq n < m < \infty$ and $1 \leq j \leq k < \infty$, letting
\[
P_{j,k,m,n} = p_k \prb{H < m - n}^{j-1} \prb{H < m+1-n}^{k-j} \frac{\prb{H=m-n}}{\prb{H=m}},
\]
it therefore follows from \cite[Lemma 2.1]{GeigerKerstingGWHeight} and then Lemma \ref{lem:height tree decreasing prob} that for any $m \geq (1+\epsilon)n$ and any $1 \leq k \leq Cn^{\frac{1}{\alpha - 1}}$,
\begin{align*}
\prcondb{\deg s_n = k+1}{H = m, (\deg s_i)_{i < n} \in A}{} \geq \sum_{j=1}^k P_{j,k,m,n} &\geq k p_k \prb{H < \epsilon n}^{k-1} =k p_k \prb{H < \epsilon n}^{k-1}.
\end{align*}
%\begin{align*}
%\prcondb{\deg s_n = k+1}{H \geq (1+\epsilon)n, (\deg s_i)_{i < n} \in A}{} &= \sum_{m \geq (1+\epsilon)n} \prcondb{H=m}{H \geq (1+\epsilon)n, (\deg s_i)_{i < n} \in A}{} \sum_{j=1}^k P_{j,k,m,n}\\
%&\geq \sum_{m \geq (1+\epsilon)n} \prcondb{H=m}{H \geq (1+\epsilon)n, (\deg s_i)_{i < n} \in A}{} k p_k \prb{H < \epsilon n}^{k-1} \\
%&=k p_k \prb{H < \epsilon n}^{k-1}.
%\end{align*}
Then, by Lemma \ref{lem:tree height bound slack}, we know that we can choose $c'<\infty$ such that $\prb{H \geq \epsilon n} \leq c'(\epsilon n)^{-\frac{1}{\alpha - 1}}$ for all $n \geq \epsilon^{-1}$. Therefore, if $k \leq Cn^{\frac{1}{\alpha - 1}}$ then
\[
\prb{H < \epsilon n}^{k-1} \geq (1 - c'(\epsilon n)^{-\frac{1}{\alpha - 1}})^{Cn^{\frac{1}{\alpha - 1}}} \geq e^{-2Cc'\epsilon^{-\frac{1}{\alpha - 1}}}
\]
for all $n \geq 2(c')^{\alpha-1}\epsilon^{-1}$, so that $\prcondb{\deg s_n = k+1}{H \geq (1+\epsilon)n, (\deg s_i)_{i < n} \in A}{} \geq e^{-3Cc'\epsilon^{-\frac{1}{\alpha - 1}}}kp_k$.

To prove the result as stated, we can then choose $c_{\epsilon, C, c}>0$ so that
\begin{align*}
&\prcondb{\deg s_n \geq k+1}{H =m, (\deg s_i)_{i < n} \in A}{} \\
&\qquad \geq \sum_{k \leq \tilde{k} \leq 2k} \prcondb{\deg s_n = \tilde{k}+1}{H =m, (\deg s_i)_{i < n} \in A}{} \geq \sum_{k \leq \tilde{k} \leq 2k} e^{-3Cc'\epsilon^{-\frac{1}{\alpha - 1}}}kp_k \geq c_{\epsilon,C,c} k^{-(\alpha - 1)}.
\end{align*}
\end{proof}

\subsubsection{Vertex degrees}
We will also need the following result on the degree of a typical vertex.

\begin{lem}\label{lem:GW typical degree prob bound}
Let $n \in \N$, and let $T_n$ be a Galton--Watson tree with offspring distribution $\xi$ satisfying \eqref{eqn:offspring tails} but conditioned to have $n$ vertices. Let $U_n$ be uniform on $\{0, \dots, n-1 \}$ and let $u_{U_n}$ be the corresponding vertex in the lexicographical ordering of $T_n$. Then there exists a constant $c'<\infty$ such that for all $n \geq 1$ and all $k \geq 1$,
\[
\prb{\deg(u_{U_n}) \geq k} \leq c' k^{-\alpha}.
\]
\end{lem}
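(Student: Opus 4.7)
The strategy is to use the cycle lemma (equivalently, the Dwass/Kemperman identity) to reduce the statement to a local limit theorem estimate for a random walk conditioned on its endpoint.

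First, recall that the Lukasiewicz path $(W^{T_n}_m)_{0 \leq m \leq n}$ has increments $W^{T_n}_{m+1}-W^{T_n}_m = k_{u_m}(T_n)-1$. Listing the offspring counts of $T_n$ in lexicographical order therefore produces a sequence $(k_{u_0}-1,\ldots,k_{u_{n-1}}-1)$. By the cycle lemma, this sequence has the same distribution as a uniformly random cyclic shift of $(X_1,\ldots,X_n)$ conditioned on $S_n:=X_1+\cdots+X_n=-1$, where the $X_i$ are i.i.d.\ with law $\xi-1$. In particular, for a uniform $U_n$ the offspring count $k_{u_{U_n}}$ has the same marginal law as $X_1+1$ under $\prb{\,\cdot\mid S_n=-1}$. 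Since $\deg u_{U_n}\leq k_{u_{U_n}}+1$, it suffices to bound
\[
\prb{k_{u_{U_n}}\geq k-1}\;=\;\frac{\prb{\xi\geq k-1,\,S_n=-1}}{\prb{S_n=-1}}\;=\;\frac{\sum_{j\geq k-1}\xi(j)\,\prb{S_{n-1}=-j}}{\prb{S_n=-1}}.
\]

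Next I would invoke the Gnedenko local limit theorem for lattice random walks in the domain of attraction of an $\alpha$-stable law: since $\xi$ is aperiodic and satisfies \eqref{eqn:offspring tails} with $\alpha\in(1,2)$, there is a constant $C<\infty$ with $\sup_{x\in\Z}\prb{S_m=x}\leq C m^{-1/\alpha}$ for all $m\geq 1$. For the denominator, Kemperman's formula gives $\prb{|T|=n}=\frac{1}{n}\prb{S_n=-1}$, so combined with Lemma \ref{lem:prog tail bound}(i) we obtain $\prb{S_n=-1}\sim c n^{-1/\alpha}$, and in particular $\prb{S_n=-1}\geq c' n^{-1/\alpha}$ for all $n$ large enough. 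Plugging both estimates into the displayed expression yields, for all sufficiently large $n$,
\[
\prb{k_{u_{U_n}}\geq k-1}\;\leq\;\frac{C n^{-1/\alpha}}{c' n^{-1/\alpha}}\sum_{j\geq k-1}\xi(j)\;\leq\;C''\,k^{-\alpha},
\]
where the last inequality is \eqref{eqn:offspring tails} (integrated).

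Finally, for the residual small values of $n$ (where the LLT lower bound on $\prb{S_n=-1}$ is not yet effective) and for small $k$, the bound is trivial: $\prb{\deg u_{U_n}\geq k}\leq 1\leq c'k^{-\alpha}$ after possibly enlarging $c'$, so the uniform statement over all $n,k\geq 1$ follows. The main (and only) nontrivial ingredient is the uniform Gnedenko-type local limit theorem, which is classical for offspring distributions satisfying \eqref{eqn:offspring tails}; everything else is a direct computation. No step here is expected to pose a serious obstacle.
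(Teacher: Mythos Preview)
Your proof is correct and follows essentially the same route as the paper's: both use the cyclic shift (the paper calls it the Vervaat transform, you the cycle lemma/Kemperman identity) to pass from the excursion to the bridge, then express the first increment of the bridge via an absolute-continuity ratio $\prb{S_{n-1}=-j}/\prb{S_n=-1}$, and finally bound this ratio uniformly using the Gnedenko local limit theorem. The only cosmetic difference is that the paper packages the density as $p_{n-1}(-W(1)-1)/p_n(-1)$ and argues it is uniformly bounded in one step, whereas you bound numerator and denominator separately; the content is identical.
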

\begin{proof}
Fix $n \in \N$, and recall from Section \ref{sctn:GW tree defs} that the vertex degrees of the vertices of $T_n$ correspond to (two more than) each of the jump sizes of the Lukasiewicz path $W^{(n)}$, which is conditioned to first hit $-1$ at time $n$. Since ${U_n}$ is uniform amongst $\{0, \ldots, n-1\}$, it follows from the (discrete) Vervaat transform (e.g. see \cite[Proposition 10]{KortSubexp}) that the ${U_n}^{th}$ cyclic shift of $W^{(n)}$, i.e. the random walk $\tilde{W}^{(n)}$ given by
\[
\tilde{W}^{(n)}(t) = \begin{cases} {W}^{(n)}({U_n}+t) - {W}^{(n)}({U_n}) &\text{ if } {U_n}+t \leq n \\
{W}^{(n)}({U_n}+t-n) - {W}^{(n)}({U_n}) &\text{ if } {U_n}+t > n
\end{cases}
\]
is a random walk bridge from $0$ to $-1$, by which we mean that, if $W$ is a simple random walk with $\prb{W_{m+1}-W_m=x-1} = \xi(x)$ for all $m, x \in \N$, then $\left(\tilde{W}^{(n)}\right)_{m=0}^n$ has the law of $\left(W\right)_{m=0}^n$ but conditioned on $W_n = -1$. In particular, it follows from the Markov property that $\tilde{W}^{(n)}$ has a density with respect to the unconditioned walk $W$, and moreover $\deg u_{U_n}$ is equal to $\tilde{W}^{(n)}(1)+2$ (or equal to $\tilde{W}^{(n)}(1)+1$ if ${U_n}=0$). We deduce that for any $k \geq 1$,
\begin{align}\label{eqn:Verv trans abs cont rel}
\prb{ \deg u_{U_n} \geq k} \leq \prb{ \tilde{W}^{(n)}(1) \geq k-2} = \Eb{ \mathbbm{1}\{W(1) \geq k-2\} \frac{p_{n-1}(-W(1)-1)}{p_n(-1)}}.
\end{align}
Moreover, by the local limit theorem on p.236 of \cite{GnedenkoKolmogorovLimitStable}, we have that 
\begin{align}\label{eqn:Gned LLT}
a_n \prb{W(n) = k} - \tilde{p}_{\alpha}\Big(\frac{k}{a_n}\Big) \rightarrow 0
\end{align}
uniformly in $k$, where $\tilde{p}_{\alpha}$ is the density of a centred stable random variable $Y_{\alpha}$, and $\tilde{p}_{\alpha}(0) = |\Gamma(\frac{-1}{\alpha})|^{-1}>0$. Moreover, under the assumption of \eqref{eqn:offspring tails} on the tails of $\xi$, $a_n$ is of order $n^{\frac{1}{\alpha}}$. It follows from this that the ratio $\frac{p_{n-1}(-W(1)-1)}{p_n(-1)}$ is bounded above uniformly in $k$ and $n$, which gives the result.
\end{proof}

\begin{lem}\label{lem:max degree tail}
Let $T$ be a Galton--Watson tree with a critical aperiodic offspring distribution $\xi$ satisfying \eqref{eqn:offspring tails}. Then there exist constants $c', C' \in (0, \infty)$ such that for all $k \geq 1$,
\[
c'k^{-1} \leq \prb{\sup_{v \in T} (\deg v) \geq k} \leq C'k^{-1}
\]
\end{lem}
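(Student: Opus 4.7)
The plan has two halves. For the upper bound, I would invoke Lemma \ref{lem:stable composition tail}(i) with $X = |T|$ and $Y = \sup_{v \in T} \deg v$. By Lemma \ref{lem:prog tail bound}(i) the tail exponent of $X$ is $\beta = 1/\alpha$. For the conditional tail, Lemma \ref{lem:GW typical degree prob bound} gives $\prb{\deg u_{U_n} \geq k} \leq C k^{-\alpha}$ for a uniformly chosen vertex in $T_n$, which is equivalent to
\[
\Eb{\#\{v \in T_n : \deg v \geq k\}} \leq C n k^{-\alpha},
\]
so by Markov $\prcondb{\sup_{v} \deg v \geq \lambda n^{1/\alpha}}{|T|=n}{} \leq C\lambda^{-\alpha}$. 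Applying the lemma with $z = 1/\alpha$, $m = \alpha$ (so $m > \beta/z = 1$) gives $\prb{\sup_v \deg v \geq k} \leq C' k^{-1}$.

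For the lower bound I would argue by a fixed-point equation. Writing $q_k := \prb{\sup_v \deg v \geq k}$ and conditioning on $\deg \rho$ (the subtrees rooted at the children of $\rho$ being i.i.d.\ copies of $T$), I obtain
\[
q_k = \sum_{j \geq k} \xi(j) + \sum_{j < k} \xi(j)\bigl(1 - (1-q_k)^j\bigr) = 1 - \sum_{j < k} \xi(j)(1-q_k)^j,
\]
which rearranges to
\[
\Phi(1-q_k) - (1-q_k) = \sum_{j \geq k} \xi(j)(1-q_k)^j,
\]
where $\Phi$ is the generating function of $\xi$. Restricting the right-hand sum to $j \in [k, 2k]$ and using the already-established upper bound $q_k \leq C/k$ gives the lower bound
\[
\Phi(1-q_k) - (1-q_k) \geq (1-q_k)^{2k}\,\xi([k, 2k]) \geq e^{-3C} \cdot c k^{-\alpha}.
\]
Combined with an asymptotic $\Phi(1-s) - (1-s) \leq C'' s^\alpha$ for small $s$, this yields $q_k^\alpha \gtrsim k^{-\alpha}$, hence $q_k \geq c' k^{-1}$.

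The main technical step not appearing elsewhere in the paper is the two-sided estimate $\Phi(1-s)-(1-s) \asymp s^\alpha$ as $s \to 0^+$. I would prove this directly (no Tauberian black box) by using $\sum_k k \xi(k) = 1$ to write
\[
\Phi(1-s) - (1-s) = \sum_{k \geq 2} \xi(k)\bigl[(1-s)^k - 1 + ks\bigr],
\]
noting each summand is nonnegative, and splitting the sum at $k \asymp 1/s$: for $k \leq 1/s$ the bracket is of order $k^2 s^2$, and for $k > 1/s$ it is of order $ks$. The tail $\xi(k) \sim c k^{-\alpha-1}$ with $\alpha \in (1,2)$ makes both pieces of order $s^\alpha$ (the exponents $2-\alpha$ and $\alpha-1$ conspire). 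This elementary computation is the main obstacle; everything else is the composition lemma and a one-line fixed-point rearrangement.
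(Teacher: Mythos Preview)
Your upper bound is exactly the paper's: both use Lemma~\ref{lem:GW typical degree prob bound} with Markov/union bound to get $\prcondb{\sup_v \deg v \geq \lambda n^{1/\alpha}}{|T|=n}{} \leq C\lambda^{-\alpha}$, then feed this into Lemma~\ref{lem:stable composition tail}(i) with $\beta = z = 1/\alpha$ and $m=\alpha$.

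Your lower bound takes a genuinely different route. The paper establishes the conditional lower bound $\prcondb{\sup_v \deg v \geq n^{1/\alpha}}{|T|=n}{} \geq c$ via the Vervaat transform: it cyclically shifts the Lukasiewicz path by a uniform index to obtain an \emph{unconditioned} bridge, uses the local limit theorem \eqref{eqn:Gned LLT} to control the Radon--Nikodym density, and then argues that among $\lfloor n/2\rfloor$ unconditioned steps one exceeds $n^{1/\alpha}$ with uniformly positive probability. This conditional statement then plugs into the lower-bound clause of Lemma~\ref{lem:stable composition tail}(i). Your fixed-point argument is more self-contained: it avoids the Lukasiewicz path, the Vervaat transform, and the local limit theorem entirely, using only the branching recursion and the elementary asymptotic $\Phi(1-s)-(1-s)\asymp s^\alpha$. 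The latter is cleaner for this lemma in isolation; the paper's approach has the side benefit of yielding the \emph{conditional} bound (on $|T|=n$), and reuses machinery (Vervaat, \eqref{eqn:Gned LLT}) that appears again later, e.g.\ in the proof of Proposition~\ref{prop:GW vol tail}. One small caveat: your recursion is literally correct for ``number of offspring $\geq k$'' rather than graph degree, but the two differ by at most $1$ so the asymptotics are unaffected.
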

\begin{proof}
We first take some $n \geq 1$ and look at what happens on the event $\{|T|=n\}$ so that we can apply Lemma \ref{lem:stable composition tail} to the pair $(X, Y)=(|T|, \ \sup_{v \in T} \deg v)$. For an upper bound, note that it follows from a union bound and Lemma \ref{lem:GW typical degree prob bound} that for all $n, \lambda \geq 1$,
\begin{align}\label{eqn:sup deg LB}
\prb{\sup_{v \in T_n} \deg v \geq n^{\frac{1}{\alpha}} \lambda} &\leq \sum_{i=0}^{n-1} \prb{ \deg u_i \geq n^{\frac{1}{\alpha}}\lambda} = n\prb{\deg u_{U_n} \geq n^{\frac{1}{\alpha}} \lambda} \leq c' \lambda^{-\alpha}.
\end{align}

For the lower bound, we can apply the Vervaat transform as in the previous proof. Note that it follows from the Markov property just as in \eqref{eqn:Verv trans abs cont rel} that if $A_n$ is an event depending precisely on the degrees of the vertices $(u_{U+i})_{i \leq \lfloor \frac{n}{2}\rfloor }$, and therefore precisely on $(\tilde{W}^{(n)}(i))_{i \leq \lfloor \frac{n}{2}\rfloor }$, then
\begin{align*}%\label{eqn:Verv trans abs cont rel}
\prb{ A_n(\tilde{W}^{(n)})} = \Eb{ \mathbbm{1}\{A_n (W)\} \frac{p_{\lceil \frac{n}{2}\rceil}(-W(\lfloor \frac{n}{2}\rfloor)-1)}{p_{n}(-1)}}.
\end{align*} 

By our assumptions on the offspring distribution from \eqref{eqn:offspring tails} we can take $c>0$ so that $\xi ([n^{\frac{1}{\alpha}}, \infty)) \geq cn^{-1}$ for all $n \geq 1$. By \cite[Section VIII, Proposition 4]{BertoinLevy} and \eqref{eqn:Gned LLT}, we can also choose $A< \infty$ so that 
\[
\prb{\left|W\left(\left\lfloor \frac{n}{2}\right\rfloor \right) + 1\right| > An^{\frac{1}{\alpha}}} \leq \frac{c}{5}.
\]
On the event $\{|W\left(\lfloor \frac{n}{2}\rfloor \right) + 1| \leq An^{\frac{1}{\alpha}}\}$, note that $\frac{p_{\lceil \frac{n}{2}\rceil}(-W(\lfloor \frac{n}{2}\rfloor)-1)}{p_{n}(-1)}$ is uniformly bounded below by a positive constant (again by \eqref{eqn:Gned LLT}). Therefore, if
\[
A_n(\tilde{W}^{(n)}) = \left\{\sup_{i \leq \lfloor\frac{n}{2}\rfloor} \deg u_i \geq n^{\frac{1}{\alpha}} \text{ and } \left|\tilde{W}^{(n)}\left(\left\lfloor \frac{n}{2}\right\rfloor\right) + 1\right| \leq An^{\frac{1}{\alpha}}\right\},
\]
then there exists $\tilde{c}>0$ such that (re-running the argument with a smaller $c$ if necessary so that $1- e^{-c/3} \geq \frac{c}{4}$):
\begin{align*}
\prb{\sup_{v \in T_n} \deg v \geq n^{\frac{1}{\alpha}}} \geq \prb{A_n(\tilde{W}^{(n)})} \geq \tilde{c}\prb{A_n(W)} &\geq\tilde{c} \left[ 1 - (1-cn^{-1})^{\frac{n-1}{2}} - \frac{c}{5}\right] \geq \tilde{c} \left[1- e^{-c/3} - \frac{c}{5}\right] \geq \frac{c\tilde{c}}{20}. 
\end{align*}
Combining with \eqref{eqn:sup deg LB} and since $\prb{|T|\geq n} \sim c'n^{\frac{-1}{\alpha}}$ for some $c' \in (0, \infty)$ by Lemma \ref{lem:prog tail bound}, we can now apply Lemma \ref{lem:stable composition tail}$(i)$ with $(X, Y)=(|T|, \sup_{v \in T} \deg v)$, $\beta= z = \frac{1}{\alpha}$ and $m=\alpha$ to deduce the result.
%
%The proof is essentially the same as above: given $|T|=n$, and applying the Vervaat transform and absolute continuity relation on the first $\lfloor \frac{n}{2} \rfloor$ vertices as in the previous proof, we get that
%\begin{align*}
%\prb{\sup_{v \in T_n} \deg v \geq n^{\frac{1}{\alpha}} \lambda} &\geq 1 - (1-cn^{-1}\lambda^{-\alpha})^{\frac{n-1}{2}} \geq 1- e^{-c\lambda^{-\alpha}} \geq c \lambda^{-\alpha}\\
%\prb{\sup_{v \in T_n} \deg v \geq n^{\frac{1}{\alpha}} \lambda} &\leq \sum_{v \in T_n} \prb{ \deg v \geq n^{\frac{1}{\alpha}}\lambda} = n\prb{\deg u_U \geq n^{\frac{1}{\alpha}} \lambda} \leq c \lambda^{-\alpha}.
%\end{align*}
%Then, since $\prb{|T|\geq n} \sim cn^{\frac{-1}{\alpha}}$, we can apply Lemma \ref{lem:stable composition tail} with $\beta= z = \frac{1}{\alpha}, m=\alpha$ to deduce the result.
\end{proof}

\subsection{Infinite critical trees}\label{sctn:infinite trees bckgrnd}
In this section we introduce Kesten's tree $T_{\infty}$ for a given critical offspring distribution $\xi$.% In light of Theorem \ref{thm:Kesten LLT}, it is the natural way to construct such an infinite tree.

\begin{defn}\label{def:Kesten's tree}\cite{KestenIICtree}.
Let $\xi$ be a critical offspring distribution, and define its size biased version $\xi^*$ by
\[
\xi^*(n) = {n \xi (n)}
\]
for all $n \geq 0$. The \textbf{Kesten's tree} $T_{\infty}$ associated to the probability distribution $\xi$ is a two-type Galton--Watson tree distributed as follows:
\begin{itemize}
\item Individuals are either normal or special.
\item The root of $T_{\infty}$ is special.
\item A normal individual produces only normal individuals according to $\xi$.
\item A special individual produces individuals according to the size-biased distribution $\xi^*$. Of these, one of them is chosen uniformly at random to be special, and the rest are normal.
\end{itemize}
\end{defn}
Almost surely, the special vertices form a unique infinite backbone of $T_{\infty}$. Note that this is one-ended. Aldous in \cite{AldousFringeSinTree} coined the term \textit{sin-trees} for such trees, since they have a single infinite spine. Although a critical Galton--Watson tree is almost surely finite, Kesten \cite[Lemma 1.14]{KestenIICtree} showed that $T_{\infty}$ arises as the local limit of a critical Galton--Watson tree with offspring distribution $\xi$ as its height goes to infinity.

%The following local limit theorem was originally proved by Kesten in \cite{KestenIICtree} under a second moment condition, but was proved with the stated assumptions in \cite[Theorem 7.1]{JansonSurvey}, and demonstrates that this construction is the right one to take.
%
%\begin{theorem}\label{thm:Kesten LLT}(\cite[Lemma 1.14]{KestenIICtree}, \cite[Theorem 2.1.1]{AbDelGWIntro}, \cite[Theorem 7.1]{JansonSurvey}).
%Let $\xi$ be a critical offspring distribution with $\xi(0) + \xi(1) < 1$ and define $T_{\infty}$ as in Definition \ref{def:Kesten's tree}. Let $T_n$ be a Galton--Watson tree with offspring distribution $\xi$ conditioned on having height at least $n$. Then
%\[
%T_n \overset{(d)}{\rightarrow} T_{\infty}
%\]
%** topology as $n \rightarrow \infty$.
%\end{theorem}

\subsection{Decorated tree definition}\label{sctn:dec tree def}
In this section we give the construction of our decorated tree $\TERa$. Formally, we let $\Tai$ denote Kesten's tree with a critical (aperiodic) offspring distribution $\xi$ satisfying \eqref{eqn:offspring tails}.
%, and let $\dai$ denote the graph distance on $\Tai$. %For convenience, we also assume that $\xi$ is aperiodic, though the results should also transfer in the periodic case with some extra notation.% The results also hold on incorporating a slowly-varying function, but for sake of clarity we have not included this here, and just note that the slowly-varying function can be carried through all the computations.

In line with the literature, we also let $p_k = \xi(k)$. By standard theory (e.g. \cite[Chapter VIII]{BGTRegVariation}), if $(X_i)_{i=1}^{\infty}$ are i.i.d. distributed according to $\xi$, it follows that $a_n^{-1} \sum_{i=1}^{\infty} X_i \rightarrow Z_{\alpha}$, where $Z_{\alpha}$ is a non-negative $\alpha$-stable random variable and $a_n = (c|\Gamma (-\alpha)|n)^{\frac{1}{\alpha}}$. ($Z_{\alpha}$ is different to $Y_{\alpha}$ in the proof of Lemma \ref{lem:GW typical degree prob bound}, which may be negative).

To construct the decorated model, we will suppose that $((G_n, d_n, V_n, \ell_n))_{n \geq 1}$ is a sequence of random graphs with some pre-specified distribution, such that for all $n \geq 1$, $G_n$ is almost surely connected, $d_n$ is a metric on $G_n$, $V_n$ is a measure on the vertices of $G_n$, and $G_n$ has $n$ pre-specified ``boundary'' vertices (for example, a tree with $n$ leaves, or a dissection of the $n$-gon). Moreover, we assume that the boundary vertices $\partial G_n$ come pre-equipped with a labelling $\ell_n: \partial G_n \to \{0, 1, \ldots, n-1\}$ (in the case of a random planar map, this would represent the clockwise ordering of the boundary vertices when $G_n$ is embedded in the plane; for a graph such as the complete graph where the ordering is not important, $\ell_n$ can be a uniform bijection from $\partial G_n$ to $\{0, 1, \ldots, n-1\}$).

Finally we will assume for the construction that $\Ti$ has been \textbf{planted} in the following way: we add a single vertex to $\Ti$, which we call the \textbf{seed} and denote $\p (\rho)$, and attach it to the root of $\Ti$. We call the edge joining the seed to the root the \textbf{root edge}. We let $\rho$ denote the root vertex of $\Ti$.

To construct $\TERa$, we work on the probability space $(\mathbf{\Omega}, \mathbf{\mathcal{F}}, \bPb)$ and do the following:
\begin{enumerate}
\item Sample a pair $\left(\Tai, ((G(u), d(u), V(u), \ell(u))\right)_{u \in \Tai \setminus \p(\rho)})$, where $\Tai$ is Kesten's tree as in Definition \ref{def:Kesten's tree} and has been planted, and for each $u \in \Ti$, $G(u)$ is independently sampled according to the law of $(G_{\deg u}, d_{\deg u}, V_{\deg u}, \ell_{\deg u})$.
\item For each $u \in \Tai$, independently choose $U_u \sim \textsf{Uniform} \{0, \ldots \deg u -1\}$. Define a bijection $\ell_U(u): \partial G(u) \to \{0, 1, \ldots, \deg u -1\}$ by $\ell_U(u)(v) \mapsto (\ell(v)+U_u) \mod (\deg u)$, and then an injection $b(u): \partial G(u) \to E(\Tai)$ such that $b(u)(v) = (u, \p (u))$ if $ \ell_U(u)(v) = 0$, and $b(u)(v) = (u, u(\ell_U(u)(v)))$ otherwise (recall that for $j \geq 1$, $uj$ is the $j^{th}$ child of $u$, ordered from left to right).% If $u = \rho$, instead set $b(u)(v) = (u, u(\deg u))$ when $ \ell_U(u)(v) = 0$ (since the root has no parent).
\item Given a vertex $x \in \bigcup_{u \in \Tai} G(u)$ and a vertex $v \in \Ti$, we say that $v = \hat{V}_T(x)$ if $x \in G(v)$. We then define an equivalence relation $\esim$ on the vertices of $\bigcup_{u \in \Tai} \partial G(u)$ by saying that $x \esim y$ if and only if $b(\hat{V}_T(x))(x) = b(\hat{V}_T(y))(y)$, or in other words that they are both in bijection with the same edge of $\Tai$.
\item We then set
\[
\TERa = \bigcup_{v \in \Tai} G(v) / \esim.
\]
%If $\rho$ is the root of $\Tai$, we also define the root of $\TERa$ to be the vertex $ x \in \partial G(\rho)$ such that $b(\rho)(x)$ is equal to the edge joining $\rho$ to its first child, and denote this vertex by $\rhoERA$.
\item We define a metric $\dg$ on $\TERa$ as follows. If $x, y \in \TERa$ and $d(x,y)=n$ for some $n \geq 0$, we let $[[\hat{V}_T(x), \hat{V}_T(y)]] = v_0, v_1, \ldots, v_n$ denote the path of (underlying tree) vertices between $\hat{V}_T(x)$ and $\hat{V}_T(y)$ in $\Tai$. We then define $\dg$ by setting
\begin{equation}\label{eqn:decorated metric def}
\begin{split}
    \dg (x,y) = &d_{G(v_0)}\left(x, b(v_0)^{-1}((v_0,v_1))\right) + \sum_{1 \leq i \leq n-1} d_{G({v_i})}\left(b(v_i)^{-1}((v_{i-1},v_i)), b(v_i)^{-1}((v_i, v_{i+1}))\right) \\
&+ d_{G(v_n)}\left(b(v_n)^{-1}((v_{n-1},v_n)), y\right),
\end{split}
\end{equation}
where $d_{G({v_i})} = d(v_i)$ represents the metric inherited from $G({v_i})$.
\item We define a measure $\V = \sum_{u \in \Tai} V(u)$.
\item Finally, we root $\TERa$ as follows. Recall from step 2 that there is a unique vertex $x \in \partial G(\rho)$ such that $b(\rho)(x) = (\rho, \p(\rho))$. Denote this vertex $x^*$. We add a new vertex $\rER$ to $\TERa$, join it to $x^*$ by a single edge of length one, so that $\dg (\rER, x^*)=1$. Also set $\V (\rER)=1$.
\end{enumerate}

\begin{rmk}
The reason for this rooting convention is that it will be technically convenient to bound distances and volumes away from $0$ to prove some of the bounds required to directly apply the theorems of \cite{KumMisumiHKStronglyRecurrent}. The results of this paper will still be true with other sensible rooting conventions, but the assumptions of \cite{KumMisumiHKStronglyRecurrent} would not hold exactly as stated in their paper.
\end{rmk}

If $v \in \Tai$, let $T_v$ be the subtree of $\Tai$ rooted at $v$. This also induces a subgraph of $\TERa$ consisting of the graphs $\bigcup_{u \in T_v} G(u)$. We denote this subgraph by $\TERa(v)$. %Each vertex in $\TERa(v)$ corresponds to an edge in $T_v$, apart from one vertex in $G(v)$ which corresponds to the edge joining $v$ to its parent in $\Tai$.

To study a simple random walk on $\TERa$, we will use the measure $\V$ such that $\V(x) = \deg x$ for all $x$ in $\TERa$ (and therefore we also take $V_n(x) = \deg x$ on $G_n$). The assumptions on the measure $V_n$ in Assumption \ref{assn:sec} then correspond to assumptions on the number of edges in $G_n$.

Throughout this paper, if $T$ is a finite tree, its \textbf{decorated version} describes the metric measure space obtained by replacing $\Tai$ with $T$ in the above construction.

\subsection{Simple random walk on $\TERa$}
A simple random walk $(X_n)_{n \geq 0}$ is a discrete-time Markov chain on $\TERa$ with $X_0 = \rER$ and 
\[
\prcond{X_{n+1} = y}{X_n=x}{} = \frac{1}{\deg x} \mathbbm{1}\{y \sim x\}
\]
for all $n \geq 1$. We define the (consequently symmetric) transition density by
\[
p_n(x,y) = \frac{\prstart{X_n=y}{x}{}}{\deg y}.
\]

In this paper, we will be interested in two metrics on $\TERa$: the metric $\dg$ constructed above, and the effective resistance metric. We let $\tau_r$ denote the exit time of $(X_n)_{n \geq 0}$ from a ball of radius $r$ of $\TERa$ defined with respect to the metric $\dg$.

For two vertices $a, b \in \TERa$, the effective resistance between them is defined as 
\[
R(a,b) =\left( \inf \{\mathcal{E}(f,f): \mathcal{E}(f,f) < \infty, f(a)=1, f(b)=0\}\right)^{-1},
\]
where 
\[
\mathcal{E}(f,f) = \frac{1}{2}\sum_{\substack{x, y \in \TERa: \\ x \sim y}} (f(x)-f(y))^2.
\]
This is a rather abstract definition but really corresponds to the classical notion of electrical resistance when we consider $G$ to be an electrical network in which all edges have conductance $1$. In particular, $R$ is a metric (e.g. see \cite{tetali1991random}) and satisfies the parallel and series laws for resistance.

We therefore also let $\tau^R_r$ denote the exit time of $(X_n)_{n \geq 0}$ from a ball of radius $r$ defined with respect to the effective resistance metric.

We will assume that $\Tai$ and the set $\left((G(v), d(v), V(v), \ell (v))\right)_{v \in \Tai}$ are defined on the probability space $(\mathbf{\Omega}, \mathbf{\mathcal{F}}, \bPb)$. We denote the law of a random walk on $\TERa$ by $\bP$: this law is therefore also a random variable on the probability space $(\mathbf{\Omega}, \mathbf{\mathcal{F}}, \bPb)$.

\section{Exponents for finite decorated Galton--Watson trees}\label{sctn:finite tree bounds dec}
In this section we prove some results on volume and distance on finite decorated Galton--Watson trees.
Since $\Tai$ is constructed to have one infinite backbone to which many finite fringe Galton--Watson trees are grafted, these estimates will be crucial when we prove the volume and resistance bounds for $\TERa$ in Sections \ref{sctn:volume bounds decorated} and \ref{sctn:dec res bounds}.

%\begin{tcolorbox}[colback=white]
%\begin{center}
%In this section, we work under Assumption \ref{assn:sec}, unless explicitly stated otherwise (the main exception to this is in Section \ref{sctn:spinal decomp dec}). 
%\end{center}
%\end{tcolorbox}

The strategy will be to use the results of Section \ref{sctn:GW tree undec} together with Assumption \ref{assn:sec} and Lemma \ref{lem:stable composition tail} to compute the exponents for decorated trees.

For technical reasons in this section we will need to consider the following structures for $n, h \geq 1$.
\begin{itemize}
\item $T_{[n,2n]}$, which is a Galton--Watson tree conditioned on having total progeny in the interval $[n, 2n]$.
\item $\TER_{[n, 2n]}$, which is a decorated version of $T_{[n,2n]}$.
\item $T^{[h,2h]}$, which is a Galton--Watson tree conditioned on having height in the interval $[h, 2h]$.
\item $\TERh$, which is a decorated version of $T^{[h,2h]}$. 
\end{itemize} 

\subsubsection{Heuristics}
Since the backbone vertices of $\Ti$ have size-biased tails by Definition \ref{def:Kesten's tree}, it follows from Lemma \ref{lem:stable composition tail}$(i)$ (with $(X,Y) = (\deg s_n,\diam (G(s_n)) )$,  $z=\frac{1}{d}$, $\beta = \alpha - 1$ and $m = \frac{\alpha}{\alpha - 1}$) that, if $s_n$ is the $n^{th}$ backbone vertex of $\Tai$, then there exist constants $c', C' \in (0, \infty)$ such that
\[
c'x^{-\sad} \leq \prb{\diam (G(s_n)) \geq x} \leq C'x^{-(\sad \wedge \frac{\alpha}{\alpha - 1})}.
\]
It is a standard fact that a sum of i.i.d. random variables $\sum_{i=1}^n X_i$ where $\pr{X_1 > x}$ is of order $x^{-\beta}$ is typically of order $n^{\frac{1}{\beta \wedge 1}}$ (some precise results are proven in the appendix). Therefore, we expect that
\[
\sum_{u \preceq s_n} \diam (G(u)) \approx n^{\frac{1}{\sad \wedge 1}}.
\]
This is why there is a factor of $\sad \wedge 1$ appearing in \eqref{eqn:volume exponent def} and \eqref{eqn:spec dis exponent def}. Similar results respectively hold for $\saR$ and $\fv$ for sums with terms of the form $\diamR (G(u))$ or $V (G(u))$. In this section we give quantitative versions of these kinds of statements.

\subsection{Decorated height bounds}
\subsubsection{Lower bounds}
If $\TER$ is the decorated version of a finite Galton--Watson tree $T$, we define the \textbf{decorated height} of $\TER$ by
\[
\Heightdec (\TER) = \sup_{x \in \TER} \dg(\rhoERA, x).
\]

We start with a lower bound on the decorated height.

\begin{prop}\label{prop:TERa height prob LB}
Under Assumption \ref{assn:sec} and with offspring distribution satisfying \eqref{eqn:offspring tails}, if $\sad \neq 1$, then there exists a constant $c' \in (0, \infty)$ such that for all $n\geq 1$,
\[
\prb{\Heightdec( \TER) \geq n} \geq c'n^{\frac{-(\sad \wedge 1)}{\alpha - 1}}.
\]
If $\sad = 1$, then there exists a constant $c' \in (0, \infty)$ such that for all $n\geq 1$,
\[
\prb{\Heightdec( \TER) \geq n \log n} \geq c'n^{\frac{-1}{\alpha - 1}}
\]
\end{prop}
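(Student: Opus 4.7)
The plan is to apply Lemma \ref{lem:stable composition tail} with $X := \Height(T)$ and $Y := \Heightdec(\TER)$. Lemma \ref{lem:tree height bound slack} gives $\prb{\Height(T) \geq n} \sim c' n^{-1/(\alpha-1)}$, so $\beta = 1/(\alpha-1)$ in the notation of that lemma. The problem therefore reduces to establishing a conditional lower bound of the form $\prcondb{\Heightdec(\TER) \geq c_0\, h^{1/(\sad \wedge 1)}}{\Height(T) \geq h}{} \geq c_0$ uniformly in $h$ large (with $h^{1/(\sad \wedge 1)}$ replaced by $c_0\, h \log h$ when $\sad = 1$); small $h$ is then absorbed into the constant $c'$.

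To produce such conditional bounds I work along the Williams spine $s_0 = \rho, s_1, \ldots, s_{\Height(T)}$ of $T$, writing $e_i, f_i \in \partial G(s_i)$ for the boundary vertices corresponding to the parent edge (entry) and the spine-child edge (exit) of $s_i$. Two elementary lower bounds on $\Heightdec(\TER)$ are available: the trivial $\Heightdec(\TER) \geq \Height(T)$ (since each entry-exit graph distance is at least $1$ whenever $\deg s_i \geq 2$), and the max-diameter bound $\Heightdec(\TER) \geq \tfrac{1}{2}\max_i \diam(G(s_i))$, obtained from $\Heightdec(\TER) \geq \max_x \dg(\rho, x) \geq \max_{x \in G(s_i)} d_{G(s_i)}(e_i, x) \geq \diam(G(s_i))/2$ for any $i$.

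Case $\sad > 1$ follows immediately from $\Heightdec(\TER) \geq \Height(T)$ and Lemma \ref{lem:stable composition tail}(i) with $z = 1$. Case $\sad < 1$ combines the max-diameter bound with Proposition \ref{prop:spinal offspring LB}: conditional on $\Height(T) \geq h$, each $\deg s_i$ for $h/4 \leq i \leq h/2$ satisfies $\prcondb{\deg s_i \geq k}{\Height(T)\geq h,\ \text{past}}{} \geq c k^{-(\alpha - 1)}$ for $k \leq Ch^{1/(\alpha - 1)}$, uniformly in the past spinal degrees. A coupling from below with i.i.d.\ truncated heavy-tailed variables then yields $\max_i \deg s_i \geq c h^{1/(\alpha - 1)}$ with positive probability, and Assumption \ref{assn:sec}(D) converts this into $\max_i \diam(G(s_i)) \geq c h^{1/((\alpha-1)d)} = c h^{1/\sad}$ with positive probability. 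Lemma \ref{lem:stable composition tail}(i) with $z = 1/\sad$ now gives $\prb{\Heightdec(\TER) \geq n} \geq c n^{-\sad/(\alpha - 1)}$.

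The main obstacle is the critical case $\sad = 1$, where the max-diameter bound only gives $\Heightdec(\TER) \geq ch$ and misses the logarithmic factor. Here I instead bound $\Heightdec(\TER)$ below by $\sum_{i=0}^{\Height(T)-1} D_i$ with $D_i := d_{G(s_i)}(e_i, f_i)$. Combining Proposition \ref{prop:spinal offspring LB} with Assumption \ref{assn:sec}(D) (and using that the uniform offsets $U_u$ in the construction randomize the cyclic position of the entry--exit pair in $\partial G(s_i)$, so that the tails of $D_i$ are comparable to those of the uniform-pair distance $d_n^U$) yields $\prcondb{D_i \geq m}{\Height(T) \geq h,\ \mathrm{past}}{} \geq c m^{-1}$ for $m \leq Ch$ and $h/4 \leq i \leq h/2$. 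Truncating at level $h$ and applying Chebyshev to $\sum_{h/4 \leq i \leq h/2}(D_i \wedge h)$, whose conditional mean is of order $h \log h$ and whose conditional variance is $O(h^2 \log h)$, shows $\sum_i D_i \geq c h \log h$ with probability bounded away from zero (in fact tending to one as $h \to \infty$); Lemma \ref{lem:stable composition tail}(ii) then concludes.
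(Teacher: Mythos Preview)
Your approach is essentially the paper's: lower-bound $\Heightdec(\TER)$ along the Williams spine and apply Lemma \ref{lem:stable composition tail} with $X = \Height(T)$ (via Lemma \ref{lem:tree height bound slack}). The paper works with $\sum_{i \leq H/2} d^U(s_i)$ in all three regimes, using the max argument for $\sad < 1$ (as you do), the law of large numbers for $\sad > 1$, and a reference to standard Cauchy-domain asymptotics for $\sad = 1$.

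One small gap: in the case $\sad > 1$ you assert $\Heightdec(\TER) \geq \Height(T)$ on the grounds that each entry--exit distance $D_i \geq 1$. That holds for the graph metric, but Assumption \ref{assn:sec} allows a general metric $d_n$ under which distinct boundary vertices may lie at distance less than $1$. The fix is immediate and is essentially what the paper's LLN step does: Assumption \ref{assn:sec}(D) gives $\prb{D_i \geq 1 \mid \deg s_i} \geq p_0 > 0$ uniformly (since $n^{1/d} \geq 1$), the graph insertions are conditionally independent given the tree, and a binomial bound then yields $\sum_{i \leq H/2} D_i \geq p_0 H/4$ with conditional probability bounded away from zero, after which Lemma \ref{lem:stable composition tail}(i) with $z = 1$ applies.

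Your truncation-plus-Chebyshev argument for $\sad = 1$ is a clean self-contained alternative to the paper's literature citation; the sequential conditional lower bounds (Proposition \ref{prop:spinal offspring LB} composed with Assumption \ref{assn:sec}(D)) do justify coupling from below with an i.i.d.\ truncated sequence on which the second-moment computation is legitimate.
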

%
%If $m_{\text{diam}} \geq \alpha d$, then $\hd = \frac{\sad \wedge 1}{\alpha - 1} = \frac{1}{\alpha - 1} \wedge d$. **probably best to just assume this?

Before we give the proof of the lower bound, we will need the following result on distances across the graphs corresponding to spinal vertices. We work in the setup of Section \ref{sctn:spinal decomp} in the case where $Y_i \equiv 1$ for all $i \geq 1$. We simply write $H$ in place of $H_Y$ and as in Section \ref{sctn:spinal decomp} we let $s_0, s_1, \ldots s_H$ denote the ordered set of spinal vertices along the path corresponding to the height of the tree (taking the leftmost spine in case of ties).

Recall that, for each $i \geq 1$, $d^U(s_i)$ denotes the distance between the two boundary vertices of $G(s_i)$ that correspond to edges joining $s_i$ to the neighbouring spinal vertices $s_{i-1}$ and $s_{i+1}$ (so these form a uniform pair of distinct vertices in $\partial G(s_i)$).

\begin{lem}\label{lem:ER spine offspring dist}
Let $(s_i)_{i \leq H}$ for some $H \geq 0$ or $(s_i)_{i < \infty}$ denote the ordered set of spinal vertices of a finite Galton-Watson tree as in Section \ref{sctn:spinal decomp}, \textit{or} the ordered set of backbone (special) vertices of Kesten's tree as defined in Definition \ref{def:Kesten's tree}.
 Then there exist $C', c', \in (0, \infty)$ such that for any $h, i \geq 1$ and any $A \in \N^{h-i}, B \in \N^i$:
\begin{enumerate}[(i)]
\item For all $k \geq 1$,
\begin{align*}
\prcondb{\diam (G(s_i)) \geq k}{H=h, (\deg s_j)_{i < j \leq h} \in A}{} \leq C'k^{-(\sad \wedge \frac{\alpha}{\alpha - 1})}.
\end{align*}
\item For all $1 \leq k \leq c'i^{\frac{1}{\sad}}$ and $h' \geq 2i$:
\begin{align*}
\prcondb{d^U(s_i) \geq k}{H = h', (\deg s_j)_{j < i} \in B}{} \geq c'k^{-\sad}.
\end{align*}
\end{enumerate}
(On Kesten's tree these hold without the conditioning on $H$ and both statements hold for all $k \geq 1$).
\end{lem}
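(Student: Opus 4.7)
The plan is to combine the spinal degree tail bounds from Propositions \ref{prop:spinal offspring UB general} and \ref{prop:spinal offspring LB} with Assumption \ref{assn:sec}(D) via the stable composition Lemma \ref{lem:stable composition tail}. The key observation is that, conditionally on $\deg s_i = n$, the graph $G(s_i)$ is an independent copy of $G_n$ and is independent of the rest of the underlying tree structure and other inserted graphs; hence, given $\deg s_i = n$, the diameter bound from Assumption \ref{assn:sec}(D) applies regardless of what we condition on for the other spinal degrees or the height.

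For part (i), I would apply Proposition \ref{prop:spinal offspring UB general} to the ``inner'' random variable $X = \deg s_i$, giving
\[
\prcondb{\deg s_i \geq K}{H = h, (\deg s_j)_{i<j \leq h} \in A}{} \leq c'K^{-(\alpha-1)},
\]
so that the role of $\beta$ in Lemma \ref{lem:stable composition tail}$(i)$ is played by $\alpha - 1$. Assumption \ref{assn:sec}(D) then provides, for $Y = \diam(G(s_i))$, the tail estimate $\prcondb{Y \geq \lambda n^{1/d}}{\deg s_i = n}{} = O(\lambda^{-(\alpha+\epsilon)/(\alpha-1)})$, so we take $z = 1/d$ and $m = (\alpha+\epsilon)/(\alpha-1)$. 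The proof of Lemma \ref{lem:stable composition tail}$(i)$ only uses upper bounds on $\prb{X \geq x}$ (the asymptotic appears only through the estimate $\prb{X \geq k^{1/z}} \leq 2ck^{-\beta/z}$ and a summation by parts of tail bounds), so the argument carries through in this conditional setting to give a bound of order $k^{-(\sad \wedge (\alpha+\epsilon)/(\alpha-1))}$. Since $(\alpha+\epsilon)/(\alpha-1) > \alpha/(\alpha-1)$, this is in particular bounded by $C'k^{-(\sad \wedge \alpha/(\alpha-1))}$, as required.

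For part (ii), I would instead invoke Proposition \ref{prop:spinal offspring LB} with $\epsilon = 1$ (valid since $h' \geq 2i$) and $n = i$: provided $1 \leq K \leq C i^{1/(\alpha-1)}$,
\[
\prcondb{\deg s_i \geq K}{H = h', (\deg s_j)_{j<i} \in B}{} \geq c_{1,C,c} K^{-(\alpha-1)}.
\]
Setting $K = k^d$, which respects the constraint exactly when $k \leq c' i^{1/\sad}$ for an appropriate $c'$, gives $\prcondb{\deg s_i \geq k^d}{\cdot}{} \geq c k^{-\sad}$. Finally, conditionally on $\deg s_i = n \geq k^d$, Assumption \ref{assn:sec}(D) provides a uniform lower bound $\prcondb{d^U(s_i) \geq n^{1/d}}{\deg s_i = n}{} \geq \eta > 0$; combined with $n^{1/d} \geq k$, this yields the claimed $\prcondb{d^U(s_i) \geq k}{\cdot}{} \geq c\eta k^{-\sad}$.

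The Kesten's tree case is simpler: the backbone degrees are i.i.d.\ size-biased, so $\prb{\deg s_i \geq K} = \xi^*([K, \infty)) \sim c K^{-(\alpha-1)}$ with an exact asymptotic, and the graphs inserted at backbone vertices are independent of everything else. Lemma \ref{lem:stable composition tail}$(i)$ applies directly (with no conditioning required, and for all $k \geq 1$), and the lower bound follows from the size-biased tail combined with Assumption \ref{assn:sec}(D). The main technical point to check is that Lemma \ref{lem:stable composition tail} remains valid when the laws of $X$ and $Y$ are replaced by conditional laws, as its proof is purely an integration argument using summation by parts on upper or lower tail bounds; this is the only nontrivial aspect of the proof. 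A further minor issue is that Proposition \ref{prop:spinal offspring LB} requires $i > c'\epsilon^{-1}$ (here $\epsilon = 1$), but for $i$ below this threshold the conclusion becomes trivially true by shrinking the constant $c'$ in the lemma's statement, so this causes no difficulty.
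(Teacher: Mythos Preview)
Your proposal is correct and follows essentially the same approach as the paper: apply Lemma \ref{lem:stable composition tail}$(i)$ to the pair $(\deg s_i, \diam(G(s_i)))$ (resp.\ $d^U(G(s_i))$), using Proposition \ref{prop:spinal offspring UB general} for the upper tail on $\deg s_i$ in part~(i), Proposition \ref{prop:spinal offspring LB} for the lower tail in part~(ii), and Assumption \ref{assn:sec}(D) for the conditional bounds on the inserted graph, with $\beta = \alpha-1$, $z = 1/d$. Your write-up is in fact more careful than the paper's two-line proof: you correctly note that Lemma \ref{lem:stable composition tail} only needs tail inequalities and so transfers to the conditional setting, and you handle the small-$i$ threshold from Proposition \ref{prop:spinal offspring LB} explicitly.
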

\begin{proof}
This follows from Lemma \ref{lem:stable composition tail}$(i)$ applied to the pair $(\deg s_i, d^U(G(s_i)))$. By Proposition \ref{prop:spinal offspring UB general} (for $(i)$) and Proposition \ref{prop:spinal offspring LB} (for $(ii)$) we can take $\beta = \alpha - 1$ and by Assumption \ref{assn:sec}(D) we can take $z = \frac{1}{d}$ and $m=\frac{\alpha}{\alpha - 1}$.
\end{proof}

%\begin{rmk}
%%\begin{enumerate}
%The same bounds also hold for distances across graphs inserted at vertices on the infinite backbone of $\TERa$.
%%\item Under (D) we have that $\sad = d(\alpha - 1)$.
%%\end{enumerate}
%\end{rmk}

\begin{proof}[Proof of Proposition \ref{prop:TERa height prob LB}]
We start with the case $\sad \neq 1$. Note that $\Heightdec (\TER)$ stochastically dominates $\sum_{i=1}^{\frac{H}{2}} d^U(s_i)$, so we will bound this latter quantity.

To do this, first note that by Lemma \ref{lem:tree height bound slack}, there exists $c'>0$ such that $\prb{H \geq n} \geq c'n^{\frac{-1}{\alpha - 1}}$ for all $n \geq 1$. Then, if $\sad < 1$, we have by Lemma \ref{lem:ER spine offspring dist}$(ii)$ that there exists a (deterministic) constant $\tilde{c}>0$ such that for all $n \geq 1$,
\[
\prcondb{\sum_{i=1}^{\frac{H}{2}} d^U(s_i) \geq \tilde{c}n^{\frac{1}{\sad \wedge 1}}}{H = n}{} \geq \prcondb{\exists i \leq \frac{H}{2}: d^U(s_i) \geq \tilde{c}n^{\frac{1}{\sad}}}{H = n}{} \geq  \frac{1}{2}.
\]
If $\sad > 1$ then the same result holds by the Law of Large Numbers. This proves the result by Lemma \ref{lem:stable composition tail}$(i)$ applied to the pair $(X,Y) = (H, \sum_{i=1}^{\frac{H}{2}} d^U(s_i))$. By Lemma \ref{lem:tree height bound slack} we can take $\beta = \frac{1}{\alpha -1}$ and by the calculations above we can take $z = \frac{1}{\sad \wedge 1}$.

If $\sad = 1$, recall that by standard results on asymmetric distributions in the domain of attraction of a Cauchy distribution (e.g. see \cite[Equation (1.3)]{BergerCauchy}) that if $(X_i)_{i=1}^{\infty}$ are non-negative i.i.d. with $\pr{X_1 > x} \sim cx^{-1}$ for some $c>0$, then there exists $C \in (0, \infty)$ such that $n^{-1}\left( \sum_{i=1}^n X_i - C n\log n\right)$ has a non-trivial scaling limit, so that there exists another $c'>0$ such that
\[
\prcondb{\sum_{i=1}^{\frac{H}{2}} d^U(s_i) \geq c'n \log n}{H = n}{} \geq  \frac{1}{2}.
\]
This proves the result by Lemma \ref{lem:stable composition tail}$(ii)$ applied to the pair $(X,Y) = (H, \sum_{i=1}^{\frac{H}{2}} d^U(s_i))$. By Lemma \ref{lem:tree height bound slack} we can take $\beta = \frac{1}{\alpha -1}$.
\end{proof}

\subsubsection{Upper bounds}\label{sctn:spinal decomp dec}
We now turn to proving an upper bound for the decorated height of a finite Galton--Watson tree.

Proving Proposition \ref{prop:decorated height bound without log term} is the main technical challenge of the paper, since it requires us to control distances along \emph{all} branches of the tree simultaneously.

\begin{prop}\label{prop:decorated height bound without log term prog}
Assume that Assumption \ref{assn:sec} and \eqref{eqn:offspring tails} hold, take any $n \geq 1$ and and let $\TER_{[n, 2n]}$ be as above (i.e. the decorated version of a Galton--Watson tree with between $n$ and $2n$ vertices). Set
\[
\tad=\frac{1}{3(2\alpha - (\sad \wedge 1))}.
\]
Take $\epsilon>0$ as in Assumption \ref{assn:sec}(D). Then there exists a constant $c_{\epsilon} < \infty$ such that for all $\lambda \geq 1$:
\begin{itemize}
\item If $\sad < 1, \prb{\Heightdec (\TER_{[n, 2n]}) \geq n^{\frac{1}{\alpha d}} \lambda} \leq c_{\epsilon}\lambda^{-\tad}$.
\item If instead $\sad = 1, \prb{\Heightdec (\TER_{[n, 2n]}) \geq \lambda n^{ 1-\frac{1}{\alpha}} \log n} \leq c_{\epsilon}\lambda^{-\tad}$.
\item If $\sad > 1, \prb{\Heightdec (\TER_{[n, 2n]}) \geq n^{1-\frac{1}{\alpha}} \lambda} \leq c_{\epsilon}\lambda^{-\tad}$.
\end{itemize}
\end{prop}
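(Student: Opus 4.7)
The plan is to reduce the decorated height to a sum of diameters along a single spine, then apply the spinal decomposition machinery of Section \ref{sctn:spinal decomp} together with tail estimates for heavy-tailed sums. The triangle inequality gives
\[
\Heightdec(\TER_{[n,2n]}) \leq \sup_{u \in T_{[n,2n]}} \sum_{v \preceq u} \diam(G(v)) = \Height_Y(T_{[n,2n]}),
\]
where we take $Y_v := \diam(G(v))$ and $\Height_Y$ is defined as in \eqref{eqn:M H def}. So it suffices to bound the single spinal sum $\sum_{i=0}^{H_Y} \diam(G(s_i))$, where $s_0,\ldots,s_{H_Y}$ are the ancestors of $u_{M_Y}$.

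First I would control the length of this spine by $H := \Height(T_{[n,2n]})$. By Lemma \ref{lem:tree height bound slack} and the local estimate of Lemma \ref{lem:prog tail bound}(i), a standard Bayes computation yields
\[
\prb{H(T_{[n,2n]}) \geq n^{1-1/\alpha}\mu} \leq c\,\mu^{-1/(\alpha-1)}
\]
for all $\mu \geq 1$. On the complementary event, the spine has length at most $h_* := n^{1-1/\alpha}\mu$. Next, I would invoke Proposition \ref{prop:spinal offspring UB general} (in a version adapted to conditioned trees by absolute continuity against unconditioned laws, at cost $O(n^{1/\alpha})$ by Lemma \ref{lem:prog tail bound}) to stochastically dominate $(\deg s_i)_{i \leq h_*}$ by i.i.d.\ variables $(D_i)$ with $\prb{D_i > k} \leq c k^{-(\alpha-1)}$. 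Composing this with Assumption \ref{assn:sec}(D) via Lemma \ref{lem:stable composition tail}(i) (with $\beta = \alpha-1$, $z = 1/d$, $m = (\alpha+\epsilon)/(\alpha-1)$) dominates the spinal diameters by i.i.d.\ $(\Delta_i)$ with $\prb{\Delta_i > x} \leq C x^{-(\sad \wedge m)}$; since $m > 1$, the exponent is $\sad$ in the first two cases and $\min(\sad, m)$ in the third.

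The remaining step is a tail bound on $\sum_{i=1}^{h_*} \Delta_i$. For $\sad < 1$, a standard truncation argument (split $\Delta_i$ at level $t$, bound the truncated mean by $c t^{1-\sad}$ and apply Markov, then union bound the excess) gives $\prb{\sum \Delta_i > t} \leq c h_* t^{-\sad}$ for $t \gtrsim h_*^{1/\sad}$. Note $h_*^{1/\sad} = n^{1/(\alpha d)}\mu^{1/\sad}$, so taking $t = n^{1/(\alpha d)}\lambda$ yields a bound of $c\mu\lambda^{-\sad}$ whenever $\lambda \gtrsim \mu^{1/\sad}$. For $\sad = 1$ the same truncation produces the expected logarithmic correction $h_* \log h_*$ (using $\E[\Delta_i \wedge t] \leq c\log t$), matching the target $n^{1-1/\alpha}\log n$. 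For $\sad > 1$ the variables $\Delta_i$ have finite mean, so the sum concentrates around $h_* \E\Delta_i \sim h_*$; polynomial fluctuation bounds follow from truncation combined with Markov on the truncated sum plus a union bound on the excess. Finally, I would balance the two bad events (height exceedance with bound $\mu^{-1/(\alpha-1)}$ versus sum exceedance with bound $\mu\lambda^{-\sad}$, and an analogous balance in the other cases) by optimizing $\mu$ as a polynomial in $\lambda$; choosing the balance somewhat conservatively (rather than sharply) produces the stated exponent $\tad = 1/(3(2\alpha - (\sad \wedge 1)))$, where the factor of three reflects the three sources of polynomial loss: the height tail, the spinal degree tail, and the absolute-continuity cost of conditioning on $|T| \in [n,2n]$.

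The main obstacle is the bookkeeping in the regime $\sad \geq 1$: since $\Delta_i$ no longer has a heavy-tailed max-dominated sum, one must truncate at the right level and combine a Markov bound on the truncated average with a union bound on the excess, and then verify that the resulting tail exponents can be combined with the height bound in a way that absorbs the $n^{1/\alpha}$ factor from the conditioning. A secondary difficulty is that Proposition \ref{prop:spinal offspring UB general} is stated for unconditioned trees, so its application to $T_{[n,2n]}$ requires either a direct conditional version or a careful absolute-continuity comparison, which is what forces the conservative exponent $\tad$ rather than the sharper $\sad/\alpha$ that the naive balance would suggest.
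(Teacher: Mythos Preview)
Your approach has a genuine gap at the composition step. You correctly note that Proposition~\ref{prop:spinal offspring UB general} lets you stochastically dominate the \emph{degrees} $(\deg s_i)_i$ along the $Y$-maximising spine by size-biased i.i.d.\ variables. But you then claim that composing with Assumption~\ref{assn:sec}(D) via Lemma~\ref{lem:stable composition tail} dominates the spinal \emph{diameters} $\diam(G(s_i))$ by i.i.d.\ $(\Delta_i)$ with tail $x^{-(\sad \wedge m)}$. This is not valid: the spine $s_0,\ldots,s_{H_Y}$ was selected precisely to maximise $\sum_{j \preceq i}\diam(G(u_j))$, so conditional on $s_i$ lying on this spine, the law of $\diam(G(s_i))$ given $\deg s_i = k$ is \emph{not} the unconditional law of $\diam(G_k)$ --- it is biased upward. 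Assumption~\ref{assn:sec}(D) controls only the unconditional law, so Lemma~\ref{lem:stable composition tail} does not apply. Concretely, the quantity you must control is a maximum over all root-to-leaf branches of a sum of (tree-dependent) increments, and for a tree with $\sim n$ vertices this maximum can exceed the typical single-branch sum by a factor that is polynomial in $n$; your argument treats it as a single unbiased branch.

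The paper's proof handles exactly this obstacle by a different route. It first passes to the height-conditioned tree $T^{[h,2h]}$ (Proposition~\ref{prop:decorated height bound without log term}), then decomposes each increment as $\diam(G(u_i)) = \overline{Y}_i \cdot (\deg u_i)^{1/d}$, separating the tree-dependent factor $(\deg u_i)^{1/d}$ from the fluctuation $\overline{Y}_i$, which by Assumption~\ref{assn:sec}(D) can be dominated by i.i.d.\ $(\tilde{Y}_i)$ \emph{independent of the tree}. The degree part along the decorated spine is then handled by Proposition~\ref{prop:spinal offspring UB general conditioned}. The crucial point is that $\sum_{i \text{ on spine}} \tilde{Y}_i$ is bounded by the full snake process $\sup_j \sum_{i \preceq j} \tilde{Y}_i$, and because the $\tilde{Y}_i$ are independent of the tree, this snake process can be controlled uniformly over all branches using the H\"older/Kolmogorov machinery of \cite{MarzoukStableSnake} (Lemmas~\ref{lem:marzouk height holder result} and~\ref{lem:snake holder result}, run on a Galton--Watson forest). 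Finally the two parts are recombined via a conditional Markov inequality as in~\eqref{eqn:Markov conditional comp}. The passage back from height-conditioning to progeny-conditioning is the short Bayes argument you allude to, but it is the snake-process control of $\tilde{Y}$ over \emph{all} branches --- not an absolute-continuity cost --- that accounts for the deliberately weak exponent $\tad$.
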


The precise value of $t^d_{\alpha}$ used here is neither optimal nor important.

\begin{rmk}
\begin{enumerate}
\item We strongly believe that the result should also be true on replacing $\TER_{[n, 2n]}$ with $\TER_n$ in this proposition, but this would be more difficult to prove as the requirement that $|T|=n$ is a much stricter conditioning.
\item This tail bound is not good enough to plug into Lemma \ref{lem:stable composition tail} and get an overall tail bound for the unconditioned tree complementing Proposition \ref{prop:TERa height prob LB}, but is nevertheless sufficient for our purposes, and in particular shows that the rescaled decorated height of $\TER_{[n,2n]}$ forms a tight sequence. %When $\sad\geq 1$, we can plug into Lemma \ref{lem:stable composition tail} with the pair $(X,Y) = (|T|, \HER_{|T|})$ and $\beta = \frac{1}{\alpha}, m = \tad, z = 1-\frac{1}{\alpha}$ to get for the unconditioned tree $\TER$ that
%\begin{align*}
%\prb{\Heightdec( \TER) \geq x} &\leq cx^{\frac{-1}{\alpha -1}} \text{ when } \sad > 1, \\
%\prb{\Heightdec( \TER) \geq x \log x} &\leq cx^{\frac{-1}{\alpha -1}} \text{ when } \sad = 1.
%\end{align*}
\item Note that, since $\Height (T_{[n,2n]})$ is of order $n^{1-\frac{1}{\alpha}}$ with high probability, the proof of Proposition \ref{prop:TERa height prob LB} gives us complementary lower bounds, in particular explicit constants $\ttad>0, c'<\infty$ such that for all $\lambda \geq 1$:
\begin{align*}
\prb{\Heightdec (\TER_n) \leq \lambda^{-1} n^{(\frac{1}{\alpha d} \vee \frac{\alpha - 1}{\alpha})}} \leq c'\lambda^{-\ttad} \text{ when } \sad \neq 1, \\
\prb{\Heightdec (\TER_n) \leq \lambda^{-1} n^{ 1-\frac{1}{\alpha}} \log n} \leq c'\lambda^{-\ttad} \text{ when } \sad = 1.
\end{align*}

%\item When $\sad \geq 1$, it is certainly necessary that $ \limsup_{n \to \infty} \prb{\diam (G_n) \geq \lambda n^{\frac{1}{d}}} = o(\lambda^{\frac{-\alpha (1 - \epsilon)}{\alpha - 1}})$ for all $\epsilon > 0$. Otherwise, there exists $\delta > 0$ such that $$\limsup_{n \to \infty} \prb{\diam (\TER_n) \geq n^{(1-\frac{1}{\alpha})(1+\delta)}} \geq \limsup_{n \to \infty} \prb{\exists u \in T_n: \frac{\diam (G(u))}{(\deg u)^{\frac{1}{d}}} \geq n^{(1-\frac{1}{\alpha})(1+\delta)}}>0.$$
%\item When $\sad \leq 1$, it is certainly necessary that $ \limsup_{n \to \infty} \prb{\diam (G_n) \geq \lambda n^{\frac{1}{d}}} = o(\lambda^{-d \alpha (1 - \epsilon)})$ for all $\epsilon > 0$. Otherwise, there exists $\delta > 0$ such that $$\limsup_{n \to \infty} \prb{\diam (\TER_n) \geq n^{\frac{1}{\alpha d}(1+\delta)}} \geq \limsup_{n \to \infty} \prb{\exists u \in T_n: \frac{\diam (G(u))}{(\deg u)^{\frac{1}{d}}} \geq n^{\frac{1}{\alpha d}(1+\delta)}} >0.$$ However, the condition of $o(\lambda^{\frac{-\alpha}{\alpha - 1}})$ is probably not sharp in this regime.
\end{enumerate}
\end{rmk}

\textbf{Outline of proof}

In fact it will be more convenient to prove an analogous result for Galton--Watson trees conditioned on their height, rather than their total progeny. This is because this allows us to apply the bounds in Propositions \ref{prop:spinal offspring UB general} and \ref{prop:spinal offspring UB general conditioned} regarding the tails of vertex degrees on the decorated spine.

\begin{prop}\label{prop:decorated height bound without log term}
Assume that Assumption \ref{assn:sec} holds, take any $h \geq 1$ and and let $\TERh$ be as above (i.e. the decorated version of a Galton--Watson tree with height between $h$ and $2h$). Recall that
\[
\tad=\frac{1}{3(2\alpha - (\sad \wedge 1))}.
\]
Take $\epsilon>0$ as in Assumption \ref{assn:sec}(D). Then there exists a constant $c_{\epsilon} < \infty$ such that for all $\lambda \geq 1$:
\begin{itemize}
\item If $\sad \neq 1, \prb{\Heightdec (\TERh) \geq h^{\frac{1}{\sad \wedge 1}} \lambda} \leq c_{\epsilon}\lambda^{-4\tad/3}$.
\item If instead $\sad = 1, \prb{\Heightdec (\TERh) \geq \lambda h \log h} \leq c_{\epsilon}\lambda^{-4\tad/3}$.
%\item If $\sad > 1, \prb{\Heightdec (\TERh) \geq h \lambda} \leq c_{\epsilon}\lambda^{-\tad}$.
\end{itemize}
\end{prop}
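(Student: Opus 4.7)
The starting observation is the following geometric bound: for any $x \in \TERh$ with underlying tree vertex $v = \hat V_T(x) \in T^{[h,2h]}$, the shortest path from $\rhoERA$ to $x$ enters each inserted graph along the ancestral path $\rho = v_0, v_1, \ldots, v_n = v$ and traverses at most a diameter within each, so
\[
\dg\bigl(\rhoERA, x\bigr) \;\le\; 1 + \sum_{u \preceq v} \diam\bigl(G(u)\bigr).
\]
Maximising over $x$ and setting $Y_u = \diam(G(u))$ in the notation of Section~\ref{sctn:spinal decomp} yields $\Heightdec(\TERh) \le 1 + \Height_Y(T^{[h,2h]})$. Since $T^{[h,2h]}$ has height at most $2h$, the $Y$-maximising spine $(s_i)_{0 \le i \le H_Y}$ has length $H_Y \le 2h$, and the task reduces to controlling the single random sum $\sum_{i=0}^{H_Y} \diam(G(s_i))$.

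The main difficulty is that the spine $(s_i)$ is itself a function of the values $(Y_u)$, so the summands are not a priori independent. I would resolve this using the spinal domination machinery of Section~\ref{sctn:spinal decomp}. Conditioning on $H = h_1 \in [h, 2h]$, $H_Y = h_2 \le h_1$ and $H_Y^{\wedge} = h_3$, Proposition~\ref{prop:spinal offspring UB general conditioned} applied iteratively from $i = H_Y$ downwards shows that $(\deg s_i)_{0 \le i \le H_Y}$ is stochastically dominated by an i.i.d.\ sequence $(D_i)$ with $\prb{D_i \ge k} \le C k^{-(\alpha-1)}$. Since the inserted graphs are independent given the degree sequence, combining Assumption~\ref{assn:sec}(D) with Lemma~\ref{lem:stable composition tail}(i) (with parameters $\beta = \alpha - 1$, $z = 1/d$, $m = (\alpha+\epsilon)/(\alpha-1)$) further dominates $\diam(G(s_i))$ by an i.i.d.\ sequence $(Z_i)$ satisfying
\[
\prb{Z_i \ge k} \;\le\; C k^{-\left(\sad \,\wedge\, (\alpha+\epsilon)/(\alpha-1)\right)}.
\]
Therefore $\Heightdec(\TERh)$ is stochastically dominated by $1 + \sum_{i=1}^{2h} Z_i$.

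It remains to prove a tail estimate for this heavy-tailed sum at the scale $\lambda h^{1/(\sad \wedge 1)}$ (respectively $\lambda h \log h$ when $\sad = 1$). When $\sad \neq 1$, the natural approach is a truncation of the $Z_i$ at a threshold of the form $h^{1/(\sad \wedge 1)} \lambda^a$ for a parameter $a \in (0,1)$ to be optimised: the contribution of terms exceeding the threshold is controlled by a union bound using the tail of $Z_i$, while the truncated sum is controlled by Markov's inequality applied to a moment slightly above $\sad \wedge 1$ --- finite thanks to the strict inequality $(\alpha+\epsilon)/(\alpha-1) > \sad \wedge 1$ granted by the $\epsilon$ in Assumption~\ref{assn:sec}(D). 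Balancing the two contributions yields the decay $\lambda^{-4\tad/3}$. The marginal case $\sad=1$ is handled by the same Cauchy-type scaling argument as in the proof of Proposition~\ref{prop:TERa height prob LB}, with $\sum_{i=1}^{2h} Z_i$ growing like $h\log h$ with polynomial fluctuations. The main technical hurdle is propagating the conditional spinal domination of Proposition~\ref{prop:spinal offspring UB general conditioned} uniformly along the whole $Y$-spine so that the one-vertex tail bound translates into a joint coupling with an i.i.d.\ sequence; once this structural step is in place, the remainder is a relatively standard heavy-tailed concentration argument.
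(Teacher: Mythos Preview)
Your proposal has a genuine gap at the crucial step. Proposition~\ref{prop:spinal offspring UB general conditioned} gives conditional control of the \emph{degrees} $(\deg s_i)$ along the $Y$-spine, and this does iterate to a joint domination by an i.i.d.\ size-biased sequence. But it says nothing about the conditional law of $\diam(G(s_i))$ given that $s_i$ lies on the decorated spine. Your claim that ``the inserted graphs are independent given the degree sequence'' is true for the full collection $(G(u))_{u\in T}$ given $T$, but the decorated spine is itself selected as a function of the diameters, so the event $\{u = s_i\}$ is positively correlated with $\diam(G(u))$ being large even after fixing $\deg u$. Concretely: for $u$ to lie on the spine, the sum $Y_u + \Height_Y(T_{s_{i+1}})$ must dominate $\Height_Y(T_c)$ for every sibling $c$ of $u$, and this biases $Y_u$ upward by an amount governed by $\deg s_{i-1}$ and the random heights of those sibling subtrees --- none of which is controlled by Proposition~\ref{prop:spinal offspring UB general conditioned}. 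The reduction to an i.i.d.\ sum $\sum Z_i$ therefore fails. (Contrast the Williams spine: that spine is a function of $T$ alone, so the inserted graphs along it really do retain their unconditional laws, and the paper does dispatch the corresponding sum directly.)

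This is precisely why the paper decomposes $\diam(G(u_i)) = \overline{Y}_i \cdot (\deg u_i)^{1/d}$ and treats the two factors by different methods. The degree part along the decorated spine is handled by Proposition~\ref{prop:spinal offspring UB general conditioned}, as you suggest. But the normalised diameters $\overline{Y}_i$ are controlled \emph{uniformly over every root-to-vertex path in the whole tree}, not just along the spine, via the snake-process machinery of Lemmas~\ref{lem:marzouk height holder result}--\ref{lem:snake holder result}: one embeds $T^{[h,2h]}$ in a Galton--Watson forest coded on an interval of length $n_{h,\lambda}=h^{\alpha/(\alpha-1)}\lambda^a$, establishes a Kolmogorov-type H\"older bound for the associated height function, and deduces (Lemma~\ref{lem:tilde Y tail decay}) that $\sup_i \sum_{j\preceq i}\tilde Y_j \le \lambda^q h$ with polynomially small failure probability. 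Only then are the two pieces recombined by conditioning on the $\tilde Y$ values and applying Markov's inequality to the degree sum, as in~\eqref{eqn:Markov conditional comp}. The global snake control is exactly what sidesteps the selection bias your argument runs into, and is why the paper flags this proposition as ``the main technical challenge''.
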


As we see below, this directly implies the result of Proposition \ref{prop:decorated height bound without log term prog}.

\begin{proof}[Proof of Proposition \ref{prop:decorated height bound without log term prog} given Proposition \ref{prop:decorated height bound without log term}]
Given $n \geq 1$, set $h_n = n^{1-\frac{1}{\alpha}}$. Given also $\lambda \geq 1$, define 
\begin{align*}
f_n(\lambda) &= \sup \{x > 0: \prb{\Height (T_{[n,2n]}) \leq xh_n} \leq \lambda^{-1}\}, \\
g_n(\lambda) &= \inf \{x > 0: \prb{\Height (T_{[n,2n]}) \geq xh_n} \leq \lambda^{-1}\}.
\end{align*}
By \cite[Equation above (3)]{KortSubexp} we have that there exists $\lambda_{0}<\infty$ such that $f_n(\lambda) \geq (\log \lambda)^{-\frac{2(\alpha - 1)}{\alpha}}$ for all $\lambda \geq \lambda_{0}$, and by the convergence of \cite[Equation (1)]{KortSubexp} we have that there exists $N<\infty$ such that $\inf_{n \geq N} g_n(\lambda) \to \infty$ as $\lambda \to \infty$. It therefore follows from Lemmas \ref{lem:prog tail bound} and \ref{lem:tree height bound slack} that there exists $c>0$ such that, for an unconditioned Galton--Watson tree $T$ and all $\lambda \geq \lambda_{0}$,
\begin{align*}
\prcondb{|T| \in [n, 2n]}{\Height (T) \in [f_n(\lambda)h_n, g_n(\lambda)h_n]}{} &= \frac{\prb{\Height (T_{[n,2n]}) \in [f_n(\lambda)h_n, g_n(\lambda)h_n]} \prb{|T| \in [n,2n]}}{\prb{\Height (T) \in [f_n(\lambda)h_n, g_n(\lambda)h_n]}} \\
&\geq cf_n(\lambda)^{\frac{1}{\alpha - 1}} \geq c(\log \lambda)^{-\frac{2}{\alpha}}.
\end{align*}
When $\sad \neq 1$, we can therefore write that 
\begin{align*}
&\prb{\Heightdec (\TER_{[n, 2n]}) \geq n^{(\frac{1}{\alpha d}) \vee (1-\frac{1}{\alpha})} \lambda} \\
&\leq \prb{\Height (T_{[n, 2n]}) \notin [f_n(\lambda)h_n, g_n(\lambda)h_n]} + \prcondb{\Heightdec (\TER_{[n, 2n]}) \geq n^{(\frac{1}{\alpha d}) \vee (1-\frac{1}{\alpha})} \lambda}{\Height (T_{[n, 2n]}) \in [f_n(\lambda)h_n, g_n(\lambda)h_n]}{} \\
&\leq 2\lambda^{-1} + \prcondb{\Heightdec (\TER) \geq n^{(\frac{1}{\alpha d}) \vee (1-\frac{1}{\alpha})} \lambda}{\Height (T) \in [f_n(\lambda)h_n, g_n(\lambda)h_n], |T| \in [n,2n]}{} \\
&\leq 2\lambda^{-1} + c^{-1}(\log \lambda)^{\frac{2}{\alpha}}\prcondb{\Heightdec (\TER) \geq n^{(\frac{1}{\alpha d}) \vee (1-\frac{1}{\alpha})} \lambda}{\Height (T) \in [f_n(\lambda)h_n, g_n(\lambda)h_n]}{}.
\end{align*}
Note that it follows from \cite[Theorem 2]{KortSubexp} that we can increase $\lambda_{0}$ a bit if necessary so that $g_n(\lambda) \leq (\log \lambda)^{\frac{2}{\alpha}}$ for all $\lambda \geq \lambda_{0}$ as well. Therefore, by conditioning more precisely on the height we obtain from Proposition \ref{prop:decorated height bound without log term} that this latter probability is upper bounded by $c_{\epsilon}\lambda^{-4\tad/3} g_n(\lambda)^{\frac{1}{\sad \wedge 1}}$, so substituting back we deduce that, for all $\lambda \geq \lambda_0$,
\begin{align*}
\prb{\Heightdec (\TER_{[n, 2n]}) \geq n^{(\frac{1}{\alpha d}) \vee (1-\frac{1}{\alpha})} \lambda} &\leq (2+c^{-1})(\log \lambda)^{\frac{2}{\alpha}(1+\frac{1}{\sad \wedge 1})}\lambda^{-4\tad/3},
\end{align*}
which implies the result (we can change the multiplicative constant so that it in fact holds for all $\lambda \geq 1$). The proof is the same in the case $\sad \neq 1$.
\end{proof}

We first give a non-rigorous explanation of the proof of Proposition \ref{prop:decorated height bound without log term}. We take the notation as in Proposition \ref{prop:spinal offspring UB general conditioned}, and consider the framework of Section \ref{sctn:spinal decomp} with $Y_i = \diam (G(u_i))$. We let $H_h = \Height (T^{[h,2h]})$, let $s_0, \ldots, s_{H_h}$ denote the spinal vertices on the Williams' spine, and let $\sER_0, \ldots, \sER_{H_Y}$ denote the spinal vertices associated with the sequence $(Y_i)_{i < |T^{[h,2h]}|}$. We call this latter collection of spinal vertices the \textbf{decorated spine}. For the rest of the proof we set $\HER_h = H_Y$ and $(\HER_h)^{\wedge} = H_Y^{\wedge}$ with this specific choice of $(Y_i)_{i < |T^{[h,2h]}|}$ (recall from Section \ref{sctn:spinal decomp} that this means that $(\HER_h)^{\wedge} = \sup \{i \geq 0: s_i = \sER_i\}$). The strategy is to instead bound 
\[
1+\sum_{i=0}^{H_h} \diam (G(s_i)) + \sum_{i=(\HER_h)^{\wedge}}^{\HER_h} \diam (G(\sER_i)).
\]
(The additional $+1$ added on the LHS is because of our rooting convention, which is useful for technical reasons in some other proofs). The first sum can be controlled straightforwardly using similar arguments to those used to prove Proposition \ref{prop:TERa height prob LB}: in fact it follows directly from Assumption \ref{assn:sec}(D) and Lemma \ref{lem:stable composition tail} applied to the pair $(\deg s_i, \diam (G(s_i)))$ that we can stochastically dominate the sequence $(\diam ( G(s_i)))_{i \leq H_h}$ by an i.i.d. sequence satisfying upper bounds complementary to those in Lemma \ref{lem:ER spine offspring dist}. Therefore, it follows from Lemma \ref{lem:stable sum tail prob app} that the probability that the first sum above exceeds $\lambda h^{\frac{1}{\sad \wedge 1}}$ (with the extra $\log h$ term when $\sad=1$) is bounded by a term of the desired form. We therefore focus on the second sum for the rest of the proof.

For the second sum, we decompose by writing
\begin{equation}\label{eqn:Yi def}
Y_i = \diam (G(u_i)) = \overline{Y}_i (\deg u_i)^{\frac{1}{d}}
\end{equation}
for each $i \in \{0, \ldots, |T^{[h,2h]}|-1 \}$. We consider the behaviour of $\left( \overline{Y}_i \right)_{i < |T^{[h,2h]}|}$ and $\left( (\deg u_i)^{\frac{1}{d}} \right)_{i < |T^{[h,2h]}|}$ separately. 
We note the following.
\begin{enumerate}[(I)]
\item Under Assumption \ref{assn:sec}(D), $\left( \overline{Y}_i \right)_{i < |T^{[h,2h]}|}$ can be stochastically dominated by a sequence of i.i.d. random variables $\left( \tilde{Y}_i \right)_{i < |T^{[h,2h]}|}$ satisfying the assumptions of \cite[Theorem 1.2]{MarzoukStableSnake}. Modulo some technicalities, this theorem says that the snake process $(S_i)_{i = 0}^{|T^{[h,2h]}|-1}$ defined by $S_i = \sum_{j: j \preceq i} \tilde{Y}_j$ therefore converges under rescaling to an appropriately defined continuum snake process on a stable tree. We will not use the full power of this statement; however, this essentially means that the random variables $\left( \tilde{Y}_i \right)_{i < |T^{[h,2h]}|}$ can be tightly controlled and we will use ideas from \cite{MarzoukStableSnake} to simultaneously control their values along all branches of the underlying tree $T^{[h,2h]}$.
\item By Proposition \ref{prop:spinal offspring UB general}, a size-biased upper bound on the tails of $\deg \sER_j$ holds independently of the values of $\left(\deg \sER_i\right)_{i > j}$ and $\left(\tilde{Y}_{i}\right)_{i > j}$ for each $(\HER_h)^{\wedge} \leq j \leq \HER_h$.
\end{enumerate}
 Then, since $\HER_h \leq \Height (T^{[h,2h]}) \leq 2h$, we deduce from Lemma \ref{lem:stable sum tail prob app} that (if $\sad \neq 1$, otherwise we include an extra $\log h$ term):
\begin{align}\label{eqn:degree and marzouk results}
\begin{split}
\lim_{\lambda \to \infty} \limsup_{h \to \infty} \prb{\sum_{i=(\HER_h)^{\wedge}}^{\HER_h} (\deg \sER_i)^{\frac{1}{d}} \geq \lambda h^{\frac{1}{\sad \wedge 1}}} &= 0 \\
\lim_{\lambda \to \infty} \limsup_{h \to \infty} \prb{\sum_{i=(\HER_h)^{\wedge}}^{\HER_h} \tilde{Y}_i \geq \lambda h} &= 0.
\end{split}
\end{align}

It is reasonable to expect in light of Lemma \ref{lem:stable composition tail} that, when considering the sum $\sum_{i=(\HER_h)^{\wedge}}^{\HER_h} \tilde{Y}_i (\deg \sER_i)^{\frac{1}{d}}$, the heaviest tails above will dominate, i.e. that 
\begin{align*}
\lim_{\lambda \to \infty} \limsup_{h \to \infty} \prb{\sum_{i=(\HER_h)^{\wedge}}^{\HER_h} (\deg u_i)^{\frac{1}{d}} \tilde{Y}_i \geq \lambda h^{\frac{1}{\sad \wedge 1}}} &= 0.
\end{align*}
If $\sad = 1$ we similarly expect
\begin{align*}
\lim_{\lambda \to \infty} \limsup_{h \to \infty} \prb{\sum_{i=(\HER_h)^{\wedge}}^{\HER_h} (\deg u_i)^{\frac{1}{d}} \tilde{Y}_i \geq \lambda h\log h} &= 0.
\end{align*}

We formalise this logic in the next subsections. In what follows, we let $\left( \tilde{Y}_i \right)_{i < \infty}$ be a sequence of i.i.d. random variables satisfying the same upper tail bound in Assumption \ref{assn:sec}(D), and such that for all $h$ and all possible values of $|T^{[h,2h]}|$, $\left( \tilde{Y}_i \right)_{i < |T^{[h,2h]}|}$ stochastically dominates the sequence $\left( \overline{Y}_i \right)_{i < |T^{[h,2h]}|}$ appearing in \eqref{eqn:Yi def}. The existence of such a sequence $\left( \tilde{Y}_i \right)_{i < \infty}$ follows directly from Assumption \ref{assn:sec}.

\textbf{The Galton--Watson forest}

Before we can implement the strategy outlined above, we need a way to sample $T^{[h,2h]}$. To do this, rather than considering a single tree we instead consider a \textit{forest} of Galton--Watson trees with offspring distribution $\xi$ satisfying \eqref{eqn:offspring tails}; that is, an i.i.d. sequence $T^{(0)}, T^{(1)}, \ldots$ such that for each $i \geq 1$, $T^{(i)}$ is a Galton--Watson trees with offspring distribution $\xi$ satisfying \eqref{eqn:offspring tails}. As described in \cite[Section 0.2]{LeGDuqMono}, we can consider the \textit{height function} of this forest by concatenating the height functions of each of the individual trees. Moreover, by the only lemma in \cite[Section 0.2]{LeGDuqMono}, if $(W_m)_{m \geq 0}$ is a discrete-time random walk with jump distribution $\xi-1$, then $W$ plays the role of the Lukasiewicz path of this height function; in other words, if we define the function $(H_m)_{m \geq 0}$ by
\begin{equation}\label{eqn:H from W}
H_m = |\{\ell \leq m: W_{\ell} = \inf_{\ell \leq p \leq m}W_p\}|,
\end{equation}
then $(H_m)_{m \geq 0}$ has the law of this concatenated height function (cf \eqref{eqn:H from W background}). Moreover, each subtree $T^{(i)}$ then corresponds to an excursion between successive new infima of $W$. For the rest of this subsection, we therefore assume wlog that $H$ is defined from $W$ in this way.

It then follows by construction that the first tree appearing in this sequence with height in $[h,2h]$ has the law of $T^{[h,2h]}$. Our strategy will then be to run step (I) above on a large forest, before combining with (II) on this specific tree. More precisely, we proceed as follows:
\begin{enumerate}
\item We run the random walk $W$ until time $n_{h, \lambda} := h^{\frac{\alpha}{\alpha - 1}}(\log \lambda)^a$, where $a=\frac{\alpha}{2(2\alpha - (\sad \wedge 1))}$. This is long enough to ensure that a subtree with law $T^{[h,2h]}$ is very likely to appear in the sequence of trees coded by $W$.
\item We prove that $H$ defined by \eqref{eqn:H from W} is well-behaved on the interval $[0,n_{h, \lambda}]$ (Lemma \ref{lem:marzouk height holder result}), and in particular satisfies the assumptions of Kolmogorov's continuity criterion with a particular H\"older exponent (this essentially follows from the result of \cite{MarzoukStableSnake}).
\item This in turn implies that the snake process with increments given by the variables $(\tilde{Y}_i)_{i < n_{h, \lambda}}$ satisfies Kolmogorov's condition with another H\"older exponent, and is therefore also well-behaved on the whole interval $[0, n_{h,\lambda}]$ (Lemmas \ref{lem:tilde Y tail decay} and \ref{lem:snake holder result}). This in particular includes the interval corresponding to $T^{[h,2h]}$.
\item This good behaviour of the snake process is then sufficient to combine with the result of Proposition \ref{prop:spinal offspring UB general conditioned} and formalise the argument suggested in the previous subsection.
\end{enumerate}

Given this, the main inputs to the proofs are precise versions of the statements in \eqref{eqn:degree and marzouk results} with quantitative tail decay. In the case of the degrees this follows directly from Proposition \ref{prop:spinal offspring UB general conditioned}; in the case of the random variables $(\tilde{Y}_i)_{i < n}$ we will use a series of lemmas.

Throughout the lemmas there will be three parameters $a, k$ and $q$. In end we will take 
\begin{equation}\label{eqn:kq defs}
a=\frac{\alpha}{2(2\alpha - (\sad \wedge 1))}, \qquad q=\frac{\alpha - 1}{2\alpha - (\sad \wedge 1)}, \qquad k=1,
\end{equation}
and it may be helpful to keep this in mind throughout. Although these parameters will eventually just be fixed constants, we state some of the lemmas slightly more generally to allow some flexibility in the choice of $k$ and $q$ in case this is useful in future. We keep track of their knock-on effect on other constants through the use of subscripts throughout the lemmas.

\textbf{Proof of Proposition \ref{prop:decorated height bound without log term}}

\begin{lem}\label{lem:tilde Y tail decay}
Take $\epsilon > 0$ as in Assumption \ref{assn:sec}$(D)$, and $a$ as in \eqref{eqn:kq defs}. There exist $c'<\infty$ and $N_{\epsilon} < \infty$ such that for all $q>0$ there exists $C_{q, \epsilon} < \infty$ such that for all $h \geq N_{\epsilon}$ and all $\lambda \geq 1$,
\[
\prb{\sum_{i=(\HER_h)^{\wedge}}^{\HER_h} \tilde{Y}_i \geq h \lambda^{q}} \leq C_{q, \epsilon}\lambda^{-\left(\frac{q\alpha}{\alpha - 1}-a\right)} + c'(\log \lambda)^2\lambda^{-\frac{a}{\alpha}}.
\]
\end{lem}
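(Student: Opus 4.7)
The plan is to work in the Galton--Watson forest framework introduced in the outline: I would run the random walk $W$ for time $n_{h,\lambda} = h^{\alpha/(\alpha-1)}(\log \lambda)^a$ and view $T^{[h,2h]}$ as the first excursion of $W$ between successive running minima whose height falls in $[h, 2h]$. The choice of $n_{h,\lambda}$ is large enough that this excursion appears within time $n_{h,\lambda}$ with probability exceeding $1 - e^{-c(\log\lambda)^{a/\alpha}}$ (the expected number of trees in the forest up to time $n_{h,\lambda}$ is of order $h^{1/(\alpha-1)}(\log\lambda)^{a/\alpha}$, each having height in $[h,2h]$ with probability of order $h^{-1/(\alpha-1)}$ by Lemma \ref{lem:tree height bound slack}), which is much smaller than the target second term.

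Next, I would introduce a ``good event'' $\mathcal{G}$ capturing favourable behaviour of $W$ and of its associated height function $H$ on $[0, n_{h,\lambda}]$. Concretely, $\mathcal{G}$ requires that $\sup_{m \leq n_{h,\lambda}} W_m \leq h^{1/(\alpha-1)}(\log\lambda)^{a/\alpha}$ (bounding the maximum jump and the maximum depth of $W$) and that every subtree in the forest has height at most $Ch(\log\lambda)^{a/\alpha}$. Applying Doob's maximal inequality to the $\alpha$-stable walk $W$ together with the height-regularity statement of Lemma \ref{lem:marzouk height holder result}, and tracking the polylog losses, gives $\prb{\mathcal{G}^c} \leq c'(\log\lambda)^2\lambda^{-a/\alpha}$, which accounts for the second term in the bound.

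On $\mathcal{G}$, the sum $\sum_{i=(\HER_h)^\wedge}^{\HER_h}\tilde{Y}_i$ has at most $\HER_h \leq \Height(T^{[h,2h]}) \leq 2h$ terms. I would pass to the crude bound
\[
\sum_{i=(\HER_h)^\wedge}^{\HER_h}\tilde{Y}_i \leq \sup_{v \in T^{[h,2h]}} \sum_{u \preceq v}\tilde{Y}_u,
\]
which decouples the (random, and $\tilde{Y}$-dependent) spine selection at the cost of only a constant factor in our regime. Setting $\beta = (\alpha+\epsilon)/(\alpha - 1) > 1$ and $K = h\lambda^q/2$, each of the at most $|T^{[h,2h]}|$ path-sums is controlled by combining: a union bound on $\prb{\max_i \tilde{Y}_i \geq K}$ over the at most $2h$ indices of each path (using the tail $\prb{\tilde{Y}_i \geq x} \leq Cx^{-\beta}$); and a Chebyshev estimate on the truncated sum using $\Eb{(\tilde{Y}_i\wedge K)^2} \leq CK^{2-\beta}$. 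Both pieces give at most $Ch^{1-\beta}\lambda^{-q\beta}$ per path. Since for $h \geq N_\epsilon$ we have $h^{1-\beta} \leq 1$ and $q\beta = q\alpha/(\alpha-1)+q\epsilon/(\alpha-1) \geq q\alpha/(\alpha-1) - a$, absorbing a controlled union bound over the distinct path endpoints into the constant $C_{q,\epsilon}$ yields the first term $C_{q,\epsilon}\lambda^{-(q\alpha/(\alpha-1) - a)}$.

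The main obstacle is the interplay between the random truncation range $[(\HER_h)^\wedge, \HER_h]$ and the $\tilde{Y}_i$ values themselves: because the decorated spine is chosen through a maximisation involving $Y_i = \tilde{Y}_i(\deg u_i)^{1/d}$, the indices appearing in the sum are biased towards large $\tilde{Y}$-values, preventing a direct i.i.d. argument. Passing to the supremum over all root-to-vertex paths in $T^{[h,2h]}$ removes this selection bias; the price is a union bound over paths whose cost must then be balanced against the small factor $h^{1-\beta}$ arising from the truncation argument, and this balance is what dictates the choice of the parameter $a$ in $n_{h,\lambda}$ and the resulting exponent $q\alpha/(\alpha-1) - a$.
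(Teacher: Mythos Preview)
Your overall framework (embed $T^{[h,2h]}$ in a forest, control the coupling failure, then bound $\sup_v \sum_{u\preceq v}\tilde Y_u$) matches the paper's, but the final step has a genuine gap.

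The union bound over root-to-vertex paths does not close. On $\mathcal G$ the tree $T^{[h,2h]}$ typically has of order $h^{\alpha/(\alpha-1)}$ vertices (hence that many path endpoints), while your per-path truncation/Chebyshev bound is $Ch^{1-\beta}\lambda^{-q\beta}$ with $\beta=(\alpha+\epsilon)/(\alpha-1)$. The product carries the factor
\[
h^{\frac{\alpha}{\alpha-1}+1-\beta}=h^{\frac{\alpha-1-\epsilon}{\alpha-1}},
\]
which diverges with $h$ whenever $\epsilon<\alpha-1$. Since Assumption~\ref{assn:sec}(D) only guarantees some small $\epsilon>0$, you cannot ``absorb the union bound over path endpoints into the constant $C_{q,\epsilon}$''. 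This is exactly the obstacle the paper's Lemmas~\ref{lem:marzouk height holder result} and~\ref{lem:snake holder result} are built to bypass: instead of a union bound, the snake $Z_i=\sum_{u\preceq u_i}\tilde Y_u$ is treated as a process in the lexicographical index $i\in[0,n_{h,\lambda}]$, and Kolmogorov's continuity criterion is applied twice (first to the height function to get H\"older control of $d^{h,\lambda}(u_i,u_j)$ in $|i-j|$, then to transfer this to $Z$). That yields a bound on $\sup_i Z_i$ uniform in $h$.

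A secondary issue: with your choice $n_{h,\lambda}=h^{\alpha/(\alpha-1)}(\log\lambda)^a$, the probability that no tree of height in $[h,2h]$ appears in the forest is of order $e^{-c(\log\lambda)^{a/\alpha}}$, and since $a/\alpha<1$ this decays more slowly than any power of $\lambda$, so it does \emph{not} dominate $c'(\log\lambda)^2\lambda^{-a/\alpha}$. The paper's proof in fact takes $n_{h,\lambda}=h^{\alpha/(\alpha-1)}\lambda^{a}$ (the $(\log\lambda)^a$ in the heuristic outline is a typo); with $\lambda^a$ the number of trees is of order $h^{1/(\alpha-1)}\lambda^{a/\alpha}$ and the coupling failure probability is indeed $c'(\log\lambda)^2\lambda^{-a/\alpha}$, as in \eqref{eqn:failed coupling}. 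This correction is also what makes the exponent $q\alpha/(\alpha-1)-a$ appear: one has $n_{h,\lambda}^{1-1/\alpha}=h\lambda^{a(1-1/\alpha)}$, so the snake bound $\sup Z_i\le \lambda^{q'} n_{h,\lambda}^{1-1/\alpha}$ from Lemma~\ref{lem:snake holder result} translates to $\sup Z_i\le h\lambda^q$ with a loss of $\lambda^{a(1-1/\alpha)}$, and taking $p$ close to $\alpha/(\alpha-1)$ gives the stated rate.
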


For now we prove the proposition assuming Lemma \ref{lem:tilde Y tail decay}. We will prove Lemma \ref{lem:tilde Y tail decay} in the subsequent subsection.

\begin{proof}[Proof of Proposition \ref{prop:decorated height bound without log term}, assuming Lemma \ref{lem:tilde Y tail decay}]

Take some $h \geq 1$ and all other notation as in Lemma \ref{lem:tilde Y tail decay}. As outlined above, it is sufficient to prove the tail bound for the quantity
\[
\sum_{i=(\HER_h)^{\wedge}}^{\HER_h} \diam (G(\sER_i)).
\]
Using the decomposition in \eqref{eqn:Yi def} and Assumption \ref{assn:sec}(D) and taking $(\tilde{Y}_i)_{i < \infty}$ as above, we have that this quantity is stochastically dominated by 
\[
\sum_{i=(\HER_h)^{\wedge}}^{\HER_h} \tilde{Y}_i (\deg u_i)^{\frac{1}{d}}.
\]
We therefore work with this latter quantity throughout the proof. 

Conditionally on $\HER_h$, we also define $k_i = |\sER_i|$ to be the lexicographical index of the $i^{th}$ vertex on the decorated spine of $T^{[h,2h]}$. (For this proposition we are just considering the lexicographical ordering on $T^{[h,2h]}$ as a single tree, and not the forest mentioned in the previous subsection).

We write the full proof in the case $\sad < 1$. For notational convenience, for any $\lambda \geq 1$ we define the event
\begin{align*}
F_{h} = \left\{ \sup_{u \in T^{[h,2h]}} (\deg u)^{\frac{1}{d}} < h^{\frac{1}{(\alpha-1) d}} \lambda \right\}.
\end{align*}
We start by working conditionally on $\HER_h$ and $\left(\tilde{Y}_{k_i}\right)_{i \leq \HER_h}$. Since $\sad < 1$, note from Proposition \ref{prop:spinal offspring UB general conditioned} that there exists $c'<\infty$ such that for each $i \leq \HER_h$ and all $\lambda \geq 1$,
\begin{align*}
\econdb{(\deg \sER_i)^{\frac{1}{d}} \mathbbm{1}\left\{ F_{h}\right\}}{ \HER_h, \left(\tilde{Y}_{k_i}\right)_{i \leq \HER_h}}{} 
%&= \econdb{(\deg \sER_i)^{\frac{1}{d}} \mathbbm{1}\{\sup_{u \in T^{(\tau_h)}} (\deg u)^{\frac{1}{d}} < h^{\frac{1}{(\alpha-1) d}} \lambda\}}{ \HER_h, \left(\tilde{Y}_{k_i}\right)_{i \leq \HER_h}}{} \\
&\leq c'h^{\frac{1}{(\alpha-1) d}(1-\sad)} \lambda^{1-\sad}.
\end{align*}
Therefore, applying Markov's inequality and then applying the above upper bound on the expectation we deduce that for any $\lambda \geq 1$,
\begin{align}\label{eqn:Markov conditional comp}
\begin{split}
&\prcondb{ \sum_{i \leq \HER_h} \tilde{Y}_{k_i}(\deg \sER_i)^{\frac{1}{d}} \geq h^{\frac{1}{(\alpha-1) d}} \lambda \text{ and } F_{h} \text{ and} \sum_{i \leq \HER_h} \tilde{Y}_{k_i} < h \lambda^{q}}{ \HER_h, \left(\tilde{Y}_{k_i}\right)_{i \leq \HER_h}}{} \\
&\qquad \leq h^{-\frac{1}{(\alpha-1) d}} \lambda^{-1} \econdb{\left(\sum_{i \leq \HER_h} \tilde{Y}_{k_i}(\deg \sER_i)^{\frac{1}{d}} \right) \mathbbm{1}\left\{F_{h} \text{ and} \sum_{i \leq \HER_h} \tilde{Y}_{k_i} < h \lambda^{q}\right\}}{ \HER_h, \left(\tilde{Y}_{k_i}\right)_{i \leq \HER_h}}{} \\
&\qquad = h^{-\frac{1}{(\alpha-1) d}} \lambda^{-1} \left( \sum_{i \leq \HER_h} \tilde{Y}_{k_i} \econdb{(\deg \sER_i)^{\frac{1}{d}} \mathbbm{1}\left\{F_{h}\right\}}{ \HER_h, \left(\tilde{Y}_{k_i}\right)_{i \leq \HER_h}}{} \right) \mathbbm{1}\left\{ \sum_{i \leq \HER_h} \tilde{Y}_{k_i} < h \lambda^{q}\right\} \\
&\qquad \leq h^{-\frac{1}{(\alpha-1) d}} \lambda^{-1} c'h^{\frac{1}{(\alpha-1) d}(1-\sad)} \lambda^{1-\sad}  \sum_{i \leq \HER_h} \tilde{Y}_{k_i} \mathbbm{1}\left\{ \sum_{i \leq \HER_h} \tilde{Y}_{k_i} < h \lambda^{q}\right\} \\
%&\qquad \leq h^{-\frac{1}{(\alpha-1) d}} \lambda^{-1} c'h^{\frac{1}{(\alpha-1) d}(1-\sad)} \lambda^{1-\sad} h \lambda^{q} = 
&\qquad \leq c' \lambda^{-(\sad-q)}.
\end{split}
\end{align}
Finally, we integrate over $\HER_h$ and $(\tilde{Y}_{k_i})_{i \leq \HER_h}$ to obtain the same bound for the unconditional probability. Then, applying this in a union bound with the events above we obtain that
\begin{align*}
\prb{ \sum_{i \leq \HER_h} \tilde{Y}_{k_i}(\deg \sER_i)^{\frac{1}{d}} \geq h^{\frac{1}{(\alpha-1) d}} \lambda} &\leq \prb{F_{h}^c} + \prb{\sum_{i \leq \HER_h} \tilde{Y}_{k_i} \geq h \lambda^{q}} + c' \lambda^{-(\sad-q)} \\
&\leq c'\lambda^{-d\alpha} + C_{q, \epsilon}\lambda^{-\left(\frac{q\alpha}{\alpha - 1}-a\right)} + c'(\log \lambda)^2\lambda^{-\frac{a}{\alpha}} + c' \lambda^{-(\sad-q)}.
\end{align*}
(Here to obtain that $\prb{F_{h}^c} \leq c'\lambda^{-d\alpha}$ we use \cite[Equation (3)]{KortSubexp} with Bayes' formula, Lemma \ref{lem:prog tail bound}$(i)$ and Lemma \ref{lem:tree height bound slack} to deduce that $|T_{[h, 2h]}|$ is typically of order $h^{\frac{\alpha}{\alpha - 1}}$ and is very well concentrated, and then combine with Lemma \ref{lem:GW typical degree prob bound} in a union bound).

Recall we defined $q=\frac{\alpha - 1}{2\alpha - \sad}$ and $a=\frac{\alpha}{2(2\alpha - \sad)}$, which gives an upper bound of $c_{\epsilon}\lambda^{-\frac{4}{9(2\alpha - \sad)}}$ (actually even better than this), where $c_{\epsilon}$ depends on the $\epsilon$ appearing in Assumption \ref{assn:sec}(D).

In the cases $\sad = 1$ and $\sad >1$, we instead have
\[
\econdb{(\deg \sER_i)^{\frac{1}{d}} \mathbbm{1}\{ F_{h}\}}{ \HER_h, \left(\tilde{Y}_{k_i}\right)_{i \leq \HER_h}}{} \leq \begin{cases} c'\log (h \lambda) &\text{ if } \sad=1, \\
c' &\text{ if } \sad>1.
\end{cases}
\]
For the computation with Markov's inequality in \eqref{eqn:Markov conditional comp} to work as above we therefore need to replace $h^{\frac{1}{(\alpha-1) d}} \lambda$ with $h \lambda \log h$ and $h \lambda$ respectively.

When $\sad = 1$ the final line in \eqref{eqn:Markov conditional comp} is therefore instead
\[
 h^{-1} (\log h)^{-1} \lambda^{-1} c' (\log (h \lambda)) h \lambda^{q} = c \lambda^{-(1-q)} \log \lambda.
\]
Again since $q=\frac{\alpha - 1}{2\alpha - 1}$ and $a=\frac{\alpha}{2(2\alpha - 1)}$ in this case, this gives overall tail decay (in fact a bit better than) order $c_{\epsilon}\lambda^{-\frac{4}{9(2\alpha - 1)}}$. Here we also use that $\log (h \lambda) \leq \log (h) \log ( \lambda)$ whenever $h \wedge \lambda \geq e^2$.

When $\sad > 1$ the final line in \eqref{eqn:Markov conditional comp} is therefore instead
\[
h^{-1} \lambda^{-1} c' h \lambda^{q} = c' \lambda^{-(1-q)}.
\]
We also do not need to include the event $F_{h}$ in the argument in this case, so the same argument therefore gives that
\begin{align*}
\prb{ \sum_{i \leq \HER_h} \tilde{Y}_{k_i}(\deg \sER_i)^{\frac{1}{d}} \geq h \lambda} 
&\leq C_{q, \epsilon}\lambda^{-\left(\frac{q\alpha}{\alpha - 1}-a\right)} + c'(\log \lambda)^2\lambda^{-\frac{a}{\alpha}} + c' \lambda^{-(1-q)}.
\end{align*}
Again since $q=\frac{\alpha - 1}{2\alpha - 1}$ and $a=\frac{\alpha}{2(2\alpha - 1)}$, this easily gives an upper bound of $c_{\epsilon}\lambda^{-\frac{4}{9(2\alpha - 1)}}$.
\end{proof}

%\textbf{We now return to the previous labelling convention for the sequence $(\tilde{Y}_j)_{j < n}$, whereby $\tilde{Y}_i$ denotes the random variable associated with the lexicographically ordered vertex $u_i$, rather than the spinal vertex $\sER_i$.}

We are left with proving Lemma \ref{lem:tilde Y tail decay}. 

\textbf{Proof of Lemma \ref{lem:tilde Y tail decay}}

In what follows we use the following notation. First let $(W_m)_{m \geq 0}$ be a random walk with jump distribution $\xi-1$, and let $(H_m)_{m \geq 0}$ be defined from $W$ via \eqref{eqn:H from W}. Given $h \geq 1$, we consider $W$ and $H$ on the time interval $[0, n_{h, \lambda}]$ where $n_{h, \lambda} = h^{\frac{\alpha}{\alpha - 1}}\lambda^a$ and where $a$ is as in \eqref{eqn:kq defs}, and let $T^{(0)}, T^{(1)}, \ldots, T^{(N_{h,\lambda})}$ denote the ordered set of complete trees coded by $W$ on the interval $[0, n_{h,\lambda}]$. If there is an incomplete tree at the end of the sequence, we discard it by defining $(H^{(h, \lambda)}_m)_{m \in [0, n_{h, \lambda}]}$ by
\begin{equation}
H^{(h, \lambda)}_m = H_m \mathbbm{1}\{\arg \min_{[0, n_{h, \lambda}]} X \geq m\} \qquad \text{for } m \in [0, n_{h, \lambda}].
\end{equation}
(This means that $H^{(h, \lambda)}$ codes precisely the first $N_{h,\lambda}$ trees appearing in the forest). We extend the lexicographical ordering to the whole forest and let $u_0, u_1, \ldots, u_{n_{h, \lambda}}$ denote the ordered vertices of the trees of the forest. For $h \geq 1$ we let $\tau_h = \inf\{i \geq 1: \Height (T^{(i)}) \in [h,2h]\}$, or $0$ if this set is empty, and decorate $T^{(\tau_h)}$ as described in Section \ref{sctn:dec tree def}.%, let $\HER_h$ denote the length of the decorated spine of $T^{(\tau_h)}$ and let $\sER_0, \ldots, \sER_{\HER_h}$ denote the set of vertices on the decorated spine, ordered by distance from the root. Also let $(\HER_h)^{\wedge}$ denote the maximal index $i$ such that $\sER_i$ is also on the Williams' spine and the decorated spine.

Recall that $(\tilde{Y}_i)_{i <\infty}$ is a sequence of i.i.d. random variables satisfying the tail bound of Assumption \ref{assn:sec}(D). For each $j \in [0, \infty)$ we let $\Znv = \sum_{i:u_i \preceq u_j} \Ynu$ denote the corresponding snake process on the interval $[0, \infty)$, where $u_i \preceq u_j$ has exactly the same meaning as in Section \ref{sctn:GW tree defs}.

We first state a lemma controlling $\max_{j \leq n_{h, \lambda}} \Znv$.

\begin{lem}\label{lem:snake holder result}
Take $\epsilon > 0$ as in Assumption \ref{assn:sec}$(D)$. There exists $c' < \infty$ such for all $k>0$ and $p \in (\frac{\alpha}{\alpha -1}, {\frac{\alpha +\epsilon}{\alpha -1}})$ the following holds. There exists $N_p < \infty$ and an event $E_{h, \lambda,k}$ such that for any $q>0$ there exists $C_{k, p, q} < \infty$ such that for all $h \geq N_{p}$ and all $\lambda \geq 1$,
\begin{align*}
\prb{E_{h, \lambda,k}^c} \leq c'\lambda^{-k}, \qquad \text{ and } \qquad \prb{\sup_{i \leq n_{h, \lambda}} \Znu \geq \lambda^q n_{h, \lambda}^{1-\frac{1}{\alpha}} \text{ and } E_{h, \lambda, k}} \leq C_{k, p, q}\lambda^{-qp}.
\end{align*}
%\item For all $\theta < \frac{1}{6}, p> 2$* ($p>..?$) there exists $C_{p, \theta}$ such that
%\begin{align*}
%\Eb{\sup_{u,v \in T_n}\left(\frac{n^{-(1-\frac{1}{\alpha})}|\Znu - \Znv|}{\left(n^{-\left(1-\frac{1}{\alpha}\right)}d(u,v)\right)^{\theta}} \right)^p}^{\frac{1}{p}} &\leq C_{p, \theta}.
%\end{align*}
\end{lem}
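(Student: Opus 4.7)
My plan is to deduce the snake-process control from a Kolmogorov chaining argument, using the H\"older control on the height function $H$ provided by Lemma \ref{lem:marzouk height holder result} (which the preceding discussion in the excerpt promises) as the main input, together with the fact that $E[\tilde Y_1^p]<\infty$ for $p<\tfrac{\alpha+\epsilon}{\alpha-1}$ by Assumption \ref{assn:sec}(D). The key structural fact I will exploit is that for any $i<j$,
\[
|Z_j - Z_i| \;=\; \sum_{u\in[[u_{k^*},u_i]]\setminus\{u_{k^*}\}} \tilde Y_u \;+\; \sum_{u\in[[u_{k^*},u_j]]\setminus\{u_{k^*}\}} \tilde Y_u,
\]
where $u_{k^*}$ is the most recent common ancestor of $u_i$ and $u_j$, so that the number of summands is $(H_i-H_{k^*})+(H_j-H_{k^*}) \le 2\,\mathrm{osc}_{[i,j]} H$.

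First, I will define the good event $E_{h,\lambda,k}$ directly in terms of the conclusion of Lemma \ref{lem:marzouk height holder result}, so that on $E_{h,\lambda,k}$ the forest-height function $H^{(h,\lambda)}$ satisfies a quantitative oscillation bound of the form $E[(\mathrm{osc}_{[s,t]}H)^p\cdot \mathbbm{1}_{E_{h,\lambda,k}}] \le C_{p,k}(\log\lambda)^{c_0}|t-s|^{p(\alpha-1)/\alpha}$, with $\bPb(E_{h,\lambda,k}^c)\le c'\lambda^{-k}$. The precise form is whatever is convenient given the statement of Lemma \ref{lem:marzouk height holder result}, but the crucial feature is that the exponent $p(\alpha-1)/\alpha$ is strictly greater than $1$, which is exactly what the hypothesis $p>\alpha/(\alpha-1)$ buys us.

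Next, I transfer this H\"older-type bound to the snake. Conditioning on the tree, the $\tilde Y_u$ are i.i.d.\ and independent of the tree, so the inequality $(\sum_{k=1}^n x_k)^p\le n^{p-1}\sum x_k^p$ (for nonneg $x_k$ and $p\ge 1$) yields
\[
E\bigl[|Z_j-Z_i|^p \,\big|\, \text{tree}\bigr] \;\le\; C_p\,(H_i+H_j-2H_{k^*})^p\,E[\tilde Y_1^p] \;\le\; C_p\,(2\,\mathrm{osc}_{[i,j]}H)^p\,E[\tilde Y_1^p],
\]
the last factor being finite because $p<\tfrac{\alpha+\epsilon}{\alpha-1}$. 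Taking expectation and plugging in the height oscillation bound on $E_{h,\lambda,k}$ gives
\[
E\bigl[|Z_j-Z_i|^p\,\mathbbm{1}_{E_{h,\lambda,k}}\bigr] \;\le\; C_{p,k}(\log\lambda)^{c_0}\,|i-j|^{p(\alpha-1)/\alpha}.
\]

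Finally, I apply Kolmogorov's continuity criterion to the rescaled process $\tilde Z_s := Z_{\lfloor s\,n_{h,\lambda}\rfloor}/n_{h,\lambda}^{(\alpha-1)/\alpha}$ on $[0,1]$: since $p(\alpha-1)/\alpha-1>0$, Kolmogorov produces a random variable $K$ controlling the H\"older modulus of $\tilde Z$ with $E[K^p\,\mathbbm{1}_{E_{h,\lambda,k}}]\le C_{p,k}(\log\lambda)^{c_1}$, and hence $\sup_{s\in[0,1]} \tilde Z_s \le \tilde Z_0 + CK$. The root contribution $\tilde Z_0=\tilde Y_0/n_{h,\lambda}^{(\alpha-1)/\alpha}$ is negligible compared to $\lambda^q$ for $h$ large, so Markov's inequality gives $\bPb(\sup_j Z_j \ge \lambda^q n_{h,\lambda}^{(\alpha-1)/\alpha},\,E_{h,\lambda,k})\le C_{k,p,q}\lambda^{-qp}$ after absorbing the logarithmic factor into the constant (at the cost of a harmless decrease in the effective exponent, which can be restored by choosing $p$ slightly larger in the admissible range).

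The main obstacle is the bookkeeping in step three: carrying the $\lambda$-dependent constants from the height H\"older bound through Kolmogorov's chaining without spoiling the target decay $\lambda^{-qp}$. The tight range $p\in(\alpha/(\alpha-1),(\alpha+\epsilon)/(\alpha-1))$ is dictated precisely by the two competing requirements (Kolmogorov needs $p$ large, finite moment of $\tilde Y_1$ needs $p$ small), and the nonemptiness of this range is exactly why Assumption \ref{assn:sec}(D) is stated with a strictly positive $\epsilon$.
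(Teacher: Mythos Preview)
Your approach is essentially the same as the paper's: take $E_{h,\lambda,k}$ from Lemma \ref{lem:marzouk height holder result}, condition on the tree to get $\econdb{|Z_i-Z_j|^p}{W}{}\le C\,d^{h,\lambda}(u_i,u_j)^p\,\Eb{\tilde Y_1^p}$ via Jensen, insert the H\"older control on tree distances (your $\mathrm{osc}_{[i,j]}H$ is the paper's $d^{h,\lambda}(u_i,u_j)$ bounded through the random variable $\kappa_{h,\gamma,\lambda}$), take expectation, run Kolmogorov, and finish with Markov plus the $p\mapsto p'$ trick to absorb the $\log\lambda$. One small slip: your displayed identity for $|Z_j-Z_i|$ should be an inequality (it is a \emph{difference} of the two branch sums, not a sum), but since the $\tilde Y_u$ are nonnegative the bound you actually use, $|Z_j-Z_i|\le \sum_{u\in((u_{k^*},u_i]]}\tilde Y_u+\sum_{u\in((u_{k^*},u_j]]}\tilde Y_u$, holds and the argument goes through unchanged.
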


We now show how this implies Lemma \ref{lem:tilde Y tail decay}. We give the proof of Lemma \ref{lem:snake holder result} afterwards.

\begin{proof}[Proof of Lemma \ref{lem:tilde Y tail decay}, assuming Lemma \ref{lem:snake holder result}]
First note that on the event $\tau_h \neq 0$, we can couple $T^{(\tau_h)}$ and $T^{[h,2h]}$ so that they are identical. By Lemmas \ref{lem:prog tail bound} and \ref{lem:stable sum tail prob app}$(i)$ there exist $c,c' \in (0, \infty)$ such that for all $\lambda, h \geq 1$, the probability that this coupling fails is upper bounded by
\begin{align}\label{eqn:failed coupling}
\begin{split}
\pr{N_{h,\lambda} < h^{\frac{1}{\alpha - 1}}(\log \lambda)^2} + \pr{\tau_h = 0, N_{h,\lambda} \geq h^{\frac{1}{\alpha - 1}}(\log \lambda)^2}
&\leq (\log \lambda)^2\lambda^{-\frac{a}{\alpha}} + (1-ch^{-\frac{1}{\alpha-1}})^{h^{\frac{1}{\alpha - 1}}(\log \lambda)^2} \\
&\leq c'(\log \lambda)^2\lambda^{-\frac{a}{\alpha}}.
\end{split}
\end{align}
We henceforth now assume that $\tau_h \neq 0$, and therefore that $T^{[h,2h]} = T^{(\tau_h)}$ under this coupling. By substituting the value of $n_{h, \lambda}$, we have from Lemma \ref{lem:snake holder result} that for any $k \geq 1, q >0$ and $p \in (\frac{\alpha}{\alpha -1}, {\frac{\alpha +\epsilon}{\alpha -1}})$ there exists $C_{k,p,q} < \infty$ and $N_p<\infty$ such that for all $h \geq N_p$ and all $\lambda \geq 1$:
\[
\prb{\sup_{i \leq n_{h, \lambda}} \Znu \geq \lambda^q h} \leq \prb{E_{h,\lambda, k}^c} + \prb{\sup_{i \leq n_{h, \lambda}} \Znu \geq \lambda^q h \text{ and } E_{h, \lambda, k}} \leq c'\lambda^{-k} + C_{k, p, q}\lambda^{-[q-a(1-\frac{1}{\alpha})]p}.
\]
Moreover, clearly
\begin{equation}\label{eqn:snake branch bound}
\sum_{i=(\HER_h)^{\wedge}}^{\HER_h} \tilde{Y}_i 
\leq \sup_{i \leq n_{h,\lambda}} \Znu.
\end{equation}
%We take $p$ and $E_{h, \lambda, k}$ as in Lemma \ref{lem:snake holder result}. By Assumption \ref{assn:sec}(D), there exists $M < \infty$ such that $\Eb{\tilde{Y}_i} \leq M$ for all $i\leq n_{h,\lambda}$ and for all $h \geq 1$.
%Moreover, by definition, we have for any $h \geq 1$ that
%\begin{equation}\label{eqn:snake branch bound}
%\sum_{i=(\HER_h)^{\wedge}}^{\HER_h} \tilde{Y}_i 
%\leq M (\HER_h +1) + \sup_{i \leq n_{h,\lambda}} \Znuc.
%\end{equation}
%Therefore, applying a union bound, \cite[Theorem 2]{KortSubexp} and Lemma \ref{lem:snake holder result}, and using that $\HER_h \leq 2h$ by construction, we have that there exists $C_{k,p,q}'<\infty$ such that for all $\lambda \geq [2(2M \vee 1)]^{1/q}$:
%\begin{align*}
%\prb{\sum_{i=(\HER_h)^{\wedge}}^{\HER_h} \tilde{Y}_i \geq h \lambda^{q} \text{ and } E_{n, \lambda, k} \text{ and } \tau_h \neq 0} &\leq \prb{\sup_{i \leq n_{h,\lambda}} \Znuc \geq \frac{1}{2}h \lambda^{q} \text{ and } E_{h, \lambda, k}}\\
%& \leq C_{k,p,q}'\lambda^{-\left(\frac{q\alpha}{\alpha - 1}-a\right)}.
%\end{align*}
Lemma \ref{lem:tilde Y tail decay} therefore follows on fixing $p = \frac{2\alpha + \epsilon}{2(\alpha - 1)}$ and $k=1$ and combining with \eqref{eqn:failed coupling} in a union bound.
\end{proof}

\textbf{Proof of Lemma \ref{lem:snake holder result}}

The arguments to prove Lemma \ref{lem:snake holder result} use the same principles as those employed in \cite{MarzoukStableSnake}, but we just tweak some arguments as our aims here are slightly different (in particular, we would like to obtain a tail bound that decays uniformly in $\lambda$ and $n$ or $h$, so cannot discard $o_n(1)$ or $o_h(1)$ terms in the same way as in \cite{MarzoukStableSnake}). %Similarly to the notation used there, if $u \in T_n$ we let $\p (u)$ denote its parent, and $\chi_u$ denote its relative position amongst the offspring of $\p(u)$. %We also let $|u-v|$ denote the \textit{lexicographical} distance between $u$ and $v$: that is, the number of vertices explored by the height function between visiting $u$ and $v$. **delete last sentence

The proof is really just a minor modification of the arguments of \cite[proof of Lemma 1.4]{MarzoukStableSnake}; we give the details for completeness. We start with a definition.

If $i,j \in [0, n_{h, \lambda}]$, we write $i \overset{T}{\sim} j$ if $u_i$ and $u_j$ are in the same tree of the forest. We let $d(u_i,u_j)$ denote the distance between them in this case, and extend the distance function to the forest by writing
\[
d^{h, \lambda}(u_i,u_j) = \begin{cases}
d(u_i,u_j) \mathbbm{1}\{\arg \min_{[0, n_{h, \lambda}]} X \geq i \vee j\} &\text{ if } u_i \overset{T}{\sim} u_j, \\
H^{(h, \lambda)}_i + H^{(h, \lambda)}_j &\text{ otherwise}.
\end{cases}
\]
In the latter case we will sometimes think of the respective roots as representing the most recent common ancestor of $u_i$ and $u_j$, and think of the union of the paths to their respective roots as the branch between $u_i$ and $u_j$.

\begin{defn}\label{def:kappan def}
Fix any $\gamma < \frac{\alpha - 1}{\alpha}$. For each $h\geq 1, \lambda \geq 1$ we define the random variable $\kappa_{h,\gamma,\lambda}$ by
\begin{align*}%\label{eqn:kappan def}
\kappa_{h,\gamma,\lambda} = \sup_{i,j \leq n_{h, \lambda}} \left\{ \frac{n_{h, \lambda}^{-(1-\frac{1}{\alpha})}d^{h, \lambda}(u_i,u_j)}{(\log \lambda)\left(n_{h,\lambda}^{-1}|i-j|\right)^{\gamma}} \right\}.
\end{align*}
%In light of Lemma \ref{lem:marzouk height holder result}, $\kappa_{n,\gamma,\lambda}$ is well-defined.
\end{defn}

The result of the next lemma will enable us to later apply Kolmogorov's continuity criterion to deduce tightness of the rescaled snake process on the interval $[0, n_{h, \lambda}]$.

\begin{lem}\label{lem:marzouk height holder result}
Take any $\gamma < \frac{\alpha - 1}{\alpha}$. There exists $N_{\gamma}<\infty$ such that for all $h \geq N_{\gamma}$ the following holds. Fix any $k\geq 1$ and any $\lambda \geq 3$. There exists an event $E_{h, \lambda,k}$ and a constant $c'$ such that for all $p>0$ there exists a constant $C_{p, \gamma, k}< \infty$, not depending on $\lambda$, such that
\begin{align*}
\prb{E_{h, \lambda,k}^c} \leq c'\lambda^{-k} \qquad \text{ and } \qquad \Eb{\kappa_{h,\gamma,\lambda}^p\mathbbm{1}\{E_{h, \lambda,k}\}} &< C_{p, \gamma, k}.
\end{align*}
\end{lem}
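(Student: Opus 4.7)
The plan is to adapt the chaining approach from \cite[proof of Lemma 1.4]{MarzoukStableSnake} to our setting, with the good event $E_{h,\lambda,k}$ chosen so that the excluded probability decays polynomially in $\lambda$ with an arbitrary rate, rather than tending to zero at an unspecified rate in $n$. The role of $E_{h,\lambda,k}$ will be to cap the largest jumps of the Lukasiewicz path $W$ on $[0,n_{h,\lambda}]$, which are the only obstruction to getting a uniform polynomial moment bound on height increments.

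First I would define the good event explicitly. Since the jumps $W_i - W_{i-1}+1$ are i.i.d.\ with tail $\sim c x^{-\alpha-1}$, the natural choice is
\begin{equation*}
E_{h,\lambda,k} = \Bigl\{\sup_{i \leq n_{h,\lambda}} (W_i - W_{i-1}+1) \leq n_{h,\lambda}^{1/\alpha}\lambda^{(k+1)/\alpha}\Bigr\},
\end{equation*}
so that a union bound together with \eqref{eqn:offspring tails} gives $\mathbf{P}(E_{h,\lambda,k}^c) \leq c' n_{h,\lambda}\cdot (n_{h,\lambda}^{1/\alpha}\lambda^{(k+1)/\alpha})^{-\alpha} = c'\lambda^{-(k+1)}$, which yields the first claim after adjusting $c'$. (One may need to augment $E_{h,\lambda,k}$ with a mild control on $\min_{m \leq n_{h,\lambda}} W_m$, say by $-n_{h,\lambda}^{1/\alpha}\lambda^{(k+1)/\alpha}$, which costs only another polynomial loss via Nagaev-type estimates.)

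Next I would prove a moment bound of the form
\begin{equation*}
\mathbf{E}\bigl[d^{h,\lambda}(u_i,u_j)^{p}\,\mathbbm{1}\{E_{h,\lambda,k}\}\bigr] \leq C_{p,k}\,(\log\lambda)^{r(p,k)}\, |i-j|^{p(1-1/\alpha)},
\end{equation*}
valid for every $p \in \bigl(\tfrac{\alpha}{\alpha-1}, \tfrac{\alpha+\epsilon}{\alpha-1}\bigr)$ and all $i,j \in [0,n_{h,\lambda}]$. Using \eqref{eqn:H from W}, one writes $d^{h,\lambda}(u_i,u_j)$ (within a single tree, and via $H^{(h,\lambda)}_i + H^{(h,\lambda)}_j$ across trees) in terms of the number of weak-descending-ladder epochs of $W$ on $[i\wedge j, i\vee j]$. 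Classical Duquesne--Le Gall style estimates reduce this to moment bounds for the overshoot $W_i - \min_{i\leq \ell \leq j}W_\ell$ of the stable-domain random walk. Under the cap provided by $E_{h,\lambda,k}$, this overshoot has all moments of order at most $\tfrac{\alpha+\epsilon}{\alpha-1}$ bounded by $|i-j|^{p(1-1/\alpha)}$ times polylogarithmic corrections in $\lambda$. The $\epsilon$-slack in Assumption \ref{assn:sec}(D), combined with the matching $\epsilon$ in the definition of $\xi$, is precisely what allows us to push $p$ slightly above $\tfrac{\alpha}{\alpha-1}$.

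With the moment bound in hand I would conclude via a dyadic Kolmogorov--Chentsov chaining. For $\gamma < \tfrac{\alpha-1}{\alpha}$, choose $p \in \bigl(\tfrac{\alpha}{\alpha-1}, \tfrac{\alpha+\epsilon}{\alpha-1}\bigr)$ such that $p(1-\tfrac{1}{\alpha}-\gamma) > 1$. Partitioning $[0,n_{h,\lambda}]$ into dyadic subintervals and summing the moment bound over the $\sim 2^{2n}$ pairs at dyadic scale $n_{h,\lambda}2^{-n}$, we get
\begin{equation*}
\mathbf{E}\Bigl[\sup_{|i-j|\leq n_{h,\lambda}2^{-n}} d^{h,\lambda}(u_i,u_j)^{p}\,\mathbbm{1}\{E_{h,\lambda,k}\}\Bigr] \leq C_{p,k}(\log\lambda)^{r(p,k)}\, n_{h,\lambda}^{p(1-1/\alpha)}2^{-n[p(1-1/\alpha)-1]},
\end{equation*}
and summing the resulting geometric series over $n$ produces a $\gamma$-H\"older bound in $L^p$. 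Since $\kappa_{h,\gamma,\lambda}$ is exactly this H\"older constant divided by $(\log\lambda)$, choosing $k$ large enough so that $r(p,k) \leq p$ absorbs the polylogarithmic corrections, and we obtain $\mathbf{E}[\kappa_{h,\gamma,\lambda}^{p}\,\mathbbm{1}\{E_{h,\lambda,k}\}] < C_{p,\gamma,k}$ uniformly in $\lambda$, as required (the case of general $p$ follows by Jensen/monotonicity).

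The main obstacle is Step 2, namely obtaining the moment estimate for $d^{h,\lambda}$ with only polylogarithmic dependence on $\lambda$. The height function is not Markov, so one cannot simply invoke scaling; instead one must track how the cap on the jumps propagates through the ladder-height structure of $W$ and through the identity \eqref{eqn:H from W}. This is the point at which the $\epsilon$-slack in \eqref{eqn:offspring tails} and in Assumption \ref{assn:sec}(D) is essential, since without it the target exponent $\tfrac{\alpha}{\alpha-1}$ would be exactly the borderline moment of the stable limit and chaining would fail.
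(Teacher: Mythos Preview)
Your proposal has a genuine gap in Step~2, and the event you choose is not the one that makes the argument work.

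The key difficulty in bounding $d^{h,\lambda}(u_i,u_j)$ is the \emph{backward} increment $H^{(h,\lambda)}_i-\inf_{i\le \ell\le j}H^{(h,\lambda)}_\ell$. The \emph{forward} increment $H^{(h,\lambda)}_j-\inf_{i\le \ell\le j}H^{(h,\lambda)}_\ell$ already has exponential tails unconditionally, via the ladder-time representation \eqref{eqn:H from W}; no event is needed for it. The backward increment, however, can be large relative to $|j-i|$ whenever many ancestors of $u_i$ on the segment $[[u_{i\wedge j},u_i]]$ are last children of their parents: in DFS one then climbs straight from $u_i$ back to the level of $u_{i\wedge j}$ without visiting any sibling subtrees, so $|j-i|$ gives no lower bound on $d(u_i,u_{i\wedge j})$. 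Capping the jump sizes of $W$ does nothing to exclude this scenario, since a long chain of last children does not require any large jump. The paper's event is instead
\[
E_{h,\lambda,k}=\Bigl\{\Theta_{i,j}\le 1-\tfrac{\xi(0)}{2}\ \text{for all } i\preceq j \text{ with } d^{h,\lambda}(u_i,u_j)\ge A_\xi\log(n_{h,\lambda}\lambda^k)\Bigr\},
\]
where $\Theta_{i,j}$ is the fraction of last children along $[[u_i,u_j]]$. This is proved to have complement probability $\le c'\lambda^{-k}$ via a Chernoff bound on $\mathsf{Binomial}(m,1-\xi(0))$ random variables, and on $E_{h,\lambda,k}$ one obtains $|j-i|\ge \tfrac{\xi(0)}{2}\,d(u_i,u_{i\wedge j})$. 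This is exactly what allows the ``mirror forest'' trick (swapping the subtrees at $u_i,u_j$ and reversing) to reduce the backward increment to a forward one up to a constant distortion of $|j-i|$, yielding exponential tails for the backward increment as well.

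Two further points. First, your Step~2 conflates the height increment with the walk overshoot $W_i-\min_{i\le \ell\le j}W_\ell$; these are different quantities and truncating jumps of $W$ does not translate into the moment bound you state. Second, there is no ``$\epsilon$-slack in \eqref{eqn:offspring tails}'': the offspring tail is exactly $\sim cx^{-\alpha-1}$, and Assumption~\ref{assn:sec}(D) concerns the inserted graphs, which play no role whatsoever in this lemma. Indeed the paper gets \emph{exponential} (not merely $p$-th moment) tail bounds on the normalised increments, which is why the conclusion holds for \emph{all} $p>0$ and not only for $p$ in some interval above $\alpha/(\alpha-1)$.
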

\begin{proof}
For now assume a constant $A_{\xi}$ is fixed (we will choose it later), take $\lambda \geq 1$ and for $k \geq 1$ define $E_{h, \lambda,k}$ to be the event (cf \cite[Lemma 2.1]{MarzoukStableSnake})
\[
\left\{\Theta_{i,j} \leq 1 - \frac{\xi (0)}{2} \text{ for all } i \preceq j \in [0, n_{h, \lambda}] \text{ such that } i \overset{T}{\sim}j \text{ and } d^{h, \lambda}(u_i,u_j) \geq A_{\xi} \log (n_{h, \lambda} \lambda^k)\right\},
\]
where 
\[
\Theta_{i,j} = \frac{|\{\ell \in (i,j]: \chi_{u_\ell}= \deg (\p (u_\ell)) - 1\}|}{d^{h,\lambda}(u_i, u_j)}.
\]
(Recall from Section \ref{sctn:GW tree defs} that for a vertex $u$, $\chi_u$ denotes the order of $u$ amongst its siblings, and $\p(u)$ denotes its parent). Note that, even if the interval $[0, n_{h, \lambda}]$ does not code an integer number of trees, then this event is not affected by what happens on the final partial tree since $d^{h,\lambda}(u_i, u_j)=0$ for all $i \preceq j$ in this final partial tree. (Although the event would still be well-defined on this partial tree anyway, since the Lukasiewicz path tells us the degrees of all the vertices coded by the interval $[0, n_{h, \lambda}]$).

We will prove that following three claims hold, provided that $h$ is sufficiently large depending on $\gamma$. Throughout we let $u_0, \ldots, u_{n_{h,\lambda}}$ denote the vertices of the forest listed in the lexicographical ordering, and assume that $k$ and $\gamma$ are fixed as described in the statement of the lemma.
\begin{enumerate}[A.]
\item We can choose $A_{\xi}< \infty$ depending only on $\xi(0)$ and $c' < \infty$ depending only on the constant $c$ in \eqref{eqn:offspring tails} so that $\prb{E_{h, \lambda,k}^c} \leq c'\lambda^{-k}$ for all $k \geq 1$ and all $\lambda \geq 3$.
\item There exist constants $B_{\xi}< \infty, b_{\xi}>0$, depending only on $\xi(0)$ and $\alpha$, such that for all $i \leq j \leq n_{h,\lambda}$, all $k \geq 1$, all $\lambda \geq 3$ and all $x \geq 1$ we have that
\begin{align}
\label{eqn:marz2} \prb{\frac{n_{h, \lambda}^{-(1-\frac{1}{\alpha})}|H^{(h, \lambda)}_j - \inf_{i \leq \ell \leq j} H^{(h, \lambda)}_{\ell}|}{\left(n_{h, \lambda}^{-1}|i-j|\right)^{\gamma}}>x} &\leq B_{\xi}e^{-b_{\xi}x} \\ 
\label{eqn:marz1}
\prb{\frac{n_{h, \lambda}^{-(1-\frac{1}{\alpha})}|H^{(h, \lambda)}_i - \inf_{i \leq \ell \leq j} H^{(h, \lambda)}_{\ell}|\mathbbm{1}\{|H^{(h, \lambda)}_i-\inf_{i \leq \ell \leq j} H^{(h, \lambda)}_{\ell}| \geq A_{\xi} \log (n_{h, \lambda} \lambda^k)\}}{\left(n_{h, \lambda}^{-1}|i-j|\right)^{\gamma}}>x  \text{ and } E_{h, \lambda, k} } &\leq B_{\xi}e^{-b_{\xi}x}.
\end{align}
\item We can choose $A_{\xi}$ in Claim A so that if instead $i, j \leq n_{h,\lambda}$ and $\lambda \geq 1$ but $|H^{(h, \lambda)}_i- H^{(h, \lambda)}_j| < A_{\xi} \log (n \lambda^k)$, we instead have for all $k \geq 1$ and all $\lambda \geq 3$:
\begin{align*}
\frac{n_{h,\lambda}^{-(1-\frac{1}{\alpha})}|H^{(h, \lambda)}_i- H^{(h, \lambda)}_j|}{\left(n_{h,\lambda}^{-1}|i-j|\right)^{\gamma}} \leq A_{\xi}k \log \lambda.
\end{align*}
\end{enumerate}

\textbf{Proof of Claim A.} Take $\lambda \geq 3$, and first note that we can replace $\deg (\p (u_\ell)) - 1$ with $1$ by symmetry. We obtain that $\prb{E_{h, \lambda,k}^c} \leq c'\lambda^{-k}$ by repeating the proof of \cite[Lemma 2.1]{MarzoukStableSnake}. In particular, since the event $E_{h, \lambda, k}$ is not affected by the possible partial tree coded at the end of the interval $[0, n_{h, \lambda}]$, we just need to prove that this event holds on all of the complete trees $T^{(0)}, T^{(1)}, \ldots, T^{(N_{h,\lambda})}$. Therefore, we first consider a single unconditioned Galton--Watson tree $T$ with graph metric $d_T$ and note that, by Markov's inequality,
\begin{align*}
&\prb{|T| \leq n_{h, \lambda} \text{ and } \exists i \preceq j \in [0, |T|-1]: d_T(u_i,u_j) \geq A_{\xi} \log (n_{h, \lambda} \lambda^k) \text{ and } \frac{|\{\ell \in (i,j]: \chi_{u_\ell}= \deg (\p (u_\ell)) - 1\}|}{d_T(u_i, u_j)} \geq 1 - \frac{\xi (0)}{2}} \\
&\leq \Eb{\mathbbm{1}\{|T| \leq n_{h, \lambda}\}\sum_{m =1}^{\Height (T)} \sum_{\substack{j \leq |T|-1: \\ d_T(\rho, u_j)=m}} \mathbbm{1}\left\{\exists i \preceq j: d_T(u_i,u_j) \geq A_{\xi} \log (n_{h, \lambda} \lambda^k) \text{ and } \frac{|\{\ell \in (i,j]: \chi_{u_\ell}= 1\}|}{d_T(u_i, u_j)} \geq 1 - \frac{\xi (0)}{2}\right\}} \\
%&= \Eb{\sum_{m =1}^{\Height (T)} \sum_{\substack{v \in T:\\ d_T(\rho, v)=m}} \mathbbm{1}\left\{\exists u \prec v: d_{T}(u,v) \geq A_{\xi} \log (n_{h, \lambda} \lambda^k)  \text{ and }  \frac{|\{x \in ((u,v]]: \chi_x=1\}|}{d_{T}(u,v)} \geq 1 - \frac{\xi (0)}{2} \right\}} \\
&\leq \Eb{\sum_{m =1}^{n_{h, \lambda}} \sum_{\substack{v \in T:\\ d_T(\rho, v)=m}} \mathbbm{1}\left\{\exists u \prec v: d_T(u,v) \geq A_{\xi} \log (n_{h, \lambda} \lambda^k)  \text{ and }  \frac{|\{x \in ((u,v]]: \chi_x=1\}|}{d_T(u,v)} \geq 1 - \frac{\xi (0)}{2} \right\}} \\
&\leq \sum_{m =A_{\xi} \log (n_{h, \lambda} \lambda^k)}^{n_{h, \lambda}} \prb{ \exists u \prec s_m^{\infty}:  d_T(u,s_m^{\infty}) \geq A_{\xi} \log (n_{h, \lambda} \lambda^k)  \text{ and }  \frac{|\{x \in ((u,s_m^{\infty}]]: \chi_x=1\}|}{d_{\Ti}(u,s_m^{\infty})} \geq 1 - \frac{\xi (0)}{2}} \\
&\leq \sum_{m =A_{\xi} \log (n_{h, \lambda} \lambda^k) \ \ }^{n_{h, \lambda}} \sum_{i=A_{\xi} \log (n_{h, \lambda} \lambda^k)}^m \prb{\frac{|\{x \in ((s_{m-i}^{\infty},s_m^{\infty}]]: \chi_x=1\}|}{d_{\Ti}(s_{m-i}^{\infty},s_m^{\infty})} \geq 1 - \frac{\xi (0)}{2}}.
\end{align*}
Here the penultimate line follows by an application of the absolute continuity relation of \cite[Equation (24)]{DuqHawkes} between an unconditioned Galton-Watson tree $T$ and Kesten's tree $\Ti$, which entails that
\begin{align*}
&\Eb{\sum_{\substack{v \in T:\\ d_T(\rho, v)=m}} \mathbbm{1}\left\{\exists u \prec v: d_T(u,v) \geq A_{\xi} \log (n \lambda^k)  \text{ and }  \frac{|\{x \in ((u,v]]: \chi_x=1\}|}{d_T(u,v)} \geq 1 - \frac{\xi (0)}{2} \right\}} \\
&\qquad = \Eb{ \mathbbm{1}\left\{\exists u \prec s_m^{\infty}: d_{\Ti}(u,s_m^{\infty}) \geq A_{\xi} \log (n \lambda^k)  \text{ and }  \frac{|\{x \in ((u,s_m^{\infty}]]: \chi_x=1\}|}{d_{\Ti}(u,s_m^{\infty})} \geq 1 - \frac{\xi (0)}{2} \right\}},
\end{align*}
where $s_m^{\infty}$ denotes the $m^{th}$ backbone vertex of $\Ti$. As noted in \cite[proof of Lemma 2.1]{MarzoukStableSnake}, the number of $x \in ((s_{m-i}^{\infty},s_m^{\infty}]]$ fulfilling the event above is a $\textsf{Binomial}(i, 1-\xi(0))$ random variable. Therefore, by a Chernoff bound and since $m \leq n_{h, \lambda}$, we can write:
\begin{align*}
&\sum_{i=A_{\xi} \log (n_{h, \lambda} \lambda^k)}^m \prb{\frac{|\{x \in ((s_{m-i}^{\infty},s_m^{\infty}]]: \chi_x=1\}|}{d_{\Ti}(s_{m-i}^{\infty},s_m^{\infty})} \geq 1 - \frac{\xi (0)}{2}} \\
&=\sum_{i=A_{\xi} \log (n_{h, \lambda} \lambda^k)}^m \prb{\textsf{Binomial}(i, 1-\xi(0)) \geq \left(1-\frac{\xi(0)}{2}\right)i} \leq \sum_{i=A_{\xi} \log (n_{h, \lambda} \lambda^k)}^m e^{-a_{\xi}i} \leq n_{h, \lambda} e^{-a_{\xi}A_{\xi} \log (n_{h, \lambda} \lambda^k)},
\end{align*}
where $a_{\xi}>0$ is a constant depending on $\xi (0)$ only. Substituting into the previous calculation we obtain an upper bound of $n_{h, \lambda}^2 e^{-a_{\xi}A_{\xi} \log (n_{h, \lambda} \lambda^k)}$. Finally, to get an upper bound for $\prb{E_{h, \lambda,k}^c}$ note that there are deterministically at most $n_{h, \lambda}$ complete trees coded on the interval $[0, n_{h, \lambda}]$, and all must satisfy $|T| \leq n_{h, \lambda}$. Therefore we can take a union bound to deduce that, for all $\lambda, k \geq 1$ and all sufficiently large $h$,
\begin{align*}
\prb{E_{h, \lambda,k}^c} \leq n_{h, \lambda}^3 e^{-a_{\xi}A_{\xi} \log (n_{h, \lambda} \lambda^k)}.
\end{align*}
In particular we can choose $A_{\xi}$ large enough, depending only on $\xi(0)$, that the claim holds for all $h \geq 1, \lambda \geq 3$.

\textbf{Proof of Claim B.} Take any $k \geq 1, \lambda \geq 3$. The first statement \eqref{eqn:marz2} is essentially \cite[Equation (5)]{MarzoukStableSnake}; the only difference is that here we are considering a forest rather than a single tree. However, it follows from \eqref{eqn:H from W} and from a time and space reversal, that if $\tau(k)$ denotes the $k^{th}$ record time of $W$, then the probability in question is equal to 
\begin{align*}
\prb{\tau \left(\frac{x\left(n_{h, \lambda}^{-1}|i-j|\right)^{\gamma}}{n_{h, \lambda}^{-(1-\frac{1}{\alpha})}}\right) \leq |j-i|}.
\end{align*}
It is shown in \cite[proof of (5)]{MarzoukStableSnake} that this is upper bounded by the expression on the RHS of \eqref{eqn:marz2}.

We therefore just outline the proof of \eqref{eqn:marz1}. As noted in \cite[proof of (5)]{MarzoukStableSnake}, the two events considered in \eqref{eqn:marz2} and \eqref{eqn:marz1} are not \textit{quite} symmetric, because the children but not the ancestors of $u_i$ are contained in the interval $[i,j]$ in the lexicographical ordering, whereas some ancestors but not the children of $u_j$ are.

To overcome this difficulty, given a tree $T$ Marzouk introduces a \textit{mirror tree} $\tilde{T}$ (cf \cite[Figure 8]{MarzoukStableSnake}), obtained from $T$ by first exchanging the subtrees rooted at $u_i$ and $u_j$ to obtain an intermediate tree, and then flipping (i.e. reversing the contour ordering of) this intermediate tree.

In our case, it is sufficient to again exchange the subtrees rooted at $u_i$ and $u_j$ to obtain an intermediate forest, and then flip all the complete trees in the forest (i.e. reverse the contour ordering on the interval $[0, \arg \min_{[0, n_{h, \lambda}]} X]$). We call the new set of ordered trees the \textit{mirrored forest} and denote it $\tilde{F}$ (note that it depends on $i$ and $j$, but for simplicity we omit this from the notation). By symmetry, $\tilde{F}$ has the same law as the original forest, and moreover if $\tilde{i}$ and $\tilde{j}$ respectively denote the indices of the mirror images of $i$ and $j$ in $\tilde{F}$, it follows by construction that (letting $i\wedge j$ denote the index of the most recent common ancestor of $u_i$ and $u_j$)
\[
|j-i| = \begin{cases} |\tilde{i} - \tilde{j}| - d(u_i, u_{i \wedge j}) + d(u_j, u_{i \wedge j}) &\text{ if } u_i \overset{T}{\sim} u_j, \\
|\tilde{i} - \tilde{j}| - H^{(h, \lambda)}_i + H^{(h, \lambda)}_j &\text{ otherwise. }
\end{cases}
\]
However, on the event $E_{h, \lambda, k}$, we have that
\begin{align*}
|j-i| \geq \begin{cases}
\frac{\xi(0)}{2} d(u_i, u_{i \wedge j}) &\text{ if } u_i \overset{T}{\sim} u_j, \\
\frac{\xi(0)}{2} H^{(h, \lambda)}_i &\text{ otherwise, }
\end{cases}
\end{align*}
and therefore that
\[
|j-i| \geq |\tilde{i} - \tilde{j}| - \frac{2}{\xi(0)}|j-i|, 
\]
which rearranges to 
\begin{equation}\label{eqn:i-j bound}
|j-i| \geq \frac{|\tilde{i} - \tilde{j}|}{1+\frac{2}{\xi(0)}}.
\end{equation}
In particular, applying \eqref{eqn:marz2} on the mirror forest $\tilde{F}$, then applying \eqref{eqn:i-j bound} and using that $\gamma < 1-\frac{1}{\alpha}$, we instead obtain a bound for the tails of the quantity
\[
\frac{n_{h, \lambda}^{-(1-\frac{1}{\alpha})}|H^{(h, \lambda)}_i - \inf_{i \leq \ell \leq j} H^{(h, \lambda)}_{\ell}|\mathbbm{1}\{|H^{(h, \lambda)}_i - \inf_{i \leq \ell \leq j} H^{(h, \lambda)}_{\ell}| \geq A_{\xi} \log (n_{h, \lambda} \lambda^k)\}}{\left(n_{h, \lambda}^{-1}|i-j|\right)^{\gamma}\left(1+\frac{\xi(0)}{2}\right)^{1-\frac{1}{\alpha}}}.
\]
so we can obtain \eqref{eqn:marz1} from \eqref{eqn:marz2} just by replacing $b_{\xi}$ with $\frac{b_{\xi}}{\left(1+\frac{\xi(0)}{2}\right)^{1-\frac{1}{\alpha}}}$.

\textbf{Proof of Claim C.} Take any $k \geq 1, \lambda \geq 3$. This claim is deterministic. If instead $|H^{(h, \lambda)}_i-H^{(h, \lambda)}_j| < A_{\xi} \log (n \lambda^k)$ we instead have, since $|i-j| \geq 1$ and $k \log \lambda$ is bounded away from $0$, that provided $h$ is sufficiently large as a function of $(1-\frac{1}{\alpha}) - \gamma$ then
\begin{align*}
\frac{n_{h, \lambda}^{-(1-\frac{1}{\alpha})}|H^{(h, \lambda)}_i - H^{(h, \lambda)}_j|}{\left(n_{h, \lambda}^{-1}|i-j|\right)^{\gamma}} \leq A_{\xi}n_{h, \lambda}^{\gamma - (1-\frac{1}{\alpha})} (\log n_{h, \lambda} + k \log \lambda) \leq A_{\xi}k \log \lambda.
\end{align*}
\textbf{Conclusion.} Either way, by breaking paths at the most recent common ancestor (or at the roots, if $u_i$ and $u_j$ are in different trees) we can certainly modify $b_{\xi}$ and $B_{\xi}$ a bit if necessary (in a way that depends only on $A_{\xi}$) so that there exists $N_{\gamma} <\infty$ such that for all $h \geq N_{\gamma}$, all $k \geq 1$, all $\lambda \geq 3$ and for all $i,j \leq n_{h, \lambda}$ we have that
\[
\prb{\frac{n_{h, \lambda}^{-(1-\frac{1}{\alpha})}|H^{(h, \lambda)}_i-H^{(h, \lambda)}_j|}{k \log \lambda \left(n_{h, \lambda}^{-1}|i-j|\right)^{\gamma}}>x \text{ and } E_{h, \lambda,k}} \leq B_{\xi}e^{-b_{\xi}x}.
\]
Since this holds uniformly in $h$, by breaking at the most recent common ancestor (or again at the roots, if $u_i$ and $u_j$ are in different trees) and by Kolmogorov's continuity criterion (applied to the rescaled interpolated height function $H^{(h, \lambda)}$, which is indexed by $[0,1]$) gives that, for all $k \geq 1, p>0$ and all $\gamma'<\gamma$, there exists $C_{p, \gamma, \gamma', \xi} < \infty$ such that for all $h \geq N_{\gamma}, \lambda \geq 3$,
\begin{align*}
\Eb{\sup_{i \leq n_{h, \lambda}} \left(\frac{n_{h, \lambda}^{-(1-\frac{1}{\alpha})}d^{h, \lambda}(u_i, u_j)}{k \log \lambda \left(n_{h, \lambda}^{-1}|i-j|\right)^{\gamma'}}\right)^p \mathbbm{1}\{E_{h, \lambda,k}\}} < C_{p, \gamma', \gamma, \xi}.
\end{align*}
In particular, since $\gamma < 1-\frac{1}{\alpha}$, for any fixed $\gamma' < 1-\frac{1}{\alpha}$ we can run the argument with $\gamma = \frac{1}{2} \left(1-\frac{1}{\alpha} + \gamma'\right)$ to obtain a constant $C_{p, \gamma', \xi}$ on the right hand side above. Since the offspring distribution is assumed to be fixed throughout the paper, we suppress the dependence on $\xi$, and moreover remove $k$ from the denominator on the LHS by taking $C_{p, \gamma',k} = k^p C_{p, \gamma', \xi}$. (We replaced $\gamma'$ with $\gamma$ in the final statement).
%
%Finally the claim follows with $d^{h, \lambda}(u_i,u_j)$ in place of $|H^{(h, \lambda)}_i - H^{(h, \lambda)}_j|$ by breaking at the most recent common ancestor (or at the roots, if $u_i$ and $u_j$ are in different trees).
\end{proof}

\begin{proof}[\textbf{Proof of Lemma \ref{lem:snake holder result}}] 
Fix $k \geq 1$. We take $E_{h, \lambda, k}$ as in Lemma \ref{lem:marzouk height holder result}. The proof essentially follows the same steps as \cite[Theorem 1.2]{MarzoukStableSnake}. %, though we have to be a bit careful about applying Kolmogorov's continuity criterion directly to the tree-indexed process (rather than considering $Z^{(n,c)}$ to be indexed by an interval of $\R$).
 Similarly to there, we first note by Jensen's inequality that if $S_m = \sum_{i=1}^m \Ynui$ for some $m \geq 1$ and some fixed subsequence $(j_i)_{j=1}^m$, and if $p \in (\frac{\alpha}{\alpha -1} , {\frac{\alpha + \epsilon}{\alpha -1}})$ where $\epsilon>0$ is as in Assumption \ref{assn:sec}(D) (note this implies that $p>1$), then
\[
\Eb{S_m^p} \leq m^{p-1} \Eb{\sum_{i=1}^m (\Ynui)^p} = m^{p} \Eb{(\tilde{Y}_1)^p}.
\]
Now choose $\gamma < \frac{\alpha-1}{\alpha}$ such that $\gamma p > 1$. Applying the above relation on the branch between $u_i$ and $u_j$ with $m=d^{h, \lambda}(u_i, u_j)$ (and assuming that $i\neq j$), we deduce that, for any $h \geq 1, \lambda \geq 3$,
\begin{align}\label{eqn:marginal snake moment}
\begin{split}
\econdb{(n_{h, \lambda}^{-\left(1-\frac{1}{\alpha}\right)}|\Znu - \Znv|)^p \mathbbm{1}\{E_{h, \lambda, k}\}}{W_{[0, n_{h, \lambda}]}}{} &\leq n_{h, \lambda}^{-p\left(1-\frac{1}{\alpha}\right)} d^{h, \lambda}(u_i,u_j)^p \Eb{\tilde{Y}_{1}^p} \mathbbm{1}\{E_{h, \lambda, k}\} \\
&= \left(n_{h, \lambda}^{-\left(1-\frac{1}{\alpha}\right)} d^{h, \lambda}(u_i,u_j)\right)^p \Eb{\tilde{Y}_{1}^p} \mathbbm{1}\{E_{h, \lambda, k}\} \\
&\leq \left(\kappa_{h,\gamma,\lambda} (\log \lambda)\left(n_{h, \lambda}^{-1}|i-j|\right)^{\gamma}\right)^p \Eb{\tilde{Y}_{1}^p} \mathbbm{1}\{E_{h, \lambda, k}\}.
\end{split}
\end{align}
(Here in the third line we used the definition of $\kappa_{h,\gamma,\lambda}$ from Definition \ref{def:kappan def}). Now note that, using the uniform bound on $\Eb{\tilde{Y}_{1}^p}$ from Assumption \ref{assn:sec}(D) and Lemma \ref{lem:marzouk height holder result}, we know that there exist constants $c_{p,k,\gamma} < \infty$ and $N_{\gamma} < \infty$ such that
\[
\Eb{\tilde{Y}_{1}^p}^{\frac{1}{p}} \Eb{ \kappa_{h,\gamma,\lambda}^{p} \mathbbm{1}\{E_{h, \lambda, k}\}}^{\frac{1}{p}} \leq c_{p,k,\gamma}
\]
for all $h \geq N_{\gamma}, \lambda \geq 3$. 
%
%for all $u,v \in T_n$, we have by \cite[Theorem 2.10]{HallHeyde} and Minkowski's inequality that for all $p \in (2 , {\frac{\alpha}{\alpha -1}}+\epsilon)$, there exists $c_p < \infty$ such that
%\begin{align}\label{eqn:marginal snake moment}
%\begin{split}
%\econdb{(n^{-\left(1-\frac{1}{\alpha}\right)}|\Znuc - \Znvc|)^p \mathbbm{1}\{E_{n, \lambda, k}\}}{T_n}{} &\leq c_p n^{-\frac{p}{2}\left(1-\frac{1}{\alpha}\right)} \left( \sup_{u \in T_n} \Eb{Y_{u}^p} \right) \left( n^{-\left(1-\frac{1}{\alpha}\right)}d(u,v)\right)^{\frac{p}{2}}\mathbbm{1}\{E_{n, \lambda, k}\} \\
%&\leq c_p \left( \sup_{u \in T_n} \Eb{Y_{u}^p} \right) \left( \kappa_{n,\gamma,\lambda} (\log \lambda)\left(n^{-1}|u-v|\right)^{\gamma} \right)^{p}\mathbbm{1}\{E_{n, \lambda, k}\}.
%\end{split}
%\end{align}
In particular, defining $f^{(h,\lambda)}(s)$ for $s \in [0,1]$ by  $f^{(h,\lambda)}(s) = Z_{sn_{h, \lambda}}$ when $sn_{h, \lambda} \in \N$ and then interpolating, then using this with the tower property in \eqref{eqn:marginal snake moment} we have for all $h \geq N_{\gamma}, \lambda \geq 3$ that:
\begin{align*}
\Eb{(n_{h, \lambda}^{-\left(1-\frac{1}{\alpha}\right)} (\log \lambda)^{-1}|f^{(h,\lambda)}(s) - f^{(h,\lambda)}(t)|)^p \mathbbm{1}\{E_{h, \lambda, k}\}}^{\frac{1}{p}} &\leq \Eb{\tilde{Y}_{1}^p}^{\frac{1}{p}} |s-t|^{\gamma} \Eb{ \kappa_{h,\gamma,\lambda}^{p} \mathbbm{1}\{E_{h, \lambda, k}\}}^{\frac{1}{p}} \\
&\leq c_{p,k,\gamma} |s-t|^{\gamma}.
\end{align*}
Therefore, letting $\eta = \frac{1}{2}\left(\gamma - \frac{1}{p}\right)>0$, Kolmogorov's continuity criterion gives that there exists a constant $C_{p,k,\gamma} < \infty$ so that for all $h \geq N_{\gamma}, \lambda \geq 3$,
\[
\Eb{\left(\sup_{s,t \in [0,1]} \frac{n_{h, \lambda}^{-\left(1-\frac{1}{\alpha}\right)} (\log \lambda)^{-1}|f^{(h,\lambda)}(s) - f^{(h,\lambda)}(t)|}{|s-t|^{\eta}} \right)^p \mathbbm{1}\{E_{h, \lambda, k}\}} < C_{p,k,\gamma}.
\]
In particular, by Markov's inequality, for any $q>0$ it holds with probability at least $1-C_{p,k,\gamma}\lambda^{-pq}$ on the event $E_{h, \lambda, k}$ that
\[
\sup_{s,t \in [0,1]} \frac{n_{h, \lambda}^{-\left(1-\frac{1}{\alpha}\right)} (\log \lambda)^{-1}|f^{(h,\lambda)}(s) - f^{(h,\lambda)}(t)|}{|s-t|^{\eta}} < \lambda^q
\] 
for all $h \geq N_{\gamma}, \lambda \geq 3$. In particular, on this event 
\[
\sup_{i \leq n_{h, \lambda}} \Znu \leq \lambda^q n_{h, \lambda}^{1-\frac{1}{\alpha}} (\log \lambda).
\]
Since the choice of $\gamma$ depended only on $p$, we can tie up to deduce that there exists $C_{k, p} < \infty$ and $N_p < \infty$ such that for all $h \geq N_{p}$, all $q>0$ and all $\lambda \geq 3$,
\begin{align*}
\prb{\sup_{i \leq n_{h, \lambda}} \Znu \geq \lambda^q (\log \lambda) n_{h, \lambda}^{1-\frac{1}{\alpha}} \text{ and } E_{h, \lambda, k}} \leq C_{k, p}\lambda^{-qp},
\end{align*}
which implies that for any $\delta > 0$, we can choose $\lambda_{q,\delta} < \infty$ such that for all $\lambda > \lambda_{q,\delta}$,
\begin{align*}
\prb{\sup_{i \leq n_{h, \lambda}} \Znu \geq \lambda^q n_{h, \lambda}^{1-\frac{1}{\alpha}} \text{ and } E_{h, \lambda, k}} \leq C_{k, p}\lambda^{-(q-\delta)p}.
\end{align*}
In particular, given $q>0$ and $p' \in \left(\frac{\alpha}{\alpha -1}, p \right)$ we can choose $\delta_{p, p', q}>0$ small enough that $(q-\delta_{p, p', q})p > qp'$ so that the right hand side above is also upper bounded by $C_{k, p}\lambda^{-(q-\delta)p}$. To conclude, for fixed $p' \in \left(\frac{\alpha}{\alpha -1}, \frac{\alpha + \epsilon}{\alpha -1} \right)$ we can first run the argument with $p = \frac{1}{2}\left(p' + \frac{\alpha + \epsilon}{\alpha -1} \right)$ so that $\delta_{p, p', q}$ depends only on $p$ and $q$, and we obtain that, for all $p' \in (\frac{\alpha}{\alpha -1}, {\frac{\alpha +\epsilon}{\alpha -1}})$ there exists $C_{k, p'} < \infty$ and $N_{p'} < \infty$ and $\lambda_{p',q}<\infty$ such that for all $h \geq N_{p'}$ and all $\lambda \geq \lambda_{p',q}$,
\begin{align*}
\prb{\sup_{i \leq n_{h, \lambda}} \Znu \geq \lambda^q n_{h, \lambda}^{1-\frac{1}{\alpha}} \text{ and } E_{h, \lambda, k}} \leq C_{k, p'}\lambda^{-qp'}.
\end{align*}
Finally, we can replace the requirement that $\lambda \geq \lambda_{p',q}$ the requirement that $\lambda \geq 1$ by increasing the constant $C_{k,p'}$ so that it becomes $C_{k,p',q}$, and similarly with the requirement that $\lambda \geq 3$ for the bound on $\prb{E_{h, \lambda, k}^c}$.
\end{proof}

\subsection{Decorated volume bounds}
Recall that $\TER$ is the decorated version of an unconditioned Galton--Watson tree $T$ with offspring distribution satisfying \eqref{eqn:offspring tails}. We now give the asymptotics for the tail decay for the volume of $\TER$. Recall the fragmental volume exponent $\fv = \frac{\alpha}{v}$.

\begin{prop}\label{prop:GW vol tail}
If $\fv \neq 1$, there exist constants $c', C' \in (0, \infty)$, such that for all $x \geq 1$:
\[
c'x^{\frac{-(\fv \wedge 1)}{\alpha}} \leq \prb{\V (\TER) \geq x} \leq C'x^{\frac{-(\fv \wedge 1)}{\alpha}}
\]
If $\fv = 1$, we instead have
\[
c'x^{\frac{-1}{\alpha}} \leq \prb{\V (\TER) \geq x \log x} \leq C'x^{\frac{-1}{\alpha}}.
\]
\end{prop}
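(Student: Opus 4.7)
The approach is to invoke Lemma \ref{lem:stable composition tail} in two nested stages. For the first (inner) stage I would determine the tail behaviour of $V(u)$ for a single vertex $u \in T$. Since $\prb{\deg u \geq k} \sim c'k^{-\alpha}$ by summing the tail assumption \eqref{eqn:offspring tails}, and Assumption \ref{assn:sec}(V) controls $\prcondb{V_n(G_n) \geq \lambda n^v}{\deg u = n}{}$ from both sides, Lemma \ref{lem:stable composition tail}(i) applied with $\beta = \alpha$, $z = v$, $m = (\alpha+\epsilon)/v$ yields $\prb{V(u) \geq k} \asymp k^{-\fv}$; the minimum $\beta/z \wedge m$ simplifies to $\alpha/v = \fv$ since $\epsilon > 0$. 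Note that $V(u)$ has finite mean exactly when $\fv > 1$.

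The outer stage is to apply Lemma \ref{lem:stable composition tail} to the pair $(X,Y) = (|T|, \V(\TER))$ using $\prb{|T| \geq n} \sim cn^{-1/\alpha}$ from Lemma \ref{lem:prog tail bound}, so $\beta = 1/\alpha$. The key content is to verify, uniformly in $n$, a conditional upper tail bound of the form
\[
\prcondb{\V(\TER) \geq \lambda n^{1/(\fv \wedge 1)}}{|T| = n}{} \leq C\lambda^{-m},
\]
with an extra $\log n$ factor inside the event when $\fv = 1$, together with a matching constant-order lower bound at $\lambda = O(1)$. Since conditional on $T$ the random variables $(V(u))_{u \in T}$ are independent with the marginal tails obtained in the inner stage, this reduces to a classical stable-sum tail estimate (cf.\ the appendix lemmas used elsewhere in the paper, e.g.\ Lemma \ref{lem:stable sum tail prob app}) for a sum of order $n$ heavy-tailed summands: when $\fv < 1$ the sum has typical size $n^{1/\fv}$ and is essentially carried by its maximum, giving $m = \fv$; when $\fv > 1$ the sum concentrates around $n\Eb{V(u)}$, so any $m > 1/\alpha$ suffices (even Markov's inequality is enough); and when $\fv = 1$ the standard one-sided Cauchy-domain centering produces the $n \log n$ scale and tail exponent $1$. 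The matching lower bounds come from the same mechanisms: a single macroscopic summand when $\fv < 1$, concentration above a positive fraction of the mean when $\fv > 1$, and Cauchy scaling when $\fv = 1$.

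Plugging these two-sided conditional bounds back into Lemma \ref{lem:stable composition tail}(i) with $z = 1/(\fv \wedge 1)$ (respectively part (ii) when $\fv = 1$) and checking that $\beta/z \wedge m = ((\fv \wedge 1)/\alpha) \wedge \fv = (\fv \wedge 1)/\alpha$, thanks to $\alpha > 1$, delivers the claimed tail $\prb{\V(\TER) \geq x} \asymp x^{-(\fv \wedge 1)/\alpha}$ with the stated $x \log x$ correction when $\fv = 1$. The main technical obstacle is the uniform-in-$n$ conditional upper bound, since the vertex degrees of $T$ are correlated once we condition on $|T|$, so the $V(u)$'s are only independent given the \emph{full} tree. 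I would handle this by first conditioning on the degree sequence of $T$ and then using Lemmas \ref{lem:GW typical degree prob bound} and \ref{lem:max degree tail} to argue that, outside a small-probability bad set, the degree sequence is sufficiently tame for the stable-sum estimate to apply to $\sum_u V(u)$ with bounds uniform in $n$. The lower bound is significantly easier since one is free to throw away events; either a size-biased spinal decomposition or simply restricting to the subtree rooted at a large-degree vertex exposed by the Lukasiewicz path suffices.
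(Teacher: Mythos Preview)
Your overall strategy matches the paper's: both use Lemma \ref{lem:stable composition tail} at two levels, first for $(\deg u, V(u))$ to get the $\fv$-tail on a single inserted graph, then for $(|T|, \V(\TER))$ to pass to the full tree, and both correctly identify the uniform-in-$n$ conditional bound on $\{|T|=n\}$ as the crux.

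The substantive difference is in how that uniform conditional bound is obtained. You propose to condition on the full degree sequence and argue that it is ``tame'' with high probability via Lemmas \ref{lem:GW typical degree prob bound} and \ref{lem:max degree tail}. This is workable in principle but would need genuine care: conditionally on the degree sequence the summands $V(u)$ are independent but \emph{not} identically distributed (their scales track the individual degrees), so a plain stable-sum lemma does not apply directly and you would need to control the whole empirical degree profile, not just its maximum. The paper sidesteps this by using the discrete \emph{Vervaat transform}: shifting the Lukasiewicz path by a uniform index $U_n$ turns the conditioned excursion into a bridge, and on each half-interval the jump sizes become those of the unconditioned walk up to a Radon--Nikodym factor $p_{\lceil n/2\rceil}(-W(\lfloor n/2\rfloor)-1)/p_n(-1)$ that is uniformly bounded by the local limit theorem. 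The variables $(\V(G(u_{U_n+i})))_i$ are then genuinely i.i.d.\ with the $\fv$-tail, so Lemma \ref{lem:stable sum tail prob app} applies out of the box and the uniformity in $n$ comes for free. The same device, combined with Lemma \ref{lem:stable sum tail prob lower tail app}, also gives the matching lower bound. Your route buys a more probabilistic picture (``typical degree sequences''), but the Vervaat trick is the sharper and shorter tool here.
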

\begin{proof}
\textbf{Upper bounds.} We start with the first case $\fv \neq 1$. We first consider what happens on $\TER_n$, i.e. when the underlying tree is conditioned to have $n$ vertices for some $n \geq 1$. Letting ${U_n}$ be uniform on $\{0, \ldots, n-1\}$ and $u_0, \ldots, u_{n-1}$ denote the vertices of $\TER_n$ listed in the lexicographical ordering, we have that, for any $\lambda \geq 1$,
\begin{align}\label{eqn:vol sum split}
\prb{\V (\TER_n) \geq n^{\frac{1}{\fv \wedge 1}} \lambda} \leq \prb{\sum_{i=0}^{\lfloor \frac{n}{2}\rfloor - 1} \V (G(u_{{U_n}+i})) \geq \frac{1}{2}n^{\frac{1}{\fv \wedge 1}} \lambda} + \prb{\sum_{i=\lfloor \frac{n}{2} \rfloor}^{n-1} \V (G(u_{{U_n}+i})) \geq \frac{1}{2}n^{\frac{1}{\fv \wedge 1}} \lambda}.
\end{align}
Similarly to \eqref{eqn:Verv trans abs cont rel} and by the tower property, the first probability is equal to
\begin{align*}
%\prb{\sum_{i=0}^{\lfloor \frac{n}{2} \rfloor - 1} \V (G(u_{{U_n}+i})) \geq \frac{1}{2}n^{\frac{1}{\fv \wedge 1}} \lambda} = 
\Eb{ \prcondb{\sum_{i=0}^{\lfloor \frac{n}{2} \rfloor - 1} \V (G(u_{{U_n}+i})) \geq \frac{1}{2}n^{\frac{1}{\fv \wedge 1}} \lambda}{\deg (u_{{U_n}+i}) = {W}(i+1) - {W}(i)}{} \frac{p_{n-\lfloor \frac{n}{2} \rfloor}(-W(\lfloor \frac{n}{2} \rfloor)-1)}{p_n(-1)}},
\end{align*}
where $W$ is an unconditioned random walk path with jump distribution $\xi -1$. As explained below \eqref{eqn:Verv trans abs cont rel}, the ratio $\frac{p_{n-\lfloor \frac{n}{2} \rfloor}(-W(\lfloor \frac{n}{2} \rfloor)-1)}{p_n(-1)}$ is upper bounded by a constant, uniformly in $n$. Therefore it suffices to consider the variables $\left(\V (G(u_{{U_n}+i}))\right)_{i=0}^{\lfloor \frac{n}{2} \rfloor - 1}$ under the corresponding joint law with $\left({W}(i+1) - {W}(i)\right)_{i=0}^{\lfloor \frac{n}{2} \rfloor - 1}$.

By \eqref{eqn:offspring tails}, there exists $c< \infty$ such that $\prb{{W}(i+1) - {W}(i) > x} \sim cx^{-\alpha}$ as $x \to \infty$. Moreover, by Assumption \ref{assn:sec}(V), there exist constants $c_1, c_2 \in (0, \infty)$ and $\epsilon>0$ such that for all $\lambda \geq 1$,
\begin{align*}
\prcondb{\V (G(u_{{U_n}+i})) \geq x^v \lambda}{\deg u_{{U_n}+i}= {W}(i+1) - {W}(i) = x}{} &\leq c_1\lambda^{-\frac{\alpha + \epsilon}{v}} \\
\prcondb{\V (G(u_{{U_n}+i})) \geq x^v}{\deg u_{{U_n}+i}= {W}(i+1) - {W}(i) = x}{} &\geq c_2 > 0.
\end{align*}
For each $i \leq \lfloor \frac{n}{2} \rfloor - 1$, we can therefore apply Lemma \ref{lem:stable composition tail}$(i)$ to the pair $(X,Y) = (\deg (u_{{U_n}+i}), \V (G(u_{{U_n}+i})))$ with $\beta = \alpha, z = v, m = \frac{\alpha + \epsilon}{v}$ to deduce that there exist constants $c', C' \in (0, \infty)$ such that
\begin{align}\label{eqn:vol graph bound}
c' x^{-\fv} \leq \prb{\V (G(u_{{U_n}+i})) \geq x} \leq C' x^{-\fv}.
\end{align}
Now take any $\epsilon > 0$. Since the variables $\left(\V (G(u_{{U_n}+i}))\right)_{i=0}^{\lfloor \frac{n}{2} \rfloor - 1}$ are now independent, it therefore follows from Lemma \ref{lem:stable sum tail prob app}$(i)$ that there exists $C_{\epsilon} < \infty$ such that for all $n, \lambda \geq 1$,
\[
\prb{\sum_{i=0}^{\lfloor \frac{n}{2} \rfloor} \V (G(u_{{U_n}+i})) \geq \frac{1}{2}n^{\frac{1}{\fv \wedge 1}} \lambda} \leq C_{\epsilon}\lambda^{-(\fv - \epsilon)}
\]
Moreover, by symmetry, the same bound holds for the second sum in \eqref{eqn:vol sum split}. To conclude the proof of the upper bound, we now apply Lemma \ref{lem:stable composition tail}$(i)$ to the pair $(|T|, \V(\TER))$ with $\beta = \frac{1}{\alpha}, z = \frac{1}{\fv \wedge 1}$ and $m= \fv - \epsilon$, which immediately gives the first upper bound of the proposition (provided we chose $\epsilon > 0$ sufficiently small that $\fv - \epsilon > \frac{\fv}{\alpha}$).

For the second statement: if instead $\fv=1$, equation \eqref{eqn:vol graph bound} still holds and we similarly obtain from Lemma \ref{lem:stable sum tail prob app}$(ii)$ that for all $n, \lambda \geq 1$,
\[
\prb{\sum_{i=0}^{\lfloor \frac{n}{2} \rfloor} \V (G(u_{{U_n}+i})) \geq \frac{1}{2}n\log n \lambda} \leq \lambda^{-(\fv - \epsilon)}
\]
and again the result follows from Lemma \ref{lem:stable composition tail}$(ii)$ applied to the pair $(|T|, \V(\TER))$, exactly as above.

\textbf{Lower bounds.} Again we start with the case $\fv \neq 1$. Similarly to above, note that for any $n \geq 1$,
\begin{align*}
\prb{\V (\TER_n) \geq n^{\frac{1}{\fv \wedge 1}}} &\geq \prb{\sum_{i=0}^{\lfloor \frac{n}{2} \rfloor} \V (G(u_{{U_n}+i})) \geq n^{\frac{1}{\fv \wedge 1}}} \\
&=\Eb{ \prcondb{\sum_{i=0}^{\lfloor \frac{n}{2} \rfloor} \V (G(u_{{U_n}+i})) \geq n^{\frac{1}{\fv \wedge 1}}}{\deg (u_{{U_n}+i}) = {W}(i+1) - {W}(i)}{} \frac{p_{n-\lfloor \frac{n}{2} \rfloor}(-W(\lfloor \frac{n}{2} \rfloor)-1)}{p_n(-1)}}.
\end{align*}
First, note that by \eqref{eqn:vol graph bound}, Lemma \ref{lem:stable sum tail prob lower tail app} and independence we can find $c'>0$ such that for all $n \geq 1$ (here $\mathbf{E}$ denotes expectation over $W$),
\[
\Eb{ \prcondb{\sum_{i=0}^{\lfloor \frac{n}{2} \rfloor} \V (G(u_{{U_n}+i})) \geq n^{\frac{1}{\fv \wedge 1}}}{\deg (u_{{U_n}+i}) = {W}(i+1) - {W}(i)}{}} \geq c'
\]
Moreover, as in the proof of Lemma \ref{lem:max degree tail}, it follows from \cite[Section VIII, Proposition 4]{BertoinLevy} and \eqref{eqn:Gned LLT} that we can firstly choose $A< \infty$ so that 
\[
\prb{\left|W\left(\left\lfloor \frac{n}{2}\right\rfloor \right) + 1\right| > An^{\frac{1}{\alpha}}} \leq \frac{c'}{2},
\]
and secondly choose $a>0$ so that on the event $\{|W\left(\lfloor \frac{n}{2}\rfloor \right) + 1| \leq An^{\frac{1}{\alpha}}\}$, $\frac{p_{\lceil \frac{n}{2}\rceil}(-W(\lfloor \frac{n}{2}\rfloor)-1)}{p_{n}(-1)} > a$.

Together, these imply that for all $n \geq 1$,
\begin{align}\label{eqn:tree n dec vol}
\begin{split}
&\prb{\V (\TER_n) \geq n^{\frac{1}{\fv \wedge 1}}} \\
&\geq \Eb{ \prcondb{\sum_{i=0}^{\lfloor \frac{n}{2} \rfloor} \V (G(u_{{U_n}+i})) \geq n^{\frac{1}{\fv \wedge 1}}}{\deg (u_{{U_n}+i}) = {W}(i+1) - {W}(i)}{} \frac{p_{n-\lfloor \frac{n}{2} \rfloor}(-W(\lfloor \frac{n}{2} \rfloor)-1)}{p_n(-1)}} \geq \frac{ac'}{2},
\end{split}
\end{align}
%is an event on $W$ with probability at least $\frac{3}{4}$ on which
%\[
%\prcondb{\sum_{i=0}^{\lfloor \frac{n}{2} \rfloor} \V (G(u_{U+i})) \geq n^{\frac{1}{\fv \wedge 1}}}{\deg (u_{U+i}) = {W}(i+1) - {W}(i)}{} \geq \frac{2}{3}.
%\]
%Moreover, its intersection with the event $\{|W\left(\lfloor \frac{n}{2}\rfloor \right) + 1| \leq An^{\frac{1}{\alpha}}\}$ must have probability at least $\frac{1}{2}$, so we deduce that
%\begin{equation}\label{eqn:tree n dec vol}
%\prb{\V (\TER_n) \geq n^{\frac{1}{\fv \wedge 1}}} \geq \frac{1}{2} \cdot \frac{2}{3} \cdot a,
%\end{equation}
which verifies the lower bound condition of Lemma \ref{lem:stable composition tail}$(i)$ applied to $(|T|, \V(\TER))$ with $\beta = \frac{1}{\alpha}, z = \frac{1}{\fv \wedge 1}$, from which the lower bound follows. For the second statement with $\fv=1$, we again similarly consider the quantity $\prb{\sum_{i=0}^{\lfloor \frac{n}{2} \rfloor} \V (G(u_{{U_n}+i})) \geq n \log n}$, and repeat the same proof, replacing $n^{\frac{1}{\fv \wedge 1}}$ with $n \log n$ throughout.
\end{proof}

\section{Volume bounds for $\TERa$}\label{sctn:volume bounds decorated}
In this section we prove volume bounds for $\TERa$ under Assumption \ref{assn:sec} and \eqref{eqn:offspring tails}. The proof for the upper bounds follow a simplified version of the strategy for stable looptrees in \cite[Section 5.2]{ArchBMCompactLooptrees}, but due to the variability of the inserted graphs we do not optimise the argument as precisely as we did in \cite{ArchBMCompactLooptrees}.

Recall from \eqref{eqn:volume exponent def} that 
\[
\dERa = \frac{\alpha (\sad \wedge 1)}{(\alpha - 1)(\fv \wedge 1)}.
\]
In this section we will show that this is the correct volume growth exponent for $\TERa$, and in particular prove Theorem \ref{thm:dec vol growth main}.

In Theorem \ref{thm:dec vol growth main} we have stated the volume bounds with respect to the decorated metric as defined by \eqref{eqn:decorated metric def}, since these are of independent interest aside from determining the random walk exponents. However, the construction in \eqref{eqn:decorated metric def} holds when $d_{G(v_i)}$ is an arbitrary metric on $G(v_i)$, so in particular, on replacing $\sad$ with $\saR$ we obtain the exponent for volume growth with respect to the effective resistance metric.
\vspace{.6cm}

%\begin{tcolorbox}[colback=white]
%\begin{center}
%For all of Section \ref{sctn:volume bounds decorated}, we will work under Assumption \ref{assn:sec}.
%\end{center}
%\end{tcolorbox}

\subsection{Volume upper bounds}\label{sctn:dec vol UBs}
%Set
%\begin{align*}
%\ya &= \frac{(\alpha - 1)[d(\alpha - 1)-1]}{d(\alpha - 1) + \alpha - 2}, \\
%\ya' &= \frac{v(\alpha - 1)[d(\alpha - 1)-1]}{vd(\alpha - 1) + \alpha - 1}.
%\end{align*}
%This is just for notational convenience and the precise value of these exponents are not particularly important.
The main result is as follows.

\begin{prop}\label{prop:vol UB prob UB}
Take $b_1$ as in Theorem \ref{thm:dec vol growth main}. For any $\epsilon > 0$ there exists a constant $C_{\epsilon} < \infty$ such that for all $r, \lambda \geq 1$,
%\begin{enumerate}[(i)]
%\item If $\fv \neq 1, \sad \neq 1$, then:
%\begin{align*}
%\prb{\V(\BT(\rER, r)) \geq r^{\dERa} \lambda} \leq 
%C\lambda^{{-\tv (\alpha - 1 -\epsilon)}}.
%\end{align*}
\begin{align*}
\prb{\V(\BT(\rho, r)) \geq r^{\dERa} (\log r)^{b_1} \lambda} \leq C_{\epsilon}\lambda^{-\frac{(\fv \wedge 1)(\alpha - 1 - \epsilon)}{\alpha^2}}.
\end{align*}
%\item If $\fv = 1, \sad \neq 1$, then:
%\begin{align*}
%\prb{\V(\BT(\rER, r)) \geq \lambda r^{\dERa} \log r } \leq C\lambda^{\frac{-(\fv \wedge 1 - \epsilon)(\alpha - 1)}{\alpha^2}}.
%\end{align*}
%\item If $\fv \neq 1, \sad = 1$, then:
%\begin{align*}
%\prb{\V(\BT(\rER, r)) \geq \lambda r^{\dERa} (\log r)^{-\dERa} } \leq 
%C\lambda^{\frac{-(\fv \wedge 1 - \epsilon)(\alpha - 1)}{\alpha^2}}.
%\end{align*}
%\item If $\fv = 1, \sad = 1$, then:
%\begin{align*}
%\prb{\V(\BT(\rER, r)) \geq \lambda r^{\dERa} (\log r)^{-(\dERa - 1)} } \leq 
%C\lambda^{\frac{-(\fv \wedge 1 - \epsilon)(\alpha - 1)}{\alpha^2}}.
%\end{align*}
%\end{enumerate}
%\begin{align*}
%\prb{\V(\BT(\rER, r)) \geq r^{\dERa} \lambda} \leq \begin{cases} p(\lambda) &\text{ if } \sad = d(\alpha - 1), \\
%C\lambda^{{-\tv (\alpha - 1)} + \epsilon} + C\lambda^{{-\sav} + \epsilon} &\text{ otherwise}.
%\end{cases}
%\end{align*}
%The function $p(\lambda)$ satisfies the following properties:
%\begin{enumerate}[(i)]
%\item If $\pvballplus, \overline{\pvplus}$ and $\pdmin$ decay faster than stretched exponentials, then so does $p$.*Actually this is no longer true, due to Proposition \ref{prop:height volume relation}.
%\item If $\pdmin$ decays faster than polynomially, then so does $p$.
%\end{enumerate}
\end{prop}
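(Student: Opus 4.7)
Label the spinal (special) vertices of $\Ti$ as $s_0, s_1, \ldots$ from the root outward, and let $N^f_r$ denote the number of these $s_i$ for which $G(s_i) \cap \BT(\rho,r) \neq \emptyset$. For each such $s_i$, its non-spinal children root mutually independent fringe subtrees, which I enumerate as $v_1, \ldots, v_M$ where $M = \sum_{i \leq N^f_r}(\deg s_i - 1)$. Then
\begin{align*}
\V(\BT(\rho, r)) \leq \sum_{i \leq N^f_r} V(G(s_i)) + \sum_{j=1}^M \V(\TER(v_j)),
\end{align*}
and conditionally on the backbone degree sequence, each $\TER(v_j)$ is an independent copy of the unconditioned decorated tree of Proposition \ref{prop:GW vol tail}, with volume tail exponent $(\fv \wedge 1)/\alpha$.

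The plan is then a three-layer tail bound. First, since $\sum_{i \leq N^f_r} d^U(s_i) \leq r$ and Lemma \ref{lem:ER spine offspring dist}(ii) supplies an iid stochastic lower bound on $d^U(s_i)$ with tail $x^{-\sad}$, standard lower-tail estimates for stable-attracted positive sums give, for some $\theta > 0$,
\begin{align*}
\prb{N^f_r > n_{r,\epsilon,\lambda}} \leq c' \exp(-c'' \lambda^{\theta}),
\end{align*}
which is negligible compared to any polynomial tail in $\lambda^{-1}$. Second, Proposition \ref{prop:spinal offspring UB general} dominates $(\deg s_i)$ by iid variables with tail exponent $\alpha - 1 \in (0,1)$, so Lemma \ref{lem:stable sum tail prob app}(i) applied to a sum of $n_{r,\epsilon,\lambda}$ such variables gives, for any slack $c > 0$ and $\epsilon' > 0$,
\begin{align*}
\prb{M \geq \lambda^c n_{r,\epsilon,\lambda}^{1/(\alpha - 1)} \,\Big|\, N^f_r \leq n_{r,\epsilon,\lambda}} \leq C_{\epsilon'} \lambda^{-c(\alpha - 1 - \epsilon')}.
\end{align*}
Third, on the event $M \leq m_\star := \lambda^c n_{r,\epsilon,\lambda}^{1/(\alpha-1)}$, a direct calculation shows $m_\star^{\alpha/(\fv \wedge 1)} \asymp \lambda^{c\alpha/(\fv \wedge 1)} \, r^{\dERa}$; Lemma \ref{lem:stable sum tail prob app} applied to the iid fringe volume sum with tail $(\fv \wedge 1)/\alpha$ then yields
\begin{align*}
\prb{\textstyle \sum_{j=1}^M \V(\TER(v_j)) \geq \lambda r^{\dERa}(\log r)^{b_1}} \leq C_{\epsilon'} \lambda^{-(1 - c\alpha/(\fv \wedge 1))((\fv \wedge 1)/\alpha - \epsilon')},
\end{align*}
with the logarithmic correction in $b_1$ entering via part (ii) of that lemma when $\fv = 1$ (and analogously from Step 1 when $\sad = 1$).

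Equating the two exponents $c(\alpha - 1) = (1 - c\alpha/(\fv \wedge 1))(\fv \wedge 1)/\alpha$ gives the optimal slack $c = (\fv \wedge 1)/\alpha^2$, at which both tails reduce to the claimed $(\fv \wedge 1)(\alpha - 1 - \epsilon)/\alpha^2$. The backbone graph contribution $\sum_{i \leq N^f_r} V(G(s_i))$ is handled analogously: Lemma \ref{lem:stable composition tail}(i) applied to $(\deg s_i, V(G(s_i)))$ with $\beta = \alpha - 1, z = v, m = (\alpha + \epsilon)/v$ gives a per-term tail $x^{-\sav}$ with $\sav = (\alpha - 1)/v$, and a short case analysis over $\sad \wedge 1$ and $\fv \wedge 1$ shows this sum is always of order at most $r^{\dERa}$ with tail exponent at least $\sav - \epsilon'$, which comparison to $(\fv \wedge 1)(\alpha - 1)/\alpha^2$ shows is absorbed.

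\textbf{Main obstacle.} The delicate step is the optimisation: one would naively expect only the fringe volume tail to matter, yielding $\lambda^{-(\fv \wedge 1)/\alpha}$, but the size-biased law of $\deg s_i$ forces one to allow slack $\lambda^c$ in $M$ at the probabilistic cost $\lambda^{-c(\alpha - 1)}$, and balancing this cost against the gain $\lambda^{-(1 - c\alpha/(\fv \wedge 1))(\fv \wedge 1)/\alpha}$ is what produces the somewhat unusual exponent $(\fv \wedge 1)(\alpha - 1)/\alpha^2$. A secondary subtlety is the positive correlation between $N^f_r$ and $M$ (small $N^f_r$ tends to reflect unusually large $\deg s_i$, which inflates $M$), which is why Step 2 must be formulated conditionally on the full backbone-degree sequence rather than on $N^f_r$ alone.
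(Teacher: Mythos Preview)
Your proposal is correct and follows essentially the same strategy as the paper: the decomposition into backbone-graph volume plus fringe-subtree volumes, the three-layer argument (bound the number of backbone vertices reached via a lower-tail estimate on $\sum d^U(s_i)$, bound the total subtree count $M$ via Lemma~\ref{lem:stable sum tail prob app} applied to $\sum \deg s_i$, then bound the fringe-volume sum using Proposition~\ref{prop:GW vol tail}), and the optimisation of the slack parameter at $c=(\fv\wedge 1)/\alpha^2$ all coincide with the paper's Propositions~\ref{prop:spinal vol bound}--\ref{prop:spinal vol bound edge} and the proof of Proposition~\ref{prop:vol UB prob UB} (your $c$ is the paper's $\phi$). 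Two small remarks: your ``secondary subtlety'' about correlation dissolves once you replace the conditional formulation in Step~2 by the intersection of events with the \emph{deterministic} cutoff $n_{r,\epsilon,\lambda}$ (on $\{N^f_r\le n_{r,\epsilon,\lambda}\}$ simply bound $M\le\sum_{i\le n_{r,\epsilon,\lambda}}\deg s_i$ and estimate the latter unconditionally), which is exactly what the paper does; and for the $\fv=1$ log correction in Step~3 the relevant tool is Lemma~\ref{lem:stable sum tail prob log app} rather than Lemma~\ref{lem:stable sum tail prob app}(ii), since the per-term fringe-volume tail exponent is $1/\alpha$, not~$1$.
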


%It is not illuminating at this stage to provide the specific form of $p(\lambda)$. This is done precisely in the proof on page \pageref{pf:vol UB}, and we will apply the result given there in various examples in Section \ref{sctn:examples}.

%*Graph the exponent. (or in intro)

\begin{rmk}\label{rmk:nonoptimal exponent}
\begin{enumerate}
\item The tail decay here is not optimal. In all of the propositions in the rest of the section, the precise tail decay is not important for our purpose, other than that it is of polynomial form.
\item In the case $\fv > 1$ and $\sad < 1$, if the graphs inserted are deterministic or do not have too much randomness, it should be possible to extend the arguments of this section to get stretched exponential decay for the upper volume bounds, similarly to how we did for stable looptrees in \cite[Section 5.2]{ArchBMCompactLooptrees}. This would involve defining an iterative procedure on large subtrees that fall close to the root, analogously to that on \cite[p. 23]{ArchBMCompactLooptrees}.
\end{enumerate}
\end{rmk}

This therefore gives half of Theorem \ref{thm:dec vol growth main}$(i)$. The quenched upper bound in Theorem \ref{thm:dec vol growth main}$(ii)$ follows from this proposition by applying Borel-Cantelli along the subsequence $r_n = 2^n$ with $\lambda_n = (\log r_n)^{\beta}$, where $\beta = \frac{\alpha^2 + \epsilon}{(\fv \wedge 1)(\alpha - 1 - \epsilon)}$, and using monotonicity of $\V ( \BT (\rER, r))$. (We then take $\beta_1 = \beta + b_1$ for the upper bound in Theorem \ref{thm:dec vol growth main}$(ii)$). 

\subsubsection{Heuristics}\label{sbsctn:vol UB heuristics}
Fix $r \geq 1$. Before starting the proof, we briefly outline the strategy, which has several steps.% Informally, the procedure for bounding the volume $\V(\BT(\rho, r))$ is as follows. (This is actually just what we do if $\sad > 1$).

\begin{enumerate}
\item Consider the vertices of the underlying tree $\Tai$ along its infinite backbone in sequential order of their distance from the root, and label them in order as $\rho = s_0, s_1, \ldots$.
%\item We say that a vertex $v_i$ on the backbone is ``good'' if it has degree at least $r^{\sad}$, and the corresponding \ER graph $G^v$ has $d(G^v(v_{i-1}v_i), G^v(v_iv_{i+1})) \geq r$. We say it is ``goodish'' if it just has degree at least $r^{\sad}$, with no restriction on the distance.
\item We will make an appropriate choice of an index ${n_{r,\epsilon,\lambda}}$ so that w.h.p. all of $\BT(\rER, r)$ is completely contained within the inserted graphs corresponding to the segment of backbone from $\rho$ to $s_{n_{r,\epsilon,\lambda}}$ and the decorated subtrees attached to these graphs. It turns out that a good choice is $n_{r,\epsilon,\lambda} = r^{\sad \wedge 1} \lambda^{\epsilon}$, for some small $\epsilon>0$ and some $\lambda \geq 1$.
\item We will first bound the quantity
\[
\sum_{j \leq n_{r,\epsilon,\lambda}} \deg (s_j),
\]
which gives an upper bound for $\Nfr$, the number of subtrees attached to the backbone within (decorated) distance $r$ of the root. More specifically, we will show that, w.h.p. as $r, \lambda \rightarrow \infty$ appropriately, $\Nfr \leq r^{\frac{\sad \wedge 1}{\alpha - 1}} \lambda$.
\item On this event, we proceed as follows. We first define vertex sets
\begin{align*}
\Sr &= \bigcup_{j=0}^{\infty} G({s_j}) \cap \BT(\rho, r), &\partial \Sr = \bigcup_{j=0}^{\infty} \partial G({s_j}) \cap \BT(\rho, r),
\end{align*}
where for $u \in \Ti$, $\partial G(u)$ denotes the set of boundary vertices of $G(u)$. On the high probability events in points 2 and 3 above, it is sufficient to instead take the above unions only over the set $\{j \leq n_{r,\epsilon,\lambda}\}$. We then continue by analysing the tree structures of all the decorated subtrees that are grafted to a vertex $v \in \partial \Sr$. Recall that such a vertex $v$ corresponds to an edge of the underlying tree $\Tai$ joining a backbone vertex to one of its offspring. Let this offspring vertex be $w_v$. By Definition \ref{def:Kesten's tree}, $T_{w_v}$ is a critical Galton--Watson tree, again with offspring distribution $\xi$. We let $\TER_v$ denote the subgraph of $\TERa$ corresponding to $\cup_{u \in T_{w_v}}G(u)$, with metric and measure defined as in points 5 and 6 on page \pageref{eqn:decorated metric def}.

On the high probability events above, it is then the case that
\begin{align}\label{eqn:large small subtree decomp}
\V(\BT(\rho, r)) &\leq \sum_{v \in \partial \Sr} \V (\TER_v) + \sum_{j \leq n_{r,\epsilon,\lambda}} \V (G({s_j})).
\end{align}
\end{enumerate}

We formalise this and bound each of these terms separately in the next subsection.

\subsubsection{Main argument}\label{sctn:vol UB main argument}

%\begin{tcolorbox}[colback=white]
%\textbf{Important:} in what follows we will prove Proposition \ref{prop:vol UB prob UB}(i), i.e. we will assume that $\fv, \sad \neq 1$. This is for the sake of clarity. If $\fv = 1$ or $\sad = 1$, we end up picking up an extra logarithm every time we apply Lemma \ref{lem:stable sum tail prob lower tail app}(ii), but the arguments are otherwise identical; we give the details at the end of the subsection.
%\end{tcolorbox}
%
In order to implement the method outlined above, we need to bound the following quantities:
\begin{enumerate}
\item The number of subtrees grafted to the backbone of $\TERa$ within decorated distance $r$ of $\rER$.
\item The sum of the volumes of these decorated subtrees.
\item The volume contribution from the graphs inserted along the backbone.
\end{enumerate}

We consider these one by one in the next subsections. At many points, this will involve adding up sums of random variables with exponents related to those we introduced in Assumption \ref{assn:sec} and Section \ref{sctn:finite tree bounds dec}. In many cases, the relevant exponent may be more than or less than $1$, so we will have to consider two regimes: one in which the sum follows law of large numbers type behaviour, and the other in which the tails are heavier and we see stable-type behaviour. This will eventually give rise to several phase transitions in the value of $\dERa$, which can also be surmised from its expression as
\[
\dERa = \frac{\alpha (\sad \wedge 1)}{(\alpha - 1)(\fv \wedge 1)}.
\]

\subsubsection{Controlling the number of subtrees}
%We start with the case $\sad < 1$. 
For $r\geq 1$, we let $\Nfr$ denote the number of decorated subtrees that are grafted to the decorated backbone within decorated distance $r$ of $\rER$.

\begin{prop}\label{prop:spinal vol bound}
Suppose that $\sad \neq 1$. Then for any $\epsilon > 0$ there exists $C_{\epsilon} < \infty$ such that for all $r, \lambda \geq 1$,
\[
\prb{\Nfr \geq r^{\frac{\sad \wedge 1}{\alpha -1}} \lambda} \leq 
C_{\epsilon}\lambda^{-(\alpha - 1 - \epsilon)}.
\]
If $\sad = 1$, then we similarly have
\[
\prb{\Nfr \geq \lambda r^{\frac{1}{\alpha -1}} (\log r)^{\frac{-1}{\alpha -1}}} \leq C_{\epsilon}\lambda^{-(\alpha - 1- \epsilon)}.
\]
\end{prop}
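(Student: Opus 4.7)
The plan follows the heuristic outlined in Section \ref{sbsctn:vol UB heuristics}. Applying the triangle inequality along the backbone of $\Tai$, we have $\dg(\rho, s_j) \geq \sum_{i=0}^{j-1} d^U(s_i)$ for every $j \geq 1$. Hence on the event
\[
A_{r,\epsilon,\lambda} := \left\{\sum_{i=0}^{n_{r,\epsilon,\lambda}} d^U(s_i) \geq r\right\},
\]
every backbone vertex in $\BT(\rho, r)$ lies among $\{s_0, \ldots, s_{n_{r,\epsilon,\lambda}}\}$, and so $\Nfr \leq \sum_{j \leq n_{r,\epsilon,\lambda}} \deg s_j$. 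This yields the union bound
\[
\bPb\!\left(\Nfr \geq M_r\right) \leq \bPb(A_{r,\epsilon,\lambda}^c) + \bPb\!\left(\sum_{j \leq n_{r,\epsilon,\lambda}} \deg s_j \geq M_r\right),
\]
where $M_r = r^{(\sad \wedge 1)/(\alpha-1)} \lambda$ if $\sad \neq 1$ and $M_r = r^{1/(\alpha-1)}(\log r)^{-1/(\alpha-1)} \lambda$ if $\sad = 1$. The task reduces to bounding each term polynomially in $\lambda^{-1}$.

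For the first term, if $\sad > 1$ then $n_{r,\epsilon,\lambda} = r \lambda^{\epsilon} \geq r$, and since $d^U(s_i) \geq 1$ deterministically, $A_{r,\epsilon,\lambda}$ holds identically. If $\sad < 1$, Lemma \ref{lem:ER spine offspring dist}(ii) (valid unconditionally on the backbone of Kesten's tree by Definition \ref{def:Kesten's tree}) shows that each $d^U(s_i)$ dominates from below an i.i.d.\ variable with tail $c x^{-\sad}$, so a suitable lower-tail bound for stable-type sums (Lemma \ref{lem:stable sum tail prob lower tail app}) gives that, with probability at least $1 - C \lambda^{-p}$ for some $p>0$, $\sum_{i \leq n_{r,\epsilon,\lambda}} d^U(s_i) \geq c n_{r,\epsilon,\lambda}^{1/\sad} = c r \lambda^{\epsilon/\sad} \geq r$. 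The case $\sad = 1$ is handled analogously using the Cauchy-type scaling $\sum d^U(s_i) \asymp n \log n$ together with the definition $n_{r,\epsilon,\lambda} = r \lambda^\epsilon / \log r$.

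For the second term, the Kesten backbone degrees $(\deg s_j)_{j \geq 0}$ are i.i.d.\ with size-biased law, hence with tail $\bPb(\deg s_j \geq k) \leq c k^{-(\alpha-1)}$ (this may also be extracted from Proposition \ref{prop:spinal offspring UB general}). Since $\alpha - 1 \in (0,1)$, Lemma \ref{lem:stable sum tail prob app}(i) gives that for every $\epsilon' > 0$ there exists $C_{\epsilon'}<\infty$ such that
\[
\bPb\!\left(\sum_{j \leq n_{r,\epsilon,\lambda}} \deg s_j \geq n_{r,\epsilon,\lambda}^{1/(\alpha-1)} \mu\right) \leq C_{\epsilon'} \mu^{-(\alpha - 1 - \epsilon')}, \qquad \mu \geq 1.
\]
Substituting $n_{r,\epsilon,\lambda}^{1/(\alpha-1)}$ and taking $\mu = \lambda^{1 - \epsilon/(\alpha-1)}$ lands on the threshold $M_r$; the resulting exponent in $\lambda$ is $\bigl(1 - \epsilon/(\alpha-1)\bigr)(\alpha - 1 - \epsilon')$, which can be made larger than $\alpha - 1 - \epsilon''$ for any prescribed $\epsilon'' > 0$ by choosing $\epsilon, \epsilon'$ sufficiently small.

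The main bookkeeping obstacle is the boundary case $\sad = 1$, where the logarithmic correction in $n_{r,\epsilon,\lambda}$ must be propagated through both estimates; the remaining content is a routine combination of the size-biased spine structure of Kesten's tree with the stable-sum tail bounds recorded in the appendix.
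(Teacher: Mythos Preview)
Your proof is correct and follows essentially the same approach as the paper: set $n_{r,\epsilon,\lambda}$, use Lemma~\ref{lem:ER spine offspring dist}(ii) together with Lemma~\ref{lem:stable sum tail prob lower tail app} to show $\sum_{i\le n_{r,\epsilon,\lambda}} d^U(s_i)\ge r$ with high probability, and use the size-biased backbone degrees with Lemma~\ref{lem:stable sum tail prob app}(i) to bound $\sum_{i\le n_{r,\epsilon,\lambda}}\deg s_i$. Two small remarks: (i) the paper actually obtains the sharper stretched-exponential bound $e^{-c'\lambda^{\epsilon}}$ for $\bPb(A_{r,\epsilon,\lambda}^c)$ rather than just polynomial decay, though this makes no difference here; (ii) your shortcut ``$d^U(s_i)\ge 1$ deterministically'' in the case $\sad>1$ presumes the graph metric and is not guaranteed for a general metric $d_n$ satisfying Assumption~\ref{assn:sec}(D) --- the paper instead invokes Lemma~\ref{lem:stable sum tail prob lower tail app}(iii) (law of large numbers, using that $d^U(s_i)$ has finite mean when $\sad>1$), which covers the general case.
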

\begin{proof}
%\textit{Case (i): $\sad < d(\alpha - 1)$.} %The proof in this case is similar, but in this case Assumption \ref{lem:assn vol} does not give us tight enough control on the volumes at goodish vertices, so we employ a slightly different strategy. Instead, 
We prove the case $\sad < 1$. Take $r, \lambda \geq 1$, choose $\epsilon > 0$, and set $n_{r,\epsilon,\lambda} = r^{\sad} \lambda^{\epsilon}$. Consider the sequence of backbone vertices $s_0, s_1, \ldots, s_{n_{r,\epsilon,\lambda}}$. We will show that, with quantifiable high probability, $\Sr$ does not extend to graphs corresponding to backbone vertices beyond $s_{n_{r,\epsilon,\lambda}}$, in which case $\Nfr$ is bounded by $\sum_{i=1}^{n_{r,\epsilon,\lambda}} \deg (s_i)$. By Lemmas \ref{lem:ER spine offspring dist}$(ii)$ and \ref{lem:stable sum tail prob lower tail app}$(i)$, there exists $c'>0$, depending only on $c$ in \eqref{eqn:offspring tails} and the quantities appearing in Assumption \ref{assn:sec}(D), such that for all $r, \lambda \geq 1$,
\begin{align}\label{eqn:bound on nr}
\prb{\sum_{i \leq n_{r,\epsilon,\lambda}} d^U(s_i) \leq r} %\leq \prb{\nexists i \leq n_{r,\epsilon,\lambda}: d^U(s_i) \geq r} \leq (1 - c'r^{-\sad})^{r^{\sad} \lambda^{\epsilon}} 
\leq e^{-c'\lambda^{\epsilon}}.
\end{align}
Also, setting $\epsilon' = \frac{\epsilon}{\alpha - 1}$, we have by Lemma \ref{lem:stable sum tail prob app}$(i)$ that there exists $C'<\infty$, depending only on $c$ in \eqref{eqn:offspring tails}, such that for all $r, \lambda \geq 1$,
\begin{align*}
%\prb{\sum_{i=0}^{n_{r,\epsilon,\lambda}} d_{G(s_i)}(b(s_i)^{-1}(s_{i-1}s_i), b(s_i)^{-1}(s_is_{i+1})) \leq r} &\leq \prb{n_{r,\epsilon,\lambda}^{\frac{-1}{\sad}} \sum_{i=0}^{n_{r,\epsilon,\lambda}} d_{G(s_i)}(b(s_i)^{-1}(s_{i-1}s_i), b(s_i)^{-1}(s_is_{i+1})) \leq \lambda^{-\epsilon}} \\
%&\leq Ce^{-c \lambda^{\epsilon \sad}} \\
\prb{\sum_{i=0}^{n_{r,\epsilon,\lambda}} \deg (s_i) \geq r^{\frac{\sad}{\alpha - 1}}\lambda} &= \prb{n_{r,\epsilon,\lambda}^{\frac{-1}{\alpha - 1}}\sum_{i=0}^{n_{r,\epsilon,\lambda}} \deg (s_i) \geq \lambda^{1-\epsilon'}} \leq C'\lambda^{-(1-\epsilon')(\alpha - 1)}.
\end{align*}
Combining these in a union bound, we deduce that there exists $C_{\epsilon}<\infty$ such that for all $r, \lambda \geq 1$,
\[
\prb{\Nfr \geq r^{\frac{\sad}{\alpha - 1}} \lambda} \leq e^{-c' \lambda^{\epsilon}} +  C'\lambda^{-(1-\epsilon')(\alpha - 1)} \leq C_{\epsilon}\lambda^{-(\alpha - 1-\epsilon)}.
\]
The proofs of the two other cases $\sad = 1$ and $\sad>1$ are identical, except that we respectively apply Lemmas \ref{lem:stable sum tail prob lower tail app}$(ii)$ and \ref{lem:stable sum tail prob lower tail app}$(iii)$ in place of Lemma \ref{lem:stable sum tail prob lower tail app}$(i)$, and respectively set $n_{r,\epsilon,\lambda} = r (\log r)^{-1} \lambda^{\epsilon}$ and $n_{r,\epsilon,\lambda} = r \lambda^{\epsilon}$.
\end{proof}

\subsubsection{Controlling the volumes of the decorated subtrees}

We now turn to bounding the quantity
\[
\sum_{v \in \partial \Sr} \V (\TER_v).
\]
Recall that $\Nfr= |\partial \Sr|$. We will work on the event $\left\{\Nfr < r^{\frac{\sad \wedge 1}{\alpha -1}} \lambda^{\phi} \right\}$ for some $\phi>0$, and recall from Proposition \ref{prop:GW vol tail} and Definition \ref{def:Kesten's tree} that there exists $c' < \infty$ such that for a vertex $v \in \partial \Sr$, $\prb{\V(\TER_v) \geq x (\log x)^{\mathbbm{1}\{\fv = 1\}}} \leq c'x^{\frac{-(\fv \wedge 1)}{\alpha}}$ for all $x \geq 1$, independently for each such $v$. We deduce that this expression falls into the framework of Lemmas \ref{lem:stable sum tail prob app}$(i)$ and \ref{lem:stable sum tail prob log app}, with $\beta = \frac{\fv \wedge 1}{\alpha}$ and $n = \Nfr$; the next proposition is an immediate consequence.

\begin{prop}\label{prop:small fragments}
Take $b_1$ as defined in Theorem \ref{thm:dec vol growth main}. If $\sad \neq 1$, there exists $c'<\infty$ such that for any $\phi \in \left(0, \frac{\fv \wedge 1}{\alpha}\right)$, and any $r, \lambda \geq 1$,
\begin{align*}
&\prb{\sum_{v \in \partial \Sr} \V(\TER_v) \geq \lambda r^{\dERa} (\log r)^{b_1}, \Nfr < r^{\frac{\sad \wedge 1}{\alpha -1}}\lambda^{\phi}} \leq c'\lambda^{-\left(\frac{\fv \wedge 1}{\alpha} - \phi\right)}.
\end{align*}
%If $\fv = 1, \sad \neq 1$, we have that
%\begin{align*}
%&\prb{\sum_{v \in \partial \Sr} \V(\TER_v) \geq \lambda r^{\dERa} \log r, \Nfr < r^{\frac{\sad \wedge 1}{\alpha -1}}\lambda^z} \leq c\lambda^{-\left(\frac{1}{\alpha} - z\right)}.
%\end{align*}
If $\sad = 1$, there exists $c'<\infty$ such that for any $\phi \in \left(0, \frac{\fv \wedge 1}{\alpha}\right)$, and any $r, \lambda \geq 1$,
\begin{align*}
&\prb{\sum_{v \in \partial \Sr} \V(\TER_v) \geq \lambda r^{\dERa} (\log r)^{b_1}, \Nfr < \lambda^{\phi} r^{\frac{1}{\alpha -1}} (\log r)^{\frac{-1}{\alpha -1}}} \leq c'\lambda^{-\left(\frac{\fv \wedge 1}{\alpha} - \phi\right)}.
\end{align*}
%If $\fv = 1, \sad = 1$, we have that
%\begin{align*}
%&\prb{\sum_{v \in \partial \Sr} \V(\TER_v) \geq \lambda r^{\dERa} (\log r)^{-(\dERa - 1)}, \Nfr < \lambda^z r^{\frac{1}{\alpha -1}} (\log r)^{\frac{-1}{\alpha -1}}} \leq c\lambda^{-\left(\frac{\fv \wedge 1}{\alpha} - z\right)}.
%\end{align*}
\end{prop}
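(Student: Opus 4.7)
The plan is to exploit the fact that, conditionally on the backbone structure, the sum $\sum_{v \in \partial \Sr} \V(\TER_v)$ is an i.i.d.\ sum. More precisely: by Definition \ref{def:Kesten's tree} and the construction in Section \ref{sctn:dec tree def}, the subtrees $T_{w_v}$ hanging off boundary vertices $v$ of the graphs along the backbone are i.i.d.\ copies of an unconditioned critical $\xi$-Galton--Watson tree, independent of the backbone together with its decoration $(G(s_j))_{j \geq 0}$. Since $\Nfr = |\partial \Sr|$ is measurable with respect to the backbone decoration alone (it depends only on the metrics on the $G(s_j)$), conditionally on $\{\Nfr = n\}$ the variables $\big(\V(\TER_v)\big)_{v \in \partial \Sr}$ are $n$ i.i.d.\ copies of $\V(\TER)$, where $\TER$ denotes the decorated version of an unconditioned Galton--Watson tree.

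Hence it suffices to bound $\prb{\sum_{i=1}^n Y_i \geq \lambda r^{\dERa}(\log r)^{b_1}}$ uniformly in $n \leq M$, where $M$ is the upper bound on $\Nfr$ imposed in the statement (i.e.\ $M = r^{(\sad \wedge 1)/(\alpha-1)}\lambda^{\phi}$ if $\sad \neq 1$ and $M = \lambda^{\phi} r^{1/(\alpha-1)}(\log r)^{-1/(\alpha -1)}$ if $\sad = 1$), and then average over the backbone. Proposition \ref{prop:GW vol tail} gives
\[
\prb{Y_i \geq x(\log x)^{\mathbbm{1}\{\fv=1\}}} \leq C'\, x^{-\beta}, \qquad \beta := \frac{\fv \wedge 1}{\alpha} \in (0,1),
\]
so when $\fv \neq 1$ we apply Lemma \ref{lem:stable sum tail prob app}(i) and when $\fv = 1$ we apply the log-corrected Lemma \ref{lem:stable sum tail prob log app}. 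The key algebraic identity, which must be checked in each of the four combinations $\sad \in \{=1,\neq 1\}$ and $\fv \in \{=1, \neq 1\}$, is
\[
M^{1/\beta}\,(\log M)^{\mathbbm{1}\{\fv=1\}} \;\asymp\; \lambda^{\phi/\beta}\, r^{\dERa}\,(\log r)^{b_1},
\]
and is precisely what motivates the case-by-case definition of $b_1$ in Theorem \ref{thm:dec vol growth main}. Substituting the definitions of $\dERa$, $\beta$ and $M$ into each case reduces this to elementary arithmetic on exponents. Rewriting $\lambda r^{\dERa}(\log r)^{b_1} \asymp \lambda^{1 - \phi/\beta}\cdot M^{1/\beta}(\log M)^{\mathbbm{1}\{\fv=1\}}$, the cited stable sum lemma then yields
\[
\prb{\sum_{i=1}^n Y_i \geq \lambda r^{\dERa}(\log r)^{b_1}} \;\leq\; c'\, \lambda^{-\beta(1-\phi/\beta)} \;=\; c'\, \lambda^{-\left(\frac{\fv \wedge 1}{\alpha} - \phi\right)}
\]
uniformly in $n \leq M$, and this bound survives averaging over the backbone.

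The only real obstacle is bookkeeping: the four-case verification of the asymptotic equivalence above. The hypothesis $\phi < (\fv \wedge 1)/\alpha$ is used precisely to keep the final exponent $\beta - \phi$ strictly positive; beyond this, the proposition is immediate from the independence decomposition combined with the two cited lemmas.
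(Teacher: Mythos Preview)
Your proposal is correct and is essentially identical to the paper's own argument: the paper simply notes that by Definition \ref{def:Kesten's tree} and Proposition \ref{prop:GW vol tail} the $\V(\TER_v)$ are i.i.d.\ with tails of order $x^{-(\fv\wedge 1)/\alpha}$ (with the logarithmic correction when $\fv=1$), and then declares the proposition ``an immediate consequence'' of Lemmas \ref{lem:stable sum tail prob app}(i) and \ref{lem:stable sum tail prob log app} with $\beta=\frac{\fv\wedge 1}{\alpha}$ and $n=\Nfr$. Your write-up spells out the exponent bookkeeping (the identity $M^{1/\beta}(\log M)^{\mathbbm{1}\{\fv=1\}}\asymp \lambda^{\phi/\beta}r^{\dERa}(\log r)^{b_1}$) that the paper leaves implicit, but the route is the same.
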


\subsubsection{Controlling the spinal volume}

Finally, we bound the spinal volume.
%
%The next result then follows by a similar proof to Proposition \ref{prop:spinal vol bound}. This time we take 
%\[
%\ya' = \frac{v(\alpha - 1)[d(\alpha - 1)-1]}{vd(\alpha - 1) + \alpha - 1}
%\]

\begin{prop}\label{prop:spinal vol bound edge}
For all $\epsilon > 0$, there exists $C_{\epsilon}< \infty$ such that, if $\sad \neq 1$, we have for all $r, \lambda \geq 1$:
\[
\prb{\V (\Sr) \geq r^{\dERa} \lambda} \leq C_{\epsilon}\lambda^{-(\sav - \epsilon)}.
\]
If instead $\sad = 1$, we instead have that
\[
\prb{\V (\Sr) \geq  \lambda r^{\dERa} (\log r)^{\frac{-1}{\sav}}} \leq
C_{\epsilon}\lambda^{-(\sav - \epsilon)}.
\]
\end{prop}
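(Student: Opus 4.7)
The plan is to mirror the strategy behind Proposition \ref{prop:spinal vol bound}. Fix a small auxiliary $\eta > 0$ and set $n = r^{\sad \wedge 1}\lambda^{\eta}$ if $\sad \neq 1$, and $n = r\lambda^{\eta}/\log r$ if $\sad = 1$ (the notation $n_{r,\eta,\lambda}$ used elsewhere in the paper). Exactly as in \eqref{eqn:bound on nr}, applying Lemma \ref{lem:ER spine offspring dist}(ii) on Kesten's tree together with Lemma \ref{lem:stable sum tail prob lower tail app} gives
\[
\prb{\sum_{i \leq n} d^U(s_i) \leq r} \leq e^{-c\lambda^{\eta}},
\]
and on the complementary event one has $\Sr \subseteq \bigcup_{j \leq n} G(s_j)$ and therefore $\V(\Sr) \leq \sum_{j \leq n} \V(G(s_j))$.

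Next I would identify the tail of a single spinal volume. By Definition \ref{def:Kesten's tree} the backbone degrees $(\deg s_j)_{j \geq 1}$ are i.i.d.\ size-biased with $\prb{\deg s_j \geq k} \sim ck^{-(\alpha-1)}$, and conditionally on these degrees the graphs $G(s_j)$ are sampled independently. Combining this with Assumption \ref{assn:sec}(V) via Lemma \ref{lem:stable composition tail}(i), applied to the pair $(\deg s_j, \V(G(s_j)))$ with $\beta = \alpha - 1$, $z = v$, and $m = (\alpha+\epsilon)/v$ (for which $\beta/z = \sav < m$), yields
\[
\prb{\V(G(s_j)) \geq k} \leq C' k^{-\sav},
\]
so that $(\V(G(s_j)))_{j \geq 1}$ is an i.i.d.\ sequence with tail exponent $\sav$. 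The root term is handled identically modulo the planted rooting convention, which shifts the degree by one without changing tails.

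The final step is to apply the stable-sum tail bounds to $\sum_{j \leq n} \V(G(s_j))$. When $\sav < 1$, Lemma \ref{lem:stable sum tail prob app}(i) gives $\prb{\sum \V(G(s_j)) \geq \mu n^{1/\sav}} \leq C\mu^{-(\sav - \eta)}$; when $\sav = 1$, Lemma \ref{lem:stable sum tail prob log app} gives the analogous bound with threshold $\mu n \log n$; when $\sav > 1$, Lemma \ref{lem:stable sum tail prob app}(iii) shows the sum concentrates around a multiple of $n$. Choosing $\mu = \lambda^{1 - \eta/\sav}$ and plugging in $n$, the task reduces to verifying that $r^{(\sad \wedge 1)/(\sav \wedge 1)}$, possibly carrying a factor $\log r$ (from the Cauchy regime $\sav = 1$) or $(\log r)^{-1/\sav}$ (from the case $\sad = 1$), is bounded by $r^{\dERa}$, respectively $r^{\dERa}(\log r)^{-1/\sav}$. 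This reduces to the elementary inequality $(\alpha - 1)(\fv \wedge 1) \leq \alpha(\sav \wedge 1)$, which is immediate since $\fv = \tfrac{\alpha}{\alpha - 1}\sav$, with equality precisely when $\fv \leq 1$; in the cases of strict inequality the polynomial gap $r^{\dERa - (\sad \wedge 1)/(\sav \wedge 1)}$ absorbs any stray $\log r$ for large $r$, and the small-$r$ regime is absorbed into the prefactor $C_\epsilon$.

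The main obstacle is the bookkeeping of logarithmic corrections at the boundary values $\sad = 1$ and $\sav = 1$: one must check that in the Cauchy regime the $\log n$ factor produced by Lemma \ref{lem:stable sum tail prob log app} is either swallowed by the polynomial gap $r^{\dERa - (\sad \wedge 1)/(\sav \wedge 1)}$ when this is strictly positive, or exactly reproduces the stated $(\log r)^{-1/\sav}$ factor when $\sad = 1$ (in which case the $(\log r)^{-1/\sav}$ on the right-hand side arises naturally from substituting $n = r\lambda^\eta/\log r$ into $n^{1/\sav}$). A union bound combining the two displayed estimates, followed by choosing $\eta$ small enough in terms of $\epsilon$, delivers the claimed decay $C_\epsilon \lambda^{-(\sav - \epsilon)}$.
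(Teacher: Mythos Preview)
Your proposal is correct and follows essentially the same route as the paper: truncate the backbone at index $n_{r,\eta,\lambda}$, use Lemma~\ref{lem:stable composition tail}(i) on the pair $(\deg s_j,\V(G(s_j)))$ to get the tail exponent $\sav$, apply Lemma~\ref{lem:stable sum tail prob app}(i) to the i.i.d.\ sum, and compare $r^{(\sad\wedge 1)/\sav}$ with $r^{\dERa}$. One minor simplification: since $\alpha\in(1,2)$ and $v\ge 1$ under Assumption~\ref{assn:sec}, the exponent $\sav=(\alpha-1)/v$ is automatically strictly less than~$1$, so your cases $\sav=1$ and $\sav>1$ never arise and the appeal to Lemma~\ref{lem:stable sum tail prob log app} and Lemma~\ref{lem:stable sum tail prob app}(iii) is unnecessary.
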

\begin{proof}
Take $r, \lambda \geq 1$. Recall the spinal volume exponent defined on page \pageref{box:decorated exponents paper} by $\sav = \frac{\alpha - 1}{v}$. The result follows identically to the proof of Proposition \ref{prop:spinal vol bound}, with $\alpha - 1$ replaced by $\sav$ (both are less than $1$) to bound the probabilities $\prb{\V (\Sr) \geq r^{\frac{\sad \wedge 1}{\sav}} \lambda}$ (if $\sad \neq 1$) and $\prb{\V (\Sr) \geq  \lambda r^{\frac{1}{\sav}} (\log r)^{\frac{-1}{\sav}}}$ (if $\sad = 1$). More precisely, taking $n_{r,\epsilon,\lambda}= r^{\sad \wedge 1} (\log r)^{-\mathbbm{1}\{\sad=1\}} \lambda^{\epsilon}$ as in Proposition \ref{prop:spinal vol bound}, we first note that by Assumption \ref{assn:sec}(V) and Lemma \ref{lem:stable composition tail}$(i)$ applied to the pair $(\deg s_i, \V (G (s_i)))$ with $\beta = \alpha - 1, z = v$ and $m=\frac{\alpha}{v}$, there exists $C'<\infty$ such that for each $i \leq n_{r,\epsilon,\lambda}$ and each $x \geq 1$,
\[
\prb{\V (G (s_i)) \geq x} \leq C'x^{-\sav}.
\]
Moreover, it follows from Definition \ref{def:Kesten's tree} that the random variables $(\V (G (s_i)))_{i \leq n_{r,\epsilon,\lambda}}$ are i.i.d.. Therefore, setting $\epsilon' = \frac{\epsilon}{\sav}$, we have by Lemma \ref{lem:stable sum tail prob app}$(i)$ that for all $r, \lambda \geq 1$,
\begin{align*}
\prb{\sum_{i=0}^{n_{r,\epsilon,\lambda}} \V (G (s_i)) \geq r^{\frac{\sad \wedge 1}{\sav}}(\log r)^{-\frac{\mathbbm{1}\{\sad=1\}}{\sav}}\lambda} &\leq \prb{n_{r,\epsilon,\lambda}^{\frac{-1}{\sav}}\sum_{i=0}^{n_{r,\epsilon,\lambda}} \V (G (s_i)) \geq \lambda^{1-\epsilon'}} \leq C'\lambda^{-(1-\epsilon')\sav}.
\end{align*}

 To replace the exponent $\frac{\sad \wedge 1}{\sav}$ with $\dERa$, we note that
\[
\frac{\sad \wedge 1}{\sav} = \frac{v(\sad \wedge 1)}{\alpha - 1} \leq \frac{v(\sad \wedge 1)}{(\alpha - 1)} \vee \frac{\alpha (\sad \wedge 1)}{(\alpha - 1)} = \frac{\alpha (\sad \wedge 1)}{(\alpha - 1)(\fv \wedge 1)} = \dERa
\]
(with equality if $\alpha \leq v$). The result on follows on combining with \eqref{eqn:bound on nr} in a union bound exactly as in the proof of Proposition \ref{prop:spinal vol bound}.
\end{proof}

\begin{rmk}
In particular, the proof above shows that the spinal volume can never dominate the volume contribution from the fragments. 
\end{rmk}

\subsubsection{Proof of Proposition \ref{prop:vol UB prob UB}}

\begin{proof}[Proof of Proposition \ref{prop:vol UB prob UB}]\label{pf:vol UB}
We prove the case $\fv, \sad \neq 1$ (i.e. the case $b_1 =0$) for simplicity. The key to the proof is \eqref{eqn:large small subtree decomp} and in particular the decomposition
\begin{align*}
\V(\BT(\rho, r)) &\leq \sum_{v \in \partial \Sr} \V (\TER_v) + \V (\Sr).
\end{align*}
We therefore deduce that, for any $\phi \in \left(0, \frac{\fv \wedge 1}{\alpha} \right)$, and any $r, \lambda \geq 1$,
\begin{align*}
\prb{\V(\BT(\rho, r)) \geq r^{\dERa} \lambda} \leq \ &\prb{\Nfr \geq r^{\frac{\sad \wedge 1}{\alpha -1}}\lambda^{\phi}} + \prb{\V (\Sr) \geq \frac{1}{2} r^{\dERa} \lambda} \\
&+ \prb{\sum_{v \in \partial \Sr} \V(\TER_v) \geq \frac{1}{2} r^{\dERa} \lambda, \Nfr < r^{\frac{\sad \wedge 1}{\alpha -1}}\lambda^{\phi}}.
\end{align*}
In this case, it therefore follows from Propositions \ref{prop:spinal vol bound}, \ref{prop:small fragments} and \ref{prop:spinal vol bound edge} together with a union bound that for any $\epsilon > 0$, there exists a constant $C_{\epsilon}<\infty$ such that
\begin{align*}
\prb{\V(\BT(\rho, r)) \geq r^{\dERa} \lambda} \leq C_{\epsilon}\lambda^{-\phi(\alpha - 1- \epsilon)} + C_{\epsilon}\lambda^{-(\sav - \epsilon)} + c'\lambda^{-\left( \frac{\fv \wedge 1}{\alpha} - \phi \right)},
\end{align*}
so we optimise by taking $\phi=\frac{\fv \wedge 1}{\alpha^2}$, which gives
\begin{align*}
\prb{\V(\BT(\rho, r)) \geq r^{\frac{\alpha (\sad \wedge 1)}{(\alpha - 1)(\fv \wedge 1)}} \lambda} \leq C_{\epsilon}\lambda^{\frac{-(\fv \wedge 1)(\alpha - 1 - \epsilon)}{\alpha^2}} + C_{\epsilon}\lambda^{-(\sav - \epsilon)}.
\end{align*}
Provided $\epsilon>0$ is small enough, the tail decay of the first term is always worse, so we deduce the result of Proposition \ref{prop:vol UB prob UB}. %If $\sad > 1$, we similarly have from Propositions \ref{prop:spinal vol bound LLN}, \ref{prop:small fragments} and \ref{prop:spinal vol bound edge} that
%\begin{align*}
%\prb{\V(\BT(\rho, r)) \geq r^{\dERa} \lambda} \leq c\lambda^{-z\ya} + \lambda^{-\ya'} + c\lambda^{-\left( \frac{\fv \wedge 1}{\alpha} - z \right)},
%\end{align*}
%so we optimise by taking $z=\frac{\fv \wedge 1}{\alpha (\ya + 1)}$.

The cases where $b_1 \neq 0$ follow in exactly the same way by applying the previous propositions and incorporating the extra log terms.
\end{proof}

\subsection{Volume lower bounds}\label{sctn:vol LBs}
The main result is as follows.
\begin{prop}\label{prop:vol LB prob UB 2}
Take $b_1$ as in Theorem \ref{thm:dec vol growth main}. For any $\epsilon>0$ there exists a constant $c_{\epsilon} < \infty$ such that for all $r, \lambda \geq 1$,
\begin{align*}
\prb{\V(\BT(\rho, r) \leq \lambda^{-1} r^{\dERa} (\log r)^{b_1}} \leq c_{\epsilon}\lambda^{-\frac{\fv \wedge 1}{\alpha}((\sad \wedge \frac{\alpha}{\alpha - 1}) \vee 1)(\alpha - 1- \epsilon)}.
\end{align*}
\end{prop}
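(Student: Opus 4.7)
The plan mirrors, in reverse, the upper-bound decomposition of \S\ref{sctn:dec vol UBs}. Instead of summing over all subtrees, I would lower bound $\V(\BT(\rho,r))$ by the volume of a single large subtree contained inside the ball. Concretely, I would exhibit, with polynomial-in-$\lambda$ failure probability, a non-backbone Galton--Watson subtree of $\Ti$ rooted within decorated distance $r/2$ of $\rho$ whose decorated version has height at most $r/4$ and volume at least $c\,r^{\dERa}(\log r)^{b_1}$. Since $\V(\BT(\rho,r))$ dominates $\V(\TER_v)$ for any such subtree, this gives the desired bound.

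The argument has three steps. First, using Lemma~\ref{lem:ER spine offspring dist}(i) together with the heavy-tailed sum estimate of Lemma~\ref{lem:stable sum tail prob app} applied to the i.i.d.\ sequence $(\diam(G(s_i)))_{i\geq 1}$ of backbone graph diameters (whose upper tail is of order $x^{-(\sad\wedge\alpha/(\alpha-1))}$ by Assumption~\ref{assn:sec}(D) combined with Definition~\ref{def:Kesten's tree}), I would show that at least $K_r^*:=c\lambda^{-\phi_1}r^{\sad\wedge 1}$ backbone vertices (with the appropriate log correction when $\sad=1$) lie within decorated distance $r/2$ of $\rho$. Second, using that backbone vertex degrees are i.i.d.\ size-biased with tail $\sim k^{-(\alpha-1)}$ and a lower-tail counterpart to Lemma~\ref{lem:stable sum tail prob app}, I would show that the total number $N_r^*$ of non-backbone offspring of $s_0,\dots,s_{K_r^*}$ is at least $c(K_r^*)^{1/(\alpha-1)}\lambda^{-\phi_2}$.

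Third, each such non-backbone offspring is the root of an independent critical Galton--Watson subtree $T_v$. Setting $m^*=c\lambda^{-(\fv\wedge 1)}r^{\alpha(\sad\wedge 1)/(\alpha-1)}$ and conditioning on $|T_v|\in[m^*,2m^*]$ (which by Lemma~\ref{lem:prog tail bound}(i) has probability $\asymp\lambda^{(\fv\wedge 1)/\alpha}r^{-(\sad\wedge 1)/(\alpha-1)}$), I would combine the lower-bound argument \eqref{eqn:tree n dec vol} from the proof of Proposition~\ref{prop:GW vol tail} with a suitable-constant version of Proposition~\ref{prop:decorated height bound without log term prog} (shrinking the implicit constant in $m^*$ if required) to show that each such subtree independently satisfies $\V(\TER_v)\geq cr^{\dERa}(\log r)^{b_1}$ and $\Heightdec(\TER_v)\leq r/4$ with positive constant probability. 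By independence across the $N_r^*$ subtrees, the bound $(1-p)^{N_r^*}\leq \exp(-N_r^*p)$ with $p=c\lambda^{(\fv\wedge 1)/\alpha}r^{-(\sad\wedge 1)/(\alpha-1)}$ shows that the probability that no subtree succeeds at Step 3 is stretched-exponentially small in $\lambda$, hence negligible beside the polynomial tails from Steps 1 and 2. A union bound, together with a careful optimization over $\phi_1,\phi_2$ and $\epsilon$, will yield the stated polynomial exponent $\tfrac{(\fv\wedge 1)}{\alpha}\bigl((\sad\wedge\tfrac{\alpha}{\alpha-1})\vee 1\bigr)(\alpha-1-\epsilon)$.

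The main obstacle is Step 1, where the exotic factor $(\sad\wedge\alpha/(\alpha-1))\vee 1$ emerges. This reflects that the single-diameter upper tail along the backbone saturates at $x^{-\alpha/(\alpha-1)}$ (the size-biased ceiling derived from Proposition~\ref{prop:spinal offspring UB general}), and that the decorated backbone distance transitions from heavy-tailed ``one big jump'' behaviour to concentration as $\sad$ passes through $1$, and again through $\alpha/(\alpha-1)$. Matching the final exponent therefore requires distinguishing the three regimes $\sad<1$, $1\leq\sad\leq\alpha/(\alpha-1)$ and $\sad>\alpha/(\alpha-1)$ and applying the appropriate heavy-tail sum bound in each.
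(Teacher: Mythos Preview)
Your approach is essentially identical to the paper's: the paper packages your Steps~1--2 into a single auxiliary result (Proposition~\ref{prop:spinal vol for LB stable 2}) bounding $\prb{N^f_{r/2}\le r^{(\sad\wedge 1)/(\alpha-1)}\lambda^{-h}}$ via the same diameter-sum and degree-sum estimates, and then runs your Step~3 verbatim with $h$ playing the role of $\phi_1/(\alpha-1)+\phi_2$. One small correction: the Step~2 failure probability is already stretched-exponential (by Lemma~\ref{lem:stable sum tail prob lower tail app}$(i)$, the ``lower-tail counterpart'' you allude to), not polynomial, so the final polynomial exponent is determined entirely by Step~1 --- which is precisely why the factor $(\sad\wedge\tfrac{\alpha}{\alpha-1})\vee 1$ appears there, as you correctly identify.
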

Again, the tail decay here is not optimal and the precise value of the exponent in Proposition \ref{prop:vol LB prob UB 2} is not significant. Together with Proposition \ref{prop:vol UB prob UB}, this therefore completes the proof of Theorem \ref{thm:dec vol growth main}$(i)$. The quenched lower bound in Theorem \ref{thm:dec vol growth main}$(ii)$ similarly follows from this proposition by applying Borel-Cantelli along the subsequence $r_n = 2^n$ with $\lambda_n = (\log r_n)^{-\beta}$, where $\beta = -\frac{\alpha}{(\fv \wedge 1)((\sad \wedge \frac{\alpha}{\alpha - 1}) \vee 1)(\alpha - 1- \epsilon)}$, and using monotonicity of $\V ( \BT (\rER, r))$. (We then take $\beta_1 = \beta - b_1$ for the lower bound in Theorem \ref{thm:dec vol growth main}$(ii)$).

As before, we start with a result on the number of subtrees grafted to the decorated backbone.
\begin{prop}\label{prop:spinal vol for LB stable 2}
Suppose $\sad \neq 1$. For any $\epsilon > 0$ there exists $c_{\epsilon} \in (0, \infty)$ such that for all $r, \lambda \geq 1$:
\[
\prb{\Nf2r \leq r^{\frac{\sad \wedge 1}{\alpha - 1}} \lambda^{-1}} \leq c_{\epsilon}\lambda^{-((\sad \wedge \frac{\alpha}{\alpha - 1}) \vee 1)(\alpha - 1- \epsilon)} .
\]
If $\sad = 1$, then
\[
\prb{N^f_{\frac{r}{2}} \leq r^{\frac{1}{\alpha - 1}} (\log r)^{\frac{-1}{\alpha - 1}} \lambda^{-1}} \leq c_{\epsilon}\lambda^{-(\alpha - 1 - \epsilon)}.
\]
\end{prop}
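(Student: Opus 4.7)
The plan is to reduce the lower bound on $\Nf2r$ to two simultaneous controls along the backbone: an upper bound on $\sum_{i \leq n} \diam(G(s_i))$ (to ensure boundary vertices lie within $\BT(\rho, r/2)$) and a lower bound on $\sum_{i \leq n} \deg s_i$ (to produce enough boundary vertices). Since consecutive backbone graphs $G(s_i), G(s_{i+1})$ share exactly one boundary vertex (the one in bijection with the edge $(s_i, s_{i+1})$) and no other pair shares any, we have $\big|\bigcup_{i \leq n} \partial G(s_i)\big| = \sum_{i=0}^n \deg s_i - n \geq \tfrac{1}{2}\sum_{i=0}^n \deg s_i$, using that $\deg s_i \geq 2$ along the backbone. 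Since the decorated distance from $\rho$ to any vertex of $\partial G(s_j)$ is at most $1 + \sum_{i \leq j} \diam(G(s_i))$, on the event
\[
\Big\{1 + \sum_{i \leq n} \diam(G(s_i)) \leq r/2\Big\} \cap \Big\{\tfrac{1}{2}\sum_{i \leq n} \deg s_i \geq r^{(\sad \wedge 1)/(\alpha-1)}\lambda^{-1}\Big\}
\]
we have $\Nf2r \geq r^{(\sad\wedge 1)/(\alpha-1)}\lambda^{-1}$. It thus remains to pick $n=n(r,\lambda)$ making both failure probabilities small enough.

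\textbf{Degree bound.} By Definition \ref{def:Kesten's tree} and \eqref{eqn:offspring tails}, the backbone degrees $(\deg s_i)_{i \geq 1}$ are i.i.d.\ with the size-biased distribution, and in particular $\prb{\deg s_i \geq k} \geq c k^{-(\alpha-1)}$ for every $k \geq 1$. Independence then gives
\[
\prb{\max_{1 \leq i \leq n} \deg s_i \leq M} \leq \bigl(1 - cM^{-(\alpha - 1)}\bigr)^n \leq \exp\bigl(-cnM^{-(\alpha - 1)}\bigr).
\]
Taking $M = 2 r^{(\sad \wedge 1)/(\alpha-1)}\lambda^{-1}$ together with the choice $n = c_0 r^{\sad \wedge 1}(\log r)^{-\mathbbm{1}\{\sad=1\}}\lambda^{-(\alpha - 1 - \epsilon)}$ yields $nM^{-(\alpha-1)} \geq c'\lambda^\epsilon$, so this failure event has probability at most $\exp(-c'\lambda^\epsilon)$, negligible against any polynomial decay in $\lambda$.

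\textbf{Distance bound.} For the same $n$ I estimate $\prb{\sum_{i \leq n} \diam(G(s_i)) > r/2}$, using that along the backbone these are i.i.d.\ with upper tail of index $\gamma := \sad \wedge \tfrac{\alpha}{\alpha-1}$ by Lemma \ref{lem:ER spine offspring dist}(i). When $\sad < 1$, $\gamma = \sad < 1$, and Lemma \ref{lem:stable sum tail prob app}(i) with $\beta = \sad$ directly yields $\prb{\sum > r/2} \leq C\lambda^{-(\alpha - 1 - \epsilon)}$. When $\sad = 1$, Lemma \ref{lem:stable sum tail prob log app} (the boundary tail-index-$1$ analogue) gives $C\lambda^{-(\alpha - 1 - \epsilon)}$ with the logarithmic scale correction. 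When $\sad > 1$ we have $\gamma > 1$, and $\diam(G(s_i))$ has a finite $\gamma'$-th moment for every $\gamma' < \gamma$; Minkowski's inequality then gives $\|\sum_{i \leq n}\diam(G(s_i))\|_{\gamma'} \leq n \|\diam(G(s_1))\|_{\gamma'}$, whence Markov yields
\[
\prb{\sum_{i \leq n}\diam(G(s_i)) > r/2} \leq C_{\gamma'}(n/r)^{\gamma'} = C'_{\gamma'}\lambda^{-\gamma'(\alpha - 1 - \epsilon)}.
\]
For any prescribed $\epsilon''>0$ in the proposition, choosing $\gamma'$ close enough to $\gamma$ and $\epsilon$ slightly smaller than $\epsilon''$ makes $\gamma'(\alpha - 1 - \epsilon) \geq \gamma(\alpha - 1 - \epsilon'')$. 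A union bound with the degree failure event then yields the stated bound.

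\textbf{Main obstacle.} The technical crux is the case $\sad > 1$: the crude stable-tail estimate $\prb{\sum X_i > t} \lesssim n t^{-\gamma}$ yields only $\lambda^{-(\alpha - 1 - \epsilon)}$ decay, which is too weak for the claimed exponent $\gamma(\alpha - 1 - \epsilon)$ when $\gamma > 1$. The key observation is that $\gamma > 1$ implies finiteness of every sub-$\gamma$ moment, so Minkowski's inequality trades a factor of $\gamma'$ in the exponent: this is what turns the naive $\lambda^{-(\alpha - 1 - \epsilon)}$ bound into the required $\lambda^{-\gamma(\alpha - 1 - \epsilon)}$. The cases $\sad < 1$ and $\sad = 1$ rely instead on the stable/boundary sum tail bounds from the appendix and present no additional difficulty.
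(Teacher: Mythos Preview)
Your proof is correct and follows essentially the same strategy as the paper's: choose $n = m_{r,\lambda} \approx r^{\sad\wedge 1}(\log r)^{-\mathbbm{1}\{\sad=1\}}\lambda^{-(\alpha-1-\epsilon)}$ backbone vertices, bound $\sum_{i\leq n}\diam(G(s_i))$ above via the stable-sum tail lemmas, and bound $\sum_{i\leq n}\deg s_i$ below. Your max argument for the degree bound is a mild simplification of the paper's appeal to Lemma~\ref{lem:stable sum tail prob lower tail app}$(i)$, and your Minkowski step for $\sad>1$ is exactly the content of Lemma~\ref{lem:stable sum tail prob app}$(iii)$ which the paper invokes; note that for the $\sad=1$ diameter bound the correct reference is Lemma~\ref{lem:stable sum tail prob app}$(ii)$ rather than Lemma~\ref{lem:stable sum tail prob log app}.
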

\begin{proof}
Similarly to previous proofs, suppose for now that $\sad \neq 1$, and take $\epsilon>0$ and $\epsilon'>0$ with $\epsilon' \ll \epsilon$. Given $r, \lambda \geq 1$, set $m = \alpha - 1 - \epsilon'$, let $m_{r,\lambda} = r^{\sad \wedge 1} \lambda^{-m}$, and let $\rho=s_0, s_1, \ldots, s_{m_{r,\lambda}}$ denote the first $m_{r,\lambda}$ vertices on the backbone of $\Tai$. Then, by Lemmas \ref{lem:ER spine offspring dist}$(i)$ and \ref{lem:stable sum tail prob app}, we have that there exist $c, c_{\epsilon'} < \infty$ such that for all $r, \lambda \geq 1$,
\begin{align*}
\prb{1 + \sum_{i=0}^{m_{r,\lambda}} \diam (G(s_i)) \geq \frac{r}{2}} \leq \begin{cases} c\lambda^{-m} &\text{ if } \sad < 1 \\
c_{\epsilon'}\lambda^{-m[(\sad \wedge \frac{\alpha}{\alpha - 1}) - \epsilon']} &\text{ if } \sad > 1.
\end{cases}
\end{align*}
(Here the term $+1$ is added because of our rooting convention). Also, note that, by Lemma \ref{lem:stable sum tail prob lower tail app}$(i)$, there exists $c'>0$ such that for all $r, \lambda \geq 1$
\begin{align*}
\prb{\sum_{i=0}^{m_{r,\lambda}} \deg (s_i) \leq r^{\frac{\sad \wedge 1}{\alpha - 1}} \lambda^{-1}} \leq e^{-c'\lambda^{\left(\alpha - 1-m \right)}}.
\end{align*}
Provided we take $\epsilon'$ small enough compared to $\epsilon$, this gives the result by a union bound. If $\sad = 1$, we instead take $m_{r,\lambda} = r (\log r)^{-1} \lambda^{-m}$.
\end{proof}

\begin{proof}[Proof of Proposition \ref{prop:vol LB prob UB 2}]
We first deal with the case $\fv \neq 1, \sad \neq 1$. Take some $\epsilon>0$ and choose $\epsilon'>0$ so that $\epsilon' \ll \epsilon$. Also set $q={(\fv \wedge 1)(1 - \epsilon')}{(\frac{1}{\alpha d} \vee \frac{\alpha - 1}{\alpha})}$. Note that $\dERa (\fv \wedge 1)(\frac{1}{\alpha d} \vee \frac{\alpha - 1}{\alpha})=1$. 

Now take $r, \lambda \geq 1$. Because of our rooting convention, we can assume wlog that $r^{\dERa} \lambda^{-1} \geq 1$. Since each of the subtrees grafted to the backbone are independent of each other, and letting $\left( T_i \right)_{i=1}^{\Nf2r}$ denote the first $\Nf2r$ subtrees (ordered by distance to the root and breaking ties arbitrarily), and $\left( \TER_i \right)_{i=1}^{\Nf2r}$ the corresponding decorated subgraphs of $\TER$, we have from Lemma \ref{lem:prog tail bound}$(i)$, Proposition \ref{prop:decorated height bound without log term prog} and \eqref{eqn:tree n dec vol} that there exists $c'>0$ and $\lambda_{\epsilon'}<\infty$ such that for all $r \geq 1, \lambda \geq \lambda_{\epsilon'}$ each $i \leq \Nf2r$,
\begin{align*}
&\prb{\V (\TER_i) \geq r^{\dERa} \lambda^{-1}, \Heightdec (\TER_i) \leq \frac{1}{2}r } \\
&\geq \prb{\frac{r^{\dERa (\fv \wedge 1)} \lambda^{-[(\fv \wedge 1) (1- \epsilon')]}}{|T_i|} \in [1,2]} \left(1 - \prb{\V (\TER_i) \leq \lambda^{-\epsilon'}|T_i|^{\frac{1}{\fv \wedge 1}}}  - \prb{\Heightdec (\TER_i) > \frac{1}{2} |T_i|^{(\frac{1}{\alpha d} \vee \frac{\alpha - 1}{\alpha})} \lambda^q} \right) \\
&\geq c'r^{-\dERa\left(\frac{\fv \wedge 1}{\alpha}\right)} \lambda^{\frac{(\fv \wedge 1)(1-\epsilon')}{\alpha}}.
\end{align*}
Now take $h = \frac{\fv \wedge 1}{\alpha} - \epsilon'$. Since $\left( T_i \right)_{i=1}^{\Nf2r}$ is an independent sequence, we have for all $r \geq 1, \lambda \geq \lambda_{\epsilon'}$ that
\begin{align*}
\prb{\left\{\nexists i: \V (\TER_i) \geq r^{\dERa} \lambda^{-1}, \Heightdec (\TER_i) \leq \frac{1}{2}r\right\}, \Nf2r \geq r^{\frac{\sad \wedge 1}{\alpha - 1}} \lambda^{-h}}{} 
&\leq \left(1-c'r^{-\dERa\left(\frac{\fv \wedge 1}{\alpha}\right)} \lambda^{\frac{(\fv \wedge 1)(1-\epsilon')}{\alpha}}\right)^{r^{\frac{\sad \wedge 1}{\alpha - 1}} \lambda^{-h}} \\
&\leq e^{-c'\lambda^{\epsilon'\left(1-\frac{\fv \wedge 1}{\alpha}\right)}}.
\end{align*}
Therefore, by a union bound and Proposition \ref{prop:spinal vol for LB stable 2} it follows that for any $\epsilon>0$ we can choose $\epsilon'>0$ small enough such that there exist $c_{\epsilon'}, C_{\epsilon}, \lambda_{\epsilon'} < \infty$ such that for all $r \geq 1, \lambda \geq \lambda_{\epsilon'}$,
\begin{align*}
&\prb{\V(\BT(\rho, r) \leq \lambda^{-1} r^{\dERa}} \\
&\leq \prb{\Nf2r \leq r^{\frac{\sad \wedge 1}{\alpha - 1}} \lambda^{-h}} + \prb{\left\{\nexists i: \V (\TER_i) \geq r^{\dERa} \lambda^{-1}, \Heightdec (\TER_i) \leq \frac{1}{2}r\right\}, \Nf2r \geq r^{\frac{\sad \wedge 1}{\alpha - 1}} \lambda^{-h}}{}\\
&\leq c_{\epsilon'}\lambda^{-h((\sad \wedge \frac{\alpha}{\alpha - 1}) \vee 1)(\alpha - 1- \epsilon')} + e^{-c'\lambda^{\epsilon'\left(1-\frac{\fv \wedge 1}{\alpha}\right)}} \leq C_{\epsilon}\lambda^{-\frac{\fv \wedge 1}{\alpha}((\sad \wedge \frac{\alpha}{\alpha - 1}) \vee 1)(\alpha - 1- \epsilon)}.
\end{align*}
%(Here the final line follows provided we chose $\epsilon'$ small enough compared to $\epsilon$, which we can always do).
This extends to the case where $1 \leq \lambda < \lambda_{\epsilon'}$ by modifying $C_{\epsilon}$ if necessary, giving the stated result.

The other cases where $\sad =1$ or $\fv = 1$ can be treated by exactly the same proof on incorporating the factors of $\log r$.
\end{proof}

%\begin{rmk}
%Under Assumption \ref{assn:sec}, which is stronger, we can improve the tail bound to \begin{align*}
%\prb{\V(\BT(\rho, r)) \leq r^{\dERa} \lambda^{-1}} \leq C \lambda^{\frac{-(\alpha - 1)}{\alpha} + \epsilon}
%\end{align*}
%when $\sad, \fv \neq 1$, with appropriate logarithmic corrections in the boundary cases, by following the same strategy as that used in \cite[Lemma 3.2]{BjornStef}. However this requires the uniform bound on the decorated height obtained in Proposition \ref{prop:decorated height bound without log term}. In the case of trees and looptrees where we have tighter control on $\Nf2r$, this improves the tail decay to $e^{-\lambda^{\frac{\alpha - 1}{\alpha}}}$ and $e^{-\lambda^{\frac{1}{\alpha}}}$ (the second of these was shown in \cite[Equation (3.23)]{BjornStef}).
%\end{rmk} 

\section{Resistance on $\TERa$}\label{sctn:dec res bounds}

In order to apply results of \cite{KumMisumiHKStronglyRecurrent} about random walk exponents, we also need to understand resistance on $\TERa$.

\begin{comment}
Recall from Section \ref{sctn:assns} that we have assumed that $\sad = s_{\alpha}^{d, diam}$, and $\saR=s_{\alpha}^{R, diam}$, and that in reasonable cases these exponents are equal to $d(\alpha - 1)$ and $R(\alpha - 1)$ respectively. Although one can think up contrived examples (e.g. some kind of lollipop graph) where this is not the case, this is a reasonable assumption for graphs where the local geometry is roughly the same at all vertices, and we will see that it holds in all the examples we consider in Section \ref{sctn:examples}.
\end{comment}

In Assumption \ref{assn:sec} we have assumed that the two-point function and diameters of the inserted graphs grow according to the same exponents. It would still be possible to get some kind of result if this was not the case, but we would need more information on the local geometry of the inserted graphs. When the two exponents are equal, it means that we are able to cut the decorated backbone at an appropriate cut point such that the cut point is roughly distance $r$ from the root, and all vertices contained in the decorated graphs corresponding to ancestors of that cutpoint are also roughly within distance $r$ of the root. The same holds for the resistance distance, so this gives a concrete way to separate $B^{\text{dec}}_{res}(\rER, r)$ from $B^{\text{dec}}_{res}(\rER, 2r)^c$, for example. However, if the diameters of the inserted graphs grow differently to the two-point function, then we cannot separate balls just by exploiting the underlying tree structure.

Take $i \geq 0$ and denote by $s_i$ the $i^{th}$ vertex from the root on the infinite backbone of $\Tai$. Recall from Assumption \ref{assn:sec}(R) and Lemma \ref{lem:stable composition tail}$(i)$ applied to the pair $(\deg s_i, \diam_{res} (G(s_i)))$ that the exponent $\saR$ is defined so that there exist constants $c_1, c_2 \in (0, \infty)$ such that
\begin{align*}
\prb{\diam_{res} (G(s_i)) \geq r} &\leq c_1 r^{- \saR} \\
\prb{R^U (G(s_i)) \geq r} &\geq c_2 r^{- \saR}
\end{align*}
for all $i \geq 0$ and all $r \geq 1$. 

In what follows, we will use the subscript ``res'' to indicate that distances are defined with respect to the resistance metric. For example, $B^{\text{dec}}_{res}(\rER, r)$ refers to a ball of radius $r$ with respect to the effective resistance metric on $\TERa$, and $\BT(\rER, r)$ still refers to a ball with respect to the decorated metric $\dg$. For a decorated tree $\TER, \Heightdec_{\text{res}} (\TER) = \sup_{x \in \TER} \Ref (\rER, x)$.
%
%\begin{tcolorbox}[colback=white]
%\begin{center}
%For all of Section \ref{sctn:dec res bounds}, we will work under Assumption \ref{assn:sec}.
%\end{center}
%\end{tcolorbox}
%The following bound is necessary in order to establish the random walk exponents. 
Recall from Proposition \ref{prop:decorated height bound without log term} that
\[
\tad=\frac{1}{3(2\alpha - (\sad \wedge 1))}.
\]
We analogously define 
\begin{align}\label{eqn:tar def}
\taR&=\frac{\alpha}{3(2\alpha - (\saR \wedge 1))}.
\end{align}

To understand a simple random walk on $\TERa$, we will need to estimate the effective resistance from the root to the boundary of a ball. We will do this by defining an appropriate cutvertex on the infinite backbone which separates a large subset of $B^{\text{dec}}_{res}(\rER, r)$ from infinity. To prove that the candidate cutvertex indeed does this, it will be necessary to bound $\sup_i \Heightdec_{\text{res}} ((\TER)^{(i)})$ for a collection of subtrees attached to the backbone.

Before stating the main resistance result, we give a lemma that will enable us to do this. If the tail decay in Proposition \ref{prop:decorated height bound without log term} was stronger, we would be able to apply Lemma \ref{lem:stable composition tail} to get a good bound on $\prb{\Heightdec_{\text{res}} (\TER) \geq x}$ for arbitrary $x \geq 1$ and then apply a straightforward union bound. However, the tail decay of Proposition \ref{prop:decorated height bound without log term} is not strong enough to do this, so we have to strengthen Proposition \ref{prop:decorated height bound without log term} using another trick.

\begin{lem}\label{lem:height bound GW union}
Set $\car = \frac{1}{\alpha R} \vee (1-\frac{1}{\alpha})$. For $n \geq 1$, let $((\TER)^{(i)})_{i=1}^{n^{\frac{1}{\alpha}}}$ denote a set of independent decorated unconditioned Galton--Watson trees.
\begin{enumerate}[(i)]
\item Assume $\saR \neq 1$. Then, for any $\epsilon>0$ there exists $c_{\epsilon}<\infty$ such that for any $n, \lambda \geq 1$,
\begin{align*}
\prb{\bigcup_{i=1}^{n^{\frac{1}{\alpha}}} \left\{\Heightdec_{\text{res}} ((\TER)^{(i)}) \geq n^{\frac{1}{\alpha R} \vee 1-\frac{1}{\alpha}} \lambda \right\}} &\leq c_{\epsilon}\lambda^{-\frac{(\alpha+1)\taR}{\alpha+1+2\alpha\car\taR}(1-\epsilon)}.
\end{align*}
\item 
If $\saR=1$, we instead have that
\begin{align*}
\prb{\bigcup_{i=1}^{n^{\frac{1}{\alpha}}(\log n)^{-\frac{1}{\alpha-1}}} \left\{\Heightdec_{\text{res}} ((\TER)^{(i)}) \geq n^{1-\frac{1}{\alpha}} \lambda \right\}} &\leq c_{\epsilon}\lambda^{-\frac{(\alpha+1)\taR}{\alpha+1+2\alpha\car\taR}(1-\epsilon)}.
\end{align*}
\end{enumerate}
\end{lem}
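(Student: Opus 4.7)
The approach is a union bound over the $N$ independent decorated trees combined with a careful tail estimate for a single decorated tree. First apply Boole's inequality to reduce to
$$
\prb{\bigcup_{i=1}^N \bigl\{\Heightdec_{\text{res}}((\TER)^{(i)}) \geq n^{\car}\lambda\bigr\}} \leq N \,\prb{\Heightdec_{\text{res}}(\TER) \geq n^{\car}\lambda},
$$
with $N = n^{1/\alpha}$ in case (i) and $N = n^{1/\alpha}(\log n)^{-1/(\alpha-1)}$ in case (ii). For the single-tree tail, I would decompose dyadically on $|T|$ and invoke the resistance analogue of Proposition~\ref{prop:decorated height bound without log term prog} (obtained by the identical proof with $\sad,\tad,d$ replaced by $\saR,\taR,R$): for any $\mu \geq 1$,
$$
\prb{\Heightdec_{\text{res}}(\TER_{[k,2k]}) \geq k^{\car}\mu} \leq c_\epsilon\, \mu^{-\taR}.
$$
Combined with $\prb{|T| \in [2^k, 2^{k+1}]} \lesssim 2^{-k/\alpha}$ from Lemma~\ref{lem:prog tail bound}(i), and choosing $\mu = (n/2^k)^{\car}\lambda$ in those dyadic ranges where this exceeds $1$ (bounding the conditional probability trivially by $1$ and the progeny tail by $2^{-k/\alpha}$ otherwise), the dyadic sum naturally splits according to whether $\car\taR$ is larger or smaller than $1/\alpha$.

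In the easy regime $\car\taR > 1/\alpha$, the dyadic sum is geometric and dominated by its boundary term near $2^k \sim n\lambda^{1/\car}$; together with the tail contribution $\prb{|T| \geq n\lambda^{1/\car}} \lesssim n^{-1/\alpha}\lambda^{-1/(\alpha\car)}$, multiplying by $N$ gives the clean rate $\lambda^{-1/(\alpha\car)}$, which is stronger than needed. The real difficulty is the complementary regime $\car\taR < 1/\alpha$, where the dyadic sum is instead dominated by its smallest terms, yielding $\prb{\Heightdec_{\text{res}}(\TER) \geq n^{\car}\lambda} \lesssim n^{-\car\taR}\lambda^{-\taR}$, and the naive union bound then produces a prefactor $n^{1/\alpha-\car\taR}$ that is not bounded uniformly in $n$. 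This is the main obstacle to be resolved.

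The stated exponent $\gamma = \tfrac{(\alpha+1)\taR}{\alpha+1 + 2\alpha\car\taR}$, equivalently satisfying the harmonic identity $1/\gamma = 1/\taR + 2\alpha\car/(\alpha+1)$, points to resolving this via an intermediate truncation on progeny: restrict to $|T^{(i)}| \leq n\lambda^{q}$ for a parameter $q$ to be optimised. Trees exceeding this threshold contribute at most $N \cdot (n\lambda^q)^{-1/\alpha} = \lambda^{-q/\alpha}$, while trees below it are controlled through the truncated dyadic sum, whose $\lambda$-rate improves linearly with $q$ through the range of summation. Balancing the two competing $\lambda$-powers, together with the slight slack absorbed into the $\epsilon$-loss (which covers both the polynomial factors from the dyadic sum and the gap to $\taR$ in the Proposition), yields the claimed exponent $\gamma(1-\epsilon)$ uniformly in $n$.

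Finally, case (ii) ($\saR = 1$) follows from the same scheme using the logarithmic version of Proposition~\ref{prop:decorated height bound without log term prog} in place of the power version. The reduced count $N = n^{1/\alpha}(\log n)^{-1/(\alpha-1)}$ matches the typical number of subtrees at the relevant scale, and the logarithmic prefactors cancel consistently so that the polynomial exponents are preserved.
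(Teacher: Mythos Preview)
Your diagnosis of the obstacle is correct: in the regime $\car\taR < 1/\alpha$ (which is in fact the only regime here, since $\taR$ is small), the dyadic sum $\sum_k 2^{-k/\alpha}(2^k/n)^{\car\taR}\lambda^{-\taR}$ is dominated by its \emph{smallest}-$k$ terms, and after multiplying by $N=n^{1/\alpha}$ you are stuck with the prefactor $n^{1/\alpha-\car\taR}$. However, your proposed resolution does not address this. Truncating at $|T^{(i)}|\leq n\lambda^q$ removes only the large-progeny tail of the dyadic sum --- the part that was already harmless --- while the small-$k$ terms remain untouched: the truncated sum is still dominated by $k\approx 0$ and still equals $n^{-\car\taR}\lambda^{-\taR}$, with no gain in the $\lambda$-rate from~$q$. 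So the balance you describe cannot be carried out, and the bound is not uniform in~$n$. (One could try to salvage the union-bound route by sharpening the single-tree estimate for small~$|T|$ using Assumption~\ref{assn:sec}(R) directly rather than Proposition~\ref{prop:decorated height bound without log term prog}, but that is a different argument from what you sketched.)

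The paper avoids the union bound altogether by an embedding trick. It constructs an auxiliary Galton--Watson tree $\tilde T_n$ conditioned so that generation $n^{1-1/\alpha}$ contains roughly $n^{1/\alpha}$ vertices; the subtrees hanging from that generation are then precisely $\sim n^{1/\alpha}$ independent unconditioned Galton--Watson trees, so the union event $\{\exists i:\Heightdec_{\text{res}}((\TER)^{(i)})\geq n^{\car}\lambda\}$ is contained in $\{\Heightdec_{\text{res}}(\tilde{\TER}_n)\geq n^{\car}\lambda\}$. One then restricts to $|\tilde T_n|\in[\lambda^{-\epsilon}n,\lambda^B n]$ and applies Proposition~\ref{prop:decorated height bound without log term prog} \emph{once} to $\tilde T_n$; the two error terms --- from $\prb{|\tilde T_n|>\lambda^B n}$ (which decays like $\lambda^{-B(\alpha+1)/(2\alpha)}$, using a bound on the generation size of a large tree) and from the conditional height bound (which costs $\lambda^{-\taR(1-\car B)}$) --- are balanced by the choice $B=\tfrac{2\alpha\taR}{\alpha+1+2\alpha\car\taR}$ to give the stated exponent. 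This embedding is the missing idea.
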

\begin{proof}
\begin{enumerate}
\item
We start with the case $\saR \neq 1$. Take $n, \lambda \geq 1$, and recall from Proposition \ref{prop:decorated height bound without log term} and \eqref{eqn:tar def} that for a finite decorated tree $\TER_n$, where the underlying Galton--Watson tree $T_{n,2n]}$ is conditioned to have between $n$ and $2n$ vertices, we have that
\begin{equation}\label{eqn:height bound repeat}
\prb{\Heightdec_{\text{res}} (\TER_{[n,2n]}) \geq n^{\frac{1}{\alpha R} \vee \left(1-\frac{1}{\alpha}\right)} \lambda} \leq c\lambda^{-t^R_{\alpha}}
\end{equation}
(with an extra $\log n$ factor if $\saR=1$). Now observe that we can also get a lower bound for the probability of this event as follows. Take a constant $A>1$ (we will fix it later in the proof), let $\tilde{T}_n$ be a Galton--Watson tree conditioned on the number of vertices in generation $n^{1-\frac{1}{\alpha}}$ being in $[A^{-1}n^{\frac{1}{\alpha}}, An^{\frac{1}{\alpha}}]$, and let $\tTER_n$ denote its decorated version. We consider the the event that one of the subtrees in $\tilde{T}_n$ emanating from level $n^{1-\frac{1}{\alpha}}$ has decorated resistance height at least $n^{\left(\frac{1}{\alpha R} \vee 1-\frac{1}{\alpha}\right)} \lambda$. Due to the Galton--Watson structure, all of the subtrees emanating from level $n^{1-\frac{1}{\alpha}}$ are independent Galton--Watson trees, so letting $(T^{(i)})_{i=1}^{A^{-1}n^{\frac{1}{\alpha}}}$ denote (a subset of) these Galton--Watson trees on the event described, and $((\TER)^{(i)})_{i=1}^{A^{-1}n^{\frac{1}{\alpha}}}$ their decorated versions, we have that
\begin{align}\label{eqn:res LB union}
\begin{split}
\prb{\Heightdec_{\text{res}} (\tTER_n) \geq n^{\left(\frac{1}{\alpha R} \vee 1-\frac{1}{\alpha}\right)} \lambda} &\geq \prb{\bigcup_{i=1}^{A^{-1}n^{\frac{1}{\alpha}}} \left\{ \Heightdec_{\text{res}} ((\TER)^{(i)}) \geq n^{\left(\frac{1}{\alpha R} \vee 1-\frac{1}{\alpha}\right)} \lambda\right\}} \\
%&\geq c\prb{\bigcup_{i=1}^{n^{\frac{1}{\alpha}}} \left\{\Heightdec_{\text{res}} ((\TER)^{(i)}) \geq n^{\left(\frac{1}{\alpha R} \vee 1-\frac{1}{\alpha}\right)} \lambda\right\}}.
\end{split}
\end{align}
Moreover, letting $Z_{n^{1-\frac{1}{\alpha}}}$ denote the size of generation $n$ and $B =  \frac{2\alpha\taR}{\alpha+1+2\alpha\car\taR}$ we have that
\begin{align}\label{eqn:res trick decomp}
\begin{split}
&\prb{\Heightdec_{\text{res}} (\tTER_n) \geq n^{\left(\frac{1}{\alpha R} \vee 1-\frac{1}{\alpha}\right)} \lambda} \\
&\leq \prcondb{\Heightdec_{\text{res}} (\tTER_n) \geq n^{\left(\frac{1}{\alpha R} \vee 1-\frac{1}{\alpha}\right)} \lambda}{|\tilde{T}_n| \in [\lambda^{-\epsilon}n, \lambda^{B}n]}{} + \prb{|\tilde{T}_n| < \lambda^{-\epsilon}n} + \prb{|\tilde{T}_n| > \lambda^{B}n}.
%&\leq \frac{\prcondb{\Heightdec_{\text{res}} (T) \geq n^{\left(\frac{1}{\alpha R} \vee 1-\frac{1}{\alpha}\right)} \lambda}{|T| \in [\lambda^{-\epsilon}n, \lambda^{B}n]}{}}{\prcondb{Z_{n^{1-\frac{1}{\alpha}}} \in [n^{\frac{1}{\alpha}}, 2n^{\frac{1}{\alpha}}]}{|T| \in [\lambda^{-\epsilon}n, \lambda^{B}n]}{}} + \prcondb{Z_{n^{1-\frac{1}{\alpha}}} \in [n^{\frac{1}{\alpha}}, 2n^{\frac{1}{\alpha}}]}{|T| \in [\lambda^{-\epsilon}n, \lambda^{B}n]}{}\frac{\prb{|\tilde{T}_n| \notin [\lambda^{-\epsilon}n, \lambda^{B}n]}}{\prb{Z_{n^{1-\frac{1}{\alpha}}} \in [n^{\frac{1}{\alpha}}, 2n^{\frac{1}{\alpha}}]}} \\
%&\leq \frac{\prcondb{\Heightdec_{\text{res}} (T) \geq n^{\left(\frac{1}{\alpha R} \vee 1-\frac{1}{\alpha}\right)} \lambda}{|T| \in [\lambda^{-\epsilon}n, \lambda^{B}n]}{}}{\prcondb{Z_{n^{1-\frac{1}{\alpha}}} \in [n^{\frac{1}{\alpha}}, 2n^{\frac{1}{\alpha}}]}{|T| \in [n, 2n]}{} \prcondb{|T| \in [n, 2n]}{|T| \in [\lambda^{-\epsilon}n, \lambda^{B}n]}{}} + \prb{|\tilde{T}_n| < \lambda^{-\epsilon}n} + \prb{|\tilde{T}_n| > \lambda^{B}n} \\
%&\leq c\lambda^{\frac{\alpha}{\epsilon} - \taR (1-\car B)} + \prb{|\tilde{T}_n| < \lambda^{-\epsilon}n},
\end{split}
\end{align}
We claim that the right hand side decays polynomially in $\lambda$. Indeed, for the first term in the last line of \eqref{eqn:res trick decomp} we have by \cite[Corollary 7]{KortSubexp}, Lemma \ref{lem:prog tail bound}$(i)$ and \eqref{eqn:height bound repeat} that we can choose $A>1$ large enough (not depending on $n$) so that there exists a constant $c_A<\infty$ such that for all $n, \lambda \geq 1$,
\begin{align*}
&\prcondb{\Heightdec_{\text{res}} (\tTER_n) \geq n^{\frac{1}{\alpha R} \vee \left(1-\frac{1}{\alpha}\right)} \lambda}{|\tilde{T}_n| \in [\lambda^{-\epsilon}n, \lambda^{B}n]}{} \\
&\leq \frac{\prcondb{\Heightdec_{\text{res}} (T) \geq n^{\frac{1}{\alpha R} \vee \left(1-\frac{1}{\alpha}\right)} \lambda}{|T| \in [\lambda^{-\epsilon}n, \lambda^{B}n]}{}}{\prcondb{Z_{n^{1-\frac{1}{\alpha}}} \in [A^{-1}n^{\frac{1}{\alpha}}, An^{\frac{1}{\alpha}}]}{|T| \in [\lambda^{-\epsilon}n, \lambda^{B}n]}{}} \\
&\leq \frac{\prcondb{\Heightdec_{\text{res}} (T) \geq n^{\frac{1}{\alpha R} \vee \left(1-\frac{1}{\alpha}\right)} \lambda}{|T| \in [\lambda^{-\epsilon}n, \lambda^{B}n]}{}}{\prcondb{Z_{n^{1-\frac{1}{\alpha}}} \in [A^{-1}n^{\frac{1}{\alpha}}, An^{\frac{1}{\alpha}}]}{|T| \in [n, 2n]}{} \prcondb{|T| \in [n, 2n]}{|T| \in [\lambda^{-\epsilon}n, \lambda^{B}n]}{}} \leq c_A\lambda^{\frac{\epsilon}{\alpha} - \taR (1-\car B)},
\end{align*}
recalling that $\car = \frac{1}{\alpha R} \vee (1-\frac{1}{\alpha})$.

For the second term in \eqref{eqn:res trick decomp}, note that by the same logic as above we can increase $A$ and $c_A$ a bit if necessary (independently of $n$ and $\lambda$) so that we also have for all $n, \lambda \geq 1$ that
\begin{align*}
\prb{|\tilde{T}_n| < \lambda^{-\epsilon}n} &\leq \frac{\prb{|T| \leq n\lambda^{-\epsilon} \text{ and } Z_{n^{1-\frac{1}{\alpha}}} \in [A^{-1}n^{\frac{1}{\alpha}}, An^{\frac{1}{\alpha}}]}}{\prcondb{Z_{n^{1-\frac{1}{\alpha}}} \in [A^{-1}n^{\frac{1}{\alpha}}, An^{\frac{1}{\alpha}}]}{|T| \in [n, 2n]}{} \prb{|T| \in [n, 2n]}} \\
&\leq c_A n^{\frac{1}{\alpha}} \prb{|T| \leq n\lambda^{-\epsilon} \text{ and } Z_{n^{1-\frac{1}{\alpha}}} \in [A^{-1}n^{\frac{1}{\alpha}}, An^{\frac{1}{\alpha}}]}.
\end{align*}
For this latter probability we can use Lemma \ref{lem:prog tail bound}$(i)$ and \cite[Corollary 7]{KortSubexp} to deduce that there exist $c', c_A'<\infty$ such that, for all $n, \lambda \geq 1$,
\begin{align*}
\prb{|T| \leq n\lambda^{-\epsilon} \text{ and } Z_{n^{1-\frac{1}{\alpha}}} \in [A^{-1}n^{\frac{1}{\alpha}}, An^{\frac{1}{\alpha}}]}
&\leq c'\int_0^{n\lambda^{-\epsilon}} (1 \wedge x^{-(1+\frac{1}{\alpha})}) \exp \left\{ -\left( \frac{A^{-\alpha}n}{x} \right) \right\} dx \\
&\leq c''n^{-\frac{1}{\alpha}}\int_{\lambda^{\epsilon}}^{\infty} y^{\frac{1}{\alpha}-1} \exp \left\{ -A^{-\alpha}y \right\} dy \\
&\leq c_A' n^{-\frac{1}{\alpha}} \exp \left\{ -A^{-\alpha}\lambda^{\epsilon} \right\}.
\end{align*}
Plugging back into the previous inequality we deduce that there exists $c_A'<\infty$ such that the second term in \eqref{eqn:res trick decomp} is upper bounded by $c_A' \exp \left\{ - A^{-\alpha}\lambda^{\epsilon}\right\}$.

%-----
%
%For the second term in \eqref{eqn:res trick decomp}, note that on the event $Z_{n^{1-\frac{1}{\alpha}}} \in [A^{-1}n^{\frac{1}{\alpha}}, An^{\frac{1}{\alpha}}]$, we can decompose along the spine to the leftmost vertex in generation $n^{1-\frac{1}{\alpha}}$. By Proposition \ref{prop:spinal offspring LB leftmost}, $|\tilde{T}_n|$ stochastically dominates $\sum_{i=1}^{S_n} X_i$ where $X_i$ are i.i.d. non-negative with $\prb{X_i > x} \sim c'x^{-\frac{1}{\alpha}}$, and $S_n \overset{(d)}{=} \sum_{j=1}^{\frac{1}{2}n^{1-\frac{1}{\alpha}}} Y_j$ where $Y_j$ are i.i.d. non-negative with $\prb{Y_j > x} \geq c'' x^{-(\alpha - 1)}$ for all $x \leq n^{\frac{1}{\alpha}}$. Therefore, by Lemma \ref{lem:stable sum tail prob lower tail app}$(i)$, there exists $c>0$ such that
%\begin{align*}
%\prb{|\tilde{T}_n| \leq \lambda^{-\epsilon}n} \leq \prb{S_n \leq \lambda^{-\frac{\epsilon}{2\alpha}} n^{\frac{1}{\alpha}}} + \prb{\sum_{i=1}^{\lambda^{-\frac{1}{2\alpha}\epsilon} n^{\frac{1}{\alpha}}} X_i \leq \lambda^{-\epsilon}n} &\leq \left(1 - c'' n^{-\frac{\alpha - 1}{\alpha}}\lambda^{\frac{\epsilon (\alpha - 1)}{2\alpha}}  \right)^{n^{1-\frac{1}{\alpha}}} + e^{-c\lambda^{\frac{\epsilon}{2\alpha}}} \\
%&\leq e^{-c''\lambda^{\frac{\epsilon (\alpha- 1)}{2\alpha}}} + e^{-c\lambda^{\frac{\epsilon}{2\alpha}}}.
%\end{align*}
%
%----

For the third term in \eqref{eqn:res trick decomp} we again use \cite[Theorem 5 and Corollary 7]{KortSubexp} and Lemma \ref{lem:prog tail bound}$(i)$ to deduce that for any $\epsilon>0$ there exists $c_{A, \epsilon}<\infty$ such that
\begin{align*}
&\prb{|\tilde{T}_n| > \lambda^{B}n} \\
&= \prcondb{Z_{n^{1-\frac{1}{\alpha}}} \in [A^{-1}n^{\frac{1}{\alpha}}, An^{\frac{1}{\alpha}}]}{|T| > \lambda^{B}n}{}\frac{\prb{|T| > \lambda^{B}n}}{\prb{Z_{n^{1-\frac{1}{\alpha}}} \in [A^{-1}n^{\frac{1}{\alpha}}, An^{\frac{1}{\alpha}}]}} \\
&\leq \prcondb{Z_{n^{1-\frac{1}{\alpha}}} \leq An^{\frac{1}{\alpha}}}{|T| > \lambda^{B}n}{}\frac{\prb{|T| > \lambda^{B}n}}{\prcondb{Z_{n^{1-\frac{1}{\alpha}}} \in [A^{-1}n^{\frac{1}{\alpha}}, An^{\frac{1}{\alpha}}]}{|T| \in [n, 2n]}{}\prb{|T| \in [n, 2n]}} \\
&\leq c_{A, \epsilon} \lambda^{-\frac{[B (\alpha - 1)-\epsilon]}{2\alpha}} \lambda^{-\frac{B}{\alpha}} = c_{A, \epsilon}\lambda^{-\frac{[B(\alpha+1)-\epsilon]}{2\alpha}}.
\end{align*}
(Again, this holds provided we chose $A$ large enough, but not in a way that depends on $n$ or $\lambda$). Inserting these estimates into \eqref{eqn:res trick decomp} and combining with \eqref{eqn:res LB union} and a union bound, we deduce (updating the value of $c_A$ as appropriate) that
\begin{align}\label{eqn:tree res abs cont}
\begin{split}
\prb{\bigcup_{i=1}^{n^{\frac{1}{\alpha}}} \left\{ \Heightdec_{\text{res}} ((\TER)^{(i)}) \geq n^{\frac{1}{\alpha R} \vee \left(1-\frac{1}{\alpha}\right)} \lambda \right\}} &\leq A\prb{\Heightdec_{\text{res}} (\tTER_n) \geq n^{\frac{1}{\alpha R} \vee \left(1-\frac{1}{\alpha}\right)} \lambda} \\
&\leq c_A\lambda^{\frac{\epsilon}{\alpha} - \taR (1-\car B)} + c_{A,\epsilon}\lambda^{-\frac{[B(\alpha+1)-\epsilon]}{2\alpha}}\\
&\leq c_{\epsilon'}\lambda^{-\frac{(\alpha+1)\taR}{\alpha+1+2\alpha\car\taR}(1-\epsilon')}.
\end{split}
\end{align}
In particular since $B = \frac{2\alpha\taR}{\alpha+1+2\alpha\car\taR}$, the final inequality follows by fixing $A$ and provided we chose $\epsilon'>0$ sufficiently small compared to $\epsilon$.
\item In the case $\saR=1$, we instead consider a tree $\tilde{T}_n$ conditioned so that the number of vertices in generation $\frac{n^{1-\frac{1}{\alpha}}}{\log n}$ is in $\left[ \frac{A^{-1}n^{\frac{1}{\alpha}}}{(\log n)^{\frac{1}{\alpha-1}}}, \frac{An^{\frac{1}{\alpha}}}{(\log n)^{\frac{1}{\alpha-1}}}\right]$. In this case the total volume of $\tilde{T}_n$ is typically of order $\frac{n}{(\log n)^{\frac{\alpha}{\alpha - 1}}}$, so by Proposition \ref{prop:decorated height bound without log term} its decorated resistance height is typically of order $n^{1-\frac{1}{\alpha}}$. We can then repeat the proof above with these modified quantities.
\end{enumerate}

\end{proof}

We then have the following result on effective resistance in $\TERa$.
 
\begin{prop}\label{prop:res to boundary general}
There exist constants $a>0, A< \infty$ such that for all $r, \lambda \geq 1$:
\begin{align*}
\prb{\Ref (\rER, B^{\text{dec}}_{res}(\rER, r)^c) \leq r \lambda^{-1}} \leq A\lambda^{-a}.
\end{align*}
\end{prop}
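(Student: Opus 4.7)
The strategy is to identify an articulation point $b$ on the backbone of $\Tai$ that separates $\rER$ from $B^{\text{dec}}_{res}(\rER, r)^c$, and to lower-bound the effective resistance from $\rER$ to $b$ using the series law.

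Let $(s_i)_{i \geq 0}$ denote the backbone of $\Tai$, set $k_0 = \lfloor c_0\, r^{\saR \wedge 1} \lambda^{-\epsilon}\rfloor$ for a small $\epsilon>0$ and a constant $c_0>0$ to be tuned (with an extra $(\log r)^{-1}$ factor when $\saR=1$), and let $b \in G(s_{k_0})$ be the boundary vertex corresponding to the edge $(s_{k_0}, s_{k_0+1})$; by construction $b$ is an articulation point of $\TERa$. The proof would revolve around the three events
\begin{align*}
A &= \Big\{ \textstyle\sum_{i \leq k_0} \diamR(G(s_i)) < r/2 \Big\}, \\
B &= \big\{ \Heightdec_{\text{res}}(\TER') < r/2 \text{ for every subtree } \TER' \text{ grafted to } s_0, \ldots, s_{k_0} \text{ off the backbone}\big\},\\
C &= \Big\{ 1+\textstyle\sum_{i \leq k_0} R^U(s_i) \geq r\lambda^{-1} \Big\}.
\end{align*}
On $A \cap B$, every vertex reachable from $\rER$ without crossing $b$ has resistance distance less than $r$ from $\rER$, so $b$ lies on every path from $\rER$ to $B^{\text{dec}}_{res}(\rER, r)^c$. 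By the series law and the fact that the grafted subtrees contribute no shortcut (they hang via the single boundary vertices of the $G(s_i)$), one gets $\Ref(\rER, B^{\text{dec}}_{res}(\rER, r)^c) \geq \Ref(\rER, b) = 1 + \sum_{i \leq k_0} R^U(s_i)$, which is at least $r\lambda^{-1}$ on $C$.

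It then suffices to control $\prb{A^c \cup B^c \cup C^c}$. For $A$ and $C$, the resistance analogues of Lemma \ref{lem:ER spine offspring dist}(i)--(ii) (obtained by the same proof from Assumption \ref{assn:sec}(R)) give $\prb{\diamR(G(s_i)) \geq x} \lesssim x^{-(\saR \wedge \frac{\alpha}{\alpha-1})}$ and $\prb{R^U(s_i) \geq x} \gtrsim x^{-\saR}$; combining with Lemmas \ref{lem:stable sum tail prob app} and \ref{lem:stable sum tail prob lower tail app} yields polynomial-in-$\lambda$ bounds after verifying that the typical size $k_0^{1/(\saR \wedge 1)} \asymp r \lambda^{-\epsilon/(\saR \wedge 1)}$ of the backbone resistance sum satisfies $r\lambda^{-1} \leq k_0^{1/(\saR \wedge 1)} < r/2$ whenever $\epsilon$ is chosen small enough (and $\lambda$ is not too small, otherwise the inequality is vacuous).

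The main obstacle is $\prb{B^c}$, which requires simultaneous control of the maximum $\Heightdec_{\text{res}}$ across a random, heavy-tailed family of grafted subtrees. I would proceed in two steps: first bound the total number of grafted subtrees $N \leq \sum_{i \leq k_0} \deg s_i$ by $k_0^{1/(\alpha-1)}\lambda^{\phi}$ using Lemma \ref{lem:stable sum tail prob app} with the size-biased backbone offspring tails (failing with probability $\lesssim \lambda^{-\phi(\alpha-1)}$); second, on this event, the grafted subtrees form an independent sequence of unconditioned Galton--Watson trees, so apply Lemma \ref{lem:height bound GW union} with $n^{1/\alpha} = k_0^{1/(\alpha-1)}\lambda^{\phi}$. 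A short computation, treating the cases $\saR \leq 1$ and $\saR>1$ separately, shows that the resulting scale $n^{\car}$ simplifies to $r\lambda^{-\epsilon'}$ for some $\epsilon'>0$, so that the dilation parameter $\mu = \tfrac{1}{2}\lambda^{\epsilon'}$ in Lemma \ref{lem:height bound GW union} yields a polynomial-in-$\lambda$ bound on the event that some subtree has resistance-height exceeding $r/2$. A union bound over $A^c, B^c, C^c$ then gives the proposition, with minor bookkeeping for the logarithmic corrections when $\saR=1$.
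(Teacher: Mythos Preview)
Your proposal is correct and follows essentially the same route as the paper's proof: identify a backbone cut-vertex $b$ at index $\sim r^{\saR\wedge 1}\lambda^{-(\text{small})}$, control the three events $A$ (backbone resistance diameters sum to $<r/2$), $C$ (backbone $R^U$-sum $\geq r\lambda^{-1}$), and $B$ (all grafted subtree resistance-heights $<r/2$), the last via a degree-sum bound followed by Lemma~\ref{lem:height bound GW union}. The only cosmetic difference is the parametrisation: the paper takes $N_{r,\lambda}=r^{\saR\wedge 1}\lambda^{-k}$ with $k=(\saR\wedge 1)(1-\epsilon)$ (so that the series-law lower bound lands exactly at $r\lambda^{-1}$), whereas you take $k_0\sim r^{\saR\wedge 1}\lambda^{-\epsilon}$ with a small $\epsilon$; both choices work and lead to polynomial tail decay with slightly different (but equally non-optimal) exponents.
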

\begin{proof}
We write the proof for $\saR \neq 1$. Note that, due to our rooting convention, we can assume wlog that $r \lambda^{-1} \geq 1$. First fix some $\epsilon>0$ and define two constants $k,l$ by  $k = (\saR \wedge 1)(1-\epsilon)$ and $l = \frac{(\saR \wedge 1)(1-\epsilon)}{2(\alpha - 1)}$. Recall that $s_0, s_1, \ldots$ denote the backbone vertices of $\Tai$, ordered by their distance from the root. Similarly to previous proofs, given $r, \lambda \geq 1$ we define a number $N_{r,\lambda}$ that corresponds to the index of the vertex where we ``cut off'' the infinite backbone. This time, we take $N_{r,\lambda} = r^{\saR \wedge 1}\lambda^{-k}$ (note that $N_{r, \lambda} \geq 1$ under the assumption that $r \lambda^{-1} \geq 1$) and first observe that, by Lemma \ref{lem:stable sum tail prob app} and Assumption \ref{assn:sec}(R), there exists $c_{\epsilon} < \infty$ such that for all $r, \lambda \geq 1$,
\begin{align}\label{eqn:sum diam UB res}
\prb{1+\sum_{i \leq N_{r,\lambda}} \diam_{res} (s_i)  \geq \frac{1}{2}r} &\leq c_{\epsilon}\lambda^{-k[(\saR \vee 1)-\epsilon]}.
\end{align}
(Here the extra $+1$ term is there on the LHS because of our rooting convention). Additionally, by Proposition \ref{lem:ER spine offspring dist}$(ii)$ (since the metric $\dg$ is generic the result there also holds for the resistance distance) and Lemma \ref{lem:stable sum tail prob lower tail app}, there exist $c, C \in (0,\infty)$ such that for all $r, \lambda \geq 1$ satisfying $r \lambda^{-1} \geq 1$,
\begin{align}\label{eqn:res ball distance LB}
\prb{1+\sum_{i \leq N_{r,\lambda}} R^U(s_i) \leq r\lambda^{-k(\saR \wedge 1)^{-1}-\epsilon}} \leq Ce^{-c\lambda^{\epsilon (\saR \wedge 1)}}.
\end{align}
Moreover, by Definition \ref{def:Kesten's tree} and Lemma \ref{lem:stable sum tail prob app} that there exists $c' < \infty$ such that for all $r, \lambda \geq 1$,
\begin{align}\label{eqn:sum deg UB res}
\prb{\sum_{i \leq N_{r,\lambda}} \deg (s_i) \geq r^{\frac{\saR \wedge 1}{\alpha - 1}} \lambda^{l-\frac{k}{\alpha - 1}}} \leq c'\lambda^{-l(\alpha - 1)}.
\end{align}
The three probabilistic bounds above show that, with (quantified) high probability, for all $r, \lambda \geq 1$ satisfying $r \lambda^{-1} \geq 1$: 
\begin{itemize}
\item all of the vertices contained in $\bigcup_{i \leq N_{r,\lambda}} G(s_i)$ are within resistance distance $\frac{1}{2}r$ of the root;
\item the vertex joining $G({s_{N_{r,\lambda}-1}})$ to $G({s_{N_{r,\lambda}}})$ is at resistance distance at least $r\lambda^{-k(\saR \wedge 1)^{-1}-\epsilon} = N_{r,\lambda}^{(\saR \wedge 1)^{-1}} \lambda^{-\epsilon}$ from the root;
\item the number of subtrees joined to the decorated backbone at a vertex contained in $\bigcup_{i \leq N_{r,\lambda}} G(s_i)$ is at most $r^{\frac{\saR \wedge 1}{\alpha - 1}} \lambda^{l-\frac{k}{\alpha - 1}} = N_{r,\lambda}^{\frac{1}{\alpha - 1}}\lambda^{l}$.
\end{itemize}
We will additionally show that, with high probability on the three above events, all of the subtrees joined to the decorated backbone at a vertex contained in $\bigcup_{i \leq N_{r,\lambda}} G(s_i)$ have decorated resistance height at most $\frac{1}{2}r$. 

For the rest of the proof we set $K_{\epsilon} = \frac{(\alpha+1)\taR}{\alpha+1+2\alpha\car\taR}(1-\epsilon)$, where $\car = \frac{1}{\alpha R} \vee (1-\frac{1}{\alpha})$ as in Lemma \ref{lem:height bound GW union}, and $\taR$ is given by \eqref{eqn:tar def}.

We apply Lemma \ref{lem:height bound GW union}$(i)$ by choosing $n_{r, \lambda}$ so that $n_{r, \lambda}^{\frac{1}{\alpha}}=N_{r,\lambda}^{\frac{1}{\alpha - 1}}\lambda^{l} = r^{\frac{\saR \wedge 1}{\alpha-1}}\lambda^{l-\frac{k}{\alpha -1}}$ (again note that with our choices of $k$ and $l$, it follows that $n_{r, \lambda}\geq 1$ under the assumption that $r\lambda^{-1}\geq 1$). Let $C_r$ be the event that there exists a subtree joined to the decorated backbone of $\TERa$ at a vertex in $\bigcup_{i \leq N_{r,\lambda}} G(s_i)$ with decorated height at least $\frac{1}{2}r$. Conditionally on the number of such subtrees being upper bounded by $r^{\frac{\saR \wedge 1}{\alpha-1}}\lambda^{l-\frac{k}{\alpha -1}}$, we have by independence of these subtrees and Lemma \ref{lem:height bound GW union} that there exists $c_{\epsilon}<\infty$ such that, for all $r, \lambda \geq 1$ satisfying  $r\lambda^{-1}\geq 1$,
\begin{align*}
\prb{C_r} &\leq \prb{\bigcup_{i=1}^{r^{\frac{\saR \wedge 1}{\alpha-1}}\lambda^{l-\frac{k}{\alpha -1}}} \left\{\Heightdec_{\text{res}} ((\TER)^{(i)}) \geq \frac{1}{2}r \right\}} \\
&= \prb{\bigcup_{i=1}^{n_{r, \lambda}^{\frac{1}{\alpha}}} \left\{\Heightdec_{\text{res}} ((\TER)^{(i)}) \geq \frac{1}{2} n_{r, \lambda}^{\left[\frac{1}{\alpha R} \vee (1-\frac{1}{\alpha})\right]} \lambda^{\alpha (\frac{k}{\alpha -1}-l)\left[\frac{1}{\alpha R} \vee (1-\frac{1}{\alpha})\right]} \right\}} \\
&\leq c_{\epsilon}\lambda^{-\alpha (\frac{k}{\alpha -1}-l)\left[\frac{1}{\alpha R} \vee (1-\frac{1}{\alpha})\right]K_{\epsilon}}.
\end{align*}

To conclude, note that on the event $C_r^c$ intersected with the events in the bullet point above, the vertex $v_r$ defined to be the vertex at which $G(s_{N_{r,\lambda}})$ and $G(s_{N_{r,\lambda} + 1})$ intersect is a single point that separates the root from $B^{\text{dec}}_{res}(\rER, r)^c$, and itself is at resistance distance at least $N_{r,\lambda}^{(\saR \wedge 1)^{-1}} \lambda^{-\epsilon}$ from the root, so that
\begin{align*}
\Ref (\rER, B^{\text{dec}}_{res}(\rER, r)^c) \geq \Ref (\rER, v_r) \geq N_{r,\lambda}^{(\saR \wedge 1)^{-1}}\lambda^{-\epsilon} &= r^{(\saR \wedge 1)(\saR \wedge 1)^{-1}} \lambda^{-k(\saR \wedge 1)^{-1}} \lambda^{-\epsilon} \\
&= r\lambda^{-k(\saR \wedge 1)^{-1}-\epsilon} = r\lambda^{-1}.
\end{align*}
Therefore, combining all the probabilistic bounds obtained with a union bound, we see that
\begin{align*}
\prb{\Ref (\rER, B^{\text{dec}}_{res}(\rER, r)^c) \leq r \lambda^{-1}} %&\leq c\lambda^{-k} + Ce^{-c\lambda^{\epsilon}} + c\lambda^{-l(\alpha - 1)} + c\lambda^{l-\frac{k}{\alpha - 1}} \\
&\leq  c_{\epsilon}\lambda^{-k[(\saR \vee 1)-\epsilon]} + c'\lambda^{-l(\alpha - 1)} + c_{\epsilon}\lambda^{-\alpha \left(\frac{k}{\alpha -1}-l\right)\left[\frac{1}{\alpha R} \vee (1-\frac{1}{\alpha})\right]K_{\epsilon}}.
\end{align*}
In particular, provided we originally fixed $\epsilon>0$ small enough this gives the result (recall that $k = (\saR \wedge 1)(1-\epsilon)$ and $l = \frac{(\saR \wedge 1)(1-\epsilon)}{2(\alpha - 1)}$).

If $\saR=1$, we instead take $N_{r,\lambda} = r (\log (r\lambda^{-k}))^{-1}\lambda^{-k}$, and repeat the same proof. \eqref{eqn:sum diam UB res} is unchanged, and \eqref{eqn:res ball distance LB} and \eqref{eqn:sum deg UB res} can be written as
\begin{align*}
\prb{1+\sum_{i \leq N_{r,\lambda}} R^U(s_i) \leq r\lambda^{-k(\saR \wedge 1)^{-1}-\epsilon}} \leq Ce^{-c\lambda^{\frac{\epsilon}{2} (\saR \wedge 1)}},
\prb{\sum_{i \leq N_{r,\lambda}} \deg (s_i) \geq \left(\frac{r}{\log r}\right)^{\frac{1}{\alpha - 1}} \lambda^{l-\frac{k}{\alpha - 1}}} \leq c'\lambda^{-l(\alpha - 1)}.
\end{align*}
We can then choose $1 = k > (\alpha-1)l$ and then choose $n$ so that $n^{\frac{1}{\alpha}}=r^{\frac{1}{\alpha-1}}\lambda^{l-\frac{k}{\alpha -1}}$, which implies that $n^{\frac{1}{\alpha}} (\log n)^{-\frac{1}{\alpha-1}} \geq \left(\frac{r}{\log r}\right)^{\frac{1}{\alpha-1}}\lambda^{l-\frac{k}{\alpha -1}}$. Lemma \ref{lem:height bound GW union}$(ii)$ then gives
\begin{align*}
\prb{C_r} &\leq \prb{\bigcup_{i=1}^{\left(\frac{r}{\log r}\right)^{\frac{1}{\alpha-1}}\lambda^{l-\frac{k}{\alpha -1}}} \left\{\Heightdec_{\text{res}} ((\TER)^{(i)}) \geq \frac{1}{2}r \right\}} \leq c_{\epsilon}\lambda^{- (k-l(\alpha-1))K_{\epsilon}},
\end{align*}
and the proof continues as in $(i)$.
\end{proof}

\begin{rmk}\label{rmk: res comments}
Similarly to the volume bounds, in some cases where we have better control over the inserted graphs, we may be able to improve this to exponential decay by introducing an interative process that breaks up tall subtrees, similarly to how we did in \cite[Proposition 6.4]{ArchInfiniteLooptrees} in the stable looptree case.
\end{rmk}

We will also need the following result in order to determine the random walk displacement exponent in terms of the intrinsic metric. To save space in the proposition below we write $B^R(r) = B^{\text{dec}}_{res} (\rER, r)$ and $B^d(r) = B^{\text{dec}} (\rER, r)$.

\begin{prop}\label{prop:metric balls compare}
There exists a deterministic $a<\infty$ such that there $\bPb$-almost surely exists $r_0 < \infty$ such that for all $r \geq r_0$, %if $\sad \neq 1, \saR \neq 1$, or $\sad=\saR=1$:
\begin{align*}
B^d({r^{(\saR \wedge 1)(\sad \wedge 1)^{-1}} (\log r)^{-a}}) &\subset B^R (r) \subset B^d ({r^{(\sad \wedge 1)^{-1}(\saR \wedge 1)} (\log r)^{a}}) \\
B^R ({r^{(\sad \wedge 1)(\saR \wedge 1)^{-1}} (\log r)^{-a}}) &\subset B^d (r) \subset B^R ({r^{(\saR \wedge 1)^{-1}(\sad \wedge 1)} (\log r)^{a}}).
\end{align*}
%If $\sad = 1$, then the same result is true on multiplying the radius of the intrinsic metric ball by $\log r$, and if $\saR = 1$, then the same result is true on multiplying the radius of the resistance metric ball by $\log r$.
Moreover, there exist $b>0$ and $c<\infty$ such that for all $r, \lambda \geq 1$:
\begin{itemize}
\item If $\mathbbm{1}\{\sad = 1\} = \mathbbm{1}\{ \saR = 1\}$, then $$\prb{B^R({r^{(\sad \wedge 1)(\saR \wedge 1)^{-1}} \lambda^{-1}}) \subset B^d(r) \subset B^R ({r^{(\saR \wedge 1)^{-1}(\sad \wedge 1)} \lambda})} \geq 1-c\lambda^{-b}.$$
\item If $\sad=1 \neq \saR$, then $\prb{B^R({r^{(\sad \wedge 1)(\saR \wedge 1)^{-1}} \lambda^{-1}}) \subset B^d({r \log r}) \subset B^R({r^{(\saR \wedge 1)^{-1}(\sad \wedge 1)} \lambda}})  \geq 1-c\lambda^{-b}$.
\item If $\sad\neq 1 = \saR$, then $\prb{B^R({ \lambda^{-1} r^{(\sad \wedge 1)(\saR \wedge 1)^{-1}} \log r}) \subset B^d({r}) \subset B^R({\lambda r^{(\saR \wedge 1)^{-1}(\sad \wedge 1)} \log r})}  \geq 1-c\lambda^{-b}$.
\end{itemize}
%\begin{small}
%\begin{align*}
%B^{\text{dec}}(\rER, {r^{(\saR \wedge 1)(\sad \wedge 1)^{-1}} (\log r)^{\frac{-(\alpha + \epsilon)}{\sad \wedge 1}}} ) &\subset B^{\text{dec}}_{res} \subset B^{\text{dec}}(\rER, {r^{(\sad \wedge 1)^{-1}(\saR \wedge 1)} (\log r)^{(\sad \wedge 1)^{-1}(\alpha + \epsilon)}}) \\
%B^{\text{dec}}_{res} (\rER, {r^{(\sad \wedge 1)(\saR \wedge 1)^{-1}} (\log r)^{\frac{-(\alpha + \epsilon)}{\saR \wedge 1}}}) &\subset B^{\text{dec}}(\rER, r) \subset B^{\text{dec}}_{res}(\rER, {r^{(\saR \wedge 1)^{-1}(\sad \wedge 1)} (\log r)^{(\saR \wedge 1)^{-1}(\alpha + \epsilon)}})
%\end{align*}
%\end{small}
\end{prop}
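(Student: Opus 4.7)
The plan is to mimic the cut-off strategy of Proposition \ref{prop:res to boundary general}, applied simultaneously to both metrics. The $\bPb$-a.s.\ inclusions in the first part will follow from the polynomial tail bounds in the third part by a standard Borel--Cantelli argument along the subsequence $r_n = 2^n$ with $\lambda_n = (\log r_n)^{1/b}$, together with monotonicity of the balls; this is where the $(\log r)^{\pm a}$ factors come from. It therefore suffices to prove the four probabilistic inclusions.

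I focus on $B^d(r) \subset B^R(r^{(\sad \wedge 1)/(\saR \wedge 1)} \lambda)$ in the case $\sad, \saR \neq 1$; the others are analogous. Fix small $\epsilon > 0$ and set $n_{r,\lambda} = r^{\sad \wedge 1}\lambda^{\epsilon}$ (with an extra $(\log r)^{-1}$ when $\sad = 1$). The plan is to show that with probability $\geq 1 - c\lambda^{-b}$, four events hold simultaneously. First, $1 + \sum_{i \leq n_{r,\lambda}} d^U(s_i) \geq r$, so that $B^d(r)$ is contained in $\bigcup_{i \leq n_{r,\lambda}} G(s_i)$ together with the subtrees grafted to $\bigcup_{i \leq n_{r,\lambda}} \partial G(s_i)$. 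Second, $\sum_{i \leq n_{r,\lambda}} \diamR(G(s_i)) \leq r^{(\sad \wedge 1)/(\saR \wedge 1)}\lambda^{\epsilon'}$. Third, $\sum_{i \leq n_{r,\lambda}} (\deg s_i - 1) \leq n_{r,\lambda}^{1/(\alpha-1)}\lambda^{\epsilon'}$. Fourth, the maximum of $\Heightdec_{\text{res}}((\TER)^{(j)})$ over these grafted subtrees is at most $r^{(\sad \wedge 1)/(\saR \wedge 1)}\lambda^{\epsilon''}$. The tail bounds come respectively from Lemmas \ref{lem:ER spine offspring dist}(ii) and \ref{lem:stable sum tail prob lower tail app} (stretched-exponential decay), Assumption \ref{assn:sec}(R) combined with Lemma \ref{lem:stable sum tail prob app}, Proposition \ref{prop:spinal offspring UB general} combined with Lemma \ref{lem:stable sum tail prob app}, and Lemma \ref{lem:height bound GW union} applied with $n$ chosen so that $n^{1/\alpha}$ matches the bound from the third event. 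A short computation using $\saR = R(\alpha-1)$ shows that $n^{(1/(\alpha R)) \vee (1 - 1/\alpha)}$ then simplifies to $r^{(\sad \wedge 1)/(\saR \wedge 1)}$ up to $\lambda$ powers, covering both cases $\saR \lessgtr 1$. On the intersection of these events, any $u \in B^d(r)$ satisfies
\[
\Ref(\rER, u) \leq \sum_{i \leq n_{r,\lambda}} \diamR(G(s_i)) + \max_{j} \Heightdec_{\text{res}}((\TER)^{(j)}) \leq r^{(\sad \wedge 1)/(\saR \wedge 1)}\lambda,
\]
once $\epsilon$ is chosen small enough.

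The remaining three inclusions follow by the same template, swapping $d \leftrightarrow R$ or interchanging upper and lower tail estimates on the spinal sums; for inclusions of the form $\cdot \subset B^d(\cdot)$ one uses the decorated analogue of Lemma \ref{lem:height bound GW union}, which is proved by the same argument (in particular the absolute continuity estimate \eqref{eqn:tree res abs cont}) with Proposition \ref{prop:decorated height bound without log term} replacing its resistance counterpart. The main obstacle is the case analysis forced by the three logarithmic regimes: the tail estimate on $\sum \diamR(G(s_i))$ gains an extra $\log r$ precisely when $\saR = 1$, while the lower-tail estimate on $\sum d^U(s_i)$ loses a factor $\log r$ precisely when $\sad = 1$. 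The three cases in the probabilistic statement correspond exactly to the three non-trivial possibilities for $(\mathbbm{1}\{\sad = 1\}, \mathbbm{1}\{\saR = 1\})$, and once the corresponding logarithmic corrections have been propagated through the four estimates above, the argument in each case is essentially identical.
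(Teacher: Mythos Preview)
Your proposal is correct and uses the same machinery as the paper: the backbone cut-off, Lemma~\ref{lem:ER spine offspring dist} with Lemmas~\ref{lem:stable sum tail prob app}/\ref{lem:stable sum tail prob lower tail app} for the spinal sums, and Lemma~\ref{lem:height bound GW union} for the subtree heights, followed by Borel--Cantelli. The only organisational difference is that you prove the \emph{outer} inclusion $B^d(r)\subset B^R(r^{(\sad\wedge 1)/(\saR\wedge 1)}\lambda)$ directly by bounding $\Ref(\rER,u)$ for every $u\in B^d(r)$, whereas the paper proves the \emph{inner} inclusion $B^d(r^{(\saR\wedge 1)/(\sad\wedge 1)}\lambda^{-1})\subset B^R(r)$ by simply re-running Proposition~\ref{prop:res to boundary general} with one extra line (replacing $R^U$ by $d^U$ in \eqref{eqn:res ball distance LB}) to locate the existing cut-vertex in the $d$-metric, and then recovers the outer inclusion by a substitution $\tilde r = r^x(\log r)^{-y}$. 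The paper's route is shorter because the subtree-height and spinal-diameter work is already done inside Proposition~\ref{prop:res to boundary general}; your route is more self-contained but duplicates those estimates.
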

\begin{proof}
%The principle we will use to prove the inclusions is as follows. Let $B^d(r), B^R(r)$ denote the respective balls of radius $r$ around the root in the metrics $d$ and $\Ref$. Let $A_r$ be a minimal cutset separating $B^d(r)$ from $B^d_{2r}$. For each $y \in A_r$, we will show that, with high probability, $\Ref(\rho^{ER}, y) \geq \lambda^{-1} r^{....} $. Since $A_r$ is a cutset, it then follows that $\Ref(\rho^{ER}, y) \geq \lambda^{-1} r^{....} $ for all $y \in (B^d_{2r})^c$, so that $B^R_{\lambda^{-1}r^{...}} \subset B_{2r}$. We can use the same principle for the reverse inclusion.
%
Assume for now that $\sad, \saR \neq 1$. By replacing $R^U$ with $d^U$ in \eqref{eqn:res ball distance LB}, the same proof as in Proposition \ref{prop:res to boundary general} shows that there exist $b>0$ and $c<\infty$ such that, for all $r, \lambda \geq 1$, it holds with probability at least $1-c\lambda^{-b}$ that there exists a single vertex at intrinsic distance at least $1 \vee r^{(\saR \wedge 1)(\sad \wedge 1)^{-1}} \lambda^{-1}$ from the root, separating the root from $(B^R(r))^c$. Therefore, all vertices in $(B^R(r))^c$ lie at least distance $1 \vee r^{(\saR \wedge 1)(\sad \wedge 1)^{-1}} \lambda^{-1}$ from the root, so that $B^d({r^{(\saR \wedge 1)(\sad \wedge 1)^{-1}} \lambda^{-1}}) \subset B^R(r)$. This gives one side of the probabilistic statement; the other side follows by symmetry.

For the almost sure results, note that by setting $r_n = 2^n$, $\lambda_n = (\frac{1}{2}\log r_n)^{b + 1}$, applying Borel Cantelli and using monotonicity, we deduce that $\bPb$-almost surely, there exists $r_0 < \infty$ such that 
\[
B^d({r^{(\saR \wedge 1)(\sad \wedge 1)^{-1}} (\log r)^{-(b + 1)}}) \subset B^R(r)
\]
for all $r \geq r_0$. By symmetry, we can also use the same argument to go in the other direction, and also deduce that there exists $b'>0$ such that for any \[
B^R({r^{(\sad \wedge 1)(\saR \wedge 1)^{-1}} (\log r)^{-(b' + 1)}}) \subset B^d(r)
\]
eventually $\bPb$-almost surely. 

Now note that, if $\tilde{r}=r^x(\log r)^{-y}$, then $\tilde{r}^{x^{-1}} (\log \tilde{r})^{x^{-1}y+1} \geq r$ for all sufficiently large $r$, so $B^R({\tilde{r}}) \subset B^d(r)$ implies that $B^R({\tilde{r}}) \subset B^d({\tilde{r}^{x^{-1}} (\log \tilde{r})^{x^{-1}y+1}})$ for all sufficiently large $r$. The second inclusion above therefore gives the result as stated. The same proof works for the other values of $\sad, \saR$.
\end{proof}

\section{Random walk exponents}\label{sctn:RW exponents}

The purpose of this section is to use the volume and resistance results of the previous sections to determine the exponents for a simple random walk on $\TERa$. To do this, we will apply results of \cite{KumMisumiHKStronglyRecurrent}. In order to fit in with the framework of \cite{KumMisumiHKStronglyRecurrent}, we will specifically consider the measure $\mu$ on $\TERa$ defined by $\mu (x) = \deg x$ for all $x \in \TERa$,  rather than a general decorated measure $\V$. (This is because $\mu$ defined as such is the ``natural speed measure'' for a simple random walk).

To directly apply the results of \cite{KumMisumiHKStronglyRecurrent} to get exponents for the decorated metric $d$, we would need to define deterministic functions $v(\cdot)$ and $r(\cdot)$ that govern the volume and resistance growth of the space, and for a given $\lambda > 1$ define the set
\begin{align*}
J(\lambda) = &\left\{R \in [1, \infty]: \lambda^{-1} v(R) \leq \mu (B^d(R)) \leq \lambda v(R), \Ref (\rER, (B^d(R))^c) \geq \lambda^{-1} r(R) \right\} \\
&\hspace{3.2cm} \cap \left\{R \in [1, \infty]: \forall y \in B^d(R), \Ref (\rER, y) \leq \lambda r(d(\rER, y)) \right\},
\end{align*}
and show that $\prb{R \in J(\lambda)} \rightarrow 1$ as $\lambda \rightarrow \infty$, uniformly in $R>1$ (cf \cite[Definition 1.1, Assumption 1.2(1)]{KumMisumiHKStronglyRecurrent}.

If we ignore the special cases with logarithmic corrections for a moment, by Propositions \ref{prop:vol UB prob UB}, \ref{prop:vol LB prob UB 2} and \ref{prop:res to boundary general} the appropriate volume function to take would be $v(R) = R^{\frac{\alpha (\sad \wedge 1)}{(\alpha - 1)(\fv \wedge 1)}}$, and the appropriate resistance function would be $r(R) = R^{(\sad \wedge 1)(\saR \wedge 1)^{-1}}$. However, we encounter some technical difficulties with the final condition in the definition of $J(\lambda)$, in that it requires $\Ref (\rER, y) \leq \lambda r(d(\rER, y))$ \textbf{for all} $y \in B_d(R)$.

For a general graph, it is usually only possible to achieve this kind of control uniformly when there is some deterministic relation between the resistance metric and the intrinsic metric, for example as is the case for random trees and looptrees. In our decorated tree setting, this is probably still achievable in the case when we decorate the tree with deterministic graphs, but in the case when the inserted graphs are random we anticipate that there will be genuine multiplicative fluctuations in the relationship between the resistance metric and the intrinsic metric (for example these could be on the order of $\log R$ on the ball of radius $R$), so it is not always possible to bound $\prb{\Ref (\rER, y) \leq \lambda d(\rER, y) \forall y \in B^d(R)}$ uniformly in $R$.

Although this obstruction can probably be dealt with by tweaking the proofs of \cite{KumMisumiHKStronglyRecurrent}, it is easier to circumvent this problem by instead using the results of \cite{KumMisumiHKStronglyRecurrent} as a black box to first estimate displacement with respect to the resistance metric, and then use Proposition \ref{prop:metric balls compare} to account for the fluctuations and state the results in terms of the intrinsic metric. We therefore define the set
\begin{equation}\label{eqn:Jlambda def}
J^R(\lambda) = \left\{r \in [1, \infty]: \lambda^{-1} v^R(r) \leq \mu (B^R(r)) \leq \lambda v^R(r), \Ref (\rhoERA, (B^R(r))^c) \geq \lambda^{-1} r \right\},
\end{equation}
where $v^R(r) = r^{\frac{\alpha (\saR \wedge 1)}{(\alpha - 1)(\fv \wedge 1)}}$, with appropriate logarithmic corrections if $\saR=1$ or $\fv=1$ as in Theorem \ref{thm:dec vol growth main}. It then follows from Propositions \ref{prop:vol UB prob UB}, \ref{prop:vol LB prob UB 2} and \ref{prop:res to boundary general} that there exist $c < \infty, \gamma > 0$ such that for all $\lambda \geq 1$,
\begin{equation}\label{eqn:prob in Jlambda}
\sup_{r \geq 1} \prb{r \notin J^R(\lambda)} \leq c\lambda^{-\gamma},
\end{equation}
which allows us to directly apply \cite[Proposition 1.3]{KumMisumiHKStronglyRecurrent} to give exponents with respect to the resistance metric.

In what follows, we let $(X_n)_{n \geq 0}$ denote a simple random walk on $\TERa$, started at $\rER$, we let $\tau^d_r$ (respectively $\tau^R_r$) denote the exit time of a simple random walk from $B^d(r)$ (respectively $B^R(r)$), and let $p_{2n}(\rhoERA, \rhoERA)$ denote its transition density at the root. Also let $I^R$ denote the inverse of the function $r \mapsto rv^R(r)$, so that $I^R(n) = n^{\frac{(\alpha -1)(\fv \wedge 1)}{(\alpha -1)(\fv \wedge 1) + \alpha(\saR \wedge 1)}}$ when $\saR, \fv \neq 1$. The next result is a direct consequence of \cite[Proposition 1.3]{KumMisumiHKStronglyRecurrent}.

\begin{prop}[Probabilistic results w.r.t. resistance metric]\label{prop:RW exponents wrt res}
Assume that Assumption \ref{assn:sec} and \eqref{eqn:offspring tails} hold. Then, as $\theta \rightarrow \infty$, we have that
\begin{align*}
\liminf_{r \to \infty} \prb{ \theta^{-1} \leq \frac{\estart{\tau^R_r}{\rhoERA}{}}{rv^R(r)} \leq \theta} &\rightarrow 1, \\
\liminf_{n \to \infty} \prb{ \theta^{-1} \leq v^R ( I^R(n)) p_{2n}(\rhoERA, \rhoERA) \leq \theta} &\rightarrow 1, \\
\liminf_{n \to \infty} \bPb \times \prstart{\frac{\Ref (\rhoERA, X_n)}{I^R(n)} \leq \theta}{\rhoERA}{} &\rightarrow 1, \\
\liminf_{n \to \infty} \bPb \times \prstart{ \theta^{-1} \leq \frac{1 +\Ref (\rhoERA, X_n)}{I^R(n)}}{\rhoERA}{} &\rightarrow 1.
\end{align*}
\end{prop}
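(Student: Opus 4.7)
The plan is to apply \cite[Proposition 1.3]{KumMisumiHKStronglyRecurrent} essentially as a black box, after verifying the single quantitative hypothesis required by that result. Working with the resistance metric (as opposed to the intrinsic metric $\dg$) is precisely what allows us to bypass the final ``uniform in $y$'' condition in the definition of $J(\lambda)$ flagged in the discussion above, because in the resistance metric the condition $\Reff(\rER,y)\le \lambda\cdot r^R(d^R(\rER,y))$ is tautological with $r^R(x)=x$.

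First I would recall the set $J^R(\lambda)$ defined in \eqref{eqn:Jlambda def} with volume gauge $v^R(r)=r^{\alpha(\saR\wedge 1)/[(\alpha-1)(\fv\wedge 1)]}$ (including the logarithmic correction factors $(\log r)^{b_1^R}$ dictated by the analogue of Theorem \ref{thm:dec vol growth main} in the resistance metric, i.e.\ with $\sad$ replaced by $\saR$ throughout Section \ref{sctn:volume bounds decorated}). Combining the volume upper bound of Proposition \ref{prop:vol UB prob UB}, the volume lower bound of Proposition \ref{prop:vol LB prob UB 2}, and the resistance lower bound of Proposition \ref{prop:res to boundary general} (each of which is applied with the resistance metric in the role of the decorated metric, exactly as indicated in the opening remarks of Section \ref{sctn:dec res bounds}) in a union bound immediately yields \eqref{eqn:prob in Jlambda}, namely the existence of constants $c<\infty$ and $\gamma>0$ such that $\sup_{r\ge 1}\prb{r\notin J^R(\lambda)}\le c\lambda^{-\gamma}$ for all $\lambda\ge 1$.

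Next I would check the remaining regularity hypotheses of \cite[Assumption 1.2]{KumMisumiHKStronglyRecurrent}. The gauge $v^R$ is (up to slowly varying logarithmic factors) a pure power of $r$ with exponent at least $1$, so it is doubling, strictly increasing, and satisfies the polynomial volume regularity condition required. Recurrence of the walk, needed so that the effective resistance from $\rER$ to $(B^R(r))^c$ is finite and so that the walk visits $\rER$ infinitely often, is exactly Theorem \ref{thm:recurrence}. The rooting convention introduced in step 7 of the construction in Section \ref{sctn:dec tree def} ensures the trivial bounds $\V(\rER)\ge 1$ and $\Reff(\rER,\cdot)\ge 1$ away from $\rER$, which are the nondegeneracy conditions at the origin invoked in \cite{KumMisumiHKStronglyRecurrent} (this is the reason for introducing the extra root vertex). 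With these in hand, Proposition 1.3 of \cite{KumMisumiHKStronglyRecurrent} applies verbatim, and its four conclusions are precisely the four displays of Proposition \ref{prop:RW exponents wrt res}, with the function $I^R$ defined as the inverse of $r\mapsto r v^R(r)$ as stated.

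The only step with any content is the production of \eqref{eqn:prob in Jlambda}, which is already done earlier in the paper; the remaining work is purely a matter of matching notation with \cite{KumMisumiHKStronglyRecurrent}. The mildly delicate point, and the one I would be most careful about, is the logarithmic correction $b_1^R$ in the volume gauge when $\saR=1$ or $\fv=1$: one has to check that $v^R$ so defined is still regularly varying and that the form of $J^R(\lambda)$ is consistent with the ``$c_1^{-1}v(r)\le \mu(B(r))\le c_1 v(r)$'' formulation of \cite{KumMisumiHKStronglyRecurrent}, which is straightforward since multiplicative constants and slowly varying corrections are absorbed by $\lambda$ on both sides of the defining inequalities of $J^R(\lambda)$.
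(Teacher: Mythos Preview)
Your approach is exactly the paper's: the proposition is stated there as ``a direct consequence of \cite[Proposition 1.3]{KumMisumiHKStronglyRecurrent}'' once \eqref{eqn:prob in Jlambda} is in hand, and you have correctly identified the three inputs (Propositions \ref{prop:vol UB prob UB}, \ref{prop:vol LB prob UB 2}, \ref{prop:res to boundary general}, applied in the resistance metric) that feed into that estimate, as well as the reason the resistance metric sidesteps the uniform-in-$y$ condition.

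One small point of logical hygiene: you invoke Theorem \ref{thm:recurrence} as an input, but in the paper's order recurrence is \emph{deduced} from this very framework (it appears as part of Theorem \ref{thm:RW exponents quenched}(b), which in turn relies on \cite[Theorem 1.5]{KumMisumiHKStronglyRecurrent}). Fortunately recurrence is not actually needed as a hypothesis for \cite[Proposition 1.3]{KumMisumiHKStronglyRecurrent}; it is a consequence of the resistance lower bound $\Reff(\rER,(B^R(r))^c)\to\infty$ already encoded in $J^R(\lambda)$. So simply drop that sentence and the argument is clean.
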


While the definition of the transition density does not depend on the intrinsic metric, we can combine Proposition \ref{prop:RW exponents wrt res} with Proposition \ref{prop:metric balls compare} to obtain displacement results in terms of the intrinsic metric $d$. 

Before doing so, we recall from \eqref{eqn:spec dis exponent def} that the (claimed) spectral dimension and displacement exponent for $\TERa$ are respectively given by the following exponents:
\begin{align*}
\begin{split}
\dspec &= \frac{2\alpha (\saR \wedge 1)}{\alpha (\saR \wedge 1) + (\alpha - 1)(\fv \wedge 1)}, \\
\ddis &= \frac{(\saR \wedge 1)(\alpha -1)(\fv \wedge 1)}{(\alpha -1)(\fv \wedge 1)(\sad \wedge 1) + \alpha(\saR \wedge 1)(\sad \wedge 1)}.
\end{split}
\end{align*}
To ease the notation in what follows, we further define:
\begin{enumerate}
\item The \textbf{walk dimension}, $\ea = \frac{\alpha (\sad \wedge 1)}{(\alpha - 1)(\fv \wedge 1)} +\frac{\sad \wedge 1}{\saR \wedge 1}$.
\item The \textbf{transition density exponent}, $\ka = \frac{\alpha (\saR \wedge 1)}{(\alpha - 1)(\fv \wedge 1)} \left( \frac{\alpha (\saR \wedge 1)}{(\alpha - 1)(\fv \wedge 1)} + 1 \right)^{-1}$. %Note that the spectral dimension is $2\ka$.
%\item The \textbf{displacement} exponent, $\Da = \frac{(\saR \wedge 1)(\alpha -1)(\fv \wedge 1)}{(\alpha -1)(\fv \wedge 1)(\sad \wedge 1) + \alpha(\saR \wedge 1)(\sad \wedge 1)}$.
%\item The \textbf{fluctuation} exponents, $\ba = \frac{- \alpha (\alpha + \epsilon)}{(\alpha - 1)(\fv \wedge 1)}, \bat = \frac{\alpha + \epsilon}{\saR \wedge 1}$ (though these precise values are unlikely to be optimal and are less important).
\end{enumerate}

Note that $\Da = \ea^{-1}$, and $\dspec = 2\ka$.

The following proposition then follows from \cite[Proposition 1.3]{KumMisumiHKStronglyRecurrent} and Proposition \ref{prop:metric balls compare}.

\begin{prop}[Probabilistic results w.r.t. intrinsic metric]\label{thm:RW exponents res prob}
Assume that Assumption \ref{assn:sec} and \eqref{eqn:offspring tails} hold. Then, there exist (explicit) deterministic $\ba, \bat, \batt > 0$ such that as $\theta \rightarrow \infty$,
\begin{align*}
\liminf_{r \to \infty} \bPb \times \prstart{ \theta^{-1} \leq \frac{\tau^d_r}{r^{\ea} (\log r)^{\ba}} \leq \theta}{\rER}{} &\rightarrow 1, \\
\liminf_{r \to \infty} \prb{ \theta^{-1} \leq \frac{\estart{\tau^d_r}{\rhoERA}{}}{r^{\ea} (\log r)^{\ba}} \leq \theta} &\rightarrow 1, \\
\liminf_{n \to \infty} \prb{ \theta^{-1} \leq n^{\ka} (\log n)^{\bat} p_{2n}(\rhoERA, \rhoERA) \leq \theta} &\rightarrow 1, \\
\liminf_{n \to \infty} \bPb \times \prstart{\frac{\sup_{k \leq n} \dER (\rhoERA, X_k)}{n^{\Da} (\log n)^{\batt}} \leq \theta}{\rhoERA}{} &\rightarrow 1, \\
\liminf_{n \to \infty} \bPb \times \prstart{ \theta^{-1} \leq \frac{1 +\dER (\rhoERA, X_n)}{n^{\Da} (\log n)^{\batt}}}{\rhoERA}{} &\rightarrow 1. 
\end{align*}
If $\saR, \sad, \fv \neq 1$, then $\ba = \bat = \batt = 0$. Otherwise they can be computed by carrying through the logarithmic terms from Theorem \ref{thm:dec vol growth main} and Proposition \ref{prop:metric balls compare}.
\end{prop}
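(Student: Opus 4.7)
The plan is to deduce Proposition \ref{thm:RW exponents res prob} from Proposition \ref{prop:RW exponents wrt res}, which gives the corresponding results with respect to the resistance metric, by using the comparison between resistance and intrinsic balls established in Proposition \ref{prop:metric balls compare}. The transition density $p_{2n}(\rhoERA, \rhoERA)$ does not depend on the choice of metric, so the result for it is immediate: substituting the explicit form $I^R(n) = n^{(\alpha-1)(\fv \wedge 1)/[(\alpha-1)(\fv \wedge 1) + \alpha(\saR \wedge 1)]}$ into $v^R(I^R(n))$ gives $v^R(I^R(n)) = n^{\ka}$ (up to logarithmic corrections in the degenerate cases $\saR = 1$ or $\fv = 1$), so the third statement in the proposition follows directly from Proposition \ref{prop:RW exponents wrt res}.

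For the exit time statements, the key algebraic identity is
\[
r^{\ea} = \bigl(r^{(\sad \wedge 1)/(\saR \wedge 1)}\bigr)^{1 + \alpha(\saR \wedge 1)/[(\alpha-1)(\fv \wedge 1)]},
\]
so that by Proposition \ref{prop:RW exponents wrt res}, $\Eb{\tau^R_{r^{(\sad \wedge 1)/(\saR \wedge 1)}}}$ is of order $r^{\ea}$. Using the almost sure part of Proposition \ref{prop:metric balls compare}, for all sufficiently large $r$ we have the sandwich
\[
B^R\bigl(r^{(\sad \wedge 1)/(\saR \wedge 1)} (\log r)^{-a}\bigr) \subset B^d(r) \subset B^R\bigl(r^{(\sad \wedge 1)/(\saR \wedge 1)} (\log r)^{a}\bigr),
\]
and consequently $\tau^R_{r^{(\sad \wedge 1)/(\saR \wedge 1)} (\log r)^{-a}} \leq \tau^d_r \leq \tau^R_{r^{(\sad \wedge 1)/(\saR \wedge 1)} (\log r)^{a}}$. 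Taking expectations and applying Proposition \ref{prop:RW exponents wrt res} yields the two-sided bound for $\Eb{\tau^d_r}$ up to logarithmic corrections of the form $(\log r)^{\ba}$; the annealed first statement follows similarly by intersecting with the event that $r^{(\sad \wedge 1)/(\saR \wedge 1)}(\log r)^{\pm a}$ lies in the relevant set $J^R(\lambda)$ from \eqref{eqn:Jlambda def}.

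For the displacement, we combine the exit time bounds with the equivalence $\{\sup_{k \leq n} \dg(\rhoERA, X_k) > r\} = \{\tau^d_r \leq n\}$: the tightness of $\tau^d_r/(r^{\ea}(\log r)^{\ba})$ just obtained translates directly into tightness of $\sup_{k \leq n} \dg(\rhoERA, X_k) / (n^{\Da}(\log n)^{\batt})$ by inverting $r \mapsto r^{\ea}(\log r)^{\ba}$, using that $\Da = \ea^{-1}$. The lower bound $1 + \dg(\rhoERA, X_n) \gtrsim n^{\Da}$ follows by converting the corresponding resistance bound $1 + \Ref(\rhoERA, X_n) \gtrsim I^R(n)$ from Proposition \ref{prop:RW exponents wrt res} via the inclusion $B^R(r) \subset B^d(r^{(\saR \wedge 1)/(\sad \wedge 1)} (\log r)^a)$ from Proposition \ref{prop:metric balls compare}, together with the identity $I^R(n)^{(\saR \wedge 1)/(\sad \wedge 1)} = n^{\Da}$, which is verified by a direct computation of the exponents. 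The main technical annoyance is bookkeeping the logarithmic corrections from Theorem \ref{thm:dec vol growth main} and Proposition \ref{prop:metric balls compare} in the boundary regimes $\sad = 1$, $\saR = 1$, or $\fv = 1$; since we only claim tightness up to polylogarithmic factors, all such corrections can be absorbed into the explicit constants $\ba, \bat, \batt$ by carefully summing the logarithmic exponents produced at each conversion step.
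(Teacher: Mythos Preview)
Your overall strategy --- deduce everything from Proposition \ref{prop:RW exponents wrt res} by transferring along the ball comparison of Proposition \ref{prop:metric balls compare} --- is the same as the paper's, and your treatment of the second, third and fifth statements is correct and matches the paper's proof.

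There is, however, a genuine gap in your argument for the first statement (and consequently for the fourth, which you derive from it). Proposition \ref{prop:RW exponents wrt res} only controls the \emph{quenched expectation} $\estart{\tau^R_r}{\rhoERA}{}$; it says nothing about the distribution of $\tau^R_r$ under $\bPb \times \bP$. The sandwich $\tau^R_{r'(\log r)^{-a}} \leq \tau^d_r \leq \tau^R_{r'(\log r)^{a}}$ plus a bound on $\estart{\tau^R_{r'}}{\rhoERA}{}$ gives you the upper tail of $\tau^d_r$ via Markov, but it cannot give a lower tail: a lower bound on an expectation does not prevent the random variable from being small with high probability. Your phrase ``intersecting with the event that $r^{(\sad \wedge 1)/(\saR \wedge 1)}(\log r)^{\pm a}$ lies in $J^R(\lambda)$'' points at the right object but is not an argument --- membership in $J^R(\lambda)$ gives volume and resistance bounds, and one still has to explain how these force $\tau^R_r$ to be large under the annealed measure.

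The paper fills this gap by invoking \cite[Proposition 3.5(a)]{KumMisumiHKStronglyRecurrent}, recorded as \eqref{eqn:poly tail exit time LB}: on the event $\{r, \theta^{-1}r, c_3\theta^{-1}r \in J^R(\lambda)\}$ one has an explicit polynomial bound $\bPb \times \prstart{\tau^R_r < \theta^{-\kappa} c_1 r v^R(r)}{\rER}{} \leq c_2 \theta^{-\xi}$, and \eqref{eqn:prob in Jlambda} controls the complement. This is the missing ingredient. Once you have it, the lower bound in the first statement follows by the ball comparison as you describe, and the fourth statement then follows from the first exactly via the inversion $\{\sup_{k \leq n} \dER(\rhoERA, X_k) > r\} = \{\tau^d_r \leq n\}$ that you wrote down. (The paper also observes that the \emph{upper} bound in the first statement follows directly from the fifth rather than from the expectation bound, which is a slightly cleaner route than Markov.)
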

\begin{proof}
The second and fifth points are both a direct consequence of Proposition \ref{prop:RW exponents wrt res} and Proposition \ref{prop:metric balls compare}, and the third point can be obtained from the analogous statement in Proposition \ref{prop:RW exponents wrt res} just by substituting the forms of the functions $v^R$ and $I^R$. The upper bound in the first point follows from the fifth point.

The lower bound in the first point can be proved using exactly the same arguments used to prove the analogous result for trees in \cite[Proof of Theorem 1.1]{CroyKumRWGWTreeInfiniteVar} applied instead to the exit time $\tau^R_r$, and can then be transferred to $\tau^d_r$ using Proposition \ref{prop:metric balls compare}. In particular, it follows from \cite[Proposition 3.5(a)]{KumMisumiHKStronglyRecurrent} that there exist constants $\kappa, \xi>0$ (not depending on $\lambda$ or $\theta$ or $r$) such that for every $\lambda \geq 1$ there exist constants $c_1, \ldots, c_4$, depending on $\lambda$, such that
\begin{align}\label{eqn:poly tail exit time LB}
\bPb \times \prstart{\tau^R_r < \theta^{-\kappa}c_1 rv^R(r)}{\rER}{} \leq c_2 \theta^{-\xi} + \prb{r \text{ or } \theta^{-1}r \text{ or } c_3\theta^{-1}r \notin J^R(\lambda)} \qquad \forall \theta > c_4,
\end{align}
where $J^R(\lambda)$ is as in \eqref{eqn:Jlambda def}. Moreover, $c_4$ can be chosen to be polynomial in $\lambda$. For any $\delta > 0$, we can first choose $\lambda > 0$ large enough that the probability in \eqref{eqn:prob in Jlambda} is at most $\delta$, and therefore the second probability on the RHS above is at most $3\delta$ provided $r, \theta^{-1}r$ and $c_3\theta^{-1}r$ all exceed $1$. Then, we choose $\theta$ large enough that $\theta> c_4$ and the first probability on the RHS is at most $\delta$. Then, for all $r \geq 1 \vee \theta \vee \theta c^{-1}_3$, the RHS above is at most $4 \delta$. Since $\delta > 0$ was arbitrary, this establishes the analogous claim for $\tau^R_r$, and can then be transferred to $\tau^d_r$ as stated using Proposition \ref{prop:metric balls compare}.

The fourth point also follows from the lower bound in the first point, completing the proof.
\end{proof}
%\begin{proof}
%Set $g = (\sad \wedge 1)(\saR \wedge 1)^{-1}, j = \frac{\alpha + \epsilon}{\saR \wedge 1}$. By Proposition \ref{prop:metric balls compare}, we $\bPb$-almost surely have for all $r \geq r_0, n \geq n_0$ that:
%\begin{align*}
%\estart{\tau^R_{r^{g} (\log r)^{-j}}}{\rhoERA}{} \leq \estart{\tau^d_r}{\rhoERA}{} \leq \estart{\tau^R_{r^{g} (\log r)^{j}}}{\rhoERA}{} \\
%\Ref (\rhoERA, X_n) \leq \theta h(n) \Rightarrow \dER (\rhoERA, X_n) \leq \theta' h(n)^g(\log h(n))^{j} \\
%1 + \Ref (\rhoERA, X_n) \geq \theta^{-1} h(n) \Rightarrow 1 + \dER (\rhoERA, X_n) \geq \tilde{\theta}^{-1} h(n)^g(\log h(n))^{-j},
%\end{align*}
%where $\theta', \tilde{\theta} \rightarrow \infty$ as $\theta \rightarrow \infty$. The results therefore follow from Proposition \ref{prop:RW exponents wrt res}.
%\end{proof}

%As written, these exponents are not the most illuminating. Recall though that $\saR = R(\alpha - 1)$, $\sad = d (\alpha - 1)$, $\fv = \frac{\alpha}{v}$. In the most extreme cases in which these are all at most $1$, the exponents become :
%\begin{align*}
%\ea = r^{{v d} + \frac{d}{R}}, \hspace{5mm} \ka = \frac{Rv}{Rv + 1}, \hspace{5mm} \Da = \frac{d}{R + vR^2}.
%\end{align*}
%This is the setting in which the local graph behaviour dominates the behaviour on the whole decorated tree, and we no longer see any dependence on $\alpha$. However, as $d$ and $R$ decrease, and $v$ increases, we see several phase transitions as the tails of the various volume and distance quantities become lighter. In particular we obtain the result of Theorem \ref{thm:dec main quenched RW}.

With appropriate control, we can also get quenched and annealed results for these exponents. We give the quenched result first: again this follows directly from \cite[Theorem 1.5]{KumMisumiHKStronglyRecurrent}, Theorem \ref{thm:dec vol growth main}$(i)$ and Proposition \ref{prop:metric balls compare}.
%this follows directly from \cite[Proposition 1.5]{KumMisumiHKStronglyRecurrent}, initially defining balls with respect to the resistance metric, but can be converted to results in terms of the intrinsic metric using Proposition \ref{prop:metric balls compare} as we did for Theorem \ref{thm:RW exponents res prob} above.

\begin{theorem}[Quenched random walk results]\label{thm:RW exponents quenched}
Under Assumption \ref{assn:sec} and \eqref{eqn:offspring tails}, $\bPb$-almost surely,
\begin{enumerate}[a)]
\item There exist constants $\beta_1, \beta_2, \beta_3, \beta_4 \in (0, \infty)$ such that
\begin{enumerate}[(i)]
\item There exists $R< \infty$ such that $r^{\ea} (\log r)^{-\beta_2} \leq \estart{\tau^d_r}{\rhoERA}{} \leq r^{\ea} (\log r)^{\beta_2}$ for all $r \geq R$.
\item There exists $N< \infty$ such that $n^{-\ka} (\log n)^{-\beta_1} \leq p_{2n}(\rhoERA, \rhoERA) \leq n^{-\ka} (\log n)^{\beta_1}$ for all $n \geq N$.
\item $\bP$-almost surely, there exist $N, R<\infty$ such that, conditionally on $X_0 = \rhoERA$,
\begin{align*}
r^{\ea} (\log r)^{-\beta_3} &\leq \tau^d_r \leq r^{\ea} (\log r)^{\beta_3} \hspace{5mm} \forall r \geq R \\
n^{\Da} (\log n)^{-\beta_4} &\leq \sup_{k \leq n} \dER (\rhoERA, X_k) \leq n^{\Da} (\log n)^{\beta_4} \hspace{5mm} \forall n \geq N
\end{align*}
\end{enumerate}
%If the tail decay of Propositions \ref{prop:vol UB prob UB}, \ref{prop:vol LB prob UB} and Remark \ref{rmk: res comments}(1.) is instead $o(e^{-\lambda^{K}})$, then we can replace $\log n$ with $\log \log n$ and $\log r$ with $\log log r$ in all the items above.
\item $d_S(\TERa) := -2 \lim_{n \rightarrow \infty} \frac{\log p_{2n}(\rhoERA, \rhoERA)}{\log n} = 2\ka$, and the random walk is recurrent.
\item $\lim_{n \rightarrow \infty} \frac{ \log \left( \estart{\tau^d_r}{\rhoERA}{}\right)}{\log r} = \ea$.
\item Let $W_n = \{X_0, X_1, \ldots, \X_n\}$, and let $S_n = \sum_{x \in W_n} \deg x$. Then $\bP$-almost surely, $\lim_{n \rightarrow \infty} \frac{\log S_n}{\log n} = \ka$.
\end{enumerate}
\end{theorem}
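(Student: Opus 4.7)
The plan is to deduce the quenched estimates by feeding the resistance-metric volume and resistance controls of Sections \ref{sctn:volume bounds decorated}--\ref{sctn:dec res bounds} into the quenched machinery of \cite{KumMisumiHKStronglyRecurrent} (their Theorem 1.5), and then to translate everything into the intrinsic metric using the ball comparison of Proposition \ref{prop:metric balls compare}. Recurrence is not proved here but taken from Theorem \ref{thm:recurrence}.

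The first step is to upgrade the quantitative tail bounds we already have into quenched polylogarithmic two-sided controls. Specifically, define the function $v^{R}(r) = r^{\alpha(\saR \wedge 1)/((\alpha-1)(\fv \wedge 1))}$ (with the appropriate logarithmic correction when $\saR=1$ or $\fv=1$, read off from Theorem \ref{thm:dec vol growth main} with $\sad$ replaced by $\saR$), and consider $J^{R}(\lambda)$ as in \eqref{eqn:Jlambda def}. By \eqref{eqn:prob in Jlambda}, which combines Propositions \ref{prop:vol UB prob UB}, \ref{prop:vol LB prob UB 2} and \ref{prop:res to boundary general}, there exist $c<\infty,\gamma>0$ with $\prb{r \notin J^{R}(\lambda)} \leq c\lambda^{-\gamma}$ uniformly in $r \geq 1$. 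Taking $r_n = 2^n$ and $\lambda_n = (\log r_n)^{\eta}$ for $\eta$ large enough that $\sum c\lambda_n^{-\gamma} < \infty$, Borel--Cantelli gives that $\bPb$-almost surely $r_n \in J^{R}(\lambda_n)$ for all large $n$; then monotonicity of the volume and of the resistance to the complement of a ball lets us interpolate this to all sufficiently large $r$, with the constants inflated by a bounded power of $\log r$.

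Having verified the quenched hypotheses of \cite[Theorem 1.5]{KumMisumiHKStronglyRecurrent} with respect to the resistance metric, that theorem directly provides, $\bPb$-almost surely, polylogarithmic two-sided quenched bounds on $\estart{\tau^{R}_r}{\rER}{}$, on $p_{2n}(\rER,\rER)$ via the inverse $I^R$ of $r \mapsto r v^R(r)$, and $\bP$-almost surely on $\tau^{R}_r$ and $\sup_{k \leq n} \Ref(\rER, X_k)$, with exponents $\frac{(\alpha-1)(\fv\wedge 1)+\alpha(\saR\wedge 1)}{\alpha(\saR\wedge 1)}$, $\ka$, and $I^R(n)$ respectively. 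To obtain the intrinsic-metric versions $(i)$ and $(iii)$ of part $(a)$, I will apply Proposition \ref{prop:metric balls compare} (combined with a further Borel--Cantelli along $r_n = 2^n$ to get a quenched comparison $B^d(r(\log r)^{-a}) \subset B^R(r^{(\saR\wedge 1)(\sad\wedge 1)^{-1}}) \subset B^d(r(\log r)^{a})$ for suitable $a$ and all large $r$): this converts exit times from $B^R$ into exit times from $B^d$ at the cost of an additional power of $\log r$, and converts the resistance displacement $\Ref(\rER, X_k)$ into the intrinsic displacement $\dER(\rER, X_k)$ analogously. A short calculation checks that the resulting exponents are $\ea$ on the exit-time side and $\Da = \ea^{-1}$ on the displacement side, as claimed; the transition density estimate $(ii)$ is unaffected by the metric choice and follows immediately.

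For part $(b)$, the spectral dimension identity $d_S = 2\ka$ is a direct consequence of taking $\log$ and dividing by $\log n$ in $(a)(ii)$, and recurrence has been established in Theorem \ref{thm:recurrence}. Part $(c)$ is the corresponding consequence of the quenched two-sided bounds in $(a)(i)$. Part $(d)$ follows because $S_n$, the number of edges visited by time $n$, satisfies $S_n \asymp \mu(B^R(I^R(n)))$ up to polylog factors: this is a standard application of the strongly recurrent framework (see for instance the paragraph following \cite[Proposition 1.3]{KumMisumiHKStronglyRecurrent}), and combined with the quenched volume growth of Theorem \ref{thm:dec vol growth main} (with $\sad$ replaced by $\saR$) one obtains $\log S_n / \log n \to \ka$. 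The main technical point throughout is the bookkeeping of logarithmic corrections in the four sub-cases of Theorem \ref{thm:dec vol growth main} together with the three sub-cases of Proposition \ref{prop:metric balls compare}, since each conversion between metrics inflates the effective exponent of $\log$ by a bounded amount; aside from this, every step reduces to invoking an already established result.
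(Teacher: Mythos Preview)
Your proposal is correct and follows essentially the same route as the paper: the paper's proof is a one-line citation stating that the result follows directly from \cite[Theorem 1.5]{KumMisumiHKStronglyRecurrent}, Theorem \ref{thm:dec vol growth main}$(i)$ and Proposition \ref{prop:metric balls compare}, and your write-up simply unpacks how these three ingredients combine (verifying the $J^R(\lambda)$ condition via Borel--Cantelli, applying the Kumagai--Misumi quenched theorem in the resistance metric, then translating to the intrinsic metric). The only minor difference is that the paper treats recurrence and part $(d)$ as part of what \cite[Theorem 1.5]{KumMisumiHKStronglyRecurrent} delivers directly, rather than arguing them separately.
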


Theorems \ref{thm:recurrence} and \ref{thm:dec main quenched RW} therefore follow directly. The annealed results also follow from similar arguments to those of \cite[Proposition 1.4]{KumMisumiHKStronglyRecurrent}, but require some adaptation since \cite[Assumption 1.2(2)]{KumMisumiHKStronglyRecurrent} is not in general satisfied.

\begin{theorem}[Annealed random walk results]\label{thm:RW exponents annealed} 
Take $\ba, \bat, \batt$ as in Proposition \ref{thm:RW exponents res prob}. Under Assumption \ref{assn:sec} and \eqref{eqn:offspring tails}, we have that:
\begin{enumerate}[a)]
\item There exist constants $c_1 >0, c_2 < \infty, \gamma_1 > 0$ such that, for all $r \geq 1$, $c_1 r^{\ea} (\log r)^{\ba} \leq \Eb{\estart{\tau^d_r}{\rhoERA}{}}$, and $\Eb{\left(\estart{\tau^d_r}{\rhoERA}{}\right)^{\gamma_1}} \leq c_2 r^{\gamma \ea} (\log r)^{\gamma_1 \ba}$.
\item There exist constants $c_3 > 0, c_4 < \infty, \gamma_2 > 0$ such that $c_3 n^{-\ka} (\log n)^{\bat} \leq \Eb{p_{2n}(\rhoERA, \rhoERA)}$ for all $n \geq 1$, and $\Eb{\left(p_{2n}(\rhoERA, \rhoERA)\right)^{\gamma_2}} \leq c_3 n^{-\gamma_2\ka} (\log n)^{\gamma_2\bat}$.
\item There exist constants $c_5 > 0, c_6 < \infty, \gamma_3 > 0$ such that $c_5n^{\Da} (\log n)^{\batt} \leq \Eb{ \estart{\dER (\rhoERA, X_n)}{\rhoERA}{}}$ for all $n \geq 1$, and $\Eb{ \left(\estart{\dER (\rhoERA, X_n)}{\rhoERA}{}\right)^{\gamma_3}} \leq c_6n^{\gamma_3 \Da} (\log n)^{\gamma_3 \batt}$.
%\item If the tail decay of Proposition \ref{prop:vol LB prob UB 2} is $O(\lambda^{-(1+\epsilon)})$, for some $\epsilon > 0$, then $ \Eb{p_{2n}(\rhoERA, \rhoERA)} \leq c_3 n^{-\ka} (\log n)^{-\beta_1}$ (see \cite[Remark 1.6(1)]{KumMisumiHKStronglyRecurrent}). 
\end{enumerate}
\end{theorem}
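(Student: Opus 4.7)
My approach is to first prove all three annealed bounds with respect to the resistance metric, and then transfer them to the intrinsic metric using Proposition \ref{prop:metric balls compare}. The key observation is that the probabilistic statements in Proposition \ref{thm:RW exponents res prob} give us control of the typical behaviour, while Propositions \ref{prop:vol UB prob UB}, \ref{prop:vol LB prob UB 2} and \ref{prop:res to boundary general}, together with the summary estimate \eqref{eqn:prob in Jlambda}, give us polynomial tail control of the bad events. The failure of \cite[Assumption 1.2(2)]{KumMisumiHKStronglyRecurrent} corresponds to the fact that we only have polynomial rather than subexponential tail bounds, which forces us to use fractional moments throughout.

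For the \emph{lower bounds} in (a), (b) and (c), I would use the simple observation that for any non-negative random variable $X$ and any $a > 0$,
\begin{equation*}
\mathbf{E}[X] \geq a \, \mathbf{P}(X \geq a).
\end{equation*}
Applying this to $\estart{\tau^d_r}{\rER}{}$, $p_{2n}(\rhoERA,\rhoERA)$, and $\estart{\dER(\rhoERA,X_n)}{\rhoERA}{}$ respectively, and fixing the parameter $\theta$ in Proposition \ref{thm:RW exponents res prob} large enough so that the probabilities in that proposition exceed $1/2$, the lower bounds follow immediately, with the log exponents $\ba, \bat, \batt$ inherited from Proposition \ref{thm:RW exponents res prob}.

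For the \emph{upper bounds on fractional moments}, the main inputs are the standard inequality
\begin{equation*}
\estart{\tau^R_r}{\rhoERA}{} \leq r \cdot \mu\bigl(B^R(\rhoERA, r)\bigr),
\end{equation*}
which follows from the commute-time identity and the definition of effective resistance, and the transition density bound $p_{2n}(\rhoERA,\rhoERA) \leq C/\mu(B^R(\rhoERA, I^R(n)))$ that holds on the event $I^R(n) \in J^R(\lambda)$ (via a standard Cauchy--Schwarz argument using the fact that the random walk is unlikely to have exited the ball of radius $I^R(n)$ by time $n$). Raising both sides to a sufficiently small power $\gamma_i$ and integrating polynomial tails from Proposition \ref{prop:vol UB prob UB} and \eqref{eqn:prob in Jlambda} yields the fractional moment bounds with respect to the resistance metric. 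For the displacement (c), one first relates $\sup_{k \leq n} \dER(\rER, X_k)$ to the exit time via the identity $\{\sup_{k\leq n}\Ref(\rho,X_k) < r\} = \{\tau^R_r > n\}$, from which the annealed fractional moment bound on displacement follows from the corresponding bound on $\tau^R_r$.

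The main obstacle, and the reason for the need to adapt \cite[Proposition 1.4]{KumMisumiHKStronglyRecurrent} rather than apply it as a black box, is the transfer from the resistance metric to the intrinsic metric. Since Proposition \ref{prop:metric balls compare} only guarantees polynomial tail control for the radius conversion (with genuine logarithmic fluctuations in the quenched comparison), the simplest way to proceed is to fix a power $\gamma_i$ smaller than the exponent $b$ in Proposition \ref{prop:metric balls compare}, so that on conditioning on the (polynomial) good event for the ball comparison, we pick up only a multiplicative logarithmic factor, while the complement event contributes a convergent polynomial tail after integration. The final step is a careful bookkeeping exercise to combine these factors and read off the explicit values of $\ba, \bat, \batt$ from the logarithmic corrections in Theorem \ref{thm:dec vol growth main} and Proposition \ref{prop:metric balls compare}, which accounts for the fact that these exponents vanish precisely when $\saR, \sad, \fv \neq 1$.
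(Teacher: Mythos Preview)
Your proposal is essentially correct and follows the same approach as the paper. The lower bounds via $\mathbf{E}[X]\geq a\,\mathbf{P}(X\geq a)$ applied to Proposition~\ref{thm:RW exponents res prob} match the paper exactly, and your treatments of the upper bounds in (a) and (b) are minor variants of the paper's argument (the paper bounds $\estart{\tau^d_r}{\rhoERA}{}\leq \Ref(\rER,\BT(\rER,r)^c)\,\mu(\BT(\rER,r))$ directly in the intrinsic metric and then uses Cauchy--Schwarz with Propositions~\ref{prop:vol UB prob UB}, \ref{prop:res to boundary general}, \ref{prop:metric balls compare}, whereas you work in the resistance metric first and transfer; both routes are fine).

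There is, however, a small gap in your argument for the upper bound in (c). The identity $\{\sup_{k\leq n}\Ref(\rho,X_k)<r\}=\{\tau^R_r>n\}$ is correct, but to bound a fractional moment of displacement from above you need polynomial decay of $\bPb\times\prstart{\tau^R_r\leq n}{\rER}{}$ as $r$ grows relative to $n$, i.e.\ a \emph{lower}-tail estimate on $\tau^R_r$. This is not supplied by the upper moment bound you established in (a); that bound only controls the upper tail of $\tau^R_r$. The paper obtains the required lower-tail estimate from \eqref{eqn:poly tail exit time LB} (which in turn uses \cite[Proposition~3.5(a)]{KumMisumiHKStronglyRecurrent} together with \eqref{eqn:prob in Jlambda}), choosing the parameters so that the right-hand side decays polynomially in $\theta$. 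You should make this step explicit; once it is, your argument for (c) goes through.
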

\begin{proof}
We give the proof of the upper bounds since these are not immediate from \cite[Proposition 1.4]{KumMisumiHKStronglyRecurrent}. The strategy is similar to the proof of \cite[Theorem 1.2]{CroyKumRWGWTreeInfiniteVar}. The lower bounds all follow directly from Proposition \ref{thm:RW exponents res prob}, since we can choose $c_1, c_3, c_5 > 0$ and $\delta>0$ so that the required lower bounds hold with probability at least $\delta$.

For the upper bound in part a), we recall from \cite[Equation (3.7)]{KumMisumiHKStronglyRecurrent} that $$\estart{\tau^d_r}{\rhoERA}{}^{\gamma_1} < \Ref (\rER, \BT(\rER, r)^c)^{\gamma_1} \ \mu (\BT(\rER, r))^{\gamma_1}.$$ Therefore, using Propositions \ref{prop:vol UB prob UB}, \ref{prop:res to boundary general}, \ref{prop:metric balls compare} and Cauchy-Schwarz we can find $\gamma_1$ positive but small enough that the required expectation is finite.

For the upper bound in part b) we can work directly with the resistance metric. It follows from \cite[Proposition 3.1(a)]{KumMisumiHKStronglyRecurrent} that there exists $c_1<\infty$ such that on the event $\mu (B^R(r)) \geq \theta^{-1} v^R(r)$,
\[
p_{rv^R(r)}(0,0) \leq \frac{c_1\theta}{v^R(I^R(rv^R(r)))}.
\]
Note that Proposition \ref{prop:vol LB prob UB 2} gives polynomial lower tail decay for $\mu (B^R(r))$, which therefore transfers to polynomial upper tail decay for $p_{rv^R(r)}(0,0)$ and b) follows.

%As in the proof of \cite[Remark 1.6(1)]{KumMisumiHKStronglyRecurrent}, we can rearrange the third line of \cite[Proposition 3.3]{BarlowCoulhonKumagaiSubGHK} to deduce that there exists $c< \infty$ such that
%\[
%p_{2n}(\rER, \rER) \leq c\left(\frac{1}{\mu(B^{\text{dec}}_{\text{res}}(\rER, r))} \vee \frac{r}{n}\right).
%\]
%We deduce that
%\[
%\Eb{p_{2n}(\rER, \rER)^{\gamma_2}} \leq 2\left[ \Eb{\left(\frac{c}{\mu(B^{\text{dec}}_{\text{res}}(\rER, r))}\right)^{\gamma_2}} \vee \left(\frac{cr}{n}\right)^{\gamma_2}\right],
%\]
%so that if $n\in [rv^R(r), 2rv^R(r)]$ we have by Proposition \ref{prop:vol LB prob UB 2} that we can choose $\gamma_2$ positive but small enough that both terms are upper bounded by $cn^{-\gamma_2 \ka}(\log n)^{\gamma_2\bat}$.

The upper bound in part c) follows from \eqref{eqn:poly tail exit time LB}. In particular, if for some $K \in (0, \infty)$ we replace $\theta$ with $\theta^K$ in \eqref{eqn:poly tail exit time LB}, and then set $\lambda=\theta$, since $c_4$ can be chosen to be polynomial in $\lambda$ we can then choose $K<\infty$ large enough that $\theta^K > c_4$ for all $\theta \geq 2$ and we obtain polynomial tail decay in $\theta$ for the probability in \eqref{eqn:poly tail exit time LB}.
\end{proof}
In particular, this gives all the results of Theorem \ref{thm:dec main ann RW}. Note that it should not in general be possible to obtain all of the same results without taking lower powers, since it is known that the expected volume of a ball in $\Tai$ is infinite, which has similar consequences for some of the quantities below. See \cite[Theorem 1.2]{CroyKumRWGWTreeInfiniteVar} for analogous results for $\Tai$.

\section{Examples}\label{sctn:examples}
In this section, we will consider several examples of graph sequences $(G_n)_{n \geq 1}$ and verify that they satisfy the conditions of Assumption \ref{assn:sec}.

In terms of exponents, the main results of the previous sections are that we established the following exponents of $\TERa$:
\begin{enumerate}
\item The \textbf{fractal dimension} is equal to $\frac{\alpha (\sad \wedge 1)}{(\alpha - 1)(\fv \wedge 1)}$.
\item The \textbf{spectral dimension} is equal to $\frac{2\alpha (\saR \wedge 1)}{\alpha (\saR \wedge 1) + (\alpha - 1)(\fv \wedge 1)}$.
\item The \textbf{displacement exponent} is equal to $\frac{(\saR \wedge 1)(\alpha -1)(\fv \wedge 1)}{(\alpha -1)(\fv \wedge 1)(\sad \wedge 1) + \alpha(\saR \wedge 1)(\sad \wedge 1)}$ (this is also the inverse of the \textbf{walk dimension}).
\end{enumerate}
Below, we give the values of these exponents for several examples of interest. Since several of these examples are just used as toy models, some of the discussion is informal and non-rigorous. As in the previous section, to study the random walk we are interested exclusively in the degree measure, which we will denote by $\mu$. Note that in many of the cases considered below this is equivalent to the counting measure (up to multiplication by a constant) so we may instead consider this in some of the volume estimates that we quote below.

%\begin{landscape}

\begin{table}[h!]
\begin{center}
%\begin{adjustbox}{angle=90}
\begin{tabular}{ |c|c|c|c|c|c| }
\hline
\textbf{Inserted graph} & \textbf{Range} & \textbf{Volume} & \textbf{Spectral dim} & \textbf{Displacement} \\ 
 \hline
 Star (tree) & all & $\frac{\alpha}{\alpha -1}$ & $\frac{2\alpha}{2\alpha - 1}$ & $\frac{\alpha - 1}{2\alpha - 1}$ \\ 
 \hline
 Loop (looptree) & all & $\alpha$ & $\frac{2\alpha}{\alpha + 1}$ & $\frac{1}{\alpha + 1}$ \\ 
 \hline
\multirow{2}{*}{$\beta$-stable trees} & $\frac{\beta}{\beta - 1} \geq \frac{1}{\alpha - 1}$ & $
 \frac{\alpha}{\alpha - 1}$ &  $\frac{2\alpha}{2\alpha - 1}$ & $\frac{\alpha - 1}{2\alpha - 1}$ \\
  \cline{2-5}
 & $\frac{\beta}{\beta - 1} < \frac{1}{\alpha - 1}$ & $\frac{\beta \alpha}{\beta-1}$ & $\frac{2\beta \alpha}{\beta - 1 + \beta \alpha}$ & $\frac{\beta - 1}{\beta - 1 + \beta \alpha}$ \\
 \hline
 Finite variance dissections, & $\alpha \geq \frac{3}{2}$ & $\frac{\alpha}{\alpha - 1}$ &
$\frac{2\alpha}{2\alpha-1}$ & $\frac{\alpha - 1}{2\alpha - 1}$ \\ 
\cline{2-5}
if $\mu (k, \infty) = O \left(k^{-(1+\frac{\alpha + \epsilon}{\alpha -1})}\right)$ & $\alpha \leq \frac{3}{2}$ & $2 \alpha$ & $\frac{4 \alpha}{2 \alpha + 1}$ & $\frac{1}{2\alpha  + 1}$ \\
 \hline
% $\beta$-stable & $\beta \geq \frac{1}{\alpha-1}$ & $\frac{\alpha}{\alpha - 1}$ & $\frac{2\alpha}{\alpha - 1}$ & $\frac{\alpha - 1}{2\alpha - 1}$ \\ 
%\cline{2-5}
%dissections* & $\beta \leq \frac{1}{\alpha-1}$ & $\alpha \beta$ & $\frac{2\alpha \beta}{\alpha \beta + 1}$ & $\frac{1}{\alpha \beta + 1}$ \\
%\hline
Critical & $\alpha \geq \frac{3}{2}$ & $
\frac{\alpha}{\alpha - 1}$ & $\frac{2\alpha}{2\alpha - 1}$ & $
\frac{\alpha -1}{2\alpha -1}$. \\
\cline{2-5}
\ER ($n$) & $\alpha \leq \frac{3}{2}$ & $2 \alpha$ &
$\frac{4\alpha}{2\alpha+1}$ & $\frac{1}{2\alpha + 1}$ \\
 \hline
Critical & $\alpha \geq \beta$ & $
\frac{\alpha}{\alpha - 1}$ & $
\frac{2\alpha }{2\alpha - 1}$ & $\frac{\alpha -1}{2\alpha -1}$ \\
 \cline{2-5}
\multirow{2}{*}{\ER ($n^{\beta}$)} & $\frac{\beta + 2}{2} \leq \alpha \leq \beta$ & $\frac{\beta}{\alpha - 1}$ & $\frac{2 \beta}{\beta + \alpha - 1}$ & $
\frac{\alpha -1}{\alpha -1 + \beta}$ \\
 \cline{2-5}
 & $\alpha \leq \frac{\beta + 2}{2}$ & $2$ & $\frac{4}{3}$ & $\frac{1}{3}$ \\
\hline
  Sierpinski & $\alpha \geq \frac{\log 10 - \log 3}{\log 2}$ & $\frac{\alpha}{\alpha -1}$ & $\frac{2\alpha}{2\alpha - 1}$ & $\frac{\alpha - 1}{2\alpha -1}$ \\ 
 \cline{2-5}
  \multirow{2}{*}{triangle} & $\frac{\log 3}{\log 2} \leq \alpha < \frac{\log 10 - \log 3}{\log 2} $ & $\alpha$ & $\frac{2\alpha \log 2}{\alpha \log 2 + \log5 - \log 3}$ & $\frac{\log 2}{\alpha \log 2 + \log5 - \log 3}$ \\ 
 \cline{2-5}
  & $\alpha < \frac{\log 3}{\log 2}$ & $\frac{\log 3}{\log 2}$ & $\frac{2\log 3}{\log 5}$ & $\frac{\log 2}{\log 5}$ \\ 
 \hline
Complete graph & all & $\frac{2}{\alpha - 1}$ & $\frac{4}{\alpha + 1}$ & $\frac{\alpha -1}{\alpha +1}$ \\ 
 \hline
2d box & all & $2$ & $\frac{4}{\alpha + 1}$ & $\frac{1}{\alpha +1}$ \\ 
 \hline
\end{tabular}
%\end{adjustbox}
\end{center}\caption{Quenched exponents for the models considered below.}\label{table:RW exponents}
\end{table}
%\end{landscape}

Some other examples of interest are also discussed in \cite[Section 7]{SenStefStufDecStableTrees}.

\subsection{Trees}\label{sctn:example just tree}
By inserting an appropriate ``star'' graph at every vertex, or simply repeating the arguments employed in the previous section directly for trees, we recover some results for random walks on critical Galton--Watson trees with offspring distribution satisfying \eqref{eqn:offspring tails}, conditioned to survive. We do not go into the details, but in this setting Assumption \ref{assn:sec} is effectively satisfied with $d = R= \infty$ and $v=1$ (though to make this rigorous, it easier just to repeat the arguments directly with these trees in mind). Random walks on these trees were studied by Croydon and Kumagai in \cite{CroyKumRWGWTreeInfiniteVar}, and we recover the exponents they establish there, as given in Table \ref{table:RW exponents}.

We note that it is possible to improve the polynomial tails of Section \ref{sctn:vol LBs} and \cite[Proposition 2.6]{CroyKumRWGWTreeInfiniteVar} by adapting the argument used for looptrees in \cite[Lemma 3.2]{BjornStef}, and obtain that, for all $r, \lambda \geq 1$ on $\Tai$,
\begin{align*}
\prb{\mu (B(\rho, r)) \leq r^{\frac{1}{\alpha - 1}} \lambda^{-1}} \leq Ce^{-c \lambda^{\frac{\alpha - 1}{\alpha}}}.
\end{align*}

For more detailed results on the tree case, see \cite{CroyKumRWGWTreeInfiniteVar} and \cite[Proposition 6]{KortSubexp}.

\subsection{Looptrees}\label{sctn:looptree example}
By inserting deterministic loops at each vertex, we also recover the discrete looptree model that was considered more thoroughly in \cite{BjornStef}. In this case, $d=R=v=1$. 
In fact, in this case it is also possible to repeat the arguments of \cite{ArchBMCompactLooptrees} used for the continuum case to get stronger tail decay, and show that, for any $\delta > 0$, there exist $C_{\delta}, c_{\delta} \in (0,\infty)$ such that for all $\lambda \geq 1$,
\begin{align*}
\prb{\mu(B(\rho, r)) \geq r^{\alpha} \lambda} &\leq C_{\delta}e^{-c_{\delta}\lambda^{\frac{\alpha - \delta}{\alpha + 1}}}.
\end{align*}
Similarly in terms of the lower tail decay, the following bound was shown in \cite[Equation (3.23)]{BjornStef} for all $\lambda \geq 1$:
\begin{align*}
\prb{\mu(B(\rho, r)) \leq r^{\alpha} \lambda^{-1}} &\leq C_{\delta} e^{-c_{\delta} \lambda^{\frac{1-\delta}{\alpha}}}.
\end{align*}
Since the inserted graphs are deterministic in this case, it follows straightforwardly that Assumption \ref{assn:sec} is satisfied, so that both the annealed and quenched spectral dimensions are $\frac{2\alpha}{\alpha + 1}$, and the displacement exponent is $\frac{1}{\alpha + 1}$. This recovers results of \cite{BjornStef}; see that paper for more details.

\subsection{Inserting trees}\label{sctn:inserting trees}
Since we have good control on volumes in trees, we could also insert a separate Galton--Watson tree conditioned to have $n$ leaves at each vertex of degree $n$. To establish the volume exponents in this case, note that since the number of edges of a tree is one less than the total number of vertices, if $T$ is an unconditioned Galton--Watson tree with offspring distribution $\hat{\xi}(x) \sim cx^{-\beta}$ as $x \rightarrow \infty$ for some $\beta \in (1,2)$, and $l(T)$ is its number of leaves, then (applying \cite[Proposition 1.6]{KortIVPLeaves}, Lemma \ref{lem:stable sum tail prob lower tail app}$(iii)$ and an LDP) there exist $c, C \in (0, \infty)$ such that
\begin{align*}
\prb{ l(T) = n, |T| \geq \lambda n} = \sum_{p \geq \lambda n} \frac{1}{p} \prb{ W_{p-n}' = n-1} \prb{S_p = n} &\leq \frac{C}{\lambda n} e^{-c\lambda - c \lambda n},
\end{align*}
where $W'$ is a random walk started from zero with jump distribution $\eta (i) = \frac{p_{i+1}}{1-p_0}$ for $i \geq 1$, and $S_p$ is a sum of $p$ independent \textsf{Bernoulli}($p_0$) random variables. %(Here we are using the fact that a tree with $l(T) = n, |T| =p$ is coded by a Lukasiewicz path $W$ of length $p$ containing $n$ jumps of size $-1$ corresponding to leaves, and $n-p$ jumps with the jump distribution of $W'$ corresponding to non-leaves. The condition $ W_{p-n}' = n-1$ ensures that $W_n = -1$ and the factor of $\frac{1}{p}$ uses symmetry to account for the fact that we only want to count paths that achieve their minimum at time $p$). 
Using also the asymptotic of \cite[Theorem 3.1(ii)]{KortIVPLeaves} that there exists $c' \in (0, \infty)$ such that $\prb{ l(T) = n} \sim c' n^{-(1+ \frac{1}{\beta})}$ as $n \rightarrow \infty$, we deduce that, uniformly in $n \geq 1$,
\[
\prcondb{|T| \geq \lambda n}{ l(T) = n}{} \leq \frac{Cn^{1+ \frac{1}{\beta}}}{c'\lambda n} e^{-c\lambda}e^{- c \lambda n} = o(e^{-c \lambda})
\]
as $\lambda \to \infty$. Clearly also
\[
\prcondb{|T| \leq \lambda^{-1} n}{ l(T) = n}{} =0
\]
for all $\lambda \geq 1, n \geq 1$, so we deduce that Assumption \ref{assn:sec}(V) is satisfied and $v=1, \sav = \alpha - 1$, and $\fv = \alpha$.

To bound $\prcondb{\diam (T) \geq \lambda n^{1 - \frac{1}{\beta}}}{ l(T) = n}{}$, we first bound the quantity
\[
\prcondb{ l(T) \in [n, 2n]}{\diam (T) \geq \lambda n^{1 - \frac{1}{\beta}}}{}
\]
by decomposing along the Williams' spine (the spine of maximal height), which we know has length at least $\frac{1}{2}\lambda n^{1 - \frac{1}{\beta}}$ on the event $\{\diam (T) \geq \lambda n^{1 - \frac{1}{\beta}}\}$. By Proposition \ref{prop:spinal offspring LB}, we know that for any vertex $v$ on the Williams' spine within distance $\frac{1}{4}\lambda n^{1 - \frac{1}{\beta}}$ of the root, there exists a constant $c$ such that for all $x \leq \lambda^{\frac{1}{\beta -1}} n^{\frac{1}{\beta}}$, $\prb{\deg v \geq x} \geq cx^{-(\beta - 1)}$, independently for each such $v$. Therefore, letting $v_1, v_2, \ldots v_{\frac{1}{4}\lambda n^{1 - \frac{1}{\beta}}}$ denote the vertices on the Williams' spine within distance $\frac{1}{4}\lambda n^{1 - \frac{1}{\beta}}$ of the root, listed in order, we have that the sequence $(\deg v_i)_{i=1}^{\frac{1}{4}\lambda n^{1 - \frac{1}{\beta}}}$ stochastically dominates a sequence $(Y_i)_{i=1}^{\frac{1}{4}\lambda n^{1 - \frac{1}{\beta}}}$ of independent random variables satisfying $\prb{Y_i \geq x} \geq cx^{-(\beta - 1)}$ for all $x \leq  \lambda^{\frac{1}{\beta -1}} n^{\frac{1}{\beta}}$ and $\prb{Y_i \geq \lambda^{\frac{1}{\beta -1}} n^{\frac{1}{\beta}}} =0$. In particular, if $Z$ is a non-negative random variable satisfying $\prb{Z \geq x} \geq cx^{-(\beta - 1)}$ for all $x \geq 0$, then
\[
\Eb{e^{-\theta (Y_i-2)}} = \Eb{e^{-\theta Y_i}\mathbbm{1}\{Y_i \leq \lambda^{\frac{1}{\beta -1}} n^{\frac{1}{\beta}}\}} + \Eb{e^{-\theta Y_i}\mathbbm{1}\{Y_i > \lambda^{\frac{1}{\beta -1}} n^{\frac{1}{\beta}}\}} \leq \Eb{e^{-\theta Z}} + \prb{Y_i \geq \lambda^{\frac{1}{\beta -1}} n^{\frac{1}{\beta}}}.
\]
In particular, taking $\theta = cn^{\frac{-1}{\beta}}$ then by Lemma \ref{lem:Tauberian at 0 app} we deduce that there exist $c', c''>0$ such that for all $n \geq 1$:
\begin{equation}\label{eqn:moment deg comparison}
\Eb{e^{-cn^{\frac{-1}{\beta}}(Y_i-2)}} \leq 1 - c'n^{\frac{-(\beta - 1)}{\beta}} \leq \exp \{ - c''n^{\frac{-(\beta - 1)}{\beta}}\}.
\end{equation}
Therefore, letting $v_1, v_2, \ldots v_{\frac{1}{4}\lambda n^{1 - \frac{1}{\beta}}}$ denote the vertices on the Williams' spine within distance $\frac{1}{4}\lambda n^{1 - \frac{1}{\beta}}$ of the root, $(T_{i,j})_{i=1}^{\deg v_j -2}$ denote the subtrees emanating from all of the non-spinal offspring of vertex $v_j$, $l(T_{i,j})$ denote the number of leaves in each $T_{i,j}$, again using the asymptotic of \cite[Theorem 3.1(ii)]{KortIVPLeaves} that $\prb{ l(T) = n} \sim c' n^{-(1+ \frac{1}{\beta})}$ as $n \rightarrow \infty$ in Lemma \ref{lem:Tauberian at 0 app}, and then taking $\theta = n^{-1}$ and using \eqref{eqn:moment deg comparison} in the final line, we deduce that there exist constants $c', c > 0$ and $C<\infty$ such that
\begin{align*}
    \prcondb{ l(T) \leq 2n}{\diam (T) \geq \lambda n^{1 - \frac{1}{\beta}}}{} &\leq \Eb{\exp \left\{ - \theta \sum_{j=1}^{\frac{1}{4}\lambda n^{1 - \frac{1}{\beta}}} \sum_{i=1}^{\deg(v_j)-2} l(T_{i,j}) \right\}} e^{2\theta n} \\
    &=\Eb{\econdb{\exp \left\{ - \theta \sum_{j=1}^{\frac{1}{4}\lambda n^{1 - \frac{1}{\beta}}} \sum_{i=1}^{\deg(v_j)-2} l(T_{i,j}) \right\}}{(\deg(v_j))_{j=1}^{\frac{1}{4}\lambda n^{1 - \frac{1}{\beta}}}}{}} e^{2\theta n}   \\
   &\leq\Eb{\Eb{\exp \left\{ - \theta l(T_{i,j}) \right\}}^{(Y_j-2)}}^{\frac{1}{4}\lambda n^{1 - \frac{1}{\beta}}} e^{2\theta n} \\
      &\leq\Eb{\exp \left\{ - c'\theta^{\frac{1}{\beta}} \right\}^{(Y_j-2)}}^{\frac{1}{4}\lambda n^{1 - \frac{1}{\beta}}} e^{2\theta n} \leq Ce^{-c\lambda}.
\end{align*}

To recover the desired bound, we then use monotonicity and Bayes' Law to deduce that there exists $C'< \infty$ such that for all $\lambda > 1$,
\begin{align*}
        \prcondb{\diam (T) \geq \lambda n^{1 - \frac{1}{\beta}}}{l(T) =n}{} &\leq \prcondb{\diam (T) \geq \lambda n^{1 - \frac{1}{\beta}}}{l(T) \in [n, 2n]}{} \leq C'e^{-c \lambda}.
\end{align*}
This establishes the required upper bound in Assumption \ref{assn:sec}(D). The required lower bound on $d^U(T)$ on the event $\{l(T)=n\}$ follows from \cite[Result (II) and Corollary 3.3]{KortIVPLeaves}.

Since resistance is equal to the graph distance on trees, this verifies the remaining conditions of Assumption \ref{assn:sec} with $R=d=\frac{\beta}{\beta - 1}$, so that $\sad = \saR = \frac{\beta (\alpha - 1)}{\beta - 1}$, so substituting these into our formulas we deduce the results in Table \ref{table:RW exponents}.

%The volume growth exponent is therefore given by $\dERa = \begin{cases} \frac{\beta \alpha}{\beta - 1} \hspace{1cm} \text{ if } \frac{\beta}{\beta - 1} < \frac{1}{\alpha - 1}, \\
%\frac{\alpha}{\alpha - 1}  \hspace{1cm} \text{ if } \frac{\beta}{\beta - 1} \geq \frac{1}{\alpha - 1}.
%\end{cases}
%$
%
%In the first case, the exponent of $\frac{\beta}{\beta - 1}$ comes from the volume growth of the inserted $\beta$-stable trees, and this is compounded by a factor of $\alpha$ coming from the effect of having lots of fragments attached to the backbone of the underlying tree $\Tai$. In the latter case, the inserted trees do not contain enough volume to have an effect and so we pick up a factor of $\frac{1}{\alpha -1}$ along the backbone of $\Tai$, and then a factor of $\alpha$ from considering all the fragments, just as if we were working directly with $\Tai$ as in Section \ref{sctn:example just tree}.
%
%The random walk exponents can be calculated using the results of Section \ref{sctn:RW exponents} and are given in Table \ref{table:RW exponents}.

Note that we would expect the same results if we inserted a tree with $n$ vertices in total, rather than $n$ leaves, at a vertex of degree $n$, since the leaves asymptotically make up a constant proportion of the mass of the tree.

\subsection{Outerplanar maps: inserting dissected polygons}
Let $P_n$ be a convex polygon inscribed in the unit disc whose vertices correspond to the $n^{th}$ roots of unity. A \textit{dissection} of $P_n$ is obtained from $P_n$ by inserting a collection of chords that make up distinct diagonals of $P_n$: see Figure \ref{fig:BoltzmannDiss}.

\begin{figure}[h]
\includegraphics[width=12cm]{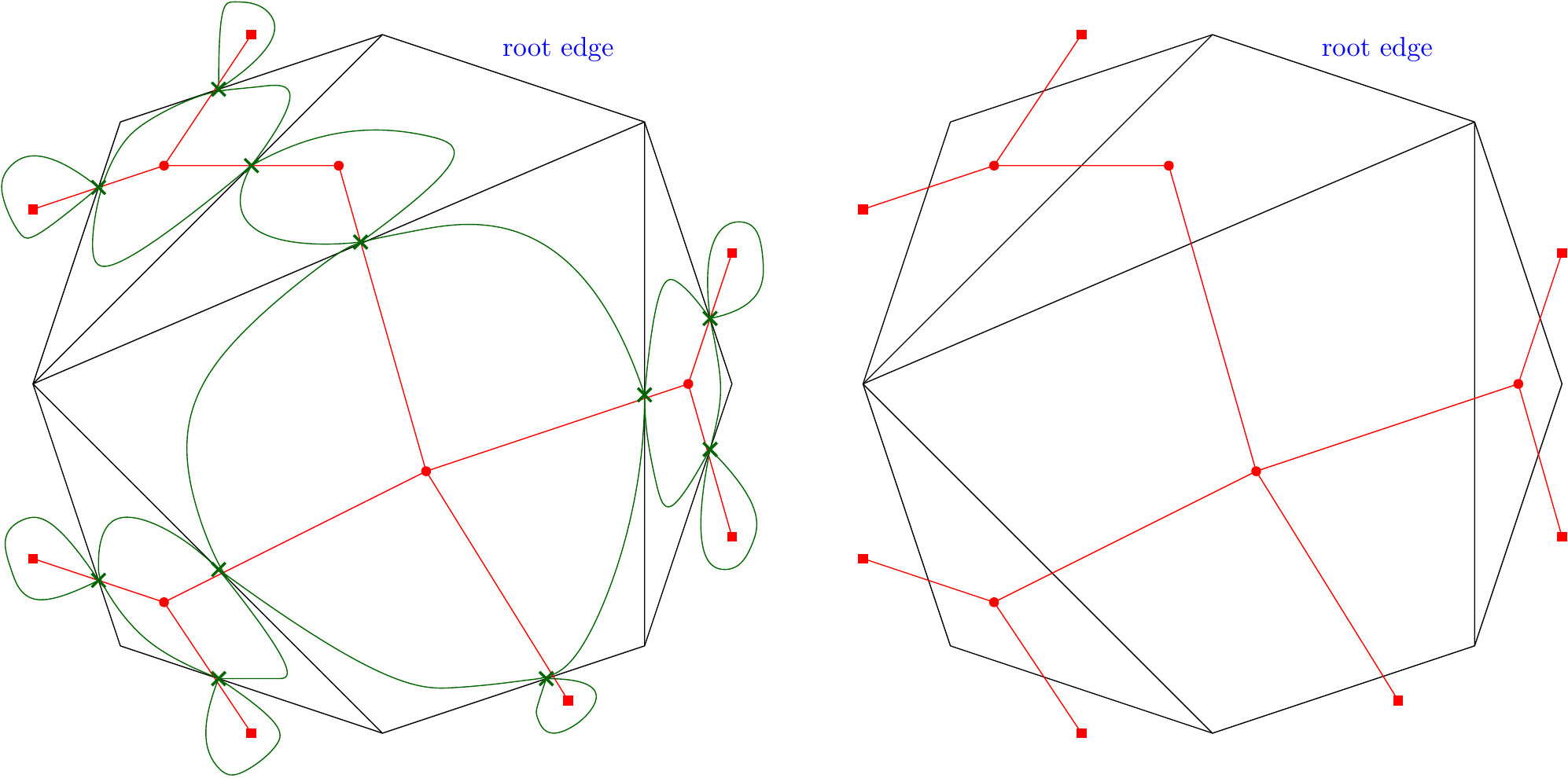}
\centering
\caption{A dissection and its inscribed tree and looptree.}\label{fig:BoltzmannDiss}
\end{figure}

If $\nu$ is a critical probability measure on the set $\{0, 2, 3 ,4, \ldots \}$, we can define a Boltzmann measure on dissections of a rooted $n$-gon, $\bP^{\nu}_n$, by setting
\[
\prnstart{D}{n}{\nu} \propto \prod_{F \in \textsf{Faces}(D)} \nu_{\deg f- 1}.
\]
For convenience we will assume that the support of $\nu$ is the entirety of the set $\{0, 2, 3 ,4, \ldots \}$. The measure $\bP^{\nu}_n$ is then well-defined under sensible assumptions on the tail of $\nu$ (see \cite[Section 1]{KortStableLam}).

Letting $\Dn$ denote a random Boltzmann dissection sampled according to $\bP^{\nu}_n$, $\Dn$ is now a natural candidate for decoration at a vertex of degree $n$ in $\Tai$. We will view $\Dn$ as a metric space (rather than as an embedding in the plane) by giving each edge of $\Dn$ length $1$. To establish exponents for the diameter and two-point function of $\Dn$, we will use a bijection between dissections of $P_n$ and trees with $n$ vertices, as illustrated in Figure \ref{fig:BoltzmannDiss}. It is shown in \cite[Proposition 1.4]{KortStableLam} that, if $T_n$ is the tree obtained from $\Dn$ in this way, then $T_n$ has the law of a Galton--Watson tree with offspring distribution $\nu$, conditioned on having $n-1$ leaves.

The main observation that will allow us to control the diameter and two-point function of $\Dn$ is that $\Dn$ looks a lot like $\Loop (T_n)$, as pointed out in \cite[Section 4.3]{RSLTCurKort}. We do not give the details, but Figure \ref{fig:BoltzmannDiss} suggests a natural way to define a correspondence between $\Dn$ and $\Loop (T_n)$. This straightforwardly enables us to verify Assumption \ref{assn:sec}(V), as in Section \ref{sctn:inserting trees}. However, the control on $\diam (T_n)$ and $\diam_{res} (T_n)$ is not necessarily as strong as we'd like, with
\[
\prcondb{\diam (\Dn) \geq n^{\frac{1}{\beta \wedge 2}}\lambda}{l(T_n) = n-1}{} \asymp \prcondb{\diam (\Loop (T_n)) \geq n^{\frac{1}{\beta \wedge 2}}\lambda}{l(T_n) = n-1}{} \leq c\lambda^{-(\beta -1)}
\]
where $\beta$ is such that $\nu (k, \infty) \leq ck^{-\beta}$, with $\diam_{res} (T_n)$ experiencing the same asymptotics. (This upper bound can be obtained by upper bounding $\diam (\Loop (T_n))$ by twice the maximum value attained by the Lukasiewicz path coding $T_n$). Assumption \ref{assn:sec} is therefore only satisfied in the finite variance case when $\beta > 1+\frac{\alpha}{\alpha -1}$. In this case $d=R=2$, and $v=1$, so we get the results in the table.
%
%Using Theorem \ref{thm:RW exponents weaker assn}, we also give the \textit{typical} displacement exponent for when $\nu$ has $\beta$-stable tails. In this case, we have that $d=R=\beta, v=1$, so that $\sad = \saR = \beta (\alpha - 1), \fv = \alpha$,
%which give the claimed results.
%\begin{enumerate}
%\item $\dERa =  \begin{cases} \frac{\alpha}{\alpha - 1} &\text{ if } \beta \geq \frac{1}{\alpha - 1} \\ 
%\alpha \beta &\text{ if } \beta < \frac{1}{\alpha - 1}.
%\end{cases}$
%\item $d_S = \begin{cases} \frac{2\alpha}{\alpha + (\alpha - 1)} &\text{ if } \beta \geq \frac{1}{\alpha - 1} \\ 
%\frac{2\alpha \beta}{\alpha \beta + 1} &\text{ if } \beta < \frac{1}{\alpha - 1}.
%\end{cases}$,
%\item $d_{\text{dis}} = \begin{cases} \frac{(\alpha -1)}{(\alpha -1) + \alpha(\saR \wedge 1)(\sad \wedge 1)} &\text{ if } \beta \geq \frac{1}{\alpha - 1} \\ 
%\frac{1}{1+\alpha \beta} &\text{ if } \beta < \frac{1}{\alpha - 1}.
%\end{cases}$.
%\end{enumerate}

\subsection{Critical \ER}
Motivated by the example of critical percolation, we can also consider a model where we insert a critically percolated graph, or more precisely the connected component of an \ER graph in its critical window, by which we mean the graph $G(n, p)$ such that $p = \frac{1}{n} + \frac{t}{n^{\frac{4}{3}}}$ for some $t > 0$ (see e.g. \cite[Section 2]{GoldschmidtPIMS} for an introduction to this model and the critical window).

It is well-known that, at criticality, a connected component of $G(n,p)$ looks roughly like a critical Galton--Watson tree with an $O(1)$ number of ``surplus'' edges. Heuristically, this can be explained as follows: let $\C$ denote a connected component of $G(n,p)$, and let $v_0 \in \C$. We consider the ``exploration tree'' rooted at $v_0$, constructed as follows: first let $v_0$ be the root. Then consider all vertices connected to $v_0$ and let these form the next generation of the tree. The number of such vertices is \textsf{Binomial}($n-1, p$); denote this number $M_1$. Then, given a vertex $v_1$ in generation one, we can repeat this process to find all the \textit{new} neighbours of $v_1$, and define these to be the offspring of $v_1$: the number of offspring is therefore \textsf{Binomial}($n-1-M_1, p$). We can repeat this process inductively to explore the cluster in a depth-first way: this will produce a spanning tree of the cluster, and as long as the total number of vertices explored remains small compared to $n$, it is fairly accurate to approximate the offspring distribution of this tree by a \textsf{Binomial}($n-1, p$) distribution. At any stage, there is a small probability that a given vertex $v$ also has some neighbours that correspond to vertices that have already been discovered, so that in order to reconstruct $\C$ from its spanning tree we must add a few extra edges.

To avoid ambiguities, for this construction we will fix $t > 0$, set $p_n =\frac{1}{n^{\frac{3}{2}}} + \frac{t}{n^{2}}$ and let $G_n$ have the law of the largest component of $G(n^{\frac{3}{2}}, p_n)$ conditioned on having $n$ vertices (by \cite[Corollary 2]{AldousCritical97}, $n$ is therefore on the natural scale to be the size of the largest cluster of $G(n^{\frac{3}{2}}, p_n)$). Using the tree viewpoint, we can relate the volume, two-point function and diameter of critical connected \ER graphs to give the following results.

\begin{prop}\label{prop:ER results}
Take $G_n$ as above. Then there exist constants $c, C \in (0, \infty)$, depending on $t$, such that for all $n \geq 1, \lambda \geq 1$:
\begin{enumerate}[(i)]
\item $\prb{\diam (G_n) \geq \lambda \sqrt{n}} \vee \prb{\diam_{res} (G_n) \geq \lambda \sqrt{n}} \leq e^{-c \lambda^2}$.
\item $\prb{d^U_n (G_n) \geq \sqrt{n}} \wedge \prb{R^U_n (G_n) \geq \sqrt{n}} \geq c > 0$.
\item $\prb{\mu (G_n) \geq \lambda n} \leq Ce^{-c\lambda^{\frac{2}{3}}}$.
\item $\prb{\mu (G_n) \geq n-1} =1$.
\end{enumerate}
\end{prop}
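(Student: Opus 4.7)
The plan is to treat the four parts separately, relying primarily on the depth-first exploration spanning tree of $G_n$ and on Aldous's excursion representation of critical \ER components. Part (iv) will be immediate: since $G_n$ is connected on $n$ vertices it contains at least $n-1$ edges, so $\mu(G_n) = 2|E(G_n)| \geq 2(n-1) \geq n-1$.

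For parts (i) and (ii), I will analyse the depth-first exploration spanning tree $T_n \subset G_n$. Under the conditioning $|V(G_n)|=n$, the walk coding $T_n$ has the law (up to an asymptotically negligible correction) of a random walk with $\text{Binomial}(n^{3/2}-1, p_n) \to \text{Poisson}(1)$ step distribution, conditioned to form an excursion of length $n$; this is a finite-variance setting with drift $O(n^{-1/2})$. Classical sub-Gaussian concentration for the maximum of such a conditioned excursion (via reflection arguments, or via exponential moment bounds on the Lukasiewicz path) then gives $\prb{\Height(T_n) \geq \lambda\sqrt{n}} \leq e^{-c\lambda^2}$, and $\diam(T_n) \leq 2\Height(T_n)$ extends this to the diameter. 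Since adding edges only shortens graph distances, and effective resistance never exceeds graph distance on a unit-conductance graph, $\diamR(G_n) \leq \diam(G_n) \leq \diam(T_n)$, which will give (i). For (ii), I will appeal to the scaling limit of Addario-Berry, Broutin and Goldschmidt, which says that $(G_n, n^{-1/2}d_{G_n}, n^{-1}\mu)$ converges in the Gromov--Hausdorff--Prokhorov sense to the (non-degenerate) Brownian continuum random graph; the analogous convergence holds for the resistance metric, since the latter is obtained from the Brownian CRT by only finitely many identifications. The rescaled distance and effective resistance between two independent uniform points then have non-degenerate positive limit laws, which yield the claimed lower bounds for all sufficiently large $n$ (and hence, by adjusting the constant, for all $n \geq 1$).

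For part (iii), I will use Aldous's representation: conditionally on the coding excursion of length $n$, the surplus $S_n := |E(G_n)|-(n-1)$ is Poisson with parameter $p_n A_n$, where $A_n$ is the area under the excursion. Since $p_n \sim n^{-3/2}$ and the rescaled area $A_n/n^{3/2}$ inherits sub-Gaussian upper tails from the Brownian excursion limit, the parameter $p_nA_n$ is itself a random variable of order one with sub-Gaussian upper tails. Combining the Poisson tail estimate $\prb{\text{Poisson}(u)\geq k} \leq (eu/k)^k e^{-u}$ with this control on $p_nA_n$, and splitting according to whether $p_nA_n$ exceeds $k/2$, will produce a super-exponential bound on $\prb{S_n \geq k}$ that is comfortably stronger than $Ce^{-ck^{2/3}}$. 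Taking $k = (\lambda/2-1)n$ and recalling $\mu(G_n)=2(n-1)+2S_n$ will then yield (iii). The main technical obstacle will be converting Aldous's asymptotic representation into a quantitative finite-$n$ estimate for $A_n$; this can be done either by direct exponential-moment computations for the conditioned coding walk, or, as an alternative and more elementary route, by bounding $\prb{|V(G_n)|=n, |E(G_n)|\geq \lambda n}$ via a Chernoff bound on the $\text{Binomial}(\binom{n}{2}, p_n)$ edge count within a fixed vertex set and dividing by the standard estimate $\prb{|V(G_n)|=n} \asymp n^{-1}$.
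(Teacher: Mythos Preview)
Your treatment of (i) and (iv) matches the paper's: spanning-tree height bounds with sub-Gaussian tails, $\diamR \leq \diam \leq 2\,\Height(T_n)$, and connectivity for (iv).

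For (ii) and (iii) you take different routes from the paper, both essentially correct but worth comparing. For (ii), the paper simply conditions on the surplus being zero (an event of uniformly positive probability by Aldous), in which case $G_n$ is a tree, resistance equals graph distance, and the two-point function is that of a (near-)Poisson$(1)$ Galton--Watson tree conditioned on size $n$. This is more elementary than your scaling-limit argument and avoids the one soft spot in your sketch: your claim that resistance-metric convergence follows ``since the latter is obtained from the CRT by finitely many identifications'' is not automatic from Gromov--Hausdorff convergence of the graph metric. It is true (via e.g.\ resistance-form arguments of the Croydon type, or direct electrical estimates), but it is an additional input, whereas the zero-surplus trick sidesteps the issue entirely.

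For (iii), the paper instead controls the surplus combinatorially: each surplus edge must join a vertex to one in its own or an adjacent generation of the exploration tree, so the total surplus is stochastically bounded by a sum over vertices of $\textsf{Binomial}(|D_{|v|}|+|D_{|v|-1}|,\,p_n)$ variables; combining the width bound $\prb{\sup_m |D_m| \geq \lambda^{1/3}\sqrt{n}} \leq Ce^{-c\lambda^{2/3}}$ with a Chernoff bound for the binomial gives the stated $e^{-c\lambda^{2/3}}$ tail. Your Aldous--area route is cleaner and in fact yields a stronger tail (since the rescaled area has all exponential moments), though making it uniform in $n$ requires a quantitative bound on the excursion area, which you can get most directly from the height bound already proved in (i): $A_n \leq n\,\Height(T_n)$, hence $p_nA_n$ is bounded on the high-probability event $\{\Height(T_n) \leq \lambda\sqrt{n}\}$. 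Your alternative Chernoff route is more delicate than it looks, because conditioning on $V(C_1)=S$ biases the edge count within $S$ upward; the area argument is the cleaner of your two options.
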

\begin{proof}
We just sketch the proof. For part $(i)$ with the graph distance, the result follows by repeating the proof of the height bound of \cite[Theorem 1.1]{AddarioBShortandFat} (it does not quite follow directly since our exploration tree is not quite a critical Galton--Watson tree, but we are close enough that the proof still works, and being slightly subcritical is intuitively helpful for this bound anyway since this corresponds to more of a condensation regime). This also gives the resistance result since the resistance is upper bounded by the graph distance. For part $(ii)$, we first condition on having zero surplus, which has strictly positive probability in the limit. In this case the exploration tree is again close to a critical Galton--Watson tree, and the resistance is equal to the graph distance. The result then follows since the offspring distribution of the tree is close to \textsf{Poisson}($1$), which corresponds to a uniform labelled Galton--Watson tree, and in this case it is known that on rescaling by $\sqrt{n}$ the two-point function satisfies $\prb{d_n^U(T_n) = \lfloor \lambda \sqrt{n} \rfloor} \sim \frac{\lambda}{\sqrt{2n}} e^{-\frac{\lambda^2}{2}}$ as $n \rightarrow \infty$ (e.g. see \cite{FlajoletSolvableUrn}).

To control the volume (i.e. number of edges) we control the surplus. To do this, note that given a vertex $v_k$ in the exploration tree, there can only be extra edges joining it somewhere within the same generation, or to an adjacent generation (otherwise this disrupts the generation structure of the tree). Therefore, if $v$ is a vertex of the tree and $D_m$ denotes the $m^{th}$ generation of the tree, we can introduce a \textsf{Binomial}($|D_{|v|}| + |D_{|v|-1}|, p_n$) random variable which we denote $S_{v}$, and the total surplus is upper bounded by summing these over all vertices in the tree. Then, again after taking care of the necessary details that our tree is not quite a critical Galton--Watson tree, we have by \cite[Theorem 1.1]{AddarioBDevroyeJansonSubGaussianTree} that there exist constants $c_t, C_t \in (0, \infty)$ such that $\prb{\sup_m |D_m| \geq \sqrt{n} \lambda^p} \leq C_te^{-c_t\lambda^{2p}}$ and $\prb{\textsf{Binomial} (2n^{\frac{3}{2}} \lambda^p, \frac{1}{n^{\frac{3}{2}}} + \frac{t}{n^{2}}) \geq \lambda} \leq C_te^{-c_t \lambda^{1-p}}$. We take $p = \frac{1}{3}$. On the complement of these events, the surplus is less than $\lambda$, so $\mu (G_n) \leq n+\lambda$, which gives part $(iii)$ (and more).
\end{proof}
%\begin{proof}
%The proof is really just a rephrasing of the proof of \cite[Lemma 6.2]{NachmiasPeresCriticalDiameterMixing} and we include it here only for completeness. In keeping with the notation there, we set $R = \lambda \sqrt{n}$, and $h = \frac{n}{R}=\frac{\sqrt{n}}{\lambda}$. Take a vertex $v_0$ in $G_n$, and consider the exploration tree started from $v_0$ constructed as above. We say that generation $j$ of the exploration tree is ``thin'' if it contains at most $8h$ vertices.
%\end{proof}

%\begin{rmk}
%The rescaled two point function for a uniform critical random tree in fact converges to a Rayleigh distribution.
%\end{rmk}
%
We therefore deduce that Assumption \ref{assn:sec} is satisfied with $d=R=2, v=1$, and the fundamental exponents take the following values:
\[
\fv = \alpha, \hspace{1cm} \sad = \saR = 2(\alpha - 1).
\]
\begin{comment}
We deduce that
\begin{enumerate}
\item The \textbf{volume growth} exponent $\dERa= \begin{cases}
\frac{\alpha}{\alpha - 1} &\text{ if } \alpha \geq \frac{3}{2},\\
2 \alpha &\text{ if } \alpha \leq \frac{3}{2}.
\end{cases}$
\item The \textbf{spectral dimension} $d_S= \begin{cases}
\frac{2\alpha}{2\alpha - 1} &\text{ if } \alpha \geq \frac{3}{2},\\
\frac{4\alpha}{2\alpha+1} &\text{ if } \alpha \leq \frac{3}{2}.
\end{cases}$
\item The \textbf{displacement/diffusivity} exponent $d_w^{-1}= \begin{cases}
\frac{2\alpha -1}{\alpha -1} &\text{ if } \alpha \geq \frac{3}{2},\\
2\alpha + 1 &\text{ if } \alpha \leq \frac{3}{2}.
\end{cases}$
\end{enumerate}
\end{comment}

Rather than forcing all vertices of the inserted critical graph to be boundary vertices, we could also consider inserting an independent copy of $G_{n^{\beta}}$ at a vertex of degree $n$, for some $\beta \geq 1$ and uniformly choosing $n$ distinct vertices to be boundary vertices. In this case, it follows from Proposition \ref{prop:ER results} that $d=r=\frac{2}{\beta}$ and $v=\beta$, so that
\[
\fv = \frac{\alpha}{\beta}, \hspace{1cm} \sad = \saR = \frac{2(\alpha - 1)}{\beta}.
\]
\begin{comment}
We deduce that:
\begin{enumerate}
\item The \textbf{volume growth} exponent $\dERa= \begin{cases}
\frac{\alpha}{\alpha - 1} &\text{ if } \alpha \geq \beta,\\
\frac{\beta}{\alpha - 1} &\text{ if } \frac{\beta + 2}{2} \leq \alpha \leq \beta,\\
2 &\text{ if } \alpha \leq \frac{\beta + 2}{2}.
\end{cases}$
\item The \textbf{spectral dimension} $d_S= \begin{cases}
\frac{2\alpha }{2\alpha - 1} &\text{ if } \alpha \geq \beta,\\
\frac{2 \beta}{\beta + \alpha - 1} &\text{ if } \frac{\beta + 2}{2} \leq \alpha \leq \beta,\\
\frac{4}{3} &\text{ if } \alpha \leq \frac{\beta + 2}{2}.
\end{cases}$
\item The \textbf{displacement/diffusivity} exponent $d_w^{-1}= \begin{cases}
\frac{\alpha -1}{2\alpha -1} &\text{ if } \alpha \geq \beta,\\
\frac{\alpha -1}{\alpha -1 + \beta} &\text{ if } \frac{\beta + 2}{2} \leq \alpha \leq \beta,\\
\frac{1}{3} &\text{ if } \alpha \leq \frac{\beta + 2}{2}.
\end{cases}$
\end{enumerate}
\end{comment}
Note that if $\beta > 2$, the local geometry always dominates and we never see the tree geometry.

\begin{rmk}
We have not written the details, but one would expect the same result on taking a critical configuration model in place of the \ER graph. We also anticipate that we could insert a $\beta$-stable graph, as considered in \cite{goldschmidthaassen2018stable, conchonkerjangoldschmidt2020stable} and we would get the same results as for inserting $\beta$-stable trees by making similar arguments to the \ER example considered above.
\end{rmk}

%Let $G_m$ denote a critical \ER graph, conditioned to have $m$ vertices. We will use the following results.
%
%\begin{lem}\label{lem:ER unif dist}
%Let $U_1^m, U_2^m$ be two independent points chosen uniformly from the vertices of $G_m$. Let $d^U_m := d_{G_m}(U_1^m, U_2^m)$ be the distance between them in $G_m$. Then, $d_m^U$ is stochastically non-decreasing in $m$, and moreover, as $\lambda \rightarrow \infty$:
%\begin{align*}
%\prb{d^U_m \geq \lambda m^{\frac{1}{2}}} &\leq e^{-\lambda} \\
%\prb{d^U_m \leq \lambda^{-1} m^{\frac{1}{2}}} &\leq \lambda^{-2}.
%\end{align*}
%(*replace $2$ with $d$).
%\end{lem}
%
%The same result holds with the resistance metric.
%
%\begin{lem}\label{lem:ER unif res}
%Let $U_1^m, U_2^m$ be two independent points chosen uniformly from the vertices of $G_m$. Let $R^U_m := \Ref(U_1^m, U_2^m)$ be the effective resistance between them in $G_m$. Then, as $\lambda \rightarrow \infty$
%\begin{align*}
%\prb{R^U_m \geq \lambda m^{\frac{1}{2}}} &\leq e^{-\lambda} \\
%\prb{R^U_m \leq \lambda^{-1} m^{\frac{1}{2}}} &\leq \lambda^{-2}.
%\end{align*}
%\end{lem}
%
%\begin{lem}\label{lem:ER vol}
%Suppose $m > r^2$. Then 
%\begin{align*}
%\prb{\mu (B(U_1^m, r)) \geq r^2 \lambda} &\leq Ce^{-c \lambda} \\
%\prb{\mu (B(U_1^m, r)) \leq r^2 \lambda^{-1}} &\leq Ce^{-c \lambda}.
%\end{align*}
%\end{lem}

\subsection{Sierpinski triangle}
In order to gain insight into the effect of inserting fractal-type graphs, again as in the gasket-type structures expected in critical percolation on random planar maps, one could also consider the exponents obtained when inserting a Sierpinski triangle. Letting $\Trin$ be the $n^{th}$ level approximation to the Sierpinski triangle as defined in \cite[Section 2]{BarDiffonFract} (also depicted in Figure \ref{fig:sierpinksi triangles}), it is always the case that the boundary length of $\Trin$ is equal to $3\cdot 2^n$: therefore, if $m \in (3\cdot 2^n, 3\cdot 2^{n+1})$ for some $n \geq 1$, one would have to do appropriate ``surgery'' to the graph $G_{n+1}$ in order to define an appropriate version of ``the Sierpinski triangle with boundary length $m$''. We will not do this is explicitly here, and just give the appropriate volume bounds for the level $n$ approximation $\Trin$.

\begin{figure}[h]
\begin{subfigure}{.3\textwidth}
\includegraphics[height=3cm]{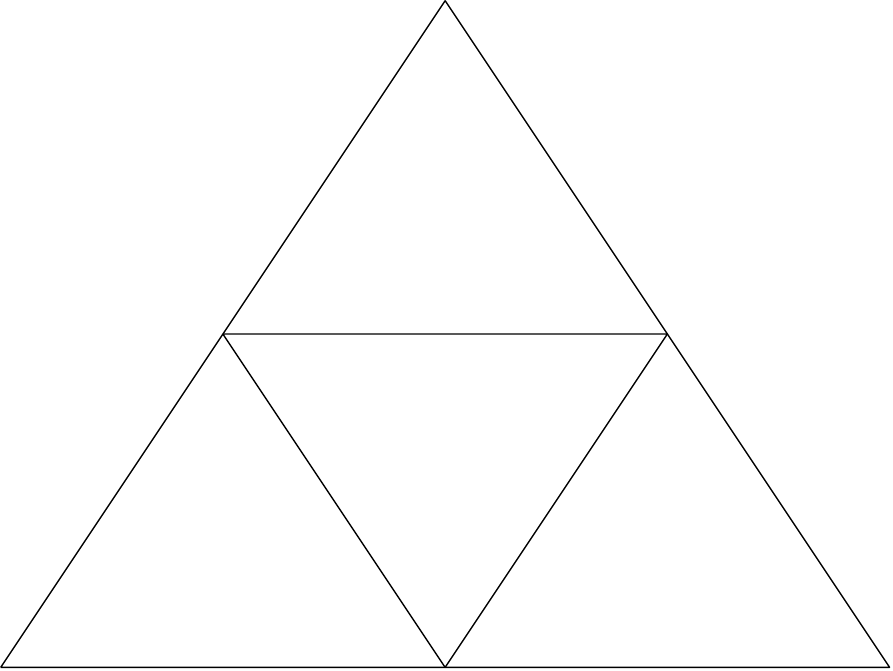}
\centering
\subcaption{$T^{\Delta}_1$}
\end{subfigure}
\begin{subfigure}{.3\textwidth}
\includegraphics[height=3cm]{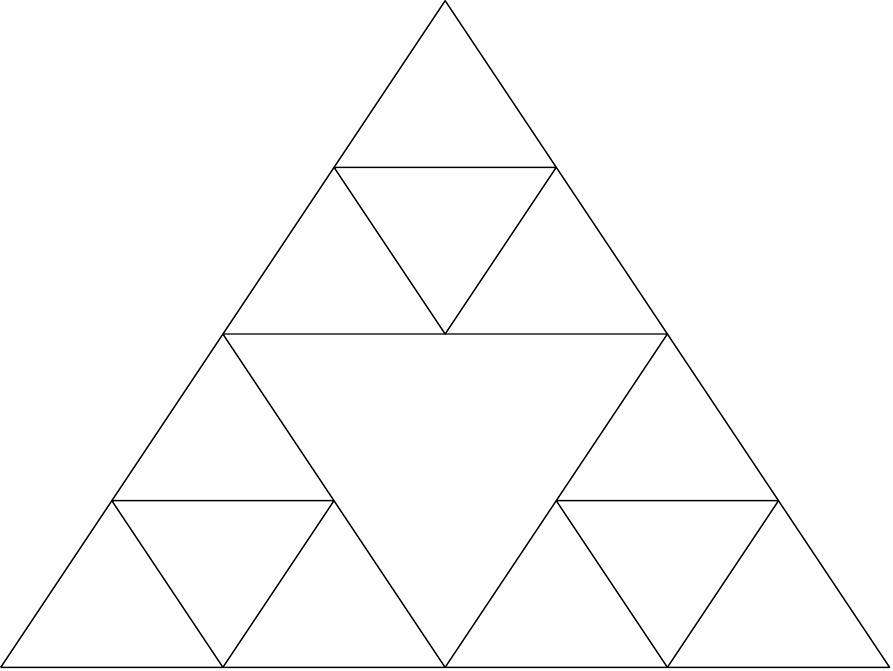}
\centering
\subcaption{$T^{\Delta}_2$}
\end{subfigure}
\begin{subfigure}{.3\textwidth}
\includegraphics[height=3cm]{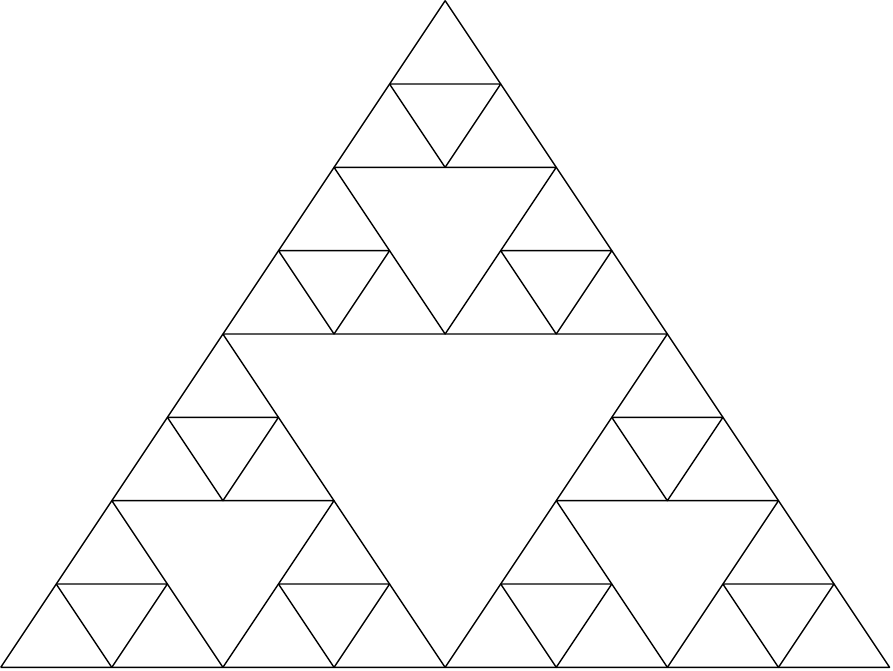}
\centering
\subcaption{$T^{\Delta}_3$}
\end{subfigure}
\caption{Sierpinski triangles}\label{fig:sierpinksi triangles}
\end{figure}

We can use the self-similarity of the Sierpinski triangle to study resistances, volumes and diameters of $\Trin$ as well, and give the (deterministic) results for these in Table \ref{table:Sier exponents}. The effective resistance bound can be obtained using the $\Delta - Y$ transformation (e.g. \cite[2.3.III]{LyonsPeresBook}): see also \cite[Section 2]{BarDiffonFract} for more explicit computations. Here we assume that $1$ and $2$ are the labels of two distinct extremal corners of $\Trin$.

\begin{table}[h!]
\begin{center}
\begin{tabular}{ |c|c|c|c|c|c| }
 \hline
\textbf{Boundary} & \textbf{Volume} & \textbf{$d(1,2)$} & \textbf{$\Ref (1,2)$} & \textbf{Diameter} & \textbf{Resistance diam} \\ 
 \hline
$3 \cdot 2^n$ & $3^{n+1}$ & $2^n$ & $\left( \frac{5}{3} \right)^n$ & $ 2^n$ & $\left( \frac{5}{3} \right)^n$ \\ 
\hline
$m$ & $m^{\frac{\log 3}{\log 2}}$ & $m$ & $m^{\frac{\log 5 - \log 3}{\log 2}}$ & $m$ & $m^{\frac{\log 5 - \log 3}{\log 2}}$ \\
 \hline
\end{tabular}
\caption{Volumes and distances in the $n^{th}$ level Sierpinski gasket, in terms of $m$ and $n$, where $m$ is the boundary length. Distances given up to multiplication by bounded positive constants.}\label{table:Sier exponents}
\end{center}
\end{table}

We can also (crudely) bound the diameters using the bound for the distances between extremal corners in the table: since to go from any point $x \in \Trin$ to any other point $y \in \Trin$ we have to pass through an edge of at most two triangles at each specific level $m \leq n$ (once on the ``way up'' from $x$, then once on the ``way down'' to $y$), we get that
\[
\diam (\Trin) \leq 2 \sum_{m=0}^n 2^m \leq 2^{n+2}, \hspace{1cm} \diamR (\Trin) \leq 2 \sum_{m=0}^n \left(\frac{5}{3} \right)^m \leq \left(\frac{5}{3} \right)^{n+1}.
\]
To get a similar lower bound for the distance between two uniform points on the boundary, note that there is a positive probability that the two points are in different ``sub-triangles'' of $T_1^{\Delta}$, and the distance will therefore be on the same order as $d(1,2)$ (and similarly for resistance).

Note also that the degrees of every vertex are either $2$ or $4$. Therefore the degree measure is always comparable to the number of vertices. Moreover, since the graphs and the bounds in Table \ref{table:Sier exponents} are deterministic, the functions giving the polynomial tail decay in Assumption \ref{assn:sec} are all zero for sufficiently large $\lambda$, so we have that $d = 1$, $R = \frac{\log 2}{\log 5 - \log 3}, v=\frac{\log 3}{\log 2}$, and obtain the following exponents:
\[
\sad = \alpha - 1, \hspace{1cm} \saR = \frac{(\alpha - 1) \log 2}{\log 5 - \log 3}, \hspace{1cm} \fv = \frac{\alpha \log 2}{\log 3},
\]
which lead to the claimed results.

\subsection{Supercritical \ER, the complete graph, or the 2-dimensional box}
This final subsection is only speculative. As well as critical Erd\"os-R\'enyi, one could also insert sufficiently supercritical \ER graphs, as well as the complete graph. It is well-known that, if $G(n,p)$ is the \ER graph on $n$ vertices and $p = \frac{\lambda}{n}$ for some $\lambda > 1$, then the largest connected component has order $n$ vertices, order $n^2$ edges, and diameter of order $\log n$ (e.g. see \cite[Theorem 1.1]{RiordanWormaldDiameterSparse}). The complete graph on $n$ vertices similarly has $n$ vertices, order $n^2$ edges, diameter $1$ and resistance diameter of order $\frac{1}{n}$. 

To fit these models into the framework of this paper, we therefore effectively want to take $d=R=\infty$, and $v=2$. Some care is needed to check that we can really do this, but we can dominate $\log n$ by $n^{\delta}$ for some sufficiently small $\delta>0$, and this also gives us very good control on the tail decay required for Assumption \ref{assn:sec} (D). Additionally, in sufficiently supercritical regimes resistance will actively stochastically decrease with $n$ which is clearly different to the assumptions of this paper; however, since ``most'' vertices in Kesten's tree are of low degree it is clear that asymptotically resistance in $\TERa$ should grow on the same order as distances in the underlying tree.

To define this model in the supercritical case, we let $G_n$ be the largest connected component of $G(Cn, p)$, conditioned to have $n$ vertices, where $C \geq 1$ is an appropriately chosen constant. Alternatively, we can let $G_n$ be the complete graph on $n$ vertices.

Setting $d=R=\infty$ (effectively) and $v=2$ therefore gives the results in the penultimate line of Table \ref{table:RW exponents}.

The same intuition works on inserting a two-dimensional lattice box with side lengths $\frac{1}{4}n$: the volume of the box is of order $n^2$, the graph distance across the box of order $n$, and the resistance of order $\log n$, which can be stochastically dominated by $n^{\delta}$ for all $\delta > 0$. We effectively want to take $R=\infty, d=1, v=2$, so that $\sad = \alpha - 1$, $\saR = \infty$, $\fv = \frac{\alpha}{2}$.

\begin{appendix}

\section{Appendix: sums of stable variables}
In this section we give some technical lemmas. The first result is used in the proof of Lemma \ref{lem:stable sum tail prob app}.

%\begin{rmk}
%Since we are not including slowly-varying functions in the tails of our random variables, this means that any ``$1$-stable'' random variable will fall into an infinite mean regime for the purposes of this paper. In most of our proofs we have to deal with the infinite mean and finite mean regimes separately; however, since all of our phase transitions will ultimately be continuous, we don't expect that adding a slowly-varying function will change the result when any of the tail decay exponents are equal to $-1$, but may just mean that the finite mean proof method is required instead.
%\end{rmk}

\begin{lem}\label{lem:truncated stable sum app}
Let $(X_i)_{i=1}^{\infty}$ be i.i.d. non-negative such that $\pr{X_1>x} \sim cx^{-\beta}$ as $x \to \infty$, for some $\beta \in (0,1]$ and $c \in (0, \infty)$, and for $k \geq 1$ let $\Tc = \inf\{i \geq 1: X_i > k\}$ (or equal to infinity if this set is empty). Set
\[
\Sc = \sum_{i=1}^{\Tc-1} X_i.
\]
\begin{enumerate}[(i)]
\item If $\beta \in (0, 1)$,
%be either an independent $\textsf{Geo}(\pr{X_1 > c})$ random variable, or deterministically equal to $\pr{X_1 > c}$. Take some $c>0$, and set
%\[
%\Sc = \sum_{i=1}^{T^{(c)}} X_i \mathbbm{1} \{X_i \leq c\}.
%\]
then 
%there exists $\theta > 0, K< \infty$ such that for all $k \geq 1$, $\E{e^{\theta k^{-1}\Sc}}< K$, so that 
there exist $c, C \in (0, \infty)$ such that $\pr{S^{(k)} \geq \lambda k} \leq Ce^{-c \lambda}$ for all $k, \lambda \geq 1$.
\item If $\beta = 1$, then there exist $c, C \in (0, \infty)$ such that $\pr{S^{(k)} \geq \lambda k \log k} \leq Ce^{-c \lambda}$ for all $k, \lambda \geq 1$.
%\item Suppose that $\beta \in (1,2)$, and that $\E{X_1}=0$. Then there exists $\theta > 0, K< \infty$ such that for all $k \geq 1$, $\E{e^{\theta k^{-1}\Sc}}< K$.*use this?**write direct proof
\end{enumerate}
\end{lem}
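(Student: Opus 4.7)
My plan is to use a Chernoff (exponential moment) bound on the truncated sum. The key observation is that $T^{(k)}$ is geometric: $\prstart{T^{(k)} = n}{}{} = (1-p_k)^{n-1} p_k$ where $p_k = \pr{X_1 > k}$, and by i.i.d.\ the random variables $X_1, \ldots, X_{T^{(k)}-1}$ are conditionally i.i.d.\ with the law of $\tilde X := X_1 \mid X_1 \leq k$. This gives the decomposition
\begin{align*}
\pr{S^{(k)} \geq \lambda k} = p_k \sum_{n=1}^\infty (1-p_k)^{n-1} \pr{\textstyle\sum_{i=1}^{n-1} \tilde X_i \geq \lambda k}.
\end{align*}
Applying Markov's inequality with a parameter $\theta>0$ to the inner probability and summing the resulting geometric series yields
\begin{align*}
\pr{S^{(k)} \geq \lambda k} \leq \frac{p_k \, e^{-\theta \lambda k}}{1-(1-p_k)\E{e^{\theta \tilde X}}},
\end{align*}
valid whenever the denominator is positive.

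The main step is to choose $\theta$ so that $(1-p_k)\E{e^{\theta \tilde X}} = \E{e^{\theta X_1}\mathbbm{1}\{X_1 \leq k\}}$ is at most $1 - c_1 k^{-\beta}$ for $\beta<1$ (resp.\ $1 - c_1/k$ for $\beta=1$), matching the order of $p_k$ so the denominator is of order $p_k$. To do this I would use the inequality $e^y \leq 1 + y + y^2$ for $y \in [0,1]$ to write
\begin{align*}
\E{e^{\theta X_1}\mathbbm{1}\{X_1 \leq k\}} \leq (1-p_k) + \theta \E{X_1\mathbbm{1}\{X_1 \leq k\}} + \theta^2 \E{X_1^2\mathbbm{1}\{X_1 \leq k\}},
\end{align*}
and then estimate the truncated moments from $\pr{X_1 > x} \sim cx^{-\beta}$. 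For $\beta \in (0,1)$ this gives $\E{X_1 \mathbbm{1}\{X_1 \leq k\}} = O(k^{1-\beta})$ and $\E{X_1^2 \mathbbm{1}\{X_1 \leq k\}} = O(k^{2-\beta})$, so with $\theta = \delta/k$ the last two terms contribute $O(\delta k^{-\beta})$; taking $\delta$ small enough compared to $c$ absorbs this into $p_k$. For $\beta = 1$ we instead have $\E{X_1 \mathbbm{1}\{X_1 \leq k\}} = O(\log k)$ and $\E{X_1^2\mathbbm{1}\{X_1 \leq k\}} = O(k)$, so the correct choice is $\theta = \delta/(k \log k)$, and the same small-$\delta$ argument works.

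Plugging these bounds in, the prefactor $p_k / (c_1 k^{-\beta})$ (resp.\ $p_k/(c_1/k)$) is uniformly bounded, leaving a decay of $e^{-\theta \lambda k} = e^{-\delta \lambda}$ (resp.\ $e^{-\theta \lambda k \log k} = e^{-\delta \lambda}$), which is exactly the claimed exponential bound. The estimates above are valid for all $k$ large enough; for bounded $k$ the claim is trivial after inflating the constant $C$. The only genuine care needed is to balance $\theta$ between being small enough to control the exponential moments and large enough to produce $e^{-c\lambda}$ decay, but this is a finite computation in each of the two cases.
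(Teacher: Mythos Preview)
Your Chernoff-bound argument is correct and yields the claimed exponential decay. The paper, however, takes a genuinely different route: instead of computing the moment generating function of $S^{(k)}$, it uses a block/renewal argument. Writing $H_x=\inf\{n:S_n\geq x\}$, the paper notes that $\{S^{(k)}\geq\lambda k\}\subset\{H_{\lambda k}\leq T^{(k)}\}$, and since all jumps before $T^{(k)}$ are at most $k$, the walk must pass through each of the levels $2Ak,4Ak,\ldots,2\lfloor\lambda/(2A)\rfloor Ak$ in turn. By the memorylessness of the geometric time $T^{(k)}$, each crossing of a block of length $2Ak$ happens (conditionally) with probability at most $\mathbb P(H_{Ak}\leq T^{(k)})$, and this is shown to be bounded strictly below $1$ uniformly in $k$ by Markov's inequality combined with Wald's identity $\mathbb E[S^{(k)}]=\mathbb E[T^{(k)}-1]\,\mathbb E[X_1\mid X_1\leq k]=O(k)$ (resp.\ $O(k\log k)$). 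Multiplying over $\lfloor\lambda/(2A)\rfloor$ blocks gives the exponential bound.

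Both proofs ultimately rest on the same first truncated moment estimate $\mathbb E[X_1\mathbbm 1\{X_1\leq k\}]=O(k^{1-\beta})$ (resp.\ $O(\log k)$). Your approach is more direct and avoids the block decomposition, at the cost of also needing the second truncated moment; the paper's argument uses only the first moment but introduces the renewal machinery. Either is a clean proof of the lemma.
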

\begin{proof}
%Since the random variables $(X_i)_{i \geq 1}$ are i.i.d., it follows that for all $a \geq 0, n \in \N$ that:
%\[
%\pr{\Sc \geq a+ 2nc} \leq \pr{\Sc \geq c}^n.
%\]
%Iterating, we obtain that 
%\[
%\pr{\Sc>2nc} \leq \pr{\Sc \geq c}^n.
%\]
%It follows that
%\begin{align*}
%\E{e^{\theta \Sc}} \leq \E{\lceil e^{\theta \Sc} \rceil} \leq \sum_{n=1}^{\infty} \pr{\lceil e^{\theta \Sc} \rceil \geq n} \leq \sum_{n=1}^{\infty} \pr{\Sc \geq \frac{1}{\theta} \log n} &= \sum_{m=1}^{\infty} \sum_{n=e^{2\theta mc}+1}^{e^{2\theta (m+1)c}}  \pr{\Sc \geq \frac{1}{\theta} \log n} \\
%&\leq \sum_{m=1}^{\infty} \sum_{n=e^{2\theta mc}+1}^{e^{2\theta (m+1)c}}  \pr{\Sc \geq c}^{m} \\
%&\leq \sum_{m=1}^{\infty} e^{2 \theta c(m+1)} \pr{\Sc \geq c}^{m}.
%\end{align*}
%Setting $p_c = \pr{\Sc \geq c}$, this final term is finite provided $\theta \leq \frac{\log p_c^{-1}}{2c}$.
%
%We now show that we can bound $p_c$ uniformly in $c \geq 1$.
\begin{enumerate}[(i)]
\item The proof is essentially the same as the argument for a similar result on \cite[p. 25]{RSLTCurKort}. Let $S_n = \sum_{i=1}^n X_i$, $H_0=0$, and $H_k = \inf\{ n \geq 0: S_n \geq k\}$ be the hitting time of $k$ for the random walk $(S_n)_{n \geq 1}$. Then, if $S^{(k)} \geq k \lambda$ it must be the case that $H_{k \lambda} \leq T^{(k)}$. Therefore, for any $A\geq 1$ we can write
\begin{align*}
\pr{S^{(k)} \geq k \lambda} &\leq \pr{H_{k \lambda} \leq T^{(k)}} \\
&\leq \pr{H_{2kA} \leq T^{(k)}, H_{4kA} \leq T^{(k)}, H_{6kA} \leq T^{(k)}, \ldots , H_{2 \lfloor \frac{1}{2A} \lambda \rfloor kA} \leq T^{(k)}} \\
&\leq \prod_{i=1}^{\lfloor \frac{1}{2A} \lambda \rfloor} \prcond{H_{2ikA} \leq T^{(k)}}{{H_{2(i-1)kA} \leq T^{(k)}}}{}.
\end{align*}
Since there are no jumps exceeding $k$ before time $T^{(k)}$, it follows that $S_{H_{2iAk}} \leq (2iA+1)k$ on the event $\{H_{2ikA} \leq T^{(k)}\}$ for all $i \geq 1$. Moreover, since $T^{(k)}$ has a geometric distribution, it therefore follows from the memoryless property that for all $i \geq 1$:
\begin{align*}
\lefteqn{\prcond{H_{2iAk} \leq T^{(k)}}{{H_{2(i-1)Ak} \leq T^{(k)}}}{}} \qquad & \\ &\leq \prcond{H_{2iAk} \leq T^{(k)}}{{H_{2(i-1)Ak} \leq T^{(k)}}, S_{H_{2(i-1)Ak}} \leq (2(i-1)A+1)k}{} \\
&\leq \pr{H_{Ak} \leq T^{(k)}}.
\end{align*}
Therefore, the exponential decay will follow once we can show that $\pr{H_{Ak} \leq T^{(k)}}$ can be bounded below $1$ uniformly in $k$. To show this, we use Markov's inequality and Wald's equality to write:
\begin{align*}
\pr{H_{Ak} \leq T^{(k)}} = \pr{S^{(k)} \geq Ak} \leq \E{S^{(k)}}A^{-1}k^{-1} \leq A^{-1}c_1\econd{X_1}{X_1 < k}{} k^{\beta} k^{-1} \leq A^{-1}c_2
\end{align*}
for all $k \geq 1$, where $c_1$ and $c_2$ are explicit constants depending on the tails of $X_1$. We can therefore choose $A>1$ so that the right hand side above is strictly less than $1$.
\item If $\beta = 1$, we can use the same proof but we write
\begin{align*}
\pr{S^{(k)} \geq \lambda k \log k} \leq \prod_{i=1}^{\lfloor \frac{1}{2A} \lambda \rfloor} \prcond{H_{2iAk \log k} \leq T^{(k)}}{{H_{2(i-1)Ak \log k} \leq T^{(k)}}}{} \leq \pr{H_{Ak\log k} \leq T^{(k)}}^{\lfloor \frac{1}{2A} \lambda \rfloor},
\end{align*}
and use that there exists $c<\infty$ such that $\econd{X_1}{X_1 < k}{} \leq 2c \log k$ to bound the final probability using  Markov's inequality and Wald's equality as in $(i)$.

\end{enumerate}
\end{proof}

%**NB Same proof should work if $\beta \in [1,2)$ and $\E{X_1}=0$.
We will also need the following lemma. The result should be standard; however we couldn't find a specific proof in the literature, so have provided one for completeness.

\begin{lem}\label{lem:stable sum tail prob app}
Let $(X_i)_{i \geq 1}$ be i.i.d. non-negative and such that $\pr{X_i \geq x} \sim cx^{-\beta}$ as $x \to \infty$ for some $\beta \in (0, \infty)$ and $c \in (0, \infty)$. For $n \geq 1$, let $S_n = \sum_{i=1}^n X_i$. 
\begin{enumerate}[(i)]
\item If $\beta < 1$, then there exists a constant $c' < \infty$ such that for each $n \geq 2$ and all $\lambda \geq 1$,
\[
\pr{S_n \geq n^{\frac{1}{\beta}} \lambda} \leq c'\lambda^{-\beta}.
\]
\item If $\beta = 1$, then there exists a constant $c' < \infty$ such that for each $n \geq 2$ and all $\lambda \geq 1$,
\[
\pr{S_n \geq \lambda n \log n} \leq c'\lambda^{-\beta}.
\]
\item If $\beta > 1$, then for any $\epsilon > 0$ there exists a constant $c_{\epsilon} < \infty$ such that for each $n \geq 2$ and all $\lambda \geq 1$, 
\[
\pr{S_n \geq \lambda n} \leq c_{\epsilon}\lambda^{-(\beta - \epsilon)}.
\]
This bound still holds if the $(X_i)_{i \geq 1}$ are not independent.
\end{enumerate}
\end{lem}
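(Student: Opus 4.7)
All three parts follow the same \emph{one big jump} template: write $\{S_n \geq L\} \subseteq \{\max_{i\leq n} X_i \geq L\} \cup \{\sum_i X_i\mathbbm{1}\{X_i < L\} \geq L\}$, bound the first event by a union bound on the polynomial tail of $X_1$, and apply Markov to a suitable moment of the truncated sum $\sum_i Y_i$ where $Y_i = X_i\mathbbm{1}\{X_i < L\}$. In each part $L$ is chosen so that the union bound already delivers the desired decay, and the work is in bounding the truncated contribution.

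For part (i), I would take $L = n^{1/\beta}\lambda$, for which the union bound gives $ncL^{-\beta} = c\lambda^{-\beta}$. The first-moment estimate $E[Y_1] \leq \int_0^L \pr{X_1 > y}\,dy \leq c'L^{1-\beta}/(1-\beta)$, valid since $\beta < 1$, together with linearity of expectation and Markov's inequality, gives $\pr{\sum_i Y_i \geq L} \leq nc'L^{-\beta}/(1-\beta) = c''\lambda^{-\beta}$, as required.

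For part (iii), I would take $L = \lambda n$. The union bound gives $ncL^{-\beta} = c\lambda^{-\beta}n^{1-\beta} \leq c\lambda^{-\beta}$. For the truncated sum, apply Markov with the $p$-th moment for $p = (\beta - \epsilon) \vee 1 \in [1,\beta)$; then $E[X_1^p] < \infty$, and the elementary convexity bound $(\sum_i y_i)^p \leq n^{p-1}\sum_i y_i^p$ (valid for $p \geq 1$ with no independence required) gives $E[(\sum_i Y_i)^p] \leq n^p E[X_1^p]$. Markov then yields $C_\epsilon\lambda^{-p} \leq C_\epsilon\lambda^{-(\beta-\epsilon)}$; when $\beta-\epsilon < 1$ the bound $\lambda^{-1}$ obtained from $p=1$ already suffices.

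Part (ii) is the delicate case. Taking $L = \lambda n\log n$, the union bound gives $c/(\lambda\log n) \leq c/\lambda$ for $n \geq 2$. The naive first-moment Markov analogue of (i) only gives $nE[Y_1]/L \asymp \log(\lambda n\log n)/(\lambda\log n)$, which is $O(1/\lambda)$ when $\lambda \lesssim n$ but acquires a spurious $\log\lambda$ factor when $\lambda \gg n$. To handle the latter regime I would switch to Chebyshev, using that $E[X_1^2\mathbbm{1}\{X_1 \leq L\}] \leq c''L$ for $\beta = 1$, so by independence $\mathrm{Var}(\sum_i Y_i) \leq c''nL$. For $\lambda$ above a fixed constant the threshold $L$ dominates $nE[Y_1]$, so Chebyshev delivers $\mathrm{Var}/(L/2)^2 \leq C/(\lambda\log n) \leq C/\lambda$, while for bounded $\lambda$ the bound $c'/\lambda$ is trivial after adjusting $c'$. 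The main obstacle is exactly this: the logarithmic growth of the truncated first moment at the critical exponent $\beta = 1$ produces an extra $\log\lambda$ factor that the Markov argument from (i) cannot absorb, and the fix I propose is to upgrade to a second-moment Chebyshev bound (which does use independence, unlike the argument in (iii)), exploiting that at $\beta = 1$ the truncated \emph{second} moment grows only linearly in the threshold.
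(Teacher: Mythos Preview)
Your proof is correct, and for parts (i) and (ii) it takes a genuinely different and more elementary route than the paper's. The paper truncates not at the threshold $L$ but at the fixed level $n^{1/\beta}$, conditions on the (approximately Poisson) number $N_n$ of terms exceeding $n^{1/\beta}$, and then invokes an auxiliary lemma (Lemma~\ref{lem:truncated stable sum app}) giving \emph{exponential} decay for the sum of the small terms between successive large jumps; summing the resulting bound over $N_n=m$ against $1/m!$ recovers the $\lambda^{-\beta}$ decay. Part (ii) is treated identically, importing the $\log$ factor from the $\beta=1$ case of that auxiliary lemma. Your approach bypasses the auxiliary lemma entirely: truncating directly at $L$ and applying first-moment Markov (part (i)) or second-moment Chebyshev (part (ii)) already delivers the polynomial bound. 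This is more self-contained and shorter; the paper's machinery gives sharper (exponential) control on the truncated piece, which is not needed here but is available for reuse. For part (iii) your argument is essentially the same as the paper's: Markov on the $p$-th moment with $p=\beta-\epsilon$ and the convexity bound $(\sum y_i)^p \le n^{p-1}\sum y_i^p$, which indeed requires no independence; the paper simply restricts to $\epsilon$ small enough that $p>1$ (the statement for larger $\epsilon$ then follows by monotonicity in $\epsilon$), whereas you handle $\beta-\epsilon<1$ explicitly via $p=1$. Your preliminary one-big-jump split in (iii) is harmless but unnecessary, since $E[X_1^p]<\infty$ already.
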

\begin{proof}
We first give the proof in case $(i)$. The proof is no doubt standard and this particular formulation follows a similar strategy to the analysis of \cite[Chapter 3, p.160]{ProbThEx}. Decompose $S_n$ as the sum $\hat{S}_n + \tilde{S}_n$, where
\[
\hat{S}_n = \sum_{i=1}^n X_i \mathbbm{1}\{X_i \geq n^{\frac{1}{\beta}}\}, \hspace{1cm} \tilde{S}_n = \sum_{i=1}^n X_i \mathbbm{1}\{X_i < n^{\frac{1}{\beta}}\}.
\]
We condition on the number of terms of $\hat{S}_n$, which is stochastically dominated by a \textsf{Binomial}($n, 2cn^{-1}$) random variable for large enough $n$.

Given that $N_n := |\{i \leq n: X_i \geq n^{\frac{1}{\beta}}\}| = m$, $\tilde{S}_n$ can be dealt with using Lemma \ref{lem:truncated stable sum app}: since we have $m+1$ copies of the sum considered there, it is necessary for one such copy to be at least $\frac{\lambda n^{\frac{1}{\beta}}}{2(m+1)}$ in order that $\tilde{S}_n \geq \frac{\lambda n^{\frac{1}{\beta}}}{2}$. Similarly, to control $\hat{S}_n$, note that it is necessary that at least one term of $\hat{S}_n$ is at least $\frac{\lambda n^{\frac{1}{\beta}}}{2m}$ in order that $\hat{S}_n \geq \frac{\lambda n^{\frac{1}{\beta}}}{2}$. Moreover, $\prcond{X_1 \geq \frac{\lambda n^{\frac{1}{\beta}}}{2m}}{X_1 \geq n^{\frac{1}{\beta}}}{} \leq 4^{\beta} m^{\beta}\lambda^{-\beta}$ for all sufficiently large $\lambda$. We can therefore write for all sufficiently large $n$ (using a union bound and Lemma \ref{lem:truncated stable sum app}), that for some constants $c', C, C', C'' \in (0, \infty)$:
\begin{align*}
\pr{S_n \geq \lambda n^{\frac{1}{\beta}}} &\leq \pr{\hat{S}_n \geq \frac{\lambda n^{\frac{1}{\beta}}}{2}} + \pr{\tilde{S}_n \geq \frac{\lambda n^{\frac{1}{\beta}}}{2}} \\
&\leq \sum_{m=0}^{\sqrt{\lambda}} \binom{n}{m} (2cn^{-1})^m (1-2cn^{-1})^{n-m} \\
&\hspace{1cm} \times \left[ \prcond{\hat{S}_n \geq \frac{\lambda n^{\frac{1}{\beta}}}{2}}{N_n=m}{} + \prcond{\tilde{S}_n \geq \frac{\lambda n^{\frac{1}{\beta}}}{2}}{N_n=m}{} \right] \\
&\qquad + \pr{\mathsf{Binomial}(n, 2cn^{-1}) \geq \sqrt{\lambda}} \\
&\leq \sum_{m=0}^{\sqrt{\lambda}} \binom{n}{m} (2cn^{-1})^m (1-2cn^{-1})^{n-m} \left[ m^{\beta + 1} 4^{\beta} \lambda^{-\beta} + Cme^{-c'\frac{\lambda }{2(m+1)}} \right] + e^{-c\sqrt{\lambda}} \\
&\leq \sum_{m=0}^{\sqrt{\lambda}} \frac{1}{m !} e^{-2c} \left(\frac{2c}{1-2cn^{-1}}\right)^{m} \left[ C'm^{\beta + 1} \lambda^{-\beta} \right] + e^{-c\sqrt{\lambda}}\\
&\leq \lambda^{-\beta} \sum_{m=0}^{\infty} \frac{C' m^{\beta + 1} }{m !} \left(\frac{2c}{1-2cn^{-1}}\right)^{m} + e^{-c\sqrt{\lambda}} \leq C''\lambda^{-\beta}.
\end{align*}
The proof is the same in case $(ii)$ and we just import an extra log term from the application of Lemma \ref{lem:truncated stable sum app}.

In case $(iii)$, we can choose $\epsilon >0$ small enough that $\beta - \epsilon > 1$ and apply Markov's inequality then H\"older's inequality with $p = \beta - \epsilon$ to get that
\begin{align*}
\pr{\sum_{i=1}^n X^{(i)} \geq \lambda n} \leq \E{\left( \sum_{i=1}^n X^{(i)}\right)^p} n^{-p} \lambda^{-p} \leq n^p \E{\left(X^{(1)}\right)^p} n^{-p} \lambda^{-p} \leq c_{\epsilon}\lambda^{-(\beta - \epsilon)}.
\end{align*}
\end{proof}

%\begin{remark}
%The same proof works if $\beta \in [1,2)$ and $\E{X_1}=0$, or for the fluctuations if $\E{X_1} \neq 0$.*use?
%\end{remark}
The next lemma deals with the case where the random variables $(X_i)_{i \geq 1}$ also have a logarithmic term.

\begin{lem}\label{lem:stable sum tail prob log app}
Let $(X_i)_{i \geq 1}$ be i.i.d. non-negative and such that $\pr{X_i \geq x \log x} \sim cx^{-\beta}$ as $x \to \infty$ for some $\beta \in (0,1)$ and $c \in (0, \infty)$. For $n \geq 1$, let $S_n = \sum_{i=1}^n X_i$. Then for any $\epsilon>0$ there exists a constant $c_{\epsilon} < \infty$ such that for each $n \geq 2$ and all $\lambda \geq 1$,
\[
\pr{S_n \geq \lambda n^{\frac{1}{\beta}} \log n} \leq c_{\epsilon}\lambda^{-(\beta - \epsilon)}.
\]
\end{lem}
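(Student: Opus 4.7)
The plan is to mimic the proof of Lemma~\ref{lem:stable sum tail prob app}$(i)$ with the cutoff raised by a $\log n$ factor to accommodate the logarithmic correction in the tails. First, I would invert the hypothesis: writing $y = x \log x$ gives $x \sim y/\log y$, so $\pr{X_i \geq y} \sim c' y^{-\beta}(\log y)^{\beta}$ as $y \to \infty$. Thus the law of $X_1$ is that of a stable-$\beta$ variable up to a polylogarithmic correction, and one expects the sum $S_n$ to live at scale $n^{1/\beta}\log n$ rather than $n^{1/\beta}$.

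Next I would set $k_n = n^{1/\beta}\log n$ and decompose $S_n = \hat{S}_n + \tilde{S}_n$, where
\[
\hat{S}_n = \sum_{i=1}^n X_i\,\mathbbm{1}\{X_i \geq k_n\}, \qquad \tilde{S}_n = \sum_{i=1}^n X_i\,\mathbbm{1}\{X_i < k_n\}.
\]
Writing $N_n = |\{i \leq n : X_i \geq k_n\}|$, the asymptotic $\pr{X_1 \geq k_n} \sim c\, n^{-1}$ shows that $N_n$ is stochastically dominated by a $\mathsf{Binomial}(n, 2cn^{-1})$ variable for large $n$, so $\pr{N_n \geq \sqrt{\lambda}} \leq e^{-c\sqrt{\lambda}}$ by a Chernoff bound.

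The central step is to establish an analog of Lemma~\ref{lem:truncated stable sum app}$(i)$ at the cutoff $k_n$: the truncated moment is
\[
\E{X_1\,\mathbbm{1}\{X_1 < k_n\}} \;=\; \int_0^{k_n} \pr{X_1 > x}\,dx \;\leq\; C\, k_n^{1-\beta}(\log k_n)^{\beta},
\]
and since $k_n^{1-\beta}(\log k_n)^{\beta} = n^{(1-\beta)/\beta}(\log n)^{1-\beta}(\log k_n)^{\beta} \leq C' n^{(1-\beta)/\beta}\log n = C' k_n/n$, the expectation is of the right order $k_n/n$. Therefore the geometric-decay argument used in the proof of Lemma~\ref{lem:truncated stable sum app}$(i)$ (decomposing into $\lambda/(2A)$ successive hitting increments of size $Ak_n$ and applying Markov with Wald to each conditional piece) carries through verbatim, yielding constants $c, C$ with $\pr{\tilde{S}_n \geq \lambda k_n/2} \leq C e^{-c\lambda}$. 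Conditionally on $N_n = m$ the same bound with $\lambda$ replaced by $\lambda/(m+1)$ holds by considering the $m+1$ excursions between large jumps.

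For $\hat{S}_n$, I would use a union bound: on $\{N_n = m\}$, the event $\{\hat{S}_n \geq \lambda k_n/2\}$ forces at least one large jump to exceed $\lambda k_n/(2m)$. The conditional tail estimate
\[
\prcond{X_1 \geq \lambda k_n/(2m)}{X_1 \geq k_n}{} \;\leq\; C\bigl(\lambda/(2m)\bigr)^{-\beta}\bigl(\log \lambda\bigr)^{\beta}
\]
follows from the inverted tail $\pr{X_1 \geq y} \sim c' y^{-\beta}(\log y)^{\beta}$ (the extra logarithm cancels against $\log k_n$ in the denominator, leaving only a $(\log \lambda)^{\beta}$ factor). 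Combining exactly as in the proof of Lemma~\ref{lem:stable sum tail prob app}$(i)$,
\[
\pr{S_n \geq \lambda k_n} \;\leq\; \sum_{m=0}^{\sqrt{\lambda}} \frac{e^{-2c}}{m!}\Bigl(\frac{2c}{1-2cn^{-1}}\Bigr)^m \Bigl[C' m^{\beta+1}\lambda^{-\beta}(\log\lambda)^{\beta} + Cme^{-c\lambda/(2(m+1))}\Bigr] + e^{-c\sqrt{\lambda}},
\]
and the Poisson-type sum over $m$ converges to give an overall bound of order $\lambda^{-\beta}(\log \lambda)^{\beta} + e^{-c\sqrt{\lambda}}$. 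Finally, $\lambda^{-\beta}(\log \lambda)^{\beta} \leq c_{\epsilon}\lambda^{-(\beta-\epsilon)}$ for any $\epsilon > 0$, which yields the claim.

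The main obstacle is bookkeeping the logarithmic corrections: one must verify that the $(\log k_n)^{\beta}$ and $\log n$ factors appearing in the truncated moment and conditional tail calculations line up exactly so that (a) the truncated-sum argument of Lemma~\ref{lem:truncated stable sum app}$(i)$ transfers cleanly to the new cutoff $k_n = n^{1/\beta}\log n$, and (b) the large-jump contribution produces only a $(\log\lambda)^{\beta}$ correction rather than something depending on $n$. Everything else is a direct adaptation of the earlier argument.
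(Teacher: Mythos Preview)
Your argument is correct, but it takes a genuinely different route from the paper's. The paper does not re-run the decomposition of Lemma~\ref{lem:stable sum tail prob app}$(i)$ at the new cutoff $k_n=n^{1/\beta}\log n$; instead it removes the logarithm altogether by a $\lambda$-dependent rescaling. Setting
\[
Y_{i,\lambda}=\frac{X_i}{\log(\lambda^\beta n)}\,\mathbbm{1}\bigl\{X_i<(\lambda^\beta n)^{1/\beta}\log((\lambda^\beta n)^{1/\beta})\bigr\},
\]
one checks that on the truncation event $Y_{i,\lambda}\le \beta^{-1}(\lambda^\beta n)^{1/\beta}$, so $\log(\lambda^\beta n)\ge\beta\log(\beta x)$ whenever $\{Y_{i,\lambda}\ge x\}$ is nonempty; the hypothesis then gives $\pr{Y_{i,\lambda}\ge x}\le C x^{-\beta}$ with a constant independent of $\lambda$ and $n$. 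Lemma~\ref{lem:stable sum tail prob app}$(i)$ applies as a black box to $\sum_i Y_{i,\lambda}$, and a union bound handles the event $\{\sup_i X_i\ge(\lambda^\beta n)^{1/\beta}\log((\lambda^\beta n)^{1/\beta})\}$, which already costs only $O(\lambda^{-\beta})$.

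The paper's trick is shorter because it delegates everything to the already-proved lemma, and by dividing by $\log(\lambda^\beta n)$ rather than $\log n$ it absorbs the logarithm cleanly, avoiding the residual $(\log\lambda)^\beta$ factor that your large-jump estimate produces. Your approach, on the other hand, is more self-contained: you verify directly that the truncated-moment identity $\Eb{X_1\mathbbm{1}\{X_1<k_n\}}\asymp k_n/n$ still holds at the new cutoff, so the geometric-decay argument of Lemma~\ref{lem:truncated stable sum app}$(i)$ transfers, and you keep the cutoff $\lambda$-independent. The price is the explicit $(\log\lambda)^\beta$ which you then absorb into the $\epsilon$; the paper's rescaling hides this step inside the choice of $Y_{i,\lambda}$.
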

\begin{proof}
Fix $n \geq 1$ and for each $i \leq n$, set $Y_{i, \lambda} = \frac{X_i}{\log (\lambda^{\beta} n)} \mathbbm{1}\{X_i < (\lambda^{\beta} n)^{\frac{1}{\beta}} \log ((\lambda^{\beta} n)^{\frac{1}{\beta}})\}$. Note that $(Y_{i, \lambda})_{i \geq n}$ is an i.i.d. sequence and $Y_{i, \lambda} \leq \frac{1}{\beta}(\lambda^{\beta} n)^{\frac{1}{\beta}}$ deterministically. Moreover, for all sufficiently large $x$ we have for each $i \leq n$ that
\begin{align*}
\pr{Y_{i, \lambda} \geq x} = \pr{Y_{i, \lambda} \geq x}\mathbbm{1}\left\{x \leq \frac{1}{\beta}(\lambda^{\beta} n)^{\frac{1}{\beta}}\right\} &= \pr{\frac{X_i}{\log (\lambda^{\beta} n)} \geq x}\mathbbm{1}\left\{x \leq \frac{1}{\beta}(\lambda^{\beta} n)^{\frac{1}{\beta}}\right\} \\
&\leq \pr{\frac{X_i}{\log ((\beta x)^{\beta})} \geq x} \leq \pr{X_i \geq \beta x \log (\beta x)} \leq 2c\beta^{-\beta}x^{-\beta}.
\end{align*}
Note that this final bound does not depend on $\lambda$. For all sufficiently large $\lambda$, we therefore have by a union bound that
\begin{align*}
\pr{S_n \geq n^{\frac{1}{\beta}} (\log n) \lambda} &\leq \pr{\sup_{i \leq n} X_i \geq (\lambda^{\beta} n)^{\frac{1}{\beta}} \log ((\lambda^{\beta} n)^{\frac{1}{\beta}})} + \pr{S_n \geq n^{\frac{1}{\beta}} (\log n) \lambda, \ \sup_{i \leq n} X_i < (\lambda^{\beta} n)^{\frac{1}{\beta}} \log ((\lambda^{\beta} n)^{\frac{1}{\beta}})} \\
&\leq n \pr{ X_1 \geq (\lambda^{\beta} n)^{\frac{1}{\beta}} \log ((\lambda^{\beta} n)^{\frac{1}{\beta}})} + \pr{\sum_{i=1}^{n}Y_{i, \lambda}  \geq \frac{n^{\frac{1}{\beta}} \lambda}{\log (\lambda^{\beta})}} \\
&\leq 2c\lambda^{-\beta} + \pr{\sum_{i=1}^{n}Y_{i, \lambda}  \geq n^{\frac{1}{\beta}} \lambda}.
\end{align*}
The result therefore follows from applying the result of Lemma \ref{lem:stable sum tail prob app}$(i)$ to $\sum_{i=1}^{n}Y_{i, \lambda}$.
\end{proof}

We also have a bound for the lower tails.
\begin{lem}\label{lem:stable sum tail prob lower tail app}
Let $(X_i)_{i \geq 1}$ be i.i.d. and non-negative and suppose there exist $c, c''>0$, $C<\infty$ and $\beta \in (0, \infty)$ such that $\pr{X_i \geq x} \leq Cx^{-\beta}$ for all $x \geq 1$, and $\pr{X_i \geq x} \geq cx^{-\beta}$ for all $1 \leq x \leq c''n^{\frac{1}{\beta}}$. For $n \geq 1$, let $S_n = \sum_{i=1}^n X_i$.
\begin{enumerate}[(i)]
\item If $\beta < 1$, then there exists a constant $c' >0$ such that for each $n \geq 2$, $1 \leq \lambda \leq n^{\frac{1}{\beta}}$,
\[
\pr{S_n \leq n^{\frac{1}{\beta}} \lambda^{-1}} \leq e^{-c'\lambda^{\beta}}.
\]
\item If $\beta = 1$, then there exists a constant $c' >0$ such that for each $n \geq 2$, $1 \leq \lambda \leq n \log n$,
\[
\pr{S_n \leq \lambda^{-1} n \log n} \leq e^{-c'\lambda (\log \lambda)^{-1}}.
\]
\item If $\beta > 1$, then there exists a constant $c'>0$ such that for each $n \geq 2$, $1 \leq \lambda \leq n$,
\[
\pr{S_n \leq \lambda^{-1} n} \leq e^{-c'\lambda}.
\]
\end{enumerate} 
\begin{proof}
\begin{enumerate}[(i)]
\item For all sufficiently large $\lambda$ we can write for any $n \geq 1$:
\begin{align*}
\pr{S_n \leq n^{\frac{1}{\beta}} \lambda^{-1}} \leq \pr{\nexists i \leq n: X_i \geq n^{\frac{1}{\beta}} \lambda^{-1}} \leq (1-\frac{c}{2}n^{-1}\lambda^{\beta})^n \leq e^{-\frac{c}{2} \lambda^{\beta}}.
\end{align*}
It extends to all $1 \leq \lambda \leq n^{\frac{1}{\beta}}$ by modifying the constant slightly.
\item First note that, by assumption, we can couple the sequence $(X_i)_{i \geq 1}$ with another non-negative i.i.d. sequence $(\tilde{X}_i)_{i \geq 1}$ satisfying $\pr{X_i \geq x} \geq cx^{-\beta}$ \textit{for all} $x \geq 1$ and such that $X_i = \tilde{X}_i$ whenever $\tilde{X}_i \leq c''n^{\frac{1}{\beta}}$. Let $\tilde{S}_n = \sum_{i=1}^n \tilde{X}_i$.

First note that by \cite[Equation (1.3)]{BergerCauchy}, Markov's inequality and a union bound, there exist $A>0, A', A''<\infty$ such that, for all sufficiently large $n$,
\begin{align*}
\pr{\tilde{S}_n \leq 2An \log n} &\leq \frac{1}{4} \\
\pr{\#\{i \leq n: X_i \geq c''n^{\frac{1}{\beta}}\} \geq A'} &\leq \frac{1}{4} \\
\pr{\sup_{i \leq n} X_i \geq A'' n^{\frac{1}{\beta}}} &\leq \frac{1}{4}.
\end{align*}
Moreover, on the complement of these three events, we have that (provided $n$ is sufficiently large)
\[
S_n \geq \tilde{S}_n - A' \cdot A'' n^{\frac{1}{\beta}} \geq An\log n.
\]
Combining these we deduce that there exists $N<\infty$ such that $\pr{\tilde{S}_n \leq An \log n} \leq \frac{3}{4}$ for all $n\geq N$. By modifying $A$ slightly, we can also ensure this holds for all $n \geq 2$. When $\lambda \leq \frac{n \log n}{4A}$ (which ensures that $A^{-1}n\lambda^{-1} \log \lambda \geq 2$ for all $n$ exceeding some $n_A$), we can therefore write
\begin{align*}
\pr{S_n \leq \lambda^{-1}n \log n} \leq \pr{S_{A^{-1}n\lambda^{-1} \log \lambda} \leq \lambda^{-1}n \log n}^{A\lambda (\log \lambda)^{-1}} \leq \left(\frac{3}{4}\right)^{A\lambda (\log \lambda)^{-1}}.
\end{align*}
%If instead $\lambda > \frac{n}{AN}$, we can still assume that $\lambda^{-1}n \log n \geq 1$ so that $\frac{n}{N} \geq \frac{\lambda}{N \log (AN\lambda)}$, so that we can write
%\begin{align*}
%\pr{S_n \leq \lambda^{-1}n \log n} \leq \pr{S_{N} \leq \lambda^{-1}\frac{n}{N} \log n}^{\frac{n}{N}} \leq \pr{S_{N} \leq A \log n}^{\frac{\lambda}{N \log (AN\lambda)}} \leq \left(\frac{3}{4}\right)^{\frac{1}{2AN^2}\lambda (\log \lambda)^{-1}}.
%\end{align*}
This proves the claimed result for $1 \leq \lambda \leq \frac{n \log n}{4A}$. We can then extend to $\lambda \leq n \log n$ in place of $\lambda \leq \frac{n \log n}{4A}$ by replacing $c'$ in the main statement with $\frac{c'}{4A}$.
\item Similarly to $(ii)$ but this time by the law of large numbers, we can similarly choose $A>0$ such that for all $1 \leq \lambda \leq \frac{n}{A}$,
\begin{align*}
\pr{S_n \leq \lambda^{-1}n} \leq \pr{S_{A^{-1}n\lambda^{-1}} \leq \lambda^{-1}n}^{A\lambda} \leq c^{A\lambda},
\end{align*}
and extend to all $\lambda \leq n$ as in $(ii)$.
\end{enumerate}
\end{proof}
\end{lem}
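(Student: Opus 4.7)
The three cases call for quite different arguments because the sum $S_n$ is dominated by the maximum when $\beta<1$, has finite mean and concentrates by the law of large numbers when $\beta>1$, and sits at the boundary (Cauchy scaling) when $\beta=1$. My plan is to treat each regime separately, with blocking and independence as the common thread for $(ii)$ and $(iii)$.

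For $(i)$ the plan is very direct. By nonnegativity,
\[
\{S_n \leq n^{1/\beta}\lambda^{-1}\} \subseteq \{\max_{i \leq n} X_i \leq n^{1/\beta}\lambda^{-1}\},
\]
so by independence
\[
\pr{S_n \leq n^{1/\beta}\lambda^{-1}} \leq \left(1 - \pr{X_1 > n^{1/\beta}\lambda^{-1}}\right)^n.
\]
Provided $\lambda \geq 1/c''$ (so that $n^{1/\beta}\lambda^{-1} \leq c''n^{1/\beta}$ and the assumed lower tail bound is in force), we have $\pr{X_1 > n^{1/\beta}\lambda^{-1}} \geq c\lambda^{\beta}/n$, and the bound $(1-c\lambda^{\beta}/n)^n \leq e^{-c\lambda^{\beta}}$ follows. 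The small range $1 \leq \lambda < 1/c''$ is absorbed into the constant.

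For $(iii)$ the idea is a block argument. Choose a constant $A<\infty$, depending only on a fixed truncated mean $\mathbb{E}[X_1 \wedge M]$, such that blocks of size $An/\lambda$ have expected sum exceeding $2n/\lambda$; then a routine law of large numbers / Markov step on the truncated variables gives $\pr{S_{An/\lambda} \leq n/\lambda} \leq \tfrac{1}{2}$ for $n/\lambda$ above some $K$. Splitting $S_n$ into $\lfloor \lambda/A\rfloor$ independent blocks and using that nonnegativity forces each block sum to be at most $n/\lambda$ on the event of interest gives
\[
\pr{S_n \leq n/\lambda} \leq (1/2)^{\lambda/A} = e^{-c'\lambda},
\]
which handles $1 \leq \lambda \leq n/(AK)$; the remaining range is covered by tweaking the constant.

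The main obstacle will be $(ii)$, since $\mathbb{E}[X_1]=\infty$ but the typical scale is $n\log n$. My strategy will be the same block argument, but I will first need a crude but uniform lower bound of the form $\pr{S_k \leq Ak\log k}\leq 3/4$ for some constant $A>0$ and all $k\geq 2$. To get this I plan to couple $(X_i)$ with an i.i.d.\ sequence $(\tilde X_i)$ agreeing with $(X_i)$ below the threshold $c''n^{1/\beta}$ but with tails $\pr{\tilde X_i\geq x}\geq cx^{-1}$ for all $x\geq 1$, so that one can invoke the asymmetric Cauchy-type limit of \cite[Eq.~(1.3)]{BergerCauchy} together with a Markov bound on the count of ``truncated'' indices to transfer an LLN-type lower bound from $\tilde S_n$ to $S_n$. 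Plugging this into the blocking argument with blocks of size $A^{-1}n\lambda^{-1}\log\lambda$ (so that the typical block contribution $k\log k$ is $\asymp \lambda^{-1}n\log n$) yields
\[
\pr{S_n \leq \lambda^{-1}n\log n} \leq (3/4)^{A\lambda(\log\lambda)^{-1}},
\]
giving the desired $e^{-c'\lambda/\log\lambda}$ decay. The delicate part is matching scales so that the concentration input applies at the right block length uniformly in $n$ and $\lambda$.
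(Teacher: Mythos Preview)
Your proposal is correct and matches the paper's proof essentially line for line: part $(i)$ via the maximum, parts $(ii)$ and $(iii)$ via a block argument, with the same coupling to $(\tilde X_i)$ and the same invocation of \cite[Eq.~(1.3)]{BergerCauchy} to obtain the uniform bound $\pr{S_k \leq Ak\log k}\leq 3/4$ in $(ii)$, and the same block size $A^{-1}n\lambda^{-1}\log\lambda$. There is nothing to add.
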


\begin{lem}\label{lem:Tauberian at 0 app}
Let $X$ be a non-negative random variable, and suppose that $\pr{X \geq x} \sim cx^{-\beta}$ as $x \to \infty$ for some $\beta \in [0,1)$ and $c \in (0, \infty)$. Then there exists a constant $c' \in (0, \infty)$ such that $1-\E{e^{-\theta X}} \sim c' \theta^{\beta}$ as $\theta \downarrow 0$.
\end{lem}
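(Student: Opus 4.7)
The plan is to derive the asymptotic from the Laplace--Stieltjes transform representation
\[
1-\E{e^{-\theta X}} = \theta \int_0^{\infty} e^{-\theta x}\,\pr{X > x}\,dx,
\]
which follows from Fubini applied to $1-e^{-\theta X} = \theta \int_0^X e^{-\theta x}\,dx$. After the substitution $u = \theta x$ this becomes
\[
1-\E{e^{-\theta X}} \;=\; \int_0^{\infty} e^{-u}\,\pr{X > u/\theta}\,du,
\]
so the whole task reduces to computing the limit of the integrand, divided by $\theta^{\beta}$, as $\theta \downarrow 0$.

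First I would observe that the hypothesis $\pr{X \geq x}\sim cx^{-\beta}$ gives the pointwise limit
\[
\theta^{-\beta}\,\pr{X > u/\theta} \;\longrightarrow\; c\,u^{-\beta} \qquad (\theta \downarrow 0)
\]
for every fixed $u > 0$. Next I would produce a $\theta$-independent integrable majorant: since $\pr{X > x}$ is bounded by $1$ and is asymptotically equivalent to $cx^{-\beta}$, one can pick a finite constant $\tilde C$ such that $\pr{X > x} \leq \tilde C (1+x)^{-\beta}$ for all $x \geq 0$, and therefore
\[
\theta^{-\beta}\,\pr{X > u/\theta} \;\leq\; \tilde C\,(\theta + u)^{-\beta} \;\leq\; \tilde C\,u^{-\beta}.
\]
The function $u \mapsto \tilde C\,e^{-u} u^{-\beta}$ is integrable on $(0,\infty)$ precisely because $\beta \in [0,1)$, so dominated convergence applies.

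Passing to the limit under the integral yields
\[
\theta^{-\beta}\bigl(1-\E{e^{-\theta X}}\bigr) \;\longrightarrow\; c\int_0^{\infty} e^{-u} u^{-\beta}\,du \;=\; c\,\Gamma(1-\beta),
\]
which is a positive finite constant; setting $c' = c\,\Gamma(1-\beta)$ gives the result. The only slightly delicate step is the dominating function near $u = 0$, but the uniform tail bound $\pr{X>x}\leq \tilde C(1+x)^{-\beta}$ (valid because the tail is asymptotically polynomial and globally bounded by $1$) handles this cleanly; the rest is just a change of variables and Fubini. No real obstacle is expected.
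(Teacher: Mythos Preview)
Your argument is correct and self-contained. The representation
\[
1-\E{e^{-\theta X}} = \theta\int_0^\infty e^{-\theta x}\,\pr{X>x}\,dx
\]
via Fubini, the change of variable $u=\theta x$, the pointwise limit $\theta^{-\beta}\pr{X>u/\theta}\to c\,u^{-\beta}$, and the domination $\theta^{-\beta}\pr{X>u/\theta}\le \tilde C\,u^{-\beta}$ (from $\pr{X>x}\le \tilde C(1+x)^{-\beta}$) all check out, and the integrability of $e^{-u}u^{-\beta}$ on $(0,\infty)$ for $\beta\in[0,1)$ is exactly what makes dominated convergence go through. The identification $c'=c\,\Gamma(1-\beta)$ is also correct.

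By contrast, the paper does not actually argue anything here: it simply cites this as a standard Tauberian theorem (Korevaar, \emph{Tauberian Theory}, Section~IV, Theorem~8.2). So your route is not so much different as strictly more informative---you give a short direct proof where the paper only points to the literature. The advantage of your approach is that it is elementary and makes the constant explicit; the advantage of the citation is brevity and that it situates the result within the general Karamata--Tauberian framework (regularly varying tails correspond to regularly varying Laplace transforms near zero), of which this is the simplest instance.
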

\begin{proof}
This is a standard Tauberian theorem, for example see \cite[Section IV, Theorem 8.2]{KorevaarTaub}.
\end{proof}

%\begin{rmk} \begin{enumerate}
%    \item In most cases we will consider we will in fact have (stretched) exponential tail decay for $p(\lambda)$.
%    \item The assumption of part $(i)$ of this theorem would be relevant in the case where $X$ is the boundary length of an inserted graph, and $f(X)$ denotes the two-point function for two points on the boundary. If $G$ is a planar graph and there is always an option to travel between two vertices directly round the boundary, then we would have a constraint of the form $f(X) \leq X$ and the distance cannot be arbitrarily large. For more general graphs, or in the case where $f(X)$ is instead the volume of $G$, then we are instead in the setting of part $(ii)$.
%\end{enumerate}
%\end{rmk}
%
%\begin{lem}
%Let $\left((X^{(n)})_{i=1}^{\infty}\right)_{n=1}^{\infty}$ and $\left((Y^{(n)})_{i=1}^{\infty}\right)_{n=1}^{\infty}$ be two sequences of sequences of i.i.d. random variables (the distributions may depend on $i$ and $n$, though). Additionally, suppose $(N_n)_{n=1}^{\infty}$ is another sequence of random variables, taking values in the positive integers. Suppose that there exist $p, q < \infty$ such that
%\begin{align*}
%\limsup_{\lambda \to \infty} \sup_{n \to \infty} \pr{\sum_{i=1}^{N_n} X^{(n)}_i \geq n^p \lambda} &= 0, \\
%\limsup_{\lambda \to \infty} \sup_{n \to \infty} \pr{\sum_{i=1}^{N_n} Y^{(n)}_i \geq n^q \lambda} &= 0.
%\end{align*}
%Then 
%\[
%\limsup_{\lambda \to \infty} \sup_{n \to \infty} \pr{\sum_{i=1}^{N_n} X^{(n)}_i Y^{(n)}_i \geq n^{p \vee q} \lambda} = 0.
%\]
%\end{lem}
%\begin{proof}
%
%\end{proof}

\end{appendix}

\bibliographystyle{plain}
\bibliography{biblio}

\end{document}